\def\R{\mathbb R}
\def\Z{\mathbb Z}
\def\N{\mathbb N}
\def\C{\mathbb C}
\def\romega{{r_{\phantom{a}\!\!\!\!_\Omega}}}
\def\rgamma{{r_{\phantom{a}\!\!\!\!_\Gamma}}}
\def\negquad{\!\!\!\!}
\def\eps{\varepsilon}
\DeclareMathOperator\essinf{ess\,inf}
\newcommand{\cal}[1]{{\mathcal #1}}
\numberwithin{equation}{section}
\newtheorem{thm}{Theorem}
\numberwithin{thm}{section}
\newtheorem{cor}{Corollary}
\numberwithin{cor}{section}
\newtheorem{lem}{Lemma}
\numberwithin{lem}{section}
\numberwithin{prop}{section}
\theoremstyle{definition}
\newtheorem{definition}{Definition}
\numberwithin{definition}{section}
\theoremstyle{remark}
\newtheorem{rem}{Remark}
\numberwithin{rem}{section}
\begin{document}
\title[On the wave equation with hyperbolic...]
{On the wave equation with hyperbolic dynamical  boundary
conditions, interior and boundary damping and source}
\author{Enzo Vitillaro}
\address[E.~Vitillaro]
       {Dipartimento di Matematica ed Informatica, Universit\`a di Perugia\\
       Via Vanvitelli,1 06123 Perugia ITALY}
\email{enzo.vitillaro@unipg.it}
\date{\today}
\subjclass{35L05, 35L20, 35D30, 35D35, 35Q74}

\keywords{Wave equation, dynamical boundary conditions, damping,
sources}

%\vskip 2cm

\thanks{ The author would like to convey his sincerest thanks to the anonymous
reviewers, whose comments helped him to improve the presentation of the paper.  Work done in the framework of the M.I.U.R. project
"Variational and perturbative aspects of nonlinear differential
problems" (Italy)}

\begin{abstract} The aim of this paper is to study  the problem
$$
\begin{cases} u_{tt}-\Delta u+P(x,u_t)=f(x,u) \qquad &\text{in
$(0,\infty)\times\Omega$,}\\
u=0 &\text{on $(0,\infty)\times \Gamma_0$,}\\
u_{tt}+\partial_\nu u-\Delta_\Gamma u+Q(x,u_t)=g(x,u)\qquad
&\text{on
$(0,\infty)\times \Gamma_1$,}\\
u(0,x)=u_0(x),\quad u_t(0,x)=u_1(x) &
 \text{in $\overline{\Omega}$,}
\end{cases}$$
where $\Omega$ is a open  bounded  subset of $\R^N$  with $C^1$ boundary  ($N\ge 2$),
$\Gamma=\partial\Omega$, $(\Gamma_0,\Gamma_1)$ is a measurable
partition of $\Gamma$,  $\Delta_\Gamma$ denotes the
Laplace--Beltrami operator on $\Gamma$, $\nu$ is the outward normal
to $\Omega$, and the terms $P$ and $Q$ represent nonlinear damping
terms, while $f$ and $g$ are nonlinear subcritical perturbations.

In the paper a local Hadamard well--posedness result for
initial data in the natural energy space associated to the problem
is given. Moreover, when $\Omega$ is  $C^2$  and
$\overline{\Gamma_0}\cap\overline{\Gamma_1}=\emptyset$,  the
regularity of solutions is studied.
 Next a blow--up theorem is given when
$P$ and $Q$ are linear and $f$, $g$ are superlinear sources. Finally a dynamical system is generated when
the source parts of $f$ and $g$ are at most linear at infinity, or they are dominated by the damping terms.

\end{abstract}

\maketitle
\section{Introduction and main result} \label{intro}
\noindent We deal with the evolution problem consisting of the
 wave equation posed in a bounded regular open subset  of
$\R^N$, supplied with a second order dynamical boundary condition of
hyperbolic type, in presence of interior and/or boundary damping
terms and sources. More precisely we consider the
initial--and--boundary value problem
\begin{equation}\label{1}
\begin{cases} u_{tt}-\Delta u+P(x,u_t)=f(x,u) \qquad &\text{in
$(0,\infty)\times\Omega$,}\\
u=0 &\text{on $(0,\infty)\times \Gamma_0$,}\\
u_{tt}+\partial_\nu u-\Delta_\Gamma u+Q(x,u_t)=g(x,u)\qquad
&\text{on
$(0,\infty)\times \Gamma_1$,}\\
u(0,x)=u_0(x),\quad u_t(0,x)=u_1(x) &
 \text{in $\overline{\Omega}$,}
\end{cases}
\end{equation}
where $\Omega$ is a  open  bounded subset
of $\R^N$  ($N\ge 2$)  with $C^1$ boundary (see \cite{grisvard}).  We denote  $\Gamma=\partial\Omega$ and we
assume $\Gamma=\Gamma_0\cup\Gamma_1$,
$\Gamma_0\cap\Gamma_1=\emptyset$,
$\Gamma_1$ being relatively open on $\Gamma$ (or equivalently  $\overline{\Gamma_0}=\Gamma_0$).  Moreover, denoting by $\sigma$ the standard Lebesgue hypersurface
measure on $\Gamma$,
we assume that $\sigma(\overline{\Gamma}_0\cap\overline{\Gamma}_1)=0$. These
properties of $\Omega$, $\Gamma_0$ and $\Gamma_1$ will be assumed,
without further comments, throughout the paper. In Section \ref{section5} we shall
restrict to  open bounded subsets with $C^2$ boundary  and to partitions such that $\overline{\Gamma_0}\cap
\overline{\Gamma_1}=\emptyset$.
Moreover $u=u(t,x)$, $t\ge 0$, $x\in\Omega$,
$\Delta=\Delta_x$ denotes the Laplace operator  with  respect to the
space variable, while $\Delta_\Gamma$ denotes the Laplace--Beltrami
operator on $\Gamma$ and $\nu$ is the outward normal to $\Omega$.
The terms $P$ and $Q$ represent nonlinear damping terms, i.e.
$P(x,v)v\ge 0$, $Q(x,v)v\ge 0$,  the cases $P\equiv 0$ and $Q\equiv
0$ being specifically allowed, while  $f$ and $g$  represent
nonlinear source, or sink, terms. The specific assumptions on them
will be introduce d  later on.

Problems with kinetic boundary conditions, that is boundary
conditions involving $u_{tt}$,  on $\Gamma$ or on a part of it,
naturally arise in several physical applications. A one dimensional
model was studied by several authors
to describe transversal small oscillations of an elastic rod with a
tip mass on one endpoint, while the other one is pinched.
See \cite{andrewskuttlershillor,conradmorgul,darmvanhorssen,guoxu,morgulraoconrad}.

 A two dimensional model introduced in
\cite{goldsteingiselle} deals with a vibrating membrane of surface
density $\mu$, subject to a tension $T$, both taken constant and
normalized here for simplicity.  If $u(t,x)$,
$x\in\Omega\subset\R^2$  denotes the vertical displacement from the
rest state, then (after a standard linear approximation) $u$
satisfies the wave equation $u_{tt}-\Delta u=0$,
$(t,x)\in\R\times\Omega$. Now suppose that a part $\Gamma_0$ of the
boundary  is pinched, while the other part $\Gamma_1$ carries a
constant linear mass density $m>0$ and  it is subject to a linear
tension $\tau$. A practical example of this situation is given by a
drumhead with a hole in the interior having  a thick border, as
common in bass drums. One linearly approximates the force exerted by
the membrane on the boundary with $-\partial_\nu u$. The boundary
condition thus reads as $mu_{tt}+\partial_\nu u-\tau
\Delta_{\Gamma_1}u=0$. In the quoted paper the case
$\Gamma_0=\emptyset$ and $\tau=0$ was studied, while here we
consider the more realistic case $\Gamma_0\not=\emptyset$ and
$\tau>0$, with $\tau$ and $m$ normalized for simplicity. We would
like to mention that this model belongs to a more general class of
models of Lagrangian type involving boundary energies, as introduced
for example in \cite{fagotinreyes}.

A three dimensional model involving kinetic dynamical boundary
conditions comes out from \cite{GGG}, where a gas undergoing small
irrotational perturbations from rest in a domain $\Omega\subset\R^3$
is considered. Normalizing the constant speed of propagation, the
velocity potential $\phi$ of the gas (i.e. $-\nabla \phi$ is the
particle velocity) satisfies the wave equation $\phi_{tt}-\Delta
\phi=0$ in $\R\times\Omega$. Each point $x\in\partial\Omega$ is
assumed to react to the excess pressure of the acoustic wave like a
resistive harmonic oscillator or spring, that is the boundary is
assumed to be locally reacting (see \cite[pp.
259--264]{morseingard}). The normal displacement $\delta$ of the
boundary into the domain then satisfies
$m\delta_{tt}+d\delta_t+k\delta+\rho\phi_t=0$, where $\rho>0$ is the
fluid density  and $m,d,k\in C(\partial\Omega)$, $m,k>0$, $d\ge 0$.
When the boundary is nonporous one has $\delta_t=\partial_{\nu}\phi$
on $\R\times\partial\Omega$, so the boundary condition reads as
$m\delta_{tt}+d\partial_\nu \phi+k\delta+\rho\phi_t=0$. In the
particular case $m=k$ and $d=\rho$  (see  \cite[Theorem~2]{GGG}) one
proves that $\phi_{|\Gamma}=\delta$, so the boundary condition reads
as  $m\phi_{tt}+d\partial_\nu \phi+k\phi+\rho\phi_t=0$, on
$\R\times\partial\Omega$.
 Now, if one consider s  the case in which the
boundary is not locally reacting, as in \cite{beale}, one has to had
a Laplace--Beltrami term  so getting an hyperbolic dynamical
boundary condition like the one in \eqref{1}.

Several papers in the literature deal with the wave equation  with
kinetic boundary conditions. This fact is even more evident  if one
takes into account that, plugging the equation in \eqref{1} into the
boundary condition, we can rewrite it as  $\Delta u +\partial_\nu
u-\Delta_\Gamma u+ Q(x,u_t)+P(x,u_t)=f(x,u)+g(x,u)$. Such a
condition is usually called a {\em generalized Wentzell boundary
condition}, at least when nonlinear perturbations are not present.
We refer to  \cite{mugnolo2011}, where abstract semigroup techniques are applied to dissipative wave equations, and to  \cite{doroninlarkinsouza,FGGGR,vazvitM3AS,xiaoliang2,xiaoliang1}.
All of them deal  either with the case $\tau=0$ or with linear
problems.

Here we shall consider this type of kinetic boundary condition in
connection with nonlinear boundary damping and source terms. These
terms have been considered by several authors, but mainly in
connection with first order dynamical boundary conditions. See
\cite{MR2674175,
MR2645989,bociuNAMA,bociulasiecka2,bociulasiecka1,CDCL,CDCM,chueshovellerlasiecka,lastat,rendiconti,global}.
The competition between interior damping and source terms is
methodologically related to the competition between boundary damping
and source and it possesses a large literature as well. See
\cite{MR2609953,georgiev,levserr,ps:private,radu1,STV,blowup}.

 Problem \eqref{1} has been recently
introduced by the author in \cite{AMS}, dealing with a preliminary
analysis of \eqref{1} in the particular the case $P=0$, $f=0$,
$Q=|u_t|^{\mu-2}u_t$, $g=|u|^{q-2}u$, $\mu>1$, $q\ge 2$. When
$\Omega$ is $C^2$, $\overline{\Gamma}_0\cap
\overline{\Gamma}_1=\emptyset$, so $\Gamma$ is disconnected, both
$Q$ and $g$ are subcritical with respect to the Sobolev embedding on
$\Gamma$,  and ${u_0}\in H^2(\Omega)$, ${u_0}_{|\Gamma_1}\in
H^2(\Gamma_1)$, $u_{1,\Omega}\in H^1(\Omega)$,
$u_{1,\Gamma_1}={u_{1,\Omega}}_{\Gamma_1}\in H^1(\Gamma_1)$, an
existence and uniqueness result is proved.  Moreover a linear
problem strongly related to \eqref{1} has also been recently studied
in \cite{graberlasiecka}, dealing with analiticity or  Gevrey
classification for the generated linear semigroup, and in \cite{Fourrier}, dealing with regularity and stability.

The aim of the present paper is to substantially generalize the
analysis made  in \cite{AMS} in several directions. At first we want
to treat in an unified framework  interior and/or internal source
and damping terms, each of which can vanish identically (the alternative being the study of several different problems).
 At second
we want to include supercritical boundary (as well as internal)
damping terms. Next we want to allow  $\Gamma$ to be
connected and just $C^1$ .  Moreover we want to
consider initial data in the natural energy space related to
\eqref{1} and thus weak solutions of it. Finally we plan to study
local Hadamard well-posedness. Several technical
problems, which were not present in \cite{AMS}, makes the analysis more involved. To best
illustrate our results we consider, in this section, the simplified
version of \eqref{1}
\begin{equation}\label{5}
\begin{cases} u_{tt}-\Delta u+\alpha(x)P_0(u_t)=f_0(u) \qquad &\text{in
$(0,\infty)\times\Omega$,}\\
u=0 &\text{on $(0,\infty)\times \Gamma_0$,}\\
u_{tt}+\partial_\nu u-\Delta_\Gamma u+\beta(x)Q_0(u_t)=g_0(u)\qquad
&\text{on
$(0,\infty)\times \Gamma_1$,}\\
u(0,x)=u_0(x),\quad u_t(0,x)=u_1(x) &
 \text{in $\overline{\Omega}$,}
\end{cases}
\end{equation}
where $\alpha\in L^\infty(\Omega)$, $\beta\in L^\infty(\Gamma_1)$,
$\alpha,\beta\ge0$,  conventionally taking $g_0\equiv 0$ when $\Gamma_1=\emptyset$,  and the following properties  are assumed:
\renewcommand{\labelenumi}{{(\Roman{enumi})}}
\begin{enumerate}
\item
$P_0$ and $Q_0$ are continuous and monotone increasing in $\R$,
$P_0(0)=Q_0(0)=0$, and there are $m,\mu >1$ such that
\begin{gather*}0<\liminf_{|v|\to \infty} \frac {|P_0(v)|}{|v|^{m-1}}  \le      \limsup_{|v|\to \infty} \frac {|P_0(v)|}{|v|^{m-1}}<\infty,\quad
\liminf_{|v|\to 0} \frac {|P_0(v)|}{|v|^{m-1}}>0,\\
0<\liminf_{|v|\to \infty} \frac {|Q_0(v)|}{|v|^{\mu-1}}  \le
\limsup_{|v|\to \infty} \frac {|Q_0(v)|}{|v|^{\mu-1}}<\infty,\quad
\liminf_{|v|\to 0} \frac {|Q_0(v)|}{|v|^{\mu-1}}>0;
\end{gather*}

\item $f_0,g_0\in C^{0,1}_{\text{loc}}(\R)$ and there are $p,q\ge 2$ such that
$|f_0'(u)|=O(|u|^{p-2})$ and  $|g_0'(u)|=O(|u|^{q-2})$ as
$|u|\to\infty$.
\end{enumerate}

Our model  nonlinearities satisfying (I--II) are given by
\begin{equation}\label{modelli}\left\{
\begin{alignedat}3 P_0(v)=&P_1(v):=a|v|^{\widetilde{m}-2}v+ |v|^{m-2}v,
 \quad&& 1<\widetilde{m}\le m,\quad &&a\ge 0,\\
Q_0(v)=&Q_1(v):=b|v|^{\widetilde{\mu}-2}v+ |v|^{\mu-2}v, \quad  &&
1<\widetilde{\mu}\le \mu, \quad &&b\ge 0,
\\
f_0(u)=&f_1(u):=\widetilde{\gamma}|u|^{\widetilde{p}-2}u+
\gamma|u|^{p-2}u+c_1,\quad &&2\le\widetilde{p}\le p,\quad
&&\widetilde{\gamma}, \gamma, c_1\in\R,
\\
g_0(u)=&g_1(u):=\widetilde{\delta}|u|^{\widetilde{q}-2}u+
\delta|u|^{q-2}u+c_2,\, \quad &&2\le\widetilde{q}\le q,\quad
&&\widetilde{\delta}, \delta, c_2\in\R.
\end{alignedat}
\right.\end{equation}
We introduce some basic notation. In the sequel we shall identify
$L^2(\Gamma_1)$ with its isometric image in $L^2(\Gamma)$, that is
\begin{equation}\label{ID}
L^2(\Gamma_1)=\{u\in L^2(\Gamma): u=0\,\,\text{a.e. on
}\,\,\Gamma_0\}.
\end{equation}
We set, for $\rho\in [1,\infty)$ and $\alpha\in L^\infty(\Omega)$,
$\beta\in L^\infty(\Gamma_1)$, $\alpha,\beta\ge 0$, the Banach
spaces
\begin{alignat*}2
&L^{2,\rho}_\alpha(\Omega)=\{u\in L^2(\Omega): \alpha^{1/\rho}u\in
L^\rho(\Omega)\}, \quad
&&\|\cdot\|_{2,\rho,\alpha}=\|\cdot\|_2+\|\alpha^{1/\rho}
\cdot\|_\rho,\\ &L^{2,\rho}_\beta(\Gamma_1)=\{u\in L^2(\Gamma_1):
\beta^{1/\rho}u\in L^\rho(\Gamma_1)\}, \quad
&&\|\cdot\|_{2,\rho,\beta}=\|\cdot\|_{2,\Gamma_1}+\|\beta^{1/\rho}
\cdot\|_{\rho,\Gamma_1},
\end{alignat*}
where
 $\|\cdot\|_\rho:=\|\cdot\|_{L^\rho(\Omega)}$ and
$\|\cdot\|_{\rho,\Gamma_1}:=\|\cdot\|_{L^\rho(\Gamma_1)}$
\begin{footnote}{it would appear simpler to set
$L^{2,\rho}_\alpha(\Omega)=L^2(\Omega)\cap
L^\rho(\Omega,\lambda_\alpha)$, but unfortunately when $\alpha$
vanishes in a set of positive measure that is wrong, since the
equivalence classes in the two intersecting spaces are different, as
it is clear in the extreme case $\alpha\equiv 0$.}\end{footnote}.

We denote by $u_{|\Gamma}$ the trace on $\Gamma$ of any $u\in
H^1(\Omega)$, and by $u_{|\Gamma_i}$ its restriction to $\Gamma_i$,
$i=0,1$. Moreover we introduce the Hilbert spaces $H^0 =
L^2(\Omega)\times L^2(\Gamma_1)$,
\begin{equation}\label{H1}
H^1 = \{(u,v)\in H^1(\Omega)\times H^1(\Gamma): v=u_{|\Gamma}, v=0
\,\,\ \text{on $\Gamma_0$}\},
\end{equation}
with the topology inherited from the products. For the sake of
simplicity we shall identify, when useful, $H^1$ with  its   isomorphic
counterpart $\{u\in H^1(\Omega): u_{|\Gamma}\in H^1(\Gamma)\cap
L^2(\Gamma_1)\}$, through the identification $(u,u_{|\Gamma})\mapsto
u$, so we shall write, without further mention, $u\in H^1$ for
functions defined on $\Omega$. Moreover we shall drop the notation
$u_{|\Gamma}$, when useful, so we shall write $\|u\|_{2,\Gamma}$,
$\int_\Gamma u$, and so on, for elements of $H^1$. We also
introduce, for $\alpha$ and $\beta$ as before and $\rho,\theta\in
[1,\infty]$, the Banach space
\begin{equation}\label{H1plus}
H^{1,\rho,\theta}_{\alpha,\beta}=H^1\cap
[L^{2,\rho}_\alpha(\Omega)\times L^{2,\theta}_\beta(\Gamma_1)],\quad
\|\cdot\|_{H^{1,\rho,\theta}_{\alpha,\beta}}=\|\cdot\|_{H^1}+\|\cdot\|_{L^{2,\rho}_\alpha(\Omega)\times
L^{2,\theta}_\beta(\Gamma_1)}.
\end{equation}
 Next, when $\Omega$ is $C^2$ and
$\rho\in [1,\infty]$, we denote
\begin{equation}\label{W2sigma}
W^{2,\rho}=[W^{2,\rho}(\Omega)\times W^{2,\rho}(\Gamma)]\cap
H^1,\qquad\text{and}\quad H^2=W^{2,2},
\end{equation}
endoweed with the norm inherited from the product.
 Finally we set $\romega$ and
$\rgamma$ to respectively be the critical exponents of the Sobolev
embeddings
\begin{footnote}{with the well--known exceptions for $\romega$ when $N=2$ and for $\rgamma$ when $N=3$. The embedding $H^1(\Gamma)\hookrightarrow
L^\rgamma(\Gamma)$ is standard in the $C^\infty$ setting, see for
example \cite[Theorem~2.6~p.32]{hebey}, and one easily sees that the
proof extends to $C^1$ manifolds without changes.}\end{footnote}
$H^1(\Omega)\hookrightarrow L^s(\Omega)$ and
$H^1(\Gamma)\hookrightarrow L^s(\Gamma)$, that is
$$\romega=
\begin{cases}
\dfrac {2N}{N-2} &\text{if $N \ge 3$,}\\ \infty &\text{if $N=2$},
\end{cases}
\qquad \rgamma=
\begin{cases}
\dfrac {2(N-1)}{N-3} &\text{if $N \ge 4$,}\\ \infty &\text{if $N=2,
3$}.
\end{cases}
$$
The first aim of the paper is to show that the problem \eqref{5} is
locally well-posed in the Hadamard sense in the phase space
$H^1\times H^0$ when $f_0$ and $g_0$ are subcritical in the sense of
semigroups.
\begin{thm}[\bf Local well--posedness in $\boldsymbol{H^1\times H^0}$] \label{theorem1}
If (I--II) hold and
\begin{equation}\label{6}2\le p\le 1+\romega / 2,\qquad  2\le q\le 1+\rgamma
/2,
\end{equation}
then the following conclusions hold.
\renewcommand{\labelenumi}{{(\roman{enumi})}}
\begin{enumerate}
\item
For any $(u_0,u_1)\in H^1\times H^0$  problem \eqref{5} has a unique
maximal weak solution $u$ in $[0,T_{\text{max}})$, that is
\begin{align}
&u=(u,u_{|\Gamma})\in
L^\infty_{\text{loc}}([0,T_{\text{max}});H^1)\cap
W^{1,\infty}_{\text{loc}}([0,T_{\text{max}});H^0), \\
\label{8} & u'=(u_t,(u_{|\Gamma})_t)\in
L^m_{\text{loc}}([0,T_{\text{max}});L^{2,m}_\alpha(\Omega))\times
L^\mu_{\text{loc}}([0,T_{\text{max}});L^{2,\mu}_\beta(\Gamma_1)),
\end{align}
which satisfies \eqref{5} in a distribution sense to be specified
later on;
\item $u$ enjoys the regularity
\begin{equation}\label{9}
u\in C([0,T_{\text{max}});H^1)\cap C^1([0,T_{\text{max}});H^0)
\end{equation}
and satisfies, for $0\le s\le t<T_{\text{max}}$, the energy identity
\begin{footnote}{$\nabla_\Gamma$
denotes the Riemannian gradient on $\Gamma$ and $|\cdot|_\Gamma$,
the norm associated to the Riemannian scalar product on the tangent
bundle of $\Gamma$. See Section \ref{section 2}.}\end{footnote}
\begin{equation*}\label{10}
\begin{split}
\frac 12 \left[\int_\Omega u_t^2(\tau)\! + \!\int_{\Gamma_1}
(u_{|\Gamma})_t^2(\tau) \!+ \!\int_\Omega |\nabla u(\tau)|^2 \!+\!
\int_{\Gamma_1} |\nabla_\Gamma  u(\tau)|_\Gamma ^2\right]_s^t\!\!
\!+ \!\int_s^t\!\!\!\int_\Omega \alpha
P_0(u_t)u_t\\+\int_s^t\int_{\Gamma_1} \beta
Q_0((u_{|\Gamma})_t))(u_{|\Gamma})_t=\int_s^t\int_{\Omega}
f_0(u)u_t+\int_s^t\int_{\Gamma_1} g_0(u)(u_{|\Gamma})_t;
\end{split}
\end{equation*}

\item
if  $T_{\text{max}}<\infty$ then
\begin{equation}\label{11} \lim_{t\to
T^-_{\text{max}}}\|u(t)\|_{H^1(\Omega)}+\|u(t)\|_{H^1(\Gamma
)}+\|u_t(t)\|_{L^2(\Omega)}+\|(u_{|\Gamma})_t(t)\|_{L^2(\Gamma_1)}=\infty;
\end{equation}

\item
if $u_{0n}\to u_0$ in $H^1$, $u_{1n}\to u_1$ in $H^0$ and we
respectively denote by $u_n\in C([0,T^n_{\text{max}});H^1)$ and
$u\in C([0,T_{\text{max}});H^1)$ the weak maximal solutions of
problem \eqref{5} corresponding to initial data $(u_{0n}, u_{1n})$
and $(u_0, u_1)$, we have $T_{\text{max}}\le\liminf_n
T^n_{\text{max}}$ and, for any $T\in (0,T_{\text{max}})$, $$u_n\to
u\quad\text{in $C([0,T];H^1)\cap C^1([0,T];H^0)$.}$$
\end{enumerate}
\end{thm}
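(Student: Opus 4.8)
The plan is to prove Theorem \ref{theorem1} in four stages, matching the four conclusions, treating the well--posedness as a fixed point problem for a suitably linearized version of \eqref{5}.

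\textbf{Step 1: The linear problem and the solution operator.}
First I would isolate the linear part, namely the abstract second order equation
\begin{equation*}
u''(t)+\mathcal{A}u(t)+B(t)=0,\qquad u(0)=u_0,\ u'(0)=u_1,
\end{equation*}
where $\mathcal{A}$ is the self--adjoint operator on $H^0$ associated with the bilinear form $((u,v),(\phi,\psi))\mapsto \int_\Omega \nabla u\cdot\nabla\phi+\int_{\Gamma_1}\langle\nabla_\Gamma v,\nabla_\Gamma\psi\rangle_\Gamma$ on $H^1$, and $B(t)\in H^0$ is a forcing term collecting the damping and source contributions. Since $\mathcal{A}$ is nonnegative and self--adjoint, it generates a $C_0$--group of the cosine--function type on $H^1\times H^0$, so that for $B\in L^1_{\text{loc}}([0,T];H^0)$ the Duhamel formula produces a unique mild solution $u\in C([0,T];H^1)\cap C^1([0,T];H^0)$ depending continuously on $(u_0,u_1,B)$. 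The only care needed is that the damping terms $\alpha P_0(u_t)$ and $\beta Q_0((u_{|\Gamma})_t)$ are genuinely nonlinear, monotone and possibly supercritical; to incorporate them I would keep them on the left side and use the theory of monotone perturbations of the linear group (Barbu--Lasiecka type arguments), obtaining, for each fixed source datum $G\in L^1_{\text{loc}}([0,T];H^0)$, a unique solution with $u'$ enjoying the extra integrability \eqref{8}, together with the energy identity of part (ii) for this truncated problem. This is essentially the construction in \cite{AMS} adapted to the present functional setting; the subcriticality hypothesis \eqref{6} is \emph{not} used here.

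\textbf{Step 2: Fixed point for the source terms.}
Next I would set up the contraction. Given $(u_0,u_1)\in H^1\times H^0$, let $R>0$ and $T>0$ be parameters to be chosen, and consider the ball
\begin{equation*}
\mathcal{C}_{R,T}=\{u\in C([0,T];H^1)\cap C^1([0,T];H^0): u(0)=u_0,\ u'(0)=u_1,\ \|u\|_{C([0,T];H^1)}+\|u'\|_{C([0,T];H^0)}\le R\}.
\end{equation*}
To $w\in\mathcal{C}_{R,T}$ associate $G(t)=(f_0(w(t)),g_0(w(t)_{|\Gamma}))$ and let $u=\Phi(w)$ be the solution furnished by Step 1 with source $G$. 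Conditions \eqref{6} say precisely that $p\le 1+\romega/2$ and $q\le 1+\rgamma/2$; by the Sobolev embeddings $H^1(\Omega)\hookrightarrow L^{\romega}(\Omega)$ and $H^1(\Gamma)\hookrightarrow L^{\rgamma}(\Gamma)$ this forces $f_0:H^1(\Omega)\to L^2(\Omega)$ and $g_0:H^1(\Gamma)\to L^2(\Gamma_1)$ to be locally Lipschitz (using (II), which controls $f_0',g_0'$ at infinity, and $C^{0,1}_{\text{loc}}$ near the origin), with Lipschitz constants depending on $R$. Hence $t\mapsto G(t)\in H^0$ is bounded on $[0,T]$ by a constant $M(R)$, and the linear estimate gives $\|\Phi(w)\|_{C([0,T];H^1)}+\|\Phi(w)'\|_{C([0,T];H^0)}\le C(\|u_0\|_{H^1}+\|u_1\|_{H^0})+C\,T\,M(R)$. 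Choosing $R=2C(\|u_0\|_{H^1}+\|u_1\|_{H^0})$ and then $T$ small, $\Phi$ maps $\mathcal{C}_{R,T}$ into itself; estimating $\Phi(w_1)-\Phi(w_2)$ by the linear continuous dependence on the forcing together with the Lipschitz bound on $f_0,g_0$ (and, for the damping, monotonicity which makes the difference of the two damping terms have the right sign), I get, after possibly shrinking $T$, that $\Phi$ is a contraction on $\mathcal{C}_{R,T}$ for a suitable metric. The Banach fixed point theorem yields a unique local weak solution on some $[0,T]$, and the regularity \eqref{9} and energy identity are inherited from Step 1. The main technical obstacle here, and the reason \eqref{6} is stated the way it is, is checking that the exponents in (II) together with \eqref{6} really do place $f_0(w)$ in $L^2$ uniformly — near $|u|\to\infty$, $|f_0(u)|\lesssim |u|^{p-1}$, and $|u|^{p-1}\in L^2$ precisely when $2(p-1)\le\romega$, i.e. $p\le 1+\romega/2$; the difference estimate needs the sharper observation that $\big||u|^{p-1}-|v|^{p-1}\big|\lesssim (|u|^{p-2}+|v|^{p-2})|u-v|$ and a Hölder split with exponents dictated again by \eqref{6}.

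\textbf{Step 3: Maximal solution and the blow--up alternative (iii).}
Then I would extend the local solution to a maximal interval $[0,T_{\text{max}})$ by the standard continuation argument: the local existence time depends only on $\|u_0\|_{H^1}+\|u_1\|_{H^0}$ through $R$, so if $T_{\text{max}}<\infty$ and the quantity in \eqref{11} stayed bounded as $t\to T_{\text{max}}^-$, one could restart the solution past $T_{\text{max}}$, a contradiction; this proves (iii). Uniqueness of the maximal solution follows from local uniqueness by the usual connectedness argument on the set where two solutions agree.

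\textbf{Step 4: Continuous dependence (iv).}
Finally, for part (iv) I would fix $T<T_{\text{max}}$, so that $u$ stays in a bounded set of $C([0,T];H^1)\cap C^1([0,T];H^0)$, hence in some $\mathcal{C}_{R_0,T}$. For $n$ large the data $(u_{0n},u_{1n})$ lie in a slightly larger ball, so the local existence time is bounded below uniformly, giving $T_{\text{max}}\le\liminf_n T^n_{\text{max}}$ after a finite covering of $[0,T]$ by small intervals. On each such small interval the map $\Phi$ depends continuously (indeed Lipschitz--continuously, by the same estimates as in Step 2) on the initial data entering through the linear Duhamel term; iterating over the covering propagates convergence $u_{0n}\to u_0$, $u_{1n}\to u_1$ to $u_n\to u$ in $C([0,T];H^1)\cap C^1([0,T];H^0)$. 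The one point requiring care is the passage of the damping terms to the limit: here I would not differentiate them but instead use the uniform bound \eqref{8} together with monotonicity to pass to the limit in the weak formulation, exactly as in the difference estimate of Step 2, which is again the crux of the argument.

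I expect the genuine difficulty to be concentrated in Step 2 — the interplay between the possibly supercritical, merely monotone damping (which cannot be treated perturbatively in $H^0$ but must be kept implicit) and the subcritical sources (which must be handled by a fixed point), reconciled through the energy identity and a difference estimate that simultaneously uses Lipschitz bounds for $f_0,g_0$ and sign/monotonicity for $P_0,Q_0$.
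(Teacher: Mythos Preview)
Your outline is correct in spirit and would produce a valid proof, but it takes a genuinely different route from the paper's. The paper does \emph{not} run a Banach fixed-point argument on the source. Instead it rewrites \eqref{5} as a first-order problem $U'+\cal{A}U+\cal{F}(U)=0$ in the phase space $\cal{H}=H^1\times H^0$, proves directly that $\cal{A}+I$ is maximal monotone (this is where the weighted spaces $L^{2,m}_\alpha\times L^{2,\mu}_\beta$ and the coercivity (PQ3) enter, via the surjectivity step $\text{Rg}(\cal{A}+2I)=\cal{H}$), and then invokes an abstract theorem (Chueshov--Eller--Lasiecka, completed in Appendix~\ref{appendixA}) on locally Lipschitz perturbations of maximal monotone operators. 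The source appears only through the observation that $F=(\widehat{f_0},\widehat{g_0}):H^1\to H^0$ is locally Lipschitz under \eqref{6}. Existence, the blow-up alternative, and continuous dependence then come for free from the abstract machinery; the additional work is showing that the resulting \emph{generalized} solutions are in fact \emph{weak} solutions satisfying the energy identity (Lemmas~\ref{lemma4.1bis}--\ref{lemma4.2}, using the linear regularity Lemma~\ref{lemma3.2} and a Minty-type monotonicity passage to the limit for the damping).

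Your approach instead layers a contraction on top of a ``linear plus monotone damping'' solver. This is legitimate, but note that your Step~1 is exactly the hard part: solving $u''+Au+B(u')=G$ for general $G\in L^1(0,T;H^0)$ with possibly supercritical, merely monotone $B$ already needs the maximal-monotone or Galerkin machinery you defer to. Once that is in hand, the paper's route is more economical --- it applies the monotone theory once, with the locally Lipschitz $F(u)$ built in, rather than first solving with arbitrary forcing and then contracting. One concrete point to watch in your Step~2: since $\langle Au,u\rangle_{H^1}=\|u\|_{H^1}^2-\|u\|_{H^0}^2$ (cf.\ the proof of Lemma~\ref{lemma4.1bis}), the energy does not directly control $\|u\|_{H^1}$; you need to absorb the $\|u\|_{H^0}^2$ term via $\|u(t)\|_{H^0}\le \|u_0\|_{H^0}+\int_0^t\|u'\|_{H^0}$ before closing the self-map and contraction estimates. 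Otherwise your sketch is sound.
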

\begin{figure}\label{Fig1}
\includegraphics[width=13cm]{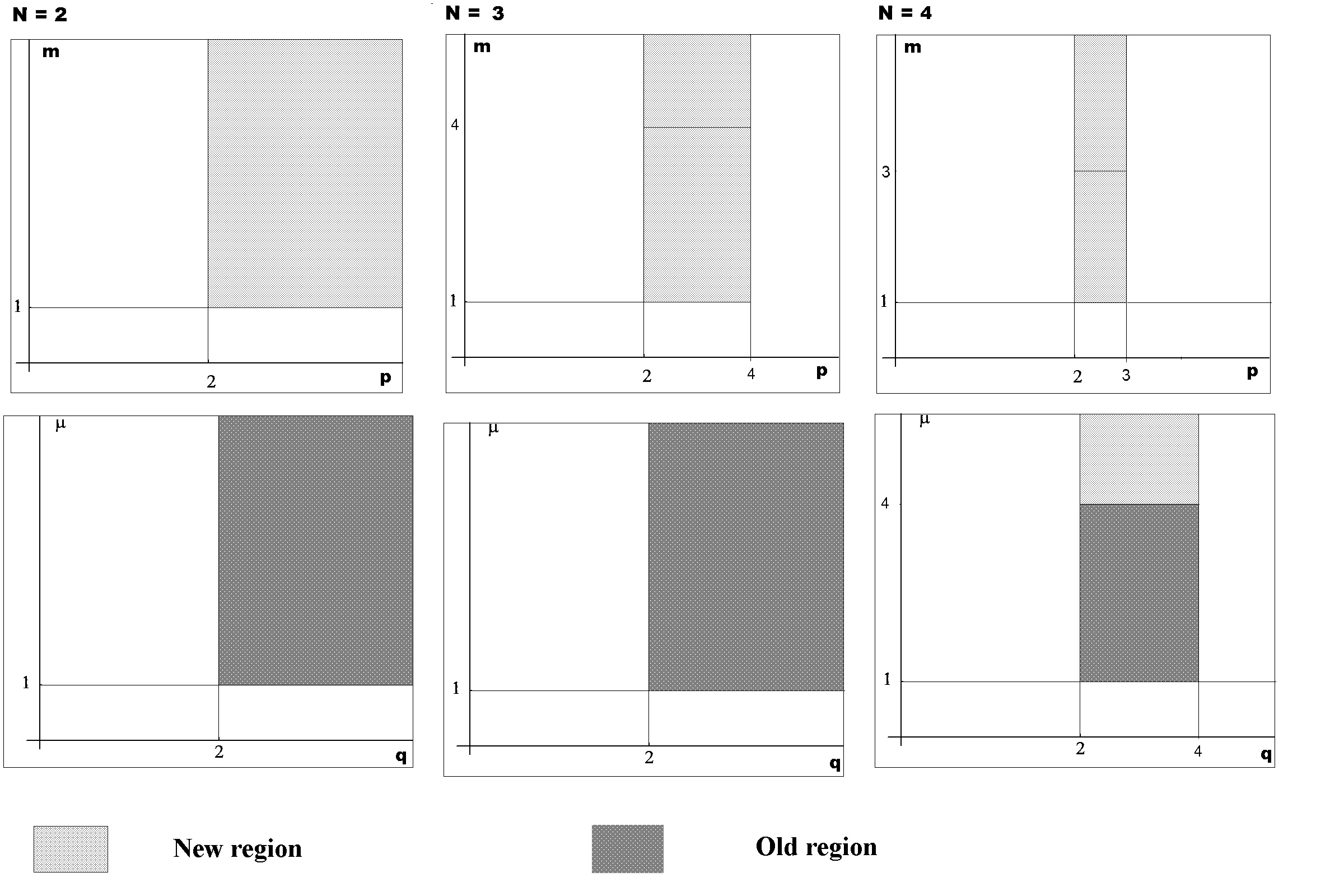}
\caption{The old region is the parameter range treated in
\cite{AMS}, while the new region is the range covered only by
Theorem~\ref{theorem1}.}
\end{figure}
\begin{rem} Figure~1 illustrates the parameter ranges covered by
Theorem~\ref{theorem1} and by \cite[Theorem~1]{AMS} in dimensions
$N=2,3,4$. Theorem~\ref{theorem1} is optimal when $N=2$, while when
$N=3$ assumption \eqref{6} imposes a severe restriction of the
growth of the internal source/sink. When $N=4$ the restriction concerns
both sources/sinks  and is even more severe. To relax assumption \eqref{6}
requires a specific analysis which is outside the aim of the present
paper. On the other hand there is no restriction on the growth of
the damping terms, improving the analysis in \cite{AMS}.
\end{rem}
As a simple byproduct of the arguments used to prove
Theorem~\ref{theorem1}  we  get a well--posedness result in a
stronger (when $m>\romega$ or $\mu>\rgamma$) topology provided $P_0$
and $Q_0$ satisfy the further assumption, trivially satisfied by $P_1$, $Q_1$ in \eqref{modelli},
\begin{enumerate}
\setcounter{enumi}2
\item $\liminf_{|v|\to \infty} \frac
{|P'_0(v)|}{|v|^{m-2}}>0$ if $m>\romega$, $\liminf_{|v|\to \infty} \frac {|Q'_0(v)|}{|v|^{\mu-2}}>0$ if $\mu>\rgamma$.
\end{enumerate}
\begin{thm}[\bf Local Hadamard well--posedness in $\boldsymbol{H^{1,\rho,\theta}_{\alpha,\beta}\times H^0}$] \label{theorem2}
If (I--III) and  \eqref{6} hold then, for any couple of exponents
\begin{equation}\label{special}
(\rho,\theta)\in [\romega,\max\{\romega,m\}]\times
[\rgamma,\max\{\rgamma,\mu \}]\end{equation}
 and any $(u_0,u_1)\in H^{1,\rho,\theta}_{\alpha,\beta}\times H^0$, the weak solution $u$ of
 problem \eqref{5} enjoys the further regularity
 \begin{equation}\label{9BIS}
u\in C([0,T_{\text{max}});H^{1,\rho,\theta}_{\alpha,\beta}).
\end{equation}
Moreover, if $u_{0n}\to u_0$ in $H^{1,\rho,\theta}_{\alpha,\beta}$,
$u_{1n}\to u_1$ in $H^0$, and $u_n$, $u$ are as in
Theorem~\ref{theorem1},  then
$$u_n\to u\text{ in $C([0,T];H^{1,\rho,\theta}_{\alpha,\beta})$ for any $T\in (0,T_{\text{max}})$.}$$
\end{thm}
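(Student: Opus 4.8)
The plan is to obtain both assertions directly from Theorem~\ref{theorem1}, the single new ingredient being the weighted integrability of the velocity already recorded in~\eqref{8}. I argue on the $\Omega$--component, the $\Gamma_1$--one being identical after replacing $(m,\rho,\romega,\alpha,P_0,\Omega)$ by $(\mu,\theta,\rgamma,\beta,Q_0,\Gamma_1)$; moreover I may assume $m>\romega$, since otherwise~\eqref{special} forces $\rho=\romega$ and, as $\alpha\in L^\infty(\Omega)$, the space $H^{1,\romega,\theta}_{\alpha,\beta}$ reduces on its $\Omega$--component to $H^1$ up to an equivalent norm, so there is nothing beyond Theorem~\ref{theorem1}(ii),(iv) to prove there.

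First I would establish~\eqref{9BIS}. By~\eqref{8}, $\alpha^{1/m}u_t\in L^m_{\text{loc}}([0,T_{\text{max}});L^m(\Omega))$; since $\romega\le\rho\le m$ by~\eqref{special} and $\Omega$ is bounded, the factorization $\alpha^{1/\rho}u_t=\alpha^{1/\rho-1/m}\alpha^{1/m}u_t$ with $\alpha^{1/\rho-1/m}\in L^\infty(\Omega)$, together with $L^m(\Omega)\hookrightarrow L^\rho(\Omega)$, yields the pointwise bound $\|\alpha^{1/\rho}u_t(t)\|_\rho\le C\|\alpha^{1/m}u_t(t)\|_m$ and hence $\alpha^{1/\rho}u_t\in L^1_{\text{loc}}([0,T_{\text{max}});L^\rho(\Omega))$. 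Since $u\in C^1([0,T_{\text{max}});H^0)$ by Theorem~\ref{theorem1}(ii), one has $u(t)=u_0+\int_0^tu_t(s)\,ds$ in $L^2(\Omega)$; multiplying by $\alpha^{1/\rho}\in L^\infty(\Omega)$, using $\alpha^{1/\rho}u_0\in L^\rho(\Omega)$ (which is built into the hypothesis $(u_0,u_1)\in H^{1,\rho,\theta}_{\alpha,\beta}\times H^0$) and identifying the $L^\rho$-- and $L^2$--valued Bochner integrals through $L^\rho(\Omega)\hookrightarrow L^2(\Omega)$, I would get
\[
\alpha^{1/\rho}u(t)=\alpha^{1/\rho}u_0+\int_0^t\alpha^{1/\rho}u_t(s)\,ds\qquad\text{in }L^\rho(\Omega),\quad 0\le t<T_{\text{max}},
\]
whose right--hand side is absolutely continuous, hence continuous, as an $L^\rho(\Omega)$--valued function of $t$. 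Thus $\alpha^{1/\rho}u\in C([0,T_{\text{max}});L^\rho(\Omega))$ which, combined with $u\in C([0,T_{\text{max}});H^1)$ and the $\Gamma_1$--analogue, is exactly~\eqref{9BIS} in the sense of~\eqref{H1plus}.

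For continuous dependence, let $u_{0n}\to u_0$ in $H^{1,\rho,\theta}_{\alpha,\beta}$, $u_{1n}\to u_1$ in $H^0$, and fix $T\in(0,T_{\text{max}})$, so that $u_n\to u$ in $C([0,T];H^1)\cap C^1([0,T];H^0)$ by Theorem~\ref{theorem1}(iv). Subtracting the integral representations of the previous paragraph for $u_n$ and for $u$, taking $L^\rho$--norms, the supremum over $[0,T]$, and Hölder in time together with the pointwise bound above,
\[
\sup_{[0,T]}\|\alpha^{1/\rho}(u_n-u)(t)\|_\rho\le\|\alpha^{1/\rho}(u_{0n}-u_0)\|_\rho+C\,T^{1-1/m}\|\alpha^{1/m}\big((u_n)_t-u_t\big)\|_{L^m((0,T)\times\Omega)}.
\]
The first term tends to $0$, so it remains to prove $\alpha^{1/m}(u_n)_t\to\alpha^{1/m}u_t$ in $L^m((0,T)\times\Omega)$ (and the $\Gamma_1$--analogue), which I would do as in the proof of Theorem~\ref{theorem1}(iv): along a subsequence $(u_n)_t\to u_t$ a.e.\ in $(0,T)\times\Omega$ (as $u_n\to u$ in $C([0,T];L^2(\Omega))\hookrightarrow L^2((0,T)\times\Omega)$); passing to the limit in the energy identity of Theorem~\ref{theorem1}(ii) for $u_n$ on $[0,T]$ --- the $H^1$-- and kinetic energies converging by the $C([0,T];H^1)\cap C^1([0,T];H^0)$ convergence, and the source integrals converging because~\eqref{6} gives $f_0(u_n)\to f_0(u)$ in $C([0,T];L^2(\Omega))$ and $g_0(u_n)\to g_0(u)$ in $C([0,T];L^2(\Gamma_1))$ --- one obtains that the sum of the two nonnegative damping integrals for $u_n$ converges to the corresponding sum for $u$; Fatou's lemma applied to each summand forces the interior and boundary integrals to converge separately, whence (nonnegativity, a.e.\ convergence and convergence of integrals) $\alpha P_0((u_n)_t)(u_n)_t\to\alpha P_0(u_t)u_t$ in $L^1((0,T)\times\Omega)$. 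By the lower bound $P_0(v)v\ge c|v|^m-C$ provided by~(I) the sequence $\alpha|(u_n)_t|^m$ is then uniformly integrable and converges a.e.\ to $\alpha|u_t|^m$, so $\|\alpha^{1/m}(u_n)_t\|_{L^m}\to\|\alpha^{1/m}u_t\|_{L^m}$ by Vitali; a.e.\ convergence together with convergence of the norms in the uniformly convex space $L^m((0,T)\times\Omega)$ upgrades this to strong convergence, and a standard subsequence argument removes the restriction to subsequences. The $\Gamma_1$--terms are handled identically with $\mu$, $\rgamma$, $Q_0$ in place of $m$, $\romega$, $P_0$, and altogether $u_n\to u$ in $C([0,T];H^{1,\rho,\theta}_{\alpha,\beta})$.

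I expect the genuine obstacle to be precisely the step just sketched, namely extracting the strong $L^m$--convergence of $\alpha^{1/m}(u_n)_t$ (and the $L^\mu$--convergence on $\Gamma_1$) from the convergence of the global damping energies. It is exactly in the supercritical range $m>\romega$ (resp.\ $\mu>\rgamma$) --- the only range in which Theorem~\ref{theorem2} says more than Theorem~\ref{theorem1} --- that the damping term escapes the $H^1$--duality, and it is here that I would expect the coercivity assumption~(III) to be indispensable: it is what lets one identify the weak limit of $P_0((u_n)_t)$ with $P_0(u_t)$ and secure the uniform weighted bounds that make the passage to the limit in the energy identity legitimate. All the remaining manipulations, including the recovery of~\eqref{9BIS}, are soft.
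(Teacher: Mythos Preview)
Your argument is correct, and for \eqref{9BIS} it coincides with the paper's (this is exactly the content of \eqref{4.11bis} in Remark~\ref{remark4.4}: integrate $u'\in L^1_{\text{loc}}([0,T_{\max});H^{1,\rho,\theta}_{\alpha,\beta})$). For the continuous dependence, however, your route is genuinely different from the paper's. The paper exploits assumption~(III) directly: from the lower bound on $P_0'$ one derives the strong-monotonicity inequality \eqref{4.23bis}, namely $(P_0(w)-P_0(v))(w-v)\ge c\,|w-v|^m-C\,|w-v|^2$ (and its boundary analogue \eqref{4.24bis}), and then inserts this into the energy identity~\eqref{4.14} for the \emph{difference} $w=u_n-u$; since all the remaining terms in \eqref{4.14} are already controlled by the $C([0,T];H^1)\cap C^1([0,T];H^0)$ convergence from Theorem~\ref{theorem1}(iv), this immediately yields $\|\alpha^{1/m}((u_n)_t-u_t)\|_{L^m((0,T)\times\Omega)}\to 0$ in one stroke, with no subsequences and no compactness.

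Your approach instead passes to the limit in the energy identity for $u_n$ \emph{itself}, then uses Fatou/Scheff\'e on the nonnegative damping integrands and Vitali to upgrade to strong $L^m$ convergence. This works, but --- contrary to your closing remark --- it never actually invokes (III): the only pointwise bound you use is $P_0(v)v\ge c|v|^m-C$, which follows from~(I) alone. So your argument in fact shows that Theorem~\ref{theorem2} holds without assuming~(III), which is stronger than what the paper claims. The trade-off is that the paper's strong-monotonicity route is cleaner and quantitative (it gives an explicit estimate of the $L^m$-distance in terms of the $H^1\times H^0$-distance of the data), while yours is softer but needs the subsequence/uniform-integrability machinery. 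Your final paragraph misidentifies where the difficulty lies: the ``identification of the weak limit of $P_0((u_n)_t)$'' plays no role in either proof.
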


Theorems~\ref{theorem1} and \ref{theorem2} can be easily extended to
more general second order uniformly elliptic linear operators, both
in $\Omega$ and $\Gamma$,  under suitable regularity assumptions on
the coefficients.  Here we prefer to deal with the Laplace and
Laplace--Beltrami operators for the sake of clearness. The proof
will rely on nonlinear semigroup theory (see \cite{barbu2010}
and \cite{showalter}), and in particular on \cite[Theorem~7.2,
Appendix]{chueshovellerlasiecka}, and on an easy consequence of
the approach used there,  which is outlined, for the reader's
convenience, in Appendix \ref{appendixA}.

The main difficulty faced in this approach consists in setting up,
and working with, the right pivot space which allows to get weak
solutions, i.e. solutions verifying the energy identity, when both
$\alpha$ and $\beta$ are allowed to vanish, identically or in  a  subset
of positive measure. Other approaches are possible, as for example
the use of a contraction argument, but  our approach  has
the advantage to set up a working framework useful for further
studies of the problem.

The first outcome of it is given by the
following regularity result, which proof constitutes the second aim
of the paper. Before stating it we introduce the exponents
$l=l(m,\mu, N)$ and $\lambda=\lambda(m,\mu,N)$ by
\begin{equation}\label{ls}
l=\min\left\{2,\tfrac{\max\{m,\romega\}}{m-1},\tfrac{\max\{\mu,\rgamma\}}{\mu-1}\right\},\,
\lambda=\begin{cases} \infty &\text{if $m\le \romega, \mu\le \rgamma$,}\\
\min\{m',\mu'\}&\text{otherwise.}
\end{cases}\end{equation}
\begin{thm}[\bf Regularity I] \label{theorem3}
Suppose that (I-II) and \eqref{6} hold true, that $\Omega$ is $C^2$
and $\overline{\Gamma}_0\cap\overline{\Gamma}_1=\emptyset$. Then, if
\begin{gather}\label{14}
(u_0,u_1)\in W^{2,l}\times H^{1,m,\mu}_{\alpha,\beta},\\
\label{15}
 -\Delta u_0+\alpha P_0(u_1)\in
 L^2(\Omega),\quad\partial_\nu
{u_0}_{|\Gamma_1}-\Delta_{\Gamma}{u_0}_{|\Gamma_1}+\beta
Q_0({u_1}_{|\Gamma})\in L^2(\Gamma_1),
\end{gather}
 then  the weak maximal solution $u$ of problem \eqref{5} found in
Theorem~\ref{theorem1} enjoys the further regularity
\begin{gather}\label{16}
u\in L^\lambda([0,T_{\text{max}}); W^{2,l})\cap
C^1_w([0,T_{\text{max}});H^1)\cap W^{2,\infty}_{\text{loc}}([0,T_{\text{max}});H^0),\\
u'\in C_w([0,T_{\text{max}});H^{1,m,\mu}_{\alpha,\beta}).
\end{gather}
Moreover, $u_{tt}-\Delta u+\alpha P_0(u_t)=f_0(u)$ in $L^l(\Omega)$,
a.e. in $(0,T_{\text{max}})$, and $(u_{|\Gamma})_{tt}+\partial_\nu
u-\Delta_\Gamma u_{|\Gamma}+\beta
Q_0((u_{|\Gamma})_t)=g_0(u_{|\Gamma})$ in $L^l(\Gamma_1)$, a.e. in
$(0,T_{\text{max}})$.

If $(u_0,u_1)\in [W^{2,l}\cap H^{1,m,\mu}_{\alpha,\beta}]\times
H^{1,m,\mu}_{\alpha,\beta}$ and \eqref{15} holds,  then  \eqref{16} becomes
$$u\in L^\lambda([0,T_{\text{max}}); W^{2,l}\cap H^{1,m,\mu}_{\alpha,\beta} )\cap
C^1_w([0,T_{\text{max}});H^{1,m,\mu}_{\alpha,\beta})\cap
W^{2,\infty}_{\text{loc}}([0,T_{\text{max}});H^0).$$
\end{thm}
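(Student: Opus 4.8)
The plan is to bootstrap from the weak solution produced by Theorem~\ref{theorem1} using the linear structure of the problem, exploiting the extra regularity of the data \eqref{14}--\eqref{15} and elliptic regularity on the disconnected boundary. I would work on a fixed interval $[0,T]$ with $T<T_{\text{max}}$, where by Theorem~\ref{theorem1} we already control $u$ in $L^\infty(0,T;H^1)\cap W^{1,\infty}(0,T;H^0)$ and $u'$ in the damping spaces \eqref{8}. The first step is to read the pair of equations in \eqref{5}, at a.e.\ fixed time $t$, as a stationary elliptic system for $u(t)$: namely $-\Delta u(t)=f_0(u(t))-\alpha P_0(u_t(t))-u_{tt}(t)$ in $\Omega$ together with the Wentzell-type boundary condition $\partial_\nu u(t)-\Delta_\Gamma u_{|\Gamma}(t)=g_0(u_{|\Gamma}(t))-\beta Q_0((u_{|\Gamma})_t(t))-(u_{|\Gamma})_{tt}(t)$ on $\Gamma_1$, with $u(t)=0$ on $\Gamma_0$. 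Because $\Omega$ is $C^2$ and $\overline{\Gamma_0}\cap\overline{\Gamma_1}=\emptyset$, $\Gamma$ is a disjoint union of closed components and the boundary operator decouples; standard elliptic regularity for this Wentzell problem then upgrades $u(t)$ to $W^{2,l}$ provided the right-hand sides lie in $L^l(\Omega)\times L^l(\Gamma_1)$. The choice of $l$ in \eqref{ls} is exactly dictated by the worst integrability available: $f_0(u)\in L^{\romega/(p-1)}\supseteq L^2$ by the subcriticality \eqref{6}, $\alpha P_0(u_t)\in L^{m'}$-type spaces from \eqref{8} via the growth condition (I) giving $\alpha P_0(u_t)\in L^{\max\{m,\romega\}/(m-1)}$, and symmetrically on $\Gamma_1$, while $u_{tt}$ requires the time-differentiated estimate below.

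The second step is the time-regularity: I would differentiate the abstract evolution equation (the nonlinear semigroup formulation from \cite[Theorem~7.2]{chueshovellerlasiecka} and Appendix~\ref{appendixA}) in time, or equivalently run the energy method on difference quotients $\frac{u(t+h)-u(t)}{h}$. The monotonicity of $P_0,Q_0$ makes the damping terms have the right sign when tested against $u_{tt}$, the source terms $f_0,g_0$ are locally Lipschitz by (II) and hence controlled by the already-known $H^1$ bound via \eqref{6}, and the hypothesis \eqref{15} guarantees $u_{tt}(0)\in H^0$ by reading the equation at $t=0$. This yields $u\in W^{2,\infty}_{\text{loc}}(0,T_{\text{max}};H^0)$ and, together with the first step applied pointwise in $t$, a bound for $u(t)$ in $W^{2,l}$ uniform (or $L^\lambda$-in-time) on $[0,T]$; the exponent $\lambda$ in \eqref{ls} appears because when $m>\romega$ or $\mu>\rgamma$ the damping terms are only in $L^{m'}$ or $L^{\mu'}$ in time, capping the time-integrability of $\|u(t)\|_{W^{2,l}}$. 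The weak continuity statements $C^1_w(\,\cdot\,;H^1)$ and $u'\in C_w(\,\cdot\,;H^{1,m,\mu}_{\alpha,\beta})$ then follow from the uniform bounds plus density and the standard weak-continuity lemma (an element of $L^\infty$ into a reflexive space with values having a strongly continuous representative in a larger space is weakly continuous). The statements that the two PDEs hold in $L^l(\Omega)$ and $L^l(\Gamma_1)$ a.e.\ in time are then just a restatement of the elliptic identity from step one.

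For the final assertion, under the stronger data hypothesis $(u_0,u_1)\in[W^{2,l}\cap H^{1,m,\mu}_{\alpha,\beta}]\times H^{1,m,\mu}_{\alpha,\beta}$, I would upgrade Theorem~\ref{theorem2} rather than Theorem~\ref{theorem1}: taking $(\rho,\theta)=(\max\{\romega,m\},\max\{\rgamma,\mu\})$ (with assumption (III) available, though here (III) is not assumed, so instead one uses that the initial data already live in the damping space and the propagation of $H^{1,m,\mu}_{\alpha,\beta}$-regularity is a byproduct of the difference-quotient energy estimate with the damping terms kept on the good side). Concretely, the same difference-quotient computation, now also testing against the $P_0$- and $Q_0$-weighted terms, propagates membership of $u'$ in $C_w(\,\cdot\,;H^{1,m,\mu}_{\alpha,\beta})$ and hence, via the elliptic step, places $u$ in $L^\lambda(0,T_{\text{max}};W^{2,l}\cap H^{1,m,\mu}_{\alpha,\beta})$ with the stated continuity.

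The main obstacle I anticipate is the coupling between the two regularity mechanisms: the elliptic step needs $u_{tt}\in L^l$, but the time-derivative estimate needs the equations to make sense in a space where the monotone damping terms can be paired with $u_{tt}$, and the pivot/duality structure that makes this work (the weighted spaces $L^{2,\rho}_\alpha$, $L^{2,\theta}_\beta$ from the introduction, designed precisely for the case $\alpha$, $\beta$ vanishing on sets of positive measure) has to be handled carefully so that no spurious regularity of $\alpha^{1/m}u_t$ is assumed where $\alpha=0$. In particular, justifying the differentiation in time rigorously — as opposed to formally — requires either a Galerkin/difference-quotient scheme compatible with the nonlinear semigroup from Appendix~\ref{appendixA}, or an approximation of $(P_0,Q_0)$ by Lipschitz monotone truncations with uniform estimates, and then passing to the limit using the already-established weak convergences; reconciling the $L^\lambda$-in-time bound on the second-order norm with the a.e.-in-time validity of the two PDEs is the delicate bookkeeping point.
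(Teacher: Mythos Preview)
Your overall strategy---first extract time regularity of $u''$, then run elliptic regularity pointwise in $t$ to obtain the $W^{2,l}$ spatial bound, and finally invoke Strauss's weak-continuity lemma---is correct and matches the second half of the paper's argument. The identification of $l$ and $\lambda$ via the integrability of the damping terms is also right.

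The paper organizes the time-regularity step differently, and more economically. Rather than running a difference-quotient energy estimate by hand, it proves a domain-characterization lemma (Lemma~\ref{lemma5.1}): the conditions \eqref{14}--\eqref{15} are \emph{exactly} the statement that $(u_0,u_1)\in D(\cal{A})$, the domain of the maximal monotone generator set up in \eqref{4.19}--\eqref{4.20}. Once this is established, the regularity $u\in W^{1,\infty}_{\text{loc}}(H^1)\cap W^{2,\infty}_{\text{loc}}(H^0)$ and the pointwise membership $(u(t),u'(t))\in D(\cal A)$ come for free from the strong-solution part of the abstract semigroup result (Theorem~\ref{proposition4.1} / Appendix~\ref{appendixA}), with no need to revisit difference quotients or monotonicity arguments. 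Your approach amounts to reproving that piece of semigroup theory in situ; it works, but the domain characterization is the cleaner route and is where the real content lies---in particular, the proof of Lemma~\ref{lemma5.1} is where the elliptic regularity for the Wentzell problem (via Lemma~\ref{propositionA1} on $-\Delta_\Gamma+I$) and the choice of $l$ first enter.

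For the final assertion you are overcomplicating things: neither Theorem~\ref{theorem2} nor assumption (III) is needed. Once $u'\in C_w([0,T_{\text{max}});X)$ with $X=H^{1,m,\mu}_{\alpha,\beta}$ is established (this is \eqref{6.5}, already part of the main conclusion), the extra hypothesis $u_0\in X$ lets you simply integrate in time to get $u\in C^1_w([0,T_{\text{max}});X)$; intersecting with $u\in L^\lambda(W^{2,l})$ gives the stated upgrade. This is the content of Remark~\ref{remark5.2}.
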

The regularity \eqref{15} is improved, depending on the growth
of $P_0$, $Q_0$, as follows.
\begin{thm}[\bf Regularity II]\label{theorem4}
Suppose that (I-II) and \eqref{6} hold true, that $\Omega$ is $C^2$
and $\overline{\Gamma}_0\cap\overline{\Gamma}_1=\emptyset$. Moreover
suppose that
\begin{equation}\label{17}
1<m\le \romega,\qquad \text{and}\quad 1<\mu\le
\rgamma.\end{equation} Then for initial data satisfying
\eqref{14}--\eqref{15} the weak maximal solution $u$ of problem
\eqref{5} found in Theorem~\ref{theorem1} enjoys the regularity
\begin{equation}\label{18}
u\in C_w([0,T_{\text{max}}); W^{2,l})\cap
C^1_w([0,T_{\text{max}});H^1)\cap C^2_w([0,T_{\text{max}});H^0).
\end{equation}
In particular, when
\begin{equation}\label{19}
1<m\le 1+\romega/2,\qquad \text{and}\quad 1<\mu\le
1+\rgamma/2,\end{equation} for initial data $(u_0,u_1)\in H^2\times
H^1$ we have the optimal regularity
\begin{equation}\label{20}
u\in C_w([0,T_{\text{max}}); H^2)\cap
C^1_w([0,T_{\text{max}});H^1)\cap C^2_w([0,T_{\text{max}});H^0).
\end{equation}
\end{thm}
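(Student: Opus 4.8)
\textbf{Proof strategy for Theorem~\ref{theorem4} (Regularity II).}
The plan is to bootstrap from Theorem~\ref{theorem3}, whose conclusion already gives $u\in L^\lambda([0,T_{\text{max}});W^{2,l})$, $u'\in C_w([0,T_{\text{max}});H^{1,m,\mu}_{\alpha,\beta})$ and the strong form of the two evolution equations a.e.\ in time. Under the subcriticality hypothesis \eqref{17}, namely $m\le\romega$ and $\mu\le\rgamma$, inspection of \eqref{ls} shows that $\lambda=\infty$ and $l=2$, so Theorem~\ref{theorem3} already upgrades to $u\in L^\infty_{\text{loc}}([0,T_{\text{max}});H^2)\cap C^1_w([0,T_{\text{max}});H^1)\cap W^{2,\infty}_{\text{loc}}([0,T_{\text{max}});H^0)$, with $u_{tt}-\Delta u+\alpha P_0(u_t)=f_0(u)$ in $L^2(\Omega)$ and the analogous boundary identity in $L^2(\Gamma_1)$, a.e.\ in time. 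Thus the only genuinely new content of \eqref{18} over Theorem~\ref{theorem3} is the promotion of the three "$L^\infty_{\text{loc}}$ / $W^{2,\infty}_{\text{loc}}$" statements to \emph{weak continuity in time}, i.e.\ $C_w$ in place of $L^\infty_{\text{loc}}$ and $W^{1,\infty}_{\text{loc}}$, respectively, at the levels $H^2$, $H^1$ and $H^0$.

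First I would establish weak continuity at the lowest level: $u_{tt}\in C_w([0,T_{\text{max}});H^0)$. Since $u\in W^{2,\infty}_{\text{loc}}([0,T_{\text{max}});H^0)$ we have $u_{tt}\in L^\infty_{\text{loc}}$ with values in the Hilbert space $H^0$, so along any sequence $t_n\to t_0$ the vectors $u_{tt}(t_n)$ are bounded and have weakly convergent subsequences; the limit is identified by testing against smooth data and using that $u_t\in C([0,T_{\text{max}});H^0)$ (from Theorem~\ref{theorem1}(ii)) together with the distributional definition of $u_{tt}$, so the full net converges weakly and the limit is independent of the subsequence. This is the standard "bounded $+$ dense set of test functions on which the pairing is continuous $\Rightarrow$ $C_w$" argument (Strauss's lemma). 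The same template, applied at the $H^1$ level using $u\in C([0,T_{\text{max}});H^1)$ and $u'\in C_w([0,T_{\text{max}});H^{1,m,\mu}_{\alpha,\beta})\hookrightarrow C_w([0,T_{\text{max}});H^1)$ already proven in Theorem~\ref{theorem3}, together with boundedness of $u$ in $H^2$, yields $u\in C_w([0,T_{\text{max}});H^2)$ once one knows the $H^2$-norm is bounded near each $t_0$ and the map is weakly continuous into the larger space $H^1$. Concretely: rewrite the strong equations as the elliptic system $-\Delta u=f_0(u)-\alpha P_0(u_t)-u_{tt}$ in $\Omega$ and $\partial_\nu u-\Delta_\Gamma u_{|\Gamma}=g_0(u_{|\Gamma})-\beta Q_0((u_{|\Gamma})_t)-(u_{|\Gamma})_{tt}$ on $\Gamma_1$; the right-hand sides lie in $H^0$ with norms continuous from the left and weakly continuous in time (using $u\in C([0,T_{\text{max}});H^1)$ for the superlinear terms $f_0(u),g_0(u)$ via the subcritical embeddings of \eqref{6}, $u'\in C_w$ into $H^{1,m,\mu}_{\alpha,\beta}$ for the damping terms $\alpha P_0(u_t),\beta Q_0(u_{t|\Gamma})$, and the previous paragraph for $u_{tt}$), and $C^2$-elliptic regularity for this Wentzell-type boundary value problem — exactly the solvability established in the construction of Theorem~\ref{theorem3} — gives $\|u(t)\|_{H^2}\le C(\|\text{RHS}(t)\|_{H^0}+\|u(t)\|_{H^1})$ with $C$ locally bounded, plus continuous dependence of the solution operator. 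Combining the uniform $H^2$-bound with weak convergence in $H^1$ and the Strauss lemma upgrades this to $u\in C_w([0,T_{\text{max}});H^2)$, and feeding this back shows the two strong equations hold with all terms in $C_w([0,T_{\text{max}});H^0)$.

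For the last assertion, under \eqref{19} we have additionally $p\le 1+\romega/2$ and $q\le 1+\rgamma/2$ automatically from \eqref{6}, and $m\le 1+\romega/2\le\romega$, $\mu\le\rgamma$, so \eqref{17} holds and the exponents in \eqref{14}--\eqref{15} collapse: $W^{2,l}=W^{2,2}=H^2$ since $l=2$, and for $(u_0,u_1)\in H^2\times H^1$ one checks that the compatibility conditions \eqref{15} are automatic because $-\Delta u_0\in L^2(\Omega)$, $\alpha P_0(u_1)\in L^2(\Omega)$ (here $\alpha\in L^\infty$ and $|P_0(u_1)|\lesssim |u_1|^{m-1}$ with $u_1\in H^1(\Omega)\hookrightarrow L^{\romega}(\Omega)$, and $m-1\le\romega/2$ forces $P_0(u_1)\in L^2$), and symmetrically on $\Gamma_1$ with $u_1\in H^1(\Gamma_1)\hookrightarrow L^{\rgamma}(\Gamma_1)$ and $\mu-1\le\rgamma/2$; moreover $H^{1,m,\mu}_{\alpha,\beta}=H^1$ in this range because $H^1(\Omega)\hookrightarrow L^m(\Omega)$ and $H^1(\Gamma_1)\hookrightarrow L^\mu(\Gamma_1)$. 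Hence \eqref{14}--\eqref{15} reduce precisely to $(u_0,u_1)\in H^2\times H^1$, \eqref{18} becomes \eqref{20}, and optimality is clear since no further regularity of the data is assumed.

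\textbf{Main obstacle.} The delicate point is the $C^2$-elliptic (Wentzell) regularity estimate $\|u(t)\|_{H^2}\lesssim\|\text{RHS}(t)\|_{H^0}+\|u(t)\|_{H^1}$ \emph{uniformly in $t$} together with the identification of weak limits — one must be careful that the superlinear terms $f_0(u),g_0(u)$ and the damping terms $\alpha P_0(u_t),\beta Q_0((u_{|\Gamma})_t)$ are not merely bounded but actually depend \emph{weakly continuously} on time with values in $H^0$; for the source terms this needs strong continuity of $u$ in $H^1$ and the (strict) subcriticality in \eqref{6} to pass a Nemytskii operator through, while for the damping terms, since $P_0,Q_0$ are only monotone and possibly non-Lipschitz, one exploits that under \eqref{17} the ranges $L^{2,m}_\alpha(\Omega)$, $L^{2,\mu}_\beta(\Gamma_1)$ sit inside $L^2$ via the subcritical Sobolev embeddings and uses weak-in-$L^2$ convergence of $u_t(t_n)$ together with Minty-type monotonicity to identify $\alpha P_0(u_t)$ at the limit. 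Assembling these weak-continuity facts at the three levels $H^0,H^1,H^2$ and chaining them through the elliptic system is where all the work lies; everything else is the Strauss lemma and bookkeeping of Sobolev exponents already recorded in \eqref{ls}, \eqref{6}.
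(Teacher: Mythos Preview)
Your proposal contains a genuine error and is also substantially more complicated than necessary.

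\textbf{The error.} You claim that under \eqref{17} inspection of \eqref{ls} gives $l=2$. This is false. Under \eqref{17} one has $\max\{m,\romega\}=\romega$ and $\max\{\mu,\rgamma\}=\rgamma$, so
\[
l=\min\Bigl\{2,\ \tfrac{\romega}{m-1},\ \tfrac{\rgamma}{\mu-1}\Bigr\},
\]
and $\romega/(m-1)\ge 2$ is equivalent to $m\le 1+\romega/2$, which is \eqref{19}, not \eqref{17}. For instance in dimension $N=3$ with $m=5$ one has $m\le\romega=6$ but $\romega/(m-1)=3/2<2$, so $l<2$. Thus Theorem~\ref{theorem3} under \eqref{17} only yields $u\in L^\infty_{\text{loc}}(W^{2,l})$ with $l$ possibly strictly less than $2$, and your subsequent argument, which is written for $H^2=W^{2,2}$, does not apply as stated. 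This is precisely why the statement of \eqref{18} involves $W^{2,l}$ and the upgrade to $H^2$ in \eqref{20} requires the stronger hypothesis \eqref{19}.

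\textbf{The overcomplication.} Even after correcting $l$, your route through elliptic estimates, weak continuity of each right-hand side term, and Minty-type identification of $\alpha P_0(u_t)$ is far heavier than needed. The paper's argument (Corollary~\ref{corollary5.1}) is direct: since $\lambda=\infty$ under \eqref{17}, Theorem~\ref{theorem3} already gives $u\in L^\infty_{\text{loc}}([0,T_{\text{max}});W^{2,l})$, and since $u\in C([0,T_{\text{max}});H^1)$ with $W^{2,l}$ dense in $H^1$, Strauss's lemma immediately yields $u\in C_w([0,T_{\text{max}});W^{2,l})$ --- no elliptic estimate, no analysis of the damping terms, no monotonicity argument. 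For $u''\in C_w(H^0)$ one reads off from the equations that $u''\in L^\infty_{\text{loc}}(H^0)$ and identifies weak continuity in the larger space $L^l(\Omega)\times L^l(\Gamma_1)$, then applies Strauss again. Your worry about Minty-type identification of the nonlinear damping is therefore unnecessary: the $W^{2,l}$ regularity in time comes for free from the $L^\infty$ bound plus continuity in a weaker norm, not from re-solving an elliptic problem at each $t$.
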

\begin{rem} In the particular case \eqref{19} Theorem~\ref{theorem4}
sharply extends \cite[Theorem 1]{AMS}, dealing with the case
$\alpha\equiv 0$, $P_0=f_0\equiv 0$, $\beta\equiv 1$,
$Q_0(v)=|v|^{\mu-2}v$, $g_0(u)=|u|^{q-2}u$.\end{rem}
The main difficulty in the proof of Theorems \ref{theorem3}--\ref{theorem4} consists in getting
the regularity with respect to the
space variable on $\Gamma_1$ expressed by \eqref{18}, especially
when \eqref{17} fails to hold and $\Omega$ is merely $C^2$.

 The  third aim of the paper  is to show that, under suitable assumptions on the nonlinearities involved beside (I--II) and \eqref{6}, the semi--flow generated by problem \eqref{5} is a dynamical system in the phase space $H^1\times H^0$ and, when also (III) holds, in  $H^{1,\rho,\theta}_{\alpha,\beta}\times H^0$ for $(\rho,\theta)$ verifying \eqref{special}. By Theorems \ref{theorem1}--\ref{theorem2} these assertions hold true if and only if  $T_{\max}=\infty$ for all $(u_0,u_1)\in H^1\times H^0$.

To motivate the need of additional assumptions we shall preliminarily show that assumptions (I--II) and \eqref{6} do not guarantee by themselves that all solutions are global in time, since  in some cases they blow--up in finite time.
To shortly prove this assertion  we temporarily restrict to linear damping terms, that is
we replace assumption (I) with the following one:
\renewcommand{\labelenumi}{{(I)$'$}}
\begin{enumerate}
\item $P_0(v)=Q_0(v)=v$ for all $v\in\R$.
\end{enumerate}

To state our blow--up result we introduce
\begin{equation}\label{23}
\mathfrak{F}_0(u)=\int_0^u f_0(s)\,ds,\qquad \mathfrak{G}_0(u)=\int_0^u g_0(s)\,ds\qquad\text{for all $u\in\R$,}
\end{equation}
and we make the following specific blow--up assumption:
\renewcommand{\labelenumi}{{(\Roman{enumi})}}
\begin{enumerate}
\setcounter{enumi}3
\item $(f_0,g_0)\not\equiv 0$ and there are $\overline{p}, \overline{q}>2$ such that
\begin{equation}\label{blowupinequalities}
f_0(u)u\ge \overline{p}\,\mathfrak{F}_0(u)\ge 0\quad\text{and}\quad
g_0(u)u\ge \overline{q}\,\mathfrak{G}_0(u)\ge 0\quad\text{for all $u\in\R$.}
\end{equation}
\end{enumerate}
\begin{rem}\label{remark1.4}
Clearly  $f_1$ and $g_1$ in \eqref{modelli} satisfy (IV) if and only if
\begin{equation}\label{blowupparametri}
c_1=c_2=0,\qquad \gamma, \widetilde{\gamma}, \delta,\widetilde{\delta}\ge 0,\qquad \gamma+\widetilde{\gamma}+\delta+\widetilde{\delta}>0,\qquad \text{and}\quad\widetilde{p},\widetilde{q}>2.
\end{equation}
\end{rem}
We also introduce the energy functional $\cal{E}_0\in C^1(H^1\times H^0)$ defined by
\begin{equation}\label{Eintroduction}
\cal{E}_0(u_0,u_1)=\frac 12 \|u_1\|_{H^0}^2+\tfrac 12\int_\Omega |\nabla u_0|^2+\tfrac 12\int_{\Gamma_1}\!\!|\nabla_{\Gamma} u_0|_{\Gamma}^2-\int_\Omega\!\!\mathfrak{F}_0(u_0)-\int_{\Gamma_1}\!\!\mathfrak{G}_0(u_0).
 \end{equation}
\begin{thm}[\bf Blow--up]\label{theorem1.5} Let (I)$'$, (II), (IV) and \eqref{6} hold. Then
\renewcommand{\labelenumi}{{(\roman{enumi})}}
\begin{enumerate}
\item
$N_0:=\{(u_0,u_1)\in H^1\times H^0: \,\,\cal{E}_0(u_0,u_1)<0\}\not=\emptyset$, and
\item for any  $(u_0,u_1)\in N_0$  the unique maximal weak solution $u$ of \eqref{1} blows--up in finite time, that is $T_{\max}<\infty$, and
\begin{equation}
\label{new6.4}
\lim\limits_{t\to T^-_{\text{max}}}
\|u(t)\|_{H^1}+\|u'(t)\|_{H^0}=\lim\limits_{t\to T_{\text{max}}^-}\|u(t)\|_p^p+\|u(t)\|_{q,\Gamma_1}^q=\infty.
\end{equation}
\end{enumerate}
 \end{thm}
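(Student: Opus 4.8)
\medskip
\noindent\textit{Outline of the proof.}
The plan is to prove part~(i) by an elementary scaling argument and part~(ii) by a concavity argument in the spirit of Levine and Kalantarov--Ladyzhenskaya, which is especially transparent here because (I)$'$ makes the damping linear. For (i): assumption (IV) already gives $\mathfrak{F}_0,\mathfrak{G}_0\ge 0$, and since $(f_0,g_0)\not\equiv 0$ we may assume, say, $f_0\not\equiv 0$, so $\mathfrak{F}_0(u_\star)>0$ for some $u_\star\ne 0$, say $u_\star>0$. Integrating the inequality $\tfrac{d}{du}\log\mathfrak{F}_0(u)\ge\overline{p}/u$ (equivalent to $f_0(u)u\ge\overline{p}\,\mathfrak{F}_0(u)$) on $[u_\star,\infty)$ gives $\mathfrak{F}_0(u)\ge c_0 u^{\overline p}$ there. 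Taking a fixed $\phi\in H^1\setminus\{0\}$ with $\phi\ge 0$, supported in $\Omega$ when $f_0$ is the nontrivial datum (resp.\ with nontrivial nonnegative trace on $\Gamma_1$ when $g_0$ is), the scaling $u_0=k\phi$, $u_1=0$ gives $\cal{E}_0(k\phi,0)\le C k^2-c\,k^{\min\{\overline{p},\overline{q}\}}\to-\infty$ as $k\to\infty$, since $\min\{\overline{p},\overline{q}\}>2$ and the unused potential term is nonnegative; hence $N_0\ne\emptyset$.

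For (ii), fix $(u_0,u_1)\in N_0$ and let $u$ be its weak maximal solution. By (I)$'$ and the energy identity of Theorem~\ref{theorem1}(ii), $t\mapsto\cal{E}_0(u(t),u'(t))$ is nonincreasing, so $\cal{H}(t):=-\cal{E}_0(u(t),u'(t))$ is nondecreasing with $\cal{H}(t)\ge\cal{H}(0)=-\cal{E}_0(u_0,u_1)>0$. With parameters $T,\beta_0,\tau>0$ to be fixed later, I would introduce
$$\cal{M}(t)=\|u(t)\|_2^2+\|u(t)\|_{2,\Gamma_1}^2+\int_0^t\!\Bigl(\int_\Omega\alpha u^2+\int_{\Gamma_1}\beta u^2\Bigr)+(T-t)\Bigl(\int_\Omega\alpha u_0^2+\int_{\Gamma_1}\beta u_0^2\Bigr)+\beta_0(t+\tau)^2,$$
the time--integral term being tailored so that on differentiating twice its contribution exactly cancels the linear--damping cross--terms $-\int_\Omega\alpha uu_t-\int_{\Gamma_1}\beta u u_t$ that arise from testing the two equations of \eqref{5} with $u$, while the boundary terms $\int_{\Gamma_1}u_{|\Gamma}\partial_\nu u$ coming from the interior equation and from the equation on $\Gamma_1$ cancel each other. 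One is then left with $\cal{M}''(t)=2\|u'(t)\|_{H^0}^2-2\bigl(\|\nabla u(t)\|_2^2+\|\nabla_\Gamma u(t)\|_{2,\Gamma_1}^2\bigr)+2\int_\Omega f_0(u)u+2\int_{\Gamma_1}g_0(u)u+2\beta_0$. Setting $K=\tfrac12\|u'\|_{H^0}^2$, $G=\tfrac12(\|\nabla u\|_2^2+\|\nabla_\Gamma u\|_{2,\Gamma_1}^2)$ and $D(t)=\int_\Omega\mathfrak{F}_0(u)+\int_{\Gamma_1}\mathfrak{G}_0(u)$, the blow--up assumption (IV) gives $\int_\Omega f_0(u)u+\int_{\Gamma_1}g_0(u)u\ge\overline{\kappa}\,D(t)$ with $\overline{\kappa}:=\min\{\overline{p},\overline{q}\}>2$, and the energy identity rearranges as $D(t)=K(t)+G(t)+\cal{H}(t)$; hence, on $[0,\min\{T,T_{\max}\})$,
$$\cal{M}''(t)\ge(4+2\overline{\kappa})K(t)+(2\overline{\kappa}-4)G(t)+2\overline{\kappa}\,\cal{H}(t)+2\beta_0\ge(4+2\overline{\kappa})K(t)+2\overline{\kappa}\,\cal{H}(0)+2\beta_0>0.$$

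On the other hand, writing $\cal{M}'$ as a sum of five scalar products (the time--integral being treated via $\int_0^t(\int_\Omega\alpha u_s^2+\int_{\Gamma_1}\beta u_s^2)=\cal{H}(t)-\cal{H}(0)$) and estimating each by Cauchy--Schwarz gives $(\cal{M}'(t))^2\le 4\,\cal{M}(t)\bigl(2K(t)+\cal{H}(t)+\beta_0\bigr)$. Choosing $\nu\in(0,(\overline{\kappa}-2)/4]$, then $\beta_0>0$ small enough that $[2\overline{\kappa}-4(1+\nu)]\cal{H}(0)\ge[4(1+\nu)-2]\beta_0$, and finally $\tau$ (hence $T$) large enough that $\cal{M}'(0)=2\int_\Omega u_0u_1+2\int_{\Gamma_1}u_0u_1+2\beta_0\tau>0$, one obtains $\cal{M}\cal{M}''-(1+\nu)(\cal{M}')^2\ge 0$, i.e.\ $(\cal{M}^{-\nu})''\le 0$. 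Being concave, positive and with negative derivative at $t=0$, $\cal{M}^{-\nu}$ must vanish at some finite $t_*$, so $T_{\max}\le t_*<\infty$. The two limits in \eqref{new6.4} then follow: the first is exactly the blow--up alternative \eqref{11}. For the second, as $t\to T_{\max}^-$: if $\cal{H}(t)$ is unbounded, then $0<\cal{H}(t)\le D(t)$ forces $D(t)\to\infty$, and since (II) gives $D(t)\le C(1+\|u(t)\|_p^p+\|u(t)\|_{q,\Gamma_1}^q)$ (the embeddings $H^1(\Omega)\hookrightarrow L^p$, $H^1(\Gamma)\hookrightarrow L^q$ being guaranteed by \eqref{6}) we get $\|u(t)\|_p^p+\|u(t)\|_{q,\Gamma_1}^q\to\infty$; if $\cal{H}(t)$ stays bounded, a Poincar\'e inequality on $\Omega$ (valid because $u=0$ on $\Gamma_0$) together with H\"older's inequality and $K+G=\cal{H}+D$ bound the quantity in \eqref{11} by $C(1+(\|u(t)\|_p^p+\|u(t)\|_{q,\Gamma_1}^q)^{1/2})$, and since that quantity diverges the same conclusion follows.

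The main obstacle is to make the computation of $\cal{M}''$ rigorous. A weak solution enjoys only the regularity \eqref{9}, so $u_{tt}$ is a priori merely a distribution; moreover, since continuous dependence yields only $T_{\max}\le\liminf_n T^n_{\max}$ — the wrong direction — one cannot reduce the blow--up of a general solution in $N_0$ to that of the strong solutions of Theorems~\ref{theorem3}--\ref{theorem4}. The argument must therefore be carried out directly on the weak formulation of \eqref{5}, exploiting that for each fixed $t$ the function $u(t)$ itself is an admissible test function, which produces the identities for $\cal{M}'$ and $\cal{M}''$ in an integrated form that can then be differentiated; the bookkeeping of the dynamical--boundary terms and of the damping correction $\int_0^t(\int_\Omega\alpha u^2+\int_{\Gamma_1}\beta u^2)$ has to be done with some care. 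Everything else is routine.
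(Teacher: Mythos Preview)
Your argument is correct, and for part~(i) it coincides with the paper's (integrate the differential inequality $f_0(u)u\ge\overline{p}\,\mathfrak{F}_0(u)$ to get polynomial growth of $\mathfrak{F}_0$, then scale a fixed profile). Two minor points there: you should allow for the possibility that $\mathfrak{F}_0$ (resp.\ $\mathfrak{G}_0$) is strictly positive only on a half--line $\{u<0\}$, which simply means scaling a nonpositive profile instead; and when $g_0$ is the nontrivial datum you must choose the profile so that it vanishes on $\Gamma_0$ while having nontrivial trace on $\Gamma_1$, which the paper handles by a cut--off in a coordinate neighborhood.

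For part~(ii) your route is genuinely different in presentation. The paper does not perform the concavity computation but invokes the abstract blow--up theorem of Levine--Pucci--Serrin \cite{lps}, after checking that the operators $P_*=\text{Id}$, $A_*=A$, $Q_*=B$, $F_*=(\widehat{f},\widehat{g})$ satisfy its structural hypothesis $\langle A_*v,v\rangle-\langle F_*v,v\rangle\le q_*[\cal A_*(v)-\cal F_*(v)]$ with $q_*=\min\{\overline p,\overline q\}>2$. Your direct computation with the corrected functional $\cal M$ is exactly the engine behind that abstract result, so the two proofs are the same at the level of ideas; your version is self--contained, while the paper's is shorter and avoids the bookkeeping you flag at the end. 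Your remark that testing the weak formulation with $\phi=u$ is legitimate is correct: under (I)$'$ one has $m=\mu=2$, hence $X=H^1$ and $u\in C([0,T];H^1)\cap C^1([0,T];H^0)$ is admissible in the distribution identity of Remark~\ref{remark4.4}, which yields the integrated form of $\cal M'$.

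One small slip: the Poincar\'e inequality you invoke for the second limit need not hold, since the paper allows $\Gamma_0=\emptyset$. It is also unnecessary. The paper obtains that limit in one line from the energy inequality \eqref{nnn6.6},
\[
\tfrac12\|u'\|_{H^0}^2+\tfrac12\|u\|_{H^1}^2\le \cal E_u(0)+\tfrac12\|u\|_{H^0}^2+J(u),
\]
together with $|J(u)|\le C(1+\|u\|_p^p+\|u\|_{q,\Gamma_1}^q)$ and, since $p,q\ge2$, the H\"older bounds $\|u\|_2^2\le C(1+\|u\|_p^p)$, $\|u\|_{2,\Gamma_1}^2\le C(1+\|u\|_{q,\Gamma_1}^q)$; no case distinction on the boundedness of $\cal H$ is needed.
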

\begin{rem} When $(f_0,g_0)\equiv 0$ the set $N_0$ is trivially empty, and all solutions are global in time, as it will be clear from
Theorem~\ref{theorem1.6}. The two cases
$f_0\not\equiv 0$, $g_0\equiv 0$ and $f_0\equiv 0$, $g_0\not\equiv 0$,
are of particular interest, since they show that just one source, internal or at the boundary, forces solutions to  blow--up.
\end{rem}
The proof of Theorem~\ref{theorem1.5} is based on Theorem~\ref{1} and on the classical concavity method of H. Levine.
In this way we give a first application of Theorem~\ref{1}.

Theorem \ref{theorem1.5} implicitly suggests that all solutions of \eqref{5} can be global in time when
$f_0$ and $g_0$ are sinks, that is $f_0(u)u,g_0(u)u\le 0$ in $\R$, or
they are sources, that is $f_0(u)u,g_0(u)u\ge 0$ in $\R$, with at  most linear growth at infinity.
It is reasonable to extend this conjecture to sums of terms of these types.  Moreover nonlocalized damping terms, whose growths at infinity
dominate those of the sources (when sources are superlinear), may also prevent solutions to blow--up in finite time.
To treat all these cases in a unified framework we shall make, beside (I--II) and \eqref{6}, the following specific global existence assumption:
\begin{enumerate}
\setcounter{enumi}4
\item there are $p_1$ and $q_1$ satisfying
\begin{equation}\label{V.2}2\le p_1\le \min\{p,\,\max\{2,m\}\}\quad\text{and}\quad  2\le q_1\le \min\{q,\,\max\{2,\mu\}\}
\end{equation}
and such that
\begin{equation}\label{V.1}
\varlimsup_{|u|\to\infty}\mathfrak{F}_0(u)/|u|^{p_1}<\infty\quad\text{and}\quad \varlimsup_{|u|\to\infty}\mathfrak{G}_0(u)/|u|^{q_1}<\infty.
\end{equation}
We also suppose that
\begin{equation}\label{V.3}
 \text{$\essinf_\Omega >0$ if $p_1>2$ and $\essinf_{\Gamma_1}\beta>0$ if $q_1>2$.}
\end{equation}
\end{enumerate}
Since $\mathfrak{F}_0(u)=\int_0^1 f_0(su)u\,ds$ (and similarly $\frak{G}_0$),
 (V) is a weak version
 \begin{footnote}{ Actually (V)$'$ is more general than (V). Indeed, when
$f_0(u)=(m+1)|u|^{m-1}u\cos |u|^{m+1}$ and $g_0(u)=(\mu+1)|u|^{\mu-1}u\cos |u|^{\mu+1}$,
\eqref{Vstrong} holds only for $p_1\ge m+1$, $q_1\ge \mu+1$, while \eqref{V.1} does with $p_1=q_1=2$.
}\end{footnote}
 of the following assumption, which is adequate for most purposes and easier to verify:
\renewcommand{\labelenumi}{{(V)$'$}}
\begin{enumerate}
\item there are $p_1$ and $q_1$ such that  \eqref{V.2} holds with \eqref{V.3} and
\begin{equation}\label{Vstrong}
\varlimsup_{|u|\to\infty}f_0(u)u/|u|^{p_1}<\infty\qquad\text{and}\quad \varlimsup_{|u|\to\infty}g_0(u)u/|u|^{q_1}<\infty.
\end{equation}
\end{enumerate}
\begin{rem}
Assumptions (II) and (V)$'$ hold true
when $f_0$ (respectively $g_0$) belongs to one among the following classes:
\renewcommand{\labelenumi}{{(\arabic{enumi})}}\begin{enumerate}
\setcounter{enumi}{-1}
\item $f_0$ (respectively $g_0$) is constant;
\item $f_0$ (respectively $g_0$) satisfies (II) with $p\le \max\{2,m\}$ and
$\essinf_\Omega \alpha>0$ if $p>2$ (respectively $q\le \max\{2,\mu\}$ and  $\essinf_{\Gamma_1}\beta>0$ if $q>2$);
\item $f_0$ (respectively $g_0$) satisfies (II) and it is a sink.
\end{enumerate}
More generally (II) and (V)$'$ hold when
\begin{equation}\label{centerdot}
f_0=f_0^0+f_0^1+f_0^2,\qquad\text{and}\quad g_0=g_0^0+g_0^1+g_0^2,
\end{equation}
where $f_0^i$ and $g_0^i$ are of class (i) for $i=0,1,2$.
\begin{footnote}{
Actually {\em all} functions verifying (II) and (V)$'$ are of the form \eqref{centerdot},
where $f_0^1$ are $g_0^1$ are sources. See Remark \ref{remark6.3ff}.}\end{footnote}
\end{rem}
\begin{rem}\label{remark1.5}
One easily checks that $f_1$ in \eqref{modelli} satisfies (II) and  (V) if and only if  one among the following cases (the analogous cases apply to  $g_1$) occurs:
\renewcommand{\labelenumi}{{(\roman{enumi})}}
\begin{enumerate}
\item $\gamma>0$, $p\le \max\{2,m\}$ and $\essinf_\Omega\alpha >0$ if $p>2$;
\item $\gamma\le 0$, $\widetilde{\gamma}>0$,  $\widetilde{p}\le \max\{2,m\}$ and
$\essinf_\Omega \alpha>0$ if $\widetilde{p}>2$;
\item $\gamma,\widetilde{\gamma}\le0$.
\end{enumerate}
\end{rem}
Our global existence result is the following one.
\begin{thm}[\bf Global existence]\label{theorem1.6} Let (I--II), (V) and \eqref{6} hold.
 Then for any  $(u_0,u_1)\in H^1\times H^0$  the unique maximal weak solution $u$ of \eqref{1} is global in time, that is $T_{\max}=\infty$.
Consequently the semi--flow generated by problem \eqref{5} is a dynamical system in $H^1\times H^0$ and, when also (III) holds, in  $H^{1,\rho,\theta}_{\alpha,\beta}\times H^0$ for $(\rho,\theta)$ verifying \eqref{special}.
\end{thm}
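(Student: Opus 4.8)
The plan is to obtain $T_{\max}=\infty$ from the blow--up alternative \eqref{11} in Theorem~\ref{theorem1}: it suffices to show that
$$t\longmapsto\|u(t)\|_{H^1(\Omega)}+\|u(t)\|_{H^1(\Gamma)}+\|u_t(t)\|_{L^2(\Omega)}+\|(u_{|\Gamma})_t(t)\|_{L^2(\Gamma_1)}$$
stays bounded on every bounded subinterval of $[0,T_{\max})$. The starting point is the energy identity of Theorem~\ref{theorem1}(ii). Using $\frac{d}{dt}\mathfrak{F}_0(u)=f_0(u)u_t$ and $\frac{d}{dt}\mathfrak{G}_0(u)=g_0(u)(u_{|\Gamma})_t$ on $\Gamma_1$, and moving the source terms to the left, that identity rewrites as $\cal{E}_0(u(t),u'(t))+D(t)=\cal{E}_0(u_0,u_1)$, where $\cal{E}_0$ is the energy functional of \eqref{Eintroduction} and $D(t):=\int_0^t\int_\Omega\alpha P_0(u_t)u_t+\int_0^t\int_{\Gamma_1}\beta Q_0((u_{|\Gamma})_t)(u_{|\Gamma})_t\ge 0$ is the accumulated dissipation; rearranging, for all $t\in[0,T_{\max})$,
\begin{equation}\label{eqplan}
\tfrac12\|u'(t)\|_{H^0}^2+\tfrac12\int_\Omega|\nabla u(t)|^2+\tfrac12\int_{\Gamma_1}|\nabla_{\Gamma}u(t)|_{\Gamma}^2+D(t)=\cal{E}_0(u_0,u_1)+\int_\Omega\mathfrak{F}_0(u(t))+\int_{\Gamma_1}\mathfrak{G}_0(u(t)),
\end{equation}
so each of the (nonnegative) terms on the left is controlled by the right--hand side.

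Next I would bound the source potentials by means of (V). Writing $\mathfrak{F}_0(u)=\int_0^1 f_0(\lambda u)u\,d\lambda$, and analogously for $\mathfrak{G}_0$, assumption \eqref{V.1} gives $\mathfrak{F}_0(u)\le C(1+|u|^{p_1})$ and $\mathfrak{G}_0(u)\le C(1+|u|^{q_1})$; since $p_1\le p\le 1+\romega/2$ and $q_1\le q\le 1+\rgamma/2$ by \eqref{V.2} and \eqref{6}, the exponents $p_1,q_1$ are subcritical for $H^1(\Omega)\hookrightarrow L^s(\Omega)$ and $H^1(\Gamma)\hookrightarrow L^s(\Gamma)$ respectively, hence --- $\Omega$ and $\Gamma_1$ being bounded ---
$$\int_\Omega\mathfrak{F}_0(u(t))\le C\bigl(1+\|u(t)\|_{L^{p_1}(\Omega)}^{p_1}\bigr),\qquad\int_{\Gamma_1}\mathfrak{G}_0(u(t))\le C\bigl(1+\|u(t)\|_{L^{q_1}(\Gamma_1)}^{q_1}\bigr).$$
It then remains to control these norms through $u(t)=u_0+\int_0^t u_t(s)\,ds$, i.e.\ $\|u(t)\|_{L^{p_1}(\Omega)}\le\|u_0\|_{L^{p_1}(\Omega)}+\int_0^t\|u_t(s)\|_{L^{p_1}(\Omega)}\,ds$, and similarly on $\Gamma_1$. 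Here the dichotomy in (V) enters: if $p_1=2$ one bounds $\|u_t(s)\|_{L^2(\Omega)}^2$ directly by twice the left--hand side of \eqref{eqplan}; if $p_1>2$, then \eqref{V.2}--\eqref{V.3} force $p_1\le m$ and $\essinf_\Omega\alpha>0$, so (I) yields $|v|^{p_1}\le C(1+P_0(v)v)$ for all $v\in\R$, whence $\|u_t(s)\|_{L^{p_1}(\Omega)}^{p_1}\le C\bigl(1+\int_\Omega\alpha P_0(u_t(s))u_t(s)\bigr)$, and integrating in time and applying H\"older's inequality bounds $\int_0^t\|u_t(s)\|_{L^{p_1}(\Omega)}\,ds$ by $C(1+t)+C\,t^{1-1/p_1}D(t)^{1/p_1}$. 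The boundary contribution $\|u(t)\|_{L^{q_1}(\Gamma_1)}$ is handled identically with $\beta,\mu,Q_0$ replacing $\alpha,m,P_0$.

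Collecting these estimates in \eqref{eqplan} and setting $\Lambda(t):=\sup_{s\in[0,t]}\bigl(\tfrac12\|u'(s)\|_{H^0}^2+\tfrac12\|\nabla u(s)\|_2^2+\tfrac12\|\nabla_{\Gamma}u(s)\|_{2,\Gamma_1}^2\bigr)+D(t)$, one arrives at an inequality $\Lambda(t)\le a(t)+b(t)\Lambda(t)$ with $a,b$ continuous, nondecreasing and $b(0)=0$ (when $p_1=q_1=2$ it is instead a linear integral inequality $\Lambda(t)\le a(t)+c\int_0^t(1+s)\Lambda(s)\,ds$, and Gronwall's lemma finishes the proof at once). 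Choosing $\delta>0$ with $b(\delta)\le\tfrac12$ one absorbs $b(t)\Lambda(t)$ and bounds $\Lambda$ on $[0,\delta]$; then one re--derives the same estimate starting at $t=\delta$ from the data $(u(\delta),u'(\delta))$, whose relevant norms are already controlled, with the \emph{same} length $\delta$ (the absorption constant being structural), and iterates. On any $[0,T]\subset[0,T_{\max})$ this takes finitely many steps and produces a finite bound for $\|u(t)\|_{H^1}^2+\|u'(t)\|_{H^0}^2$ --- using $p_1,q_1\ge 2$, hence $L^{p_1}(\Omega)\hookrightarrow L^2(\Omega)$ and $L^{q_1}(\Gamma_1)\hookrightarrow L^2(\Gamma_1)$ on bounded sets. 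If $T_{\max}<\infty$ this contradicts \eqref{11}, so $T_{\max}=\infty$. Consequently $S(t)(u_0,u_1):=(u(t),u'(t))$ is globally defined on $H^1\times H^0$, and on $H^{1,\rho,\theta}_{\alpha,\beta}\times H^0$ for $(\rho,\theta)$ as in \eqref{special} when (III) holds by Theorem~\ref{theorem2}; $S(0)=\mathrm{Id}$ and $S(t+s)=S(t)\circ S(s)$ follow from the uniqueness in Theorem~\ref{theorem1}(i), while the continuity of $(t,(u_0,u_1))\mapsto S(t)(u_0,u_1)$ is \eqref{9} (resp.\ \eqref{9BIS}) together with the continuous--dependence statements in Theorem~\ref{theorem1}(iv) and Theorem~\ref{theorem2}. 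Hence $(S(t))_{t\ge 0}$ is a dynamical system on these phase spaces.

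I expect the main difficulty to lie in the superlinear case, $p_1>2$ and/or $q_1>2$: there \eqref{eqplan} does not close by itself, and one must exploit precisely the structure wired into (V) --- an everywhere--effective damping term whose growth dominates that of the source, together with Sobolev subcriticality --- while routing every estimate through the energy \emph{identity} (weak solutions being only known to satisfy that equality, not term--by--term bounds), handling interior and boundary superlinear sources simultaneously, and coping with the degeneration of the constant $b(t)$ as $t\to\infty$, which is what forces the local--in--time plus iteration scheme rather than a single global Gronwall argument.
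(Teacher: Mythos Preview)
Your proposal is correct and reaches the conclusion, but by a route genuinely different from the paper's.

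The paper (Theorem~6.2) argues via an auxiliary functional in the spirit of Georgiev--Todorova: it sets
\[
\Upsilon(t)=\tfrac12\|u'(t)\|_{H^0}^2+\tfrac12\|u(t)\|_{H^1}^2+C_{p_1}\!\int_\Omega\alpha|u|^{p_1}+C_{q_1}\!\int_{\Gamma_1}\beta|u|^{q_1},
\]
differentiates the added weighted norms via a chain--rule lemma, and uses the energy identity to write $\Upsilon(t)$ as an integral in time. The cross terms $\int_\Omega\alpha|u|^{p_1-1}|u_t|$ are then split by a weighted Young inequality with a small parameter $\varepsilon$: one piece is $\lesssim\varepsilon^{1-p_1'}\Upsilon$, the other is $\varepsilon\int_\Omega\alpha|u_t|^{p_1}\le\varepsilon(C+\|[u_t]_\alpha\|_{m,\alpha}^m)$, which is absorbed by the damping term $-c_m'\|[u_t]_\alpha\|_{m,\alpha}^m$ once $\varepsilon$ is fixed small. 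This yields a \emph{linear} integral inequality $\Upsilon(t)\le C+C\int_0^t(1+\Upsilon)$ and a single application of Gronwall.

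Your argument dispenses with the auxiliary functional and the chain rule: you control $\|u(t)\|_{L^{p_1}}$ by integrating $u_t$ in time and using the damping coercivity $|v|^{p_1}\le C(1+P_0(v)v)$ (valid since $p_1\le m$ and $\essinf_\Omega\alpha>0$) to bound $\int_0^t\|u_t\|_{L^{p_1}}^{p_1}$ by the accumulated dissipation $D(t)$. The price is that the resulting inequality $\Lambda(t)\le a(t)+b(t)\Lambda(t)$ has $b(t)\sim t^{p_1-1}+t^{q_1-1}$, which grows with $t$; you therefore cannot close globally and must absorb on a short interval of structural length $\delta$ and iterate. Both schemes are valid: the paper's gives a clean one--step Gronwall at the cost of introducing $\Upsilon$ and the chain--rule machinery, while yours is more elementary but trades that for the local--plus--iteration bookkeeping.
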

\begin{rem}
Comparing Remarks \ref{remark1.4} and \ref{remark1.5} it is clear that, when $P_1$ and $Q_1$ are linear (hence $m=\mu=2$),
$f_1$ and $g_1$ in \eqref{modelli}  cannot satisfy  (IV) {\em and} (V).
By integrating \eqref{blowupinequalities} one easily sees that the same fact holds for $f_0$ and $g_0$,
so the assumptions sets of Theorems~\ref{theorem1.5} and \ref{theorem1.6} have empty intersection, as expected.
Moreover, since  when $m=\mu=2$ and $\widetilde{\gamma}=\widetilde{\delta}=c_1=c_2=0$ then (V) holds if and only if $p=2$ when $\gamma>0$ and $q=2$ when $\delta>0$,  comparing with \eqref{blowupparametri} and remembering that $\widetilde{p}\le p$ and $\widetilde{q}\le q$, we see that
Theorem~\ref{theorem1.6} is  sharp when damping terms are linear and sources are pure powers.
The author is convinced that Theorem~\ref{theorem1.6} is sharp also when sources are not pure powers and damping terms are nonlinear.
He intends to study this topic in a forthcoming paper.
\end{rem}

The proof of Theorem~\ref{theorem1.6} relies on Theorems~\ref{theorem1} and \ref{theorem2} (so we are giving another application of them), on a suitable auxiliary functional inspired by  \cite{georgiev}
and on suitable estimates.

The paper is organized as follows: in
Section~\ref{section 2} we introduce some background material, we
set up the functional setting used and we  prove a couple of
preliminary results, one of which concerning weak solutions for a
linear version of \eqref{1}.  In Section~\ref{section3} we state our
main local well--posedness result for \eqref{1} and a slightly more
general (and abstract) version of it, which contains
Theorems~\ref{theorem1}--\ref{theorem2} as particular cases. They are proved in
Section~\ref{section4}.  Section~\ref{section5} is devoted to regularity results for problem
\eqref{1} and the proofs of Theorems~\ref{theorem3}--\ref{theorem4}.
 In Section~\ref{section6} we give our blow--up and global existence results for problem
\eqref{1} and the proofs of Theorems~\ref{theorem1.5}--\ref{theorem1.6}.
Finally, Appendix~\ref{appendixA} deals with abstract Cauchy problems for locally Lipschitz perturbations of maximal monotone operators,
while in Appendix~\ref{appendixB} we give the proof of the isomorphism property of the operator $-\Delta_M+I$ on a  $C^2$ manifold $M$.

%%%%%%%%%%%%%%%%%%%%%%%%%%%%%%%%%%%%%%%%%%%%%%%%%%%%%%%%%%%
\section{\bf Background and preliminary results}\label{section 2}
\subsection{Notation.} We shall adopt the standard notation for
functions spaces in $\Omega$ such as the Lebesgue and Sobolev spaces
of integer order, for which we refer to \cite{adams}. All the
function spaces considered in the paper will be spaces of real
valued functions, but for Appendix~\ref{appendixB} where
complex--valued functions will be considered.  Given a Banach space
$X$ and its dual $X'$ we shall denote by $\langle
\cdot,\cdot\rangle_X$ the duality product between them. Finally, we
shall use the standard notation for vector valued Lebesgue and
Sobolev spaces in a real interval.

 Given $\alpha\in
L^\infty(\Omega)$, $\beta\in L^\infty(\Gamma)$, $\alpha,\beta\ge 0$
and $\rho\in [1,\infty]$ we shall respectively denote by
$(L^\rho(\Omega),\|\cdot\|_\rho)$, $(L^\rho(\Gamma),\|\cdot\|_{\rho,\Gamma})$, $(L^\rho(\Gamma_1),\|\cdot\|_{\rho,\Gamma_1})$,
$(L^\rho(\Omega;\lambda_\alpha),\|\cdot\|_{\rho,\alpha})$, $(L^\rho(\Gamma;\lambda_\beta),\|\cdot\|_{\rho,\beta,\Gamma})$
and $(L^\rho(\Gamma_1;\lambda_\beta),\|\cdot\|_{\rho,\beta,\Gamma_1})$
the  Lebesgue spaces (and norms) with respect to the following
measures: the standard Lebesgue one in $\Omega$, the hypersurface
measure $\sigma$ on $\Gamma$ and $\Gamma_1$, $\lambda_\alpha$ in
$\Omega$ defined (for Lebesgue measurable sets) by
$d\lambda_\alpha=\lambda_\alpha\,dx$,
 $\lambda_\beta$ on $\Gamma$ and $\Gamma_1$ defined (for $\sigma$ measurable sets) by
$d\lambda_\beta=\lambda_\beta\,d\sigma$. The equivalence classes with respect
to the measures $\lambda_\alpha$ and $\lambda_\beta$ will be
respectively denoted by $[\cdot]_\alpha$ and $[\cdot]_\beta$.  As
usual  $\rho'$ is the H\"{o}lder conjugate of $\rho$, i.e.
$1/\rho+1/\rho'=1$.

 Finally
$W^{\tau,\rho}(\Omega)$, $\tau\ge 0$ will denotes, when $\tau\not\in
\N_0$, the fractional Sobolev (Sobolev--Slobodeckij)
space in $\Omega$ and
$H^{\tau}(\Omega)=W^{\tau,2}(\Omega)$. See \cite{adams,grisvard} or
\cite{triebel}.

\subsection{Sobolev spaces and Riemannian gradient on
$\Gamma$.}\label{subsec2.2} The Sobolev spaces on $\Gamma$ are
treated in many textbooks when $\Gamma$ is the boundary of a smooth
open bounded set $\Omega\subset\R^N$  or more generally  a smooth
compact manifold, the non--optimality of the smoothness assumption being
often asserted. See for example
\cite{hebey,lionsmagenesIII,lionsmagenes1,triebel,triebefunctionspacesII}.
An exception is given by  \cite{grisvard}, so referring to it, when
$\Gamma=\partial\Omega$ and $\Omega$ is $C^k$, $k\in\N$, $\Gamma'$ is a relatively open subset of $\Gamma$,
$\rho\in(1,\infty)$ and $\theta\in [-k,k]$, we shall
denote by
$W^{\theta,\rho}(\Gamma')$ ($H^\theta=W^{\theta,2}$) the space
defined through local charts, making the reader aware that
distributions in $\Gamma'$ are elements of $[C^k_c(\Gamma')]'$. One
sees by elementary considerations that this approach is equivalent
to the one used in \cite{lionsmagenesIII} in the smooth case, with
local charts and partitions of the unity, and moreover both extend
to $C^k$ compact manifolds.

We also recall (see \cite[Theorem~1.5.1.2 and 1.5.1.3 p.
37]{grisvard}) the trace operator $u\mapsto u_{|\Gamma}$ which is
 linear and bounded from $W^{1,\rho}(\Omega)$ to
$W^{1-\frac 1\rho,\rho}(\Gamma)$, and has a right inverse
$\mathbb{D}\in \cal{L}(W^{1-\frac 1\rho,\rho}(\Gamma),
W^{1,\rho}(\Omega))$, i.e. $(\mathbb{D}u)_{|\Gamma}=u$ for all $u$,
independently on $\rho$.

We recall here, for the reader's convenience, some well--known preliminaries
on the Riemannian gradient, where only the fact that $\Gamma$ is a $C^1$  compact
manifold  endowed  with a $C^0$ Riemannian metric is used. In  Appendix \ref{appendixB}
these preliminaries will be used for abstract compact manifolds $M$.
We refer to  \cite{taylor} for more details and proofs,
given there for smooth manifolds, and to \cite{sternberg} for a
general background on differential geometry on $C^k$  manifolds when
$k\in\N$.

We start by fixing some notation. We  denote  by
$(\cdot,\cdot)_\Gamma$ the metric inherited from $\R^N$, with
$|\cdot|_\Gamma^2=(\cdot,\cdot)_\Gamma$,  given in local coordinates
$(y_1,\ldots,y_{N-1})$ by $(g_{ij})_{i,j=1,\ldots,N-1}$. We denote
by $d\sigma$ the natural volume element on $\Gamma$, given by $\sqrt{\widetilde{g}}
\,\,dy_1\wedge\ldots\wedge dy_{N-1}$, where $\widetilde{g}=\operatorname{det}
(g_{ij})$.  We denote by $(\cdot|\cdot)_\Gamma$ the Riemannian
(real) inner product on $1$-forms on $\Gamma$ associated to the
metric, given in local coordinates by $(g^{ij})=(g_{ij})^{-1}$.
Trivially, as $\Gamma$ is compact, there are $c_i=c_i(\Gamma)>0$,
$i=1,2,3$, such that \begin{footnote}{here and in the sequel the
summation convention is used}\end{footnote}
\begin{equation}\label{metricalimitata}
c_1\le \widetilde{g}\le c_2, \qquad\text{and}\quad g^{ij}\xi_i\xi_j\ge c_3
|\xi|^2\qquad\text{on $\Gamma$, for all
$\xi\in\R^{N-1}$.}\end{equation} We also denote by $d_\Gamma$ the
total differential on $\Gamma$ and by $\nabla_\Gamma$ the Riemannian
gradient, given in local coordinates by $\nabla_\Gamma
u=g^{ij}\,\partial_j u\,\,\partial_i$ for any $u\in H^1(\Gamma)$. It
is then clear that $(d_\Gamma u|d_\Gamma v)_\Gamma=(\nabla_\Gamma u,
\nabla_\Gamma v)_\Gamma$ for $u,v\in H^1(\Gamma)$, so the use of
vectors or forms in the sequel is optional. It is well--known (see
\cite{taylor} in the smooth setting, and the recent
paper \cite{quarteroni} in the $C^1$ setting) that $H^1(\Gamma)$ can be
equipped with the inner product and norm, equivalent to the standard
one, given by
\begin{equation}\label{complessifica}
(u,v)_{H^1(\Gamma)}=\int_\Gamma uv\,d\sigma +\int_\Gamma
(\nabla_\Gamma u, \nabla_\Gamma v)_\Gamma \,d\sigma,\quad
\|u\|_{H^1(\Gamma)}^2=\|u\|_{2,\Gamma}^2+\|\nabla_\Gamma
u\|_{2,\Gamma}^2
\end{equation}
 for $u,v\in H^1(\Gamma)$,  where $\|\nabla_\Gamma
u\|_{2,\Gamma}^2:=\int_\Gamma |\nabla_\Gamma u|_\Gamma^2$.

In the sequel, the notation $d\sigma$ will be dropped from the
boundary integrals; we hope that the reader will be able to put in
the appropriate integration elements in all formulas.

\subsection{Functional setting.}
We start by giving some details on $L^{2,\rho}_\alpha(\Omega)$ and
$L^{2,\rho}_\beta(\Gamma_1)$, which  definition can be extended to $\rho=\infty$ by
setting, for any $\rho\in [1,\infty]$,
\begin{alignat*}2
&L^{2,\rho}_\alpha(\Omega)=\{u\in L^2(\Omega): [u]_\alpha\in
L^\rho(\Omega,\lambda_\alpha)\}, \,
&&\|\cdot\|_{2,\rho,\alpha}=\|\cdot\|_2+\|[
\cdot]_\alpha\|_{\rho,\alpha},\\\label{weight2bis}
&L^{2,\rho}_\beta(\Gamma_1)=\{u\in L^2(\Gamma_1): [u]_\beta\in
L^\rho(\Gamma_1,\lambda_\beta)\}, \,
&&\|\cdot\|_{2,\rho,\beta}=\|\cdot\|_{2,\Gamma_1}+\|[
\cdot]_\beta\|_{\rho,\beta,\Gamma_1}.
\end{alignat*}
 Since the case $1\le \rho<2$ reduces to $\rho=2$, we shall consider $2\le \rho\le
\infty$ in the sequel.
 As $u\mapsto(u,[u]_\alpha)$ isometrically embeds $L^{2,\rho}(\Omega)$
 into $L^2(\Omega)\times L^\rho(\Omega, \lambda_\alpha)$ (the same argument applying to $L^{2,\rho}(\Gamma_1)$),
 they are reflexive spaces provided $\rho<\infty$.

  Moreover we have the
 trivial embeddings $L^\rho(\Omega)\hookrightarrow L^{2,\rho}_\alpha
(\Omega)$ and $L^\rho(\Gamma_1)\hookrightarrow L^{2,\rho}_\alpha
(\Gamma_1)$, which are dense by \cite[Theorem~1.17, p.
15]{rudinrealcomplex} and Lebesgue Dominated Convergence Theorem in
abstract measure spaces. For the same reason $[\cdot]_\alpha\in
\cal{L}(L^{2,\rho}_\alpha (\Omega), L^\rho(\Omega,\lambda_\alpha))$
and $[\cdot]_\beta\in \cal{L}(L^{2,\rho}_\beta (\Gamma_1),
L^\rho(\Gamma_1,\lambda_\beta))$ have dense ranges. Hence by
\cite[Corollary~2.18, p.45]{brezis2} their Banach adjoints are
injective. Consequently, making the standard identifications
\begin{equation}\label{dualiLebesgue}
[L^\rho(\Omega)]'\simeq L^{\rho'}(\Omega),\qquad\text{and}\quad
[L^\rho(\Gamma_1)]'\simeq L^{\rho'}(\Gamma_1),
\end{equation}
when $\rho\in [2,\infty)$ we have the two chains of embeddings
\begin{footnote}{$[L^\rho(\Omega,\lambda_\alpha)]'$ and
$[L^\rho(\Gamma_1,\lambda_\beta)]'$ cannot be
identified with $L^{\rho'}(\Omega,\lambda_\alpha)$ and
$L^{\rho'}(\Gamma_1,\lambda_\beta)$, since these identifications would
be incoherent with \eqref{dualiLebesgue}
}\end{footnote}
\begin{equation}\label{3.2}
[L^\rho(\Omega,\lambda_\alpha)]' \hookrightarrow
[L^{2,\rho}_\alpha (\Omega)]' \hookrightarrow
L^{\rho'}(\Omega),\, [L^\rho(\Gamma_1,\lambda_\beta)]' \hookrightarrow
[L^{2,\rho}_\beta (\Gamma_1)]' \hookrightarrow
L^{\rho'}(\Gamma_1).
\end{equation}
 Next, given $\rho,\theta\in [2,\infty)$ and
$-\infty\le a<b\le\infty$ we introduce the Banach space
\begin{equation}\label{3.3}
L^{2,\rho,\theta}_{\alpha,\beta}(a,b)=L^\rho(a,b\, ;
L^{2,\rho}_\alpha (\Omega))\times L^\theta(a,b\, ;
L^{2,\theta}_\beta (\Gamma_1))
\end{equation}
with the standard norm of the product. Clearly it is reflexive and
\begin{equation}\label{3.4}
[L^{2,\rho,\theta}_{\alpha,\beta}(a,b)]'\simeq L^{\rho'}(a,b\, ;
[L^{2,\rho}_\alpha (\Omega))]'\times L^{\theta'}(a,b\, ;
[L^{2,\theta}_\beta (\Gamma_1)]').
\end{equation}
Consequently, by \eqref{3.2}--\eqref{3.4} we have the embedding
\begin{equation}\label{3.4bis}
L^{\rho'}(a,b\, ; [L^\rho(\Omega,\lambda_\alpha)]')\times
L^{\theta'}(a,b\, ;
[L^\theta(\Gamma_1,\lambda_\beta)]')\hookrightarrow
[L^{2,\rho,\theta}_{\alpha,\beta}(a,b)]'.
\end{equation}
We now give some details on $H^0$ and $H^1$ introduced in
Section~\ref{intro}. The standard scalar product of $H^0$ will be
denoted by $(\cdot,\cdot)_{H^0}$, The space $H^1$ introduced in
\eqref{H1} will be endowed with a scalar product which induces a
norm equivalent to the one inherited by the product. We
recall \cite[Lemma 1]{vazvitHLB}  that the space
$$H^1(\Omega;\Gamma)=\{(u,v)\in H^1(\Omega)\times H^1(\Gamma):
v=u_{|\Gamma}\}$$ can be equipped with the scalar product
\begin{footnote}{the proof does not depends on the $C^\infty$ regularity of $\Omega$ asserted there}\end{footnote}
\begin{equation}\label{3.5}
(u,v)_{H^1(\Omega;\Gamma)}=\int_\Omega \nabla u\nabla v+\int_\Gamma
(\nabla_\Gamma u,\nabla_\Gamma v)_\Gamma+\int_\Gamma uv,\qquad
u,v\in H^1(\Omega;\Gamma).
\end{equation}
Since $H^1$ is actually a closed subspace of $H^1(\Omega;\Gamma)$,
$\nabla_\Gamma u=0$ a.e. on  the relative interior of $\Gamma_0$, that is on $\Gamma\setminus\overline{\Gamma_1}$,  and
$\sigma(\overline{\Gamma}_0\cap\overline{\Gamma}_1)=0$, we can equip
$H^1$ with the scalar product
\begin{equation}\label{3.6}
(u,v)_{H^1}=\int_\Omega \nabla u\nabla v+\int_{\Gamma_1}
(\nabla_\Gamma u,\nabla_\Gamma v)_\Gamma+\int_{\Gamma_1} uv,\qquad
u,v\in H^1.
\end{equation}
The related norm will be denoted by
$\|\cdot\|_{H^1}$. Finally the definition of $H^{1,\rho,\theta}_{\alpha,\beta}$  given in \eqref{H1plus}
can  be  extended also when $\rho=\infty$ and $\theta=\infty$, and it is a reflexive space
provided $\rho,\theta\in [2,\infty)$.

The relations among the spaces introduced above when $\rho,\theta\in
[2,\infty]$, are given by the following two chains of trivial
embeddings
\begin{equation}\label{3.10}
H^{1,\rho,\theta}_{\alpha,\beta}\hookrightarrow H^1 \hookrightarrow
H^0,\qquad\text{and}\quad
H^{1,\rho,\theta}_{\alpha,\beta}\hookrightarrow
L^{2,\rho}_\alpha(\Omega)\times
L^{2,\theta}_\beta(\Gamma_1)\hookrightarrow H^0.
\end{equation}
At a first glance they are all trivially dense when $\rho,\theta\in
[2,\infty)$ since $C^\infty(\overline{\Omega})$ is dense in
$H^1(\Omega)$  and in $L^2(\Omega)$, while $C^1(\Gamma)$ is dense in
$H^1(\Gamma)$ and in $L^2(\Gamma)$. A more careful check of this
argument would convince the reader that, even in the simpler case
$\Gamma_0=\emptyset$, the density of $C^\infty(\overline{\Omega})$
in $H^1(\Omega)$ and of $C^1(\Gamma)$ in $H^1(\Gamma)$ {\em does not
trivially implies} the density of $\{(u,v)\in
C^1(\overline{\Omega})\times C^1(\Gamma): v=u_{|\Gamma}\}$ in
$H^1(\Omega;\Gamma)$. See also \cite[Remark~2.6]{quarteroni} and
\cite[Lecture~11]{lionstata}. For this reason we give the following result.
\begin{lem}\label{lemma3.1}
Let $\alpha\in L^\infty(\Omega)$, $\beta\in L^\infty(\Gamma_1)$,
$\alpha,\beta\ge 0$, and $\rho,\theta\in [2,\infty)$. Then
$H^{1,\infty,\infty}_{1,1}$ is dense in both
$L^{2,\rho}_\alpha(\Omega)\times L^{2,\theta}_\beta(\Gamma_1)$ and
$H^{1,\rho,\theta}_{\alpha,\beta}$. Then  all the embeddings in
\eqref{3.10} are dense.
\end{lem}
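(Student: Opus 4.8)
The plan is to reduce everything to a single density statement about smooth (or $C^1$) pairs of functions compatible through the trace, and then to use a diagonal/truncation argument to handle the two weighted factors separately. First I would observe that, since $\alpha,\beta\in L^\infty$, we have the continuous inclusions $H^1(\Omega)\hookrightarrow L^2(\Omega)\hookrightarrow L^{2,\rho}_\alpha(\Omega)$ (because $\alpha^{1/\rho}u$ lies in $L^\infty\cdot L^2\subset L^2\subset$, and actually $\|\alpha^{1/\rho}u\|_\rho\le \|\alpha\|_\infty^{1/\rho}\|u\|_\rho$ whenever $u\in L^\rho$), and similarly on $\Gamma_1$; moreover $L^\rho(\Omega)\hookrightarrow L^{2,\rho}_\alpha(\Omega)$ densely was already recorded in the excerpt. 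So $H^{1,\infty,\infty}_{1,1}$ — which by the identification in \eqref{H1} is just $\{u\in H^1: u_{|\Gamma}\in H^1(\Gamma)\cap L^\infty(\Gamma), u\in L^\infty(\Omega)\}$, i.e.\ the bounded elements of $H^1$ — is contained in $H^{1,\rho,\theta}_{\alpha,\beta}$, and it suffices to prove it is dense there; density in $L^{2,\rho}_\alpha(\Omega)\times L^{2,\theta}_\beta(\Gamma_1)$ then follows a fortiori from the second chain in \eqref{3.10}, and the final assertion that all embeddings in \eqref{3.10} are dense is an immediate consequence (the map $H^{1,\rho,\theta}_{\alpha,\beta}\to H^1$ has dense range since it contains the dense set $H^{1,\infty,\infty}_{1,1}$, and $H^1\hookrightarrow H^0$, $L^{2,\rho}_\alpha(\Omega)\times L^{2,\theta}_\beta(\Gamma_1)\hookrightarrow H^0$ are classically dense).

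Next I would fix $u\in H^{1,\rho,\theta}_{\alpha,\beta}$ and approximate it in two stages. Stage one: truncate. Set $u_k = T_k(u)$ where $T_k(s)=\max\{-k,\min\{k,s\}\}$; since $|T_k(s)|\le|s|$ and $T_k$ is $1$-Lipschitz with $T_k(0)=0$, $u_k\in H^1(\Omega)$, $(u_k)_{|\Gamma}=T_k(u_{|\Gamma})\in H^1(\Gamma)$, $u_k\in H^1$, and $u_k$ is bounded, hence $u_k\in H^{1,\infty,\infty}_{1,1}$. One checks $u_k\to u$ in $H^{1,\rho,\theta}_{\alpha,\beta}$: convergence in $H^1$ is standard (dominated convergence for $\nabla u_k=\nabla u\,\chi_{\{|u|<k\}}$ and likewise on $\Gamma_1$), and convergence of $\alpha^{1/\rho}u_k\to\alpha^{1/\rho}u$ in $L^\rho(\Omega)$, $\beta^{1/\theta}u_k\to\beta^{1/\theta}u$ in $L^\theta(\Gamma_1)$ is again dominated convergence with dominating functions $\alpha^{1/\rho}|u|$, $\beta^{1/\theta}|u|$. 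This reduces the problem to approximating a \emph{bounded} element $u\in H^1$ by elements of $H^{1,\infty,\infty}_{1,1}$ in the $H^{1,\rho,\theta}_{\alpha,\beta}$-norm — but now, for bounded $u$, the weighted norms are controlled by the $H^1$-norm on level sets, so it suffices to produce an approximating sequence that is bounded in $L^\infty$ uniformly in the approximation parameter and converges in $H^1$; by interpolating $\|\alpha^{1/\rho}(u-v_j)\|_\rho^\rho\le \|\alpha\|_\infty\,(2M)^{\rho-2}\|u-v_j\|_2^2$ (with $M$ a uniform $L^\infty$ bound), $H^1$-convergence plus a uniform $L^\infty$ bound forces convergence in $H^{1,\rho,\theta}_{\alpha,\beta}$.

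Stage two is therefore the genuine content: density of the \emph{bounded} elements of $H^1$ inside $H^1$ (and more precisely, approximation by uniformly bounded sequences). Here I would invoke the density result for $H^1(\Omega;\Gamma)$ that the excerpt flags as nontrivial — namely that compatible $C^1$ pairs $\{(v,w)\in C^1(\overline\Omega)\times C^1(\Gamma): w=v_{|\Gamma}\}$ are dense in $H^1(\Omega;\Gamma)$ (this is exactly the point addressed in \cite{vazvitHLB}, \cite{quarteroni}, and is where the geometry of $\Gamma$ being merely $C^1$ matters); restricting to $H^1$ via the condition $w=0$ on $\Gamma_0$ and using $\sigma(\overline{\Gamma_0}\cap\overline{\Gamma_1})=0$ gives density of a compatible-$C^1$-type subset of $H^1$ in $H^1$. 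Any such $C^1$ approximants are automatically bounded, but not uniformly; to fix this I would post-compose with the truncation $T_{M+1}$ (with $M=\|u\|_{L^\infty}$), which does not spoil $H^1$-convergence (truncation is $1$-Lipschitz and a contraction, so $\|T_{M+1}v_j - u\|_{H^1}=\|T_{M+1}v_j-T_{M+1}u\|_{H^1}\le \|v_j-u\|_{H^1}$ — using that $T_{M+1}$ maps $H^1(\Omega;\Gamma)$ to itself continuously, which is classical) and yields a sequence in $H^{1,\infty,\infty}_{1,1}$ that is uniformly bounded by $M+1$ and converges to $u$ in $H^1$, hence in $H^{1,\rho,\theta}_{\alpha,\beta}$ by the interpolation inequality above. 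The main obstacle is precisely the invocation of the non-trivial $C^1(\Omega;\Gamma)$-density (or, equivalently, constructing smooth/$C^1$ compatible approximants directly, e.g.\ by mollification adapted to boundary charts while preserving the trace condition $v=0$ on $\Gamma_0$); everything else — the two truncation steps and the dominated-convergence/interpolation estimates for the weights — is routine.
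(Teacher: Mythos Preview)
Your Stage-one truncation already proves density of $H^{1,\infty,\infty}_{1,1}$ in $H^{1,\rho,\theta}_{\alpha,\beta}$ and coincides with the paper's argument for that half: $u_k=T_k(u)\in H^{1,\infty,\infty}_{1,1}$ and $u_k\to u$ in $H^{1,\rho,\theta}_{\alpha,\beta}$ by dominated convergence. You then write that ``this reduces the problem to approximating a bounded element of $H^1$ by elements of $H^{1,\infty,\infty}_{1,1}$'', but a bounded element of $H^1$ \emph{is} an element of $H^{1,\infty,\infty}_{1,1}$ --- there is nothing left to do, and your Stage two is superfluous. (Incidentally, the inequality $\|T_{M+1}v_j-T_{M+1}u\|_{H^1}\le\|v_j-u\|_{H^1}$ you invoke in Stage two is false in general: truncation is $1$-Lipschitz for the $L^2$ part but does not contract gradients of differences; take $f(x)=x$, $g(x)=x+2$ on $(0,1)$ with cut level $3/2$.)

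The genuine gap is the density in the \emph{product} $L^{2,\rho}_\alpha(\Omega)\times L^{2,\theta}_\beta(\Gamma_1)$. Your claim that it ``follows a fortiori from the second chain in \eqref{3.10}'' is circular: from density of $H^{1,\infty,\infty}_{1,1}$ in $H^{1,\rho,\theta}_{\alpha,\beta}$ together with $H^{1,\rho,\theta}_{\alpha,\beta}\hookrightarrow L^{2,\rho}_\alpha(\Omega)\times L^{2,\theta}_\beta(\Gamma_1)$ you obtain density in the product only if that embedding is itself dense --- which is precisely part of the lemma's content. The difficulty is real: an element $(f,g)$ of the product is \emph{decoupled} ($g$ need bear no relation to any trace of $f$), whereas every element of $H^{1,\infty,\infty}_{1,1}$ is a coupled pair $(u,u_{|\Gamma})$; this is exactly the ``nontrivial density'' flagged in the text just before the lemma. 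The paper supplies the missing step by a short duality argument: if $(\varphi,\psi)\in L^{\rho'}(\Omega)\times L^{\theta'}(\Gamma_1)$ annihilates $H^{1,\infty,\infty}_{1,1}$, then testing with $\phi\in C_c^\infty(\Omega)$ (zero trace) forces $\varphi=0$; then, for $v\in C^1_c(\Gamma_1)$, the trace right-inverse $\mathbb{D}$ produces $\mathbb{D}v\in H^1(\Omega)\cap C(\overline{\Omega})\subset H^{1,\infty,\infty}_{1,1}$ with boundary value $v$, forcing $\psi=0$. Your strategy needs an ingredient of this type --- either the duality argument, or a direct construction using $\mathbb{D}$ to decouple the interior and boundary approximations.
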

\begin{proof} We first claim that $H^{1,\infty,\infty}_{1,1}$ is dense in
$L^{2,\rho}_\alpha(\Omega)\times L^{2,\theta}_\beta(\Gamma_1)$. By \eqref{3.2} our claim
follows once we prove that, given $(\varphi,\psi)\in
L^{\rho'}(\Omega)\times L^{\theta'}(\Gamma_1)$ such that
\begin{equation}\label{3.11}
\int_\Omega \varphi u+\int_{\Gamma_1}\psi u=0\qquad\text{for all
$u\in H^{1,\infty,\infty}_{1,1}$,}
\end{equation}
then $(\varphi,\psi)=0$. Taking $u\in C^\infty_c(\Omega)$ in
\eqref{3.11} we immediately get that $\varphi=0$, hence \eqref{3.11}
reduces to $\int_{\Gamma_1}\psi u=0$ for all $u\in
H^{1,\infty,\infty}_{1,1}$. Next, given any  $v\in
C^1_c(\Gamma\setminus\overline{\Gamma_1})$,  trivially extended to $v\in
C^1(\Gamma)$, we have $v\in H^1(\Gamma)\cap W^{1-\frac 1{2N},
2N}(\Gamma)$, hence $\mathbb{D}v\in W^{1,2N}(\Omega)$, so
$\mathbb{D}v\in H^1(\Omega)\cap C(\overline{\Omega})$ by Morrey's
theorem, hence $\mathbb{D}v\in H^{1,\infty,\infty}_{1,1}$. It
follows that $\int_{\Gamma_1}\psi v=0$ for all  $v\in
C^1_c(\Gamma\setminus\overline{\Gamma_1})$,  so  $\psi=0$ a.e. in
 $\Gamma\setminus\overline{\Gamma_1}$,  from which, as
$\sigma(\overline{\Gamma}_0\cap\overline{\Gamma}_1)=0$, we get $\psi=0$,  concluding
the proof of our claim.

To prove that $H^{1,\infty,\infty}_{1,1}$ is dense in
$H^{1,\rho,\theta}_{\alpha,\beta}$  we use a classical truncation
argument. Given $u\in
H^1$ and $k\in\N$  we respectively denote by $u^k$ and
$(u_{|\Gamma})^k$ the truncated of $u$ and $u_{|\Gamma}$ given by
\begin{equation}\label{3.13}
u^k=
\begin{cases}
u &\text{if $|u|\le k$,}\\
ku/|u| &\text{if $|u|>k$,}
\end{cases}\qquad
(u_{|\Gamma})^k=
\begin{cases}
u_{|\Gamma} &\text{if $|u_{|\Gamma}|\le k$,}\\
ku_{|\Gamma}/|u_{|\Gamma}| &\text{if $|u_{|\Gamma}|>k$,}
\end{cases}
\end{equation}
or  $u^k=k-[2k-(u+k)^+]^+$ and
$(u_{|\Gamma})^k=k-[2k-(u_{|\Gamma}+k)^+]^+$. Trivially $u^k\in
L^\infty(\Omega)$ and $(u_{|\Gamma})^k\in L^\infty(\Gamma)$.
Moreover using \cite[Lemma~7.6]{gilbarg} first in
$\Omega$ and then in coordinate neighborhoods on $\Gamma$, we get
that  $u^k\in H^1(\Omega)$, $(u_{|\Gamma})^k\in H^1(\Gamma)$,
\begin{equation}\label{3.14}
\nabla u^k=\begin{cases}
\nabla u &\text{if $|u|\le k$,}\\
0 &\text{if $|u|>k$,}
\end{cases}
\qquad\text{and}\quad \nabla_\Gamma (u_{|\Gamma})^k=\begin{cases}
\nabla_\Gamma u_{|\Gamma} &\text{if $|u_{|\Gamma}|\le k$,}\\
0 &\text{if $|u_{|\Gamma}|>k$.}
\end{cases}
\end{equation}
Now we note that
\begin{equation}\label{3.15}
(u_{|\Gamma})^k={u^k}_{|\Gamma},
\end{equation}
which is trivial when $u\in H^1\cap
C(\overline{\Omega})$, while in the general case $u\in H^1$ it
follows by approximating $u$ by a sequence $(u_n)_n$   in
$C^\infty(\overline{\Omega})$ such that $u_n\to u$ in $H^1(\Omega)$
(see \cite[Corollary~9.8, p.~277]{brezis2}).
 By \eqref{3.15} then we
have $u^k\in H^{1,\infty,\infty}_{1,1}$ for all $k\in\N$. We now
note that, by \eqref{3.13}--\eqref{3.14} and several applications of
the Lebesgue Dominated Convergence Theorem, we get $u^k\to u$ in
$H^{1,\rho,\theta}_{\alpha,\beta}$ as $k\to\infty$. Hence
$H^{1,\infty,\infty}_{1,1}$ is dense in
$H^{1,\rho,\theta}_{\alpha,\beta}$.

Finally we note that the density of the embeddings in \eqref{3.10}
follows from  previous statements since
$H^{1,\infty,\infty}_{1,1}\subseteq
H^{1,\rho,\theta}_{\alpha,\beta}$, $H^1=H^{1,2,2}_{1,1}$ and
$H^0=L^{2,2}_1(\Omega)\times L^{2,2}_1(\Gamma_1)$.
\end{proof}
Using \eqref{3.10} and Lemma~\ref{lemma3.1} and making the
identification $(H^0)'\simeq H^0$, which is coherent with
\eqref{dualiLebesgue}, we have  the two chains of dense embeddings
\begin{equation}
\label{3.17}
H^{1,\rho,\theta}_{\alpha,\beta}\hookrightarrow H^1
\hookrightarrow H^0\simeq (H^0)'\hookrightarrow
(H^1)'\hookrightarrow
(H^{1,\rho,\theta}_{\alpha,\beta})',
\end{equation}
$$
H^{1,\rho,\theta}_{\alpha,\beta}\hookrightarrow
L^{2,\rho}_\alpha(\Omega)\times
L^{2,\theta}_\beta(\Gamma_1)\hookrightarrow H^0 \simeq
(H^0)'\hookrightarrow [L^{2,\rho}_\alpha(\Omega)]'\times
[L^{2,\theta}_\beta(\Gamma_1)]'\hookrightarrow
(H^{1,\rho,\theta}_{\alpha,\beta})'
$$
and, by \eqref{3.2},
\begin{equation}\label{3.18}
[L^\rho(\Omega,\lambda_\alpha)]'\times
[L^\rho(\Gamma_1,\lambda_\beta)]' \hookrightarrow [L^{2,\rho}_\alpha
(\Omega)]'\times [L^{2,\rho}_\beta (\Gamma_1)]'\hookrightarrow
(H^{1,\rho,\theta}_{\alpha,\beta})'.
\end{equation}

\subsection{Weak solutions for the linear version of problem (\ref{1}).}
We now consider the linear
evolution boundary value problem
\begin{equation}\label{L}
\begin{cases} u_{tt}-\Delta u=\xi \qquad &\text{in
$(0,T)\times\Omega$,}\\
u=0 &\text{on $(0,T)\times \Gamma_0$,}\\
u_{tt}+\partial_\nu u-\Delta_\Gamma u=\eta\qquad &\text{on
$(0,T)\times \Gamma_1$,}
\end{cases}
\end{equation}
where $0<T<\infty$ and $\xi=\xi(t,x)$, $\eta=\eta(t,x)$ are given
forcing terms of the form
\begin{equation}\label{3.19}
\left\{
\begin{alignedat}3
&\xi=\xi_1+\alpha \xi_2,\qquad && \xi_1\in L^1(0,T;L^2(\Omega)), \quad && \xi_2\in L^{\rho'}(0,T;L^{\rho'}(\Omega, \lambda_\alpha)), \\
&\eta=\eta_1+\beta \eta_2,\qquad && \eta_1\in
L^1(0,T;L^2(\Gamma_1)), \quad && \eta_2\in
L^{\theta'}(0,T;L^{\theta'}(\Gamma_1, \lambda_\beta)),
\end{alignedat}\right.
\end{equation}
where $\alpha\in L^\infty(\Omega)$, $\beta\in L^\infty(\Gamma_1)$,
$\alpha, \beta\ge 0$ and $\rho,\theta\in [2,\infty)$.

To write \eqref{L} in a more abstract form  we set
$A\in\cal{L}(H^1,(H^1)')$ by
\begin{equation}\label{3.22}
\langle Au,v\rangle_{H^1}=\int_\Omega \nabla u\nabla
v+\int_{\Gamma_1} (\nabla_\Gamma u,\nabla_\Gamma v)_\Gamma,\qquad
\text{for all $u,v\in H^1$}.
\end{equation}
Moreover we denote $\Xi_1=(\xi_1,\eta_1)\in L^1(0,T;H^0)$ and we
define $\Xi_2\in [L^{2,\rho,\theta}_{\alpha,\beta}(0,T)]'$ by
setting $\langle
\Xi_2(t),\Phi\rangle_{L^{2,\rho}_\alpha(\Omega)\times
L^{2,\theta}_\beta(\Gamma_1)}=\int_\Omega \alpha
\xi_2(t)\phi+\int_{\Gamma_1} \beta \eta_2(t)\psi$ for almost all
$t\in (0,T)$ and all $\Phi=(\phi,\psi)\in
L^{2,\rho}_\alpha(\Omega)\times L^{2,\theta}_\beta(\Gamma_1)$.
By \eqref{3.17} we set $\Xi=\Xi_1+\Xi_2\in L^1(0,T;
[L^{2,\rho}_\alpha(\Omega)]'\times [L^{2,\theta}_\beta(\Gamma_1)]')$.

The following result characterizes solutions of $u$ in the sense of distributions.
\begin{lem}\label{lemma3.2}
Suppose that \eqref{3.19} holds and let
\begin{equation}\label{3.20}
u\in L^\infty(0,T;H^1)\cap W^{1,\infty}(0,T;H^0),\qquad u'\in
L^{2,\rho,\theta}_{\alpha,\beta}(0,T).
\end{equation}
Then the following facts  are equivalent:
\renewcommand{\labelenumi}{{(\roman{enumi})}}
\begin{enumerate}
\item the distribution identity
\begin{equation}\label{3.21}
\int_0^T\left[-(u',\phi')_{H^0}+\int_\Omega \nabla u\nabla
\phi+\int_{\Gamma_1} (\nabla_\Gamma
u,\nabla_\Gamma\phi)_\Gamma-\int_\Omega
\xi\phi-\int_{\Gamma_1}\eta\phi\right]=0
\end{equation}
holds for all $\phi\in C_c((0,T);H^1)\cap C^1_c((0,T);H^0)\cap
L^{2,\rho,\theta}_{\alpha,\beta}(0,T)$;
\item $u'\in W^{1,1}(0,T; [H^{1,\rho,\theta}_{\alpha,\beta}]')$ and
\begin{equation}\label{3.26}
u''(t)+Au(t)=\Xi(t)\qquad\text{in
$[H^{1,\rho,\theta}_{\alpha,\beta}]'$ for almost all $t\in (0,T)$;}
\end{equation}
\item the alternative distribution identity
\begin{equation}\label{3.27}
(u',\phi)_{H^0}\Big|_0^T+ \int_0^T\left[-(u',\phi')_{H^0}+\langle
Au,\phi\rangle_{H^1}-\langle
\Xi,\phi\rangle_{L^{2,\rho}_\alpha(\Omega)\times
L^{2,\theta}_\beta(\Gamma_1)}\right]=0
\end{equation}
holds for all $\phi\in C([0,T];H^1)\cap C^1([0,T];H^0)\cap
L^{2,\rho,\theta}_{\alpha,\beta}(0,T)$.
\end{enumerate}
Moreover any $u$ satisfying \eqref{3.20} and (ii) enjoys the further regularity
\begin{equation}\label{3.24}
u\in C([0,T];H^1)\cap C^1([0,T];H^0)
\end{equation}
and satisfies the energy identity
\begin{equation}\label{3.25}
\frac 12\|u'\|_{H^0}^2+\frac 12 \langle Au, u\rangle
_{H^1}\Big|_s^t=\int_s^t\langle
\Xi,u'\rangle_{L^{2,\rho}_\alpha(\Omega)\times
L^{2,\theta}_\beta(\Gamma_1)}\,d\tau
\end{equation}
for $0\le s\le t\le T$.
\end{lem}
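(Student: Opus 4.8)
The plan is to prove the cyclic chain of equivalences (i)$\Rightarrow$(iii)$\Rightarrow$(ii)$\Rightarrow$(i), and then extract the extra regularity \eqref{3.24} and the energy identity \eqref{3.25} from (ii). The three test-function classes are nested: $C_c((0,T);H^1)\cap C^1_c((0,T);H^0)\cap L^{2,\rho,\theta}_{\alpha,\beta}(0,T)$ sits densely inside $C([0,T];H^1)\cap C^1([0,T];H^0)\cap L^{2,\rho,\theta}_{\alpha,\beta}(0,T)$ once we fix boundary values, so the passage from \eqref{3.21} to \eqref{3.27} is a matter of (a) integrating by parts to produce the boundary term $(u',\phi)_{H^0}|_0^T$, which requires knowing that $t\mapsto (u'(t),\phi(t))_{H^0}$ has a well-defined trace at $t=0,T$, and (b) rewriting the spatial integrals $\int_\Omega\nabla u\nabla\phi+\int_{\Gamma_1}(\nabla_\Gamma u,\nabla_\Gamma\phi)_\Gamma$ as $\langle Au,\phi\rangle_{H^1}$ via the definition \eqref{3.22}, and the source terms as $\langle\Xi,\phi\rangle$ via the definitions of $\Xi_1,\Xi_2$ and the decomposition \eqref{3.19}. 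Here one must be slightly careful that the pairing $\langle\Xi,\phi\rangle_{L^{2,\rho}_\alpha(\Omega)\times L^{2,\theta}_\beta(\Gamma_1)}$ is legitimate: $\Xi\in L^1(0,T;[L^{2,\rho}_\alpha(\Omega)]'\times[L^{2,\theta}_\beta(\Gamma_1)]')$ by construction, and $\phi\in L^{2,\rho,\theta}_{\alpha,\beta}(0,T)$ by hypothesis, so the integrand is in $L^1(0,T)$ by H\"older.

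For (iii)$\Rightarrow$(ii): taking $\phi$ of the form $\phi(t)=\chi(t)w$ with $\chi\in C^\infty_c(0,T)$ and $w\in H^{1,\rho,\theta}_{\alpha,\beta}$, identity \eqref{3.27} (the boundary term vanishes) reads $\int_0^T[-(u'(t),w)_{H^0}\chi'(t)+\langle Au(t),w\rangle_{H^1}\chi(t)-\langle\Xi(t),w\rangle\chi(t)]\,dt=0$, which says exactly that the scalar distribution $t\mapsto(u'(t),w)_{H^0}$ has distributional derivative $t\mapsto -\langle Au(t),w\rangle_{H^1}+\langle\Xi(t),w\rangle$ in $L^1(0,T)$ (using $Au\in L^\infty(0,T;(H^1)')\subset L^1(0,T;(H^{1,\rho,\theta}_{\alpha,\beta})')$ from \eqref{3.20} and \eqref{3.22}, and $\Xi\in L^1(0,T;(H^{1,\rho,\theta}_{\alpha,\beta})')$). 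Since this holds for all $w$ in the separable reflexive space $H^{1,\rho,\theta}_{\alpha,\beta}$, and the right-hand side is a fixed $L^1(0,T;(H^{1,\rho,\theta}_{\alpha,\beta})')$ function, a standard vector-valued argument (test against a countable dense set, then use boundedness) gives $u'\in W^{1,1}(0,T;(H^{1,\rho,\theta}_{\alpha,\beta})')$ with $u''=-Au+\Xi$ in that space for a.e.\ $t$, i.e.\ \eqref{3.26}. The implication (ii)$\Rightarrow$(i) is the easy direction: pair \eqref{3.26} with a test function $\phi$ from the first class, integrate over $(0,T)$, and integrate by parts in $t$ (legitimate since $u'\in W^{1,1}(0,T;(H^{1,\rho,\theta}_{\alpha,\beta})')$ and $\phi\in W^{1,\ldots}$ with compact support so no boundary term), unwinding the definitions of $A$ and $\Xi$ to recover \eqref{3.21}.

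The genuinely delicate part is the final paragraph: upgrading \eqref{3.20}$+$(ii) to the strong continuity \eqref{3.24} and to the energy identity \eqref{3.25}. The obstruction is that $u\in L^\infty(0,T;H^1)\cap W^{1,\infty}(0,T;H^0)$ only gives weak continuity into $H^1$ and $H^0$ a priori; to get norm-continuity and the energy balance one wants to test \eqref{3.26} against $u'$ itself, but $u'(t)$ is not in general an admissible test function (it lies in $L^{2,\rho}_\alpha(\Omega)\times L^{2,\theta}_\beta(\Gamma_1)$ and in $H^0$, but membership in $H^{1,\rho,\theta}_{\alpha,\beta}$ — which needs $u'(t)\in H^1$ — is false). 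The standard remedy is a mollification-in-time / density argument: this is precisely the situation covered by the Lions--Magenes type regularity theorem for second-order hyperbolic equations, and I would invoke it in the abstract form provided in \cite[Theorem~7.2, Appendix]{chueshovellerlasiecka} (cited in the introduction as the workhorse), applied with the pivot triple $H^{1,\rho,\theta}_{\alpha,\beta}\hookrightarrow H^0\hookrightarrow (H^{1,\rho,\theta}_{\alpha,\beta})'$ from \eqref{3.17}. Concretely: regularize by convolving with a Friedrichs mollifier $\varphi_n$ in $t$ to get $u_n=u*\varphi_n$ solving $u_n''+Au_n=\Xi*\varphi_n$ classically on a shrinking interval, test the mollified equation against $u_n'$ to obtain the energy identity for $u_n$, pass to the limit $n\to\infty$ using $\Xi*\varphi_n\to\Xi$ in $L^1(0,T;(H^{1,\rho,\theta}_{\alpha,\beta})')$ and lower semicontinuity, and finally combine the resulting equality of energies at all times with weak continuity to promote it to strong continuity (if $\|u(t)\|^2$ is continuous and $u(t)\rightharpoonup u(t_0)$, then $u(t)\to u(t_0)$ in the Hilbert norm), which yields both \eqref{3.24} and \eqref{3.25} simultaneously.
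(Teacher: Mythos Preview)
Your cyclic plan (i)$\Rightarrow$(iii)$\Rightarrow$(ii)$\Rightarrow$(i) has a circularity at the very first arrow. To pass from \eqref{3.21} to \eqref{3.27} you must make sense of the boundary term $(u',\phi)_{H^0}\big|_0^T$, and you correctly flag that this requires $t\mapsto(u'(t),\phi(t))_{H^0}$ to have traces at $0,T$. But \eqref{3.20} alone gives only $u'\in L^\infty(0,T;H^0)$, which does \emph{not} yield even weak continuity of $u'$ into $H^0$: Strauss's lemma would need $u'$ continuous into some larger ambient space, and \eqref{3.20} supplies none. That continuity comes precisely from (ii): once $u'\in W^{1,1}(0,T;[H^{1,\rho,\theta}_{\alpha,\beta}]')$ one has $u'\in C([0,T];[H^{1,\rho,\theta}_{\alpha,\beta}]')$, and only then can one upgrade. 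So you cannot reach (iii) before (ii). The paper's order is (i)$\Rightarrow$(ii) via separated test functions $\phi(t)=\psi(t)w$ with $\psi\in C^\infty_c(0,T)$ --- exactly your argument for (iii)$\Rightarrow$(ii), but applied starting from (i), since the compactly supported class already contains such $\phi$ --- then (ii)$\Rightarrow$\eqref{3.24}--\eqref{3.25}, and only afterwards (ii)$+$\eqref{3.24}$\Rightarrow$(iii) (rewriting the $X'$--pairing as an $H^0$--pairing and enlarging the test class by time regularization and an explicit reflection/cut--off extension of $\phi$ across $0$ and $T$). The trivial implication (iii)$\Rightarrow$(i) closes the loop.

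A second, smaller point: for \eqref{3.24}--\eqref{3.25} the paper does not use \cite{chueshovellerlasiecka} (that reference drives the nonlinear semigroup theory in Appendix~\ref{appendixA}), but rather \cite[Theorems~4.1~and~4.2]{strauss}, which are abstract results tailored to exactly this linear situation with the triple $\widetilde V=H^1$, $\widetilde H=H^0$, $\widetilde W=L^{2,\rho}_\alpha(\Omega)\times L^{2,\theta}_\beta(\Gamma_1)$, $\widetilde Z=L^{2,\rho,\theta}_{\alpha,\beta}(0,T)$. The structural hypothesis one must check is that $\widetilde X=\widetilde V\cap\widetilde W=H^{1,\rho,\theta}_{\alpha,\beta}$ is dense in both $\widetilde V$ and $\widetilde W$; this is exactly Lemma~\ref{lemma3.1}, whose role in the proof you do not mention. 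Your mollification sketch is in spirit what Strauss's argument does, so it could be made to work, but the citation and the density input should be corrected.
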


\begin{proof}
 Let us denote  $\widetilde{X}=H^{1,\rho,\theta}_{\alpha,\beta}$. We first  claim that \eqref{3.24}--\eqref{3.25} hold for any  u satisfying (ii).  To prove our
claim we   apply \cite[Theorems~4.1~and~4.2]{strauss}. Referring to the
notation of the quoted paper  (but adding a $\widetilde{}$ on the notation of the spaces)
we set

$$\widetilde{V}=H^1,\quad \widetilde{H}=H^0,\quad \widetilde{W}=L^{2,\rho}_\alpha(\Omega)\times
L^{2,\theta}_\beta(\Gamma_1),\quad
\widetilde{Z}=L^{2,\rho,\theta}_{\alpha,\beta}(0,T),$$
while $A(t)=A$ is defined
by \eqref{3.22}. To check the structural assumptions of
\cite[p.545]{strauss} we note that  $\widetilde{V}$  and  $\widetilde{W}$  are both contained in
$\widetilde{H}$  and  $\widetilde{X}=\widetilde{V}\cap \widetilde{W}$   is dense in both $\widetilde{V}$ and  $\widetilde{W}$  by
Lemma~\ref{lemma3.1}. Moreover, by \eqref{3.3}--\eqref{3.4} we have
 $\widetilde{Z}\subset L^1(0,T; \widetilde{W})$ and  $\widetilde{Z}'\subset L^1(0,T; \widetilde{W}')$  as dense
subsets with continuous inclusions. Trivially for any  $w\in \widetilde{Z}$  and
 $v\in \widetilde{Z}'$  we have   $\langle v,w,\rangle_{\widetilde{Z}}=\int_0^T \langle
v(t),w(t)\rangle_{\widetilde{W}}\,dt$  so \cite[(3.1)]{strauss} holds. Next
multiplications by step functions trivially maps  $\widetilde{Z}$   into itself
and translations in $t$ are continuous in the strong operator
topology of  $\widetilde{Z}$   thanks to the extension of
\cite[Lemma~4.3,~p.114]{brezis2} for vector--valued functions. The
specific assumptions for \cite[Theorems~4.1~and~4.2]{strauss} are
satisfied since  $\widetilde{V}$   is dense in  $\widetilde{H}$   by Lemma~\ref{lemma3.1},  $\widetilde{W}$   is
contained in  $\widetilde{H}$   and \cite[(3.5)]{strauss} holds by \eqref{3.6} and
\eqref{3.22}.  Since \cite[(4.1)]{strauss} in this case
reduces to \eqref{3.26},  the proof of our first claim is completed.

Next we claim  that (i) and (ii) are equivalent each other and with
 the distribution identity
\begin{equation}\label{3.28}
\langle u',\phi\rangle_X\Big|_0^T+
\int_0^T\left[-(u',\phi')_{H^0}+\langle Au,\phi\rangle_{H^1}-\langle
\Xi,\phi\rangle_{L^{2,\rho}_\alpha(\Omega)\times
L^{2,\theta}_\beta(\Gamma_1)}\right]=0
\end{equation}
for all  $\phi\in C^1([0,T];\widetilde{X})$. Indeed if  $u$ satisfies (i) then  by  taking test functions $\phi$ in the
separate form $\phi(t)=\psi(t)w$, $\psi\in C^\infty_c(0,T)$,  $w\in
\widetilde{X}$, from \eqref{3.21} we immediately get that
$\int_0^T-uì\psi'+(Au-\Xi)\psi=0$ in  $\widetilde{X}'$ , from which (ii) follows.
Conversely, if (ii) holds then, by a standard density argument in
 $W^{1,1}(0,T;\widetilde{X}')$  we get that for any  $\phi\in C^1([0,T];\widetilde{X})$  we
have $\langle u',\phi\rangle_X\in W^{1,1}(0,T)$ and

\begin{equation}\label{3.29}
\langle u',\phi\rangle_{\widetilde{X}'}=\langle u'',\phi\rangle_{\widetilde{X}}+\langle
u',\phi'\rangle_{\widetilde{X}}\qquad\text{a.e. in $(0,T)$}.
\end{equation}

Then, evaluating \eqref{3.26} with  $\phi\in C^1([0,T];\widetilde{X})$,
integrating in $[0,T]$ and using \eqref{3.29} we get \eqref{3.28}.
By \eqref{3.28} we immediately get that that \eqref{3.21} holds true
for any  $\phi\in C^1_c((0,T);\widetilde{X})$  and then, by standard time
regularization, for any $\phi\in C_c((0,T);H^1)\cap
C^1_c((0,T);H^0)\cap L^{2,\rho,\theta}_{\alpha,\beta}(0,T)$, so
 completing the proof of our claim.

Since (iii) trivially implies (i),  the proof is completed (thanks to
our  second  claim) provided  we prove that  if \eqref{3.28}
holds  for all  $\phi\in C^1([0,T];\widetilde{X})$  then \eqref{3.27} holds for
all $\phi\in C([0,T];H^1)\cap C^1([0,T];H^0)\cap
L^{2,\rho,\theta}_{\alpha,\beta}(0,T)$. Since by \eqref{3.24} the
identity \eqref{3.28} can be rewritten as \eqref{3.27}, we just have
to prove that we can take less regular test functions in it. By
standard time regularization one easily get that \eqref{3.27} holds
for any $\phi\in C(\R;H^1)\cap C^1(\R;H^0)\cap
L^{2,\rho,\theta}_{\alpha,\beta}(-\infty,\infty)$, so our claim follows
since any $\phi\in C([0,T];H^1)\cap
C^1([0,T];H^0)\cap L^{2,\rho,\theta}_{\alpha,\beta}(0,T)$ can be
extended to $\widetilde{\phi}\in C(\R;H^1)\cap C^1(\R;H^0)\cap
L^{2,\rho,\theta}_{\alpha,\beta}(-\infty,\infty)$
\begin{footnote}{One first
defines $\phi_1\in C([-T/2,3T/2];H^1)\cap C^1([-T/2,3T/2];H^0)\cap
L^{2,\rho,\theta}_{\alpha,\beta}(-T/2,3T/2)$ as
$$\phi_1(t)=\begin{cases}
\phi(t)\qquad &\text{if $t\in [0,T]$,}\\
3\phi(-t)-2\phi(-2t)\qquad &\text{if $t\in [-T/2,0]$,}\\
3\phi(2T-t)-2\phi(3T-2t)\qquad &\text{if $t\in [T,3T/2]$,}
\end{cases}.$$
Then one sets $\widetilde{\phi}\in C_c((-T/2,3T/2);H^1)\cap
C^1_c((-T/2,3T/2);H^0)\cap
L^{2,\rho,\theta}_{\alpha,\beta}(-T/2,3T/2)$ as
$\tilde{\phi}=\psi_0\phi_1$ where $\psi_0\in C^\infty_c(-T/2,3T/2)$
and  $\psi_0=1$ in $[0,T]$.}
\end{footnote}.
\end{proof}
%%%%%%%%%%%%%%%%%%%%%%%%%%%%%%%%%%%%%%%%%%%%%%%%%%%%%%%%%%%
\section{\bf Well--posedness in $\boldsymbol{H^1\times H^0}$ and in $\boldsymbol{H^{1,\rho,\theta}_{\alpha,\beta}\times H^0}$: statements}
\label{section3}  In this section we state our main well--posedness
result for  problem \eqref{1} and a slightly more general (and
abstract) version of it. With reference to
\eqref{1} we now introduce our main assumptions on the
nonlinearities in it, starting from $P$ and $Q$.
\renewcommand{\labelenumi}{{(PQ\arabic{enumi})}}
\begin{enumerate}
\item $P$ and $Q$ are Carath\'eodory functions, respectively in
$\Omega\times\R$ and $\Gamma_1\times\R$, and  there are $\alpha\in
L^\infty(\Omega)$, $\beta\in L^\infty(\Gamma_1)$, $\alpha,\beta\ge
0$, and constants $m,\mu>1$, $c_m,c_\mu> 0$  such that
\begin{alignat}2\label{Pgrowth}
|&P(x,v)|\le c_m\alpha(x) (1+|v|^{m-1})\,\,&&\text{for almost all
$x\in\Omega$, all $v\in\R$;}\\\label{Qgrowth} |&Q(x,v)|\le
c_\mu\beta(x) (1+|v|^{\mu-1})\,\,&&\text{for almost all
$x\in\Gamma_1$, all $v\in\R$.}
\end{alignat}
\item $P$ and $Q$ are monotone increasing in the second variable for almost all values of
the first one;
\item $P$ and $Q$ are coercive, that is there are constants $c'_m,c'_\mu>
0$ such that
\begin{alignat}2
&P(x,v)v\ge c'_m\alpha(x)|v|^m \qquad&&\text{for almost all $x\in\Omega$, all $v\in\R$;}\\
&Q(x,v)v\ge c'_\mu\beta(x)|v|^\mu\qquad&&\text{for almost all
$x\in\Gamma_1$, all $v\in\R$.}
\end{alignat}
\end{enumerate}
\begin{rem}\label{remark4.1} Trivially (PQ1--3) yield
$P(\cdot,0)\equiv 0$ and $Q(\cdot,0)\equiv0$. Moreover in the
separate variable case considered in problem \eqref{5}, that is
$P(x,v)=\alpha(x)P_0(v)$ and  $Q(x,v)=\beta(x)Q_0(v)$ with
$\alpha\in L^\infty(\Omega)$, $\beta\in L^\infty(\Gamma_1)$,
$\alpha,\beta\ge 0$, (PQ1--3) reduce to assumption (I).
\end{rem}
Referring to (PQ1) we fix the notation
\begin{equation}\label{4.1}
\overline{m}=\max\{2,m\},\,\, \overline{\mu}=\max\{2,\mu,\},\,\,
W=L^{2,\overline{m}}_\alpha(\Omega)\times
L^{2,\overline{\mu}}_\beta(\Gamma_1),\,\,
X=H^{1,\overline{m},\overline{\mu}}_{\alpha,\beta},
\end{equation}
so \eqref{3.17} and the subsequent embedding read as
\begin{equation}\label{3.17ripetuta}
\begin{alignedat}8
&X\hookrightarrow &&H^1 &&\hookrightarrow &&H^0\simeq
&&(H^0)'\hookrightarrow &&(H^1)'&&\hookrightarrow
&&X',\\
&X\hookrightarrow &&W &&\hookrightarrow &&H^0 \simeq
&&(H^0)'\hookrightarrow &&W'&&\hookrightarrow &&X'.
\end{alignedat}
\end{equation}
Moreover, for $-\infty\le a<b\le\infty$, we denote
$$Z(a,b)=L^{2,\overline{m},\overline{\mu}}_{\alpha,\beta}(a,b),\qquad\text{and}\quad
Z'(a,b)=[Z(a,b)]'.$$
By (PQ1) the Nemitskii operators $\widehat{P}$
and $\widehat{Q}$ (respectively) associated to $P$ and $Q$ are
continuous from $L^{\overline{m}}(\Omega)$ to
$L^{\overline{m}'}(\Omega))\simeq [L^{\overline{m}}(\Omega))]'$ and
from $L^{\overline{\mu}}(\Gamma_1)$ to
$L^{\overline{\mu}'}(\Gamma_1))\simeq
[L^{\overline{\mu}}(\Gamma_1))]'$. By (PQ1) they can be uniquely extended to
\begin{equation}\label{4.2}
\widehat{P}: L^{2,\overline{m}}_\alpha (\Omega)\to
[L^{\overline{m}}(\Omega,\lambda_\alpha)]'\quad\text{and}\quad
\widehat{Q}: L^{2,\overline{\mu}}_\beta (\Gamma_1)\to
[L^{\overline{\mu}}(\Gamma_1,\lambda_\beta)]'
\end{equation}
given, for $u\in L^{2,\overline{m}}_\alpha (\Omega)$, $v\in
L^{\overline{m}}(\Omega,\lambda_\alpha)$, $w\in
L^{2,\overline{\mu}}_\beta (\Gamma_1)$ and $z\in
L^{\overline{\mu}}(\Gamma_1,\lambda_\beta)$, by
\begin{equation}\label{4.3}
\langle\widehat{P}(u),v\rangle_{L^{\overline{m}}(\Omega,\lambda_\alpha
)}=\int_\Omega P(\cdot,u)v\quad\text{and}\quad
\langle\widehat{Q}(w),z\rangle_{L^{\overline{\mu}}(\Gamma_1,\lambda_\beta
)}=\int_{\Gamma_1}Q(\cdot,w)z.
\end{equation}
We denote
\begin{equation}\label{4.4}
B=(\widehat{P},\widehat{Q}):W\to
[L^{\overline{m}}(\Omega,\lambda_\alpha)]'\times
[L^{\overline{\mu}}(\Gamma_1,\lambda_\beta)]',
\end{equation}
and we point out some relevant properties of $B$ we shall use in the
sequel.
\begin{lem}\label{lemma4.1}
Let (PQ1---2) hold and $(a,b)\subset\R$ is bounded. Then
\renewcommand{\labelenumi}{{(\roman{enumi})}}
\begin{enumerate}
\item B is continuous and bounded from $W$ to $[L^{\overline{m}}(\Omega,\lambda_\alpha)]'\times
[L^{\overline{\mu}}(\Gamma_1,\lambda_\beta)]'$ and hence, by
\eqref{3.18}, to $W'$;
\item $B$ acts boundedly and continuously from $Z(a,b)$ to $L^{\overline{m}'}(a,b\, ; [L^{\overline{m}}(\Omega,\lambda_\alpha)]')\times
L^{\overline{\mu}'}(a,b\, ;
[L^{\overline{\mu}}(\Gamma_1,\lambda_\beta)]')$ and hence, by
\eqref{3.4bis}, to $Z'(a,b)$;
\item $B$ is monotone in $W$ and in $Z(a,b)$.
\end{enumerate}
\end{lem}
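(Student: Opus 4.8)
The plan is to reduce everything to the corresponding statements for the Nemitskii operators $\widehat P$ and $\widehat Q$ acting between weighted Lebesgue spaces, since $B=(\widehat P,\widehat Q)$ and $W$, $Z(a,b)$ are products. First I would observe that, by the growth bound \eqref{Pgrowth} and the standard theory of superposition operators (e.g. the Krasnoselskii-type result that a Carath\'eodory map with $|P(x,v)|\le c\,\alpha(x)(1+|v|^{m-1})$ induces a bounded continuous map $L^{m}(\Omega,\lambda_\alpha)\to L^{m'}(\Omega,\lambda_\alpha)$), the operator $\widehat P$ is bounded and continuous as a map $L^{2,\overline m}_\alpha(\Omega)\to [L^{\overline m}(\Omega,\lambda_\alpha)]'$ via the pairing \eqref{4.3}; here one uses $[u]_\alpha\in L^{\overline m}(\Omega,\lambda_\alpha)$ for $u\in L^{2,\overline m}_\alpha(\Omega)$, and the fact that $P(\cdot,u)$ depends only on the equivalence class $[u]_\alpha$ (since $\alpha$ multiplies it). The same argument in coordinate neighbourhoods on $\Gamma_1$ handles $\widehat Q$. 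Chaining with the embedding \eqref{3.18} of $[L^{\overline m}(\Omega,\lambda_\alpha)]'\times[L^{\overline\mu}(\Gamma_1,\lambda_\beta)]'$ into $W'$ then gives part (i).

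For part (ii) I would invoke the vector-valued version of the same superposition-operator theory: a map bounded and continuous from $E$ to $F$ for Lebesgue spaces $E$, $F$ induces, again by the Krasnoselskii theorem applied on the product measure $(a,b)\times\Omega$ (with $a<b$ finite so the interval has finite measure), a bounded continuous map $L^{\overline m}(a,b;L^{2,\overline m}_\alpha(\Omega))\to L^{\overline m'}(a,b;[L^{\overline m}(\Omega,\lambda_\alpha)]')$, and similarly for the $\Gamma_1$ factor. Taking the product over the two components, and then composing with the embedding \eqref{3.4bis} into $Z'(a,b)$, yields (ii). One must be a little careful that $Z(a,b)$ is \emph{defined} in \eqref{3.3} as exactly the product $L^{\overline m}(a,b;L^{2,\overline m}_\alpha(\Omega))\times L^{\overline\mu}(a,b;L^{2,\overline\mu}_\beta(\Gamma_1))$, so no extra identification is needed there; the only nontrivial identification is $[L^{\overline m}(a,b;L^{2,\overline m}_\alpha(\Omega))]'\simeq L^{\overline m'}(a,b;[L^{2,\overline m}_\alpha(\Omega)]')$ from \eqref{3.4}, which is legitimate since $L^{2,\overline m}_\alpha(\Omega)$ is reflexive for $\overline m<\infty$ (noted after \eqref{H1plus}/in the functional-setting subsection).

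Part (iii) is the cheapest: by (PQ2), $P(x,\cdot)$ and $Q(x,\cdot)$ are monotone increasing for a.e.\ $x$, so for $u_1,u_2\in L^{2,\overline m}_\alpha(\Omega)$ one has $(P(x,u_1)-P(x,u_2))(u_1-u_2)\ge0$ a.e., whence $\langle \widehat P(u_1)-\widehat P(u_2),[u_1-u_2]_\alpha\rangle=\int_\Omega(P(\cdot,u_1)-P(\cdot,u_2))(u_1-u_2)\ge0$, and likewise for $\widehat Q$; adding the two gives monotonicity of $B$ in $W$. In $Z(a,b)$ one integrates this pointwise-in-$t$ inequality over $(a,b)$. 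The duality pairing used on the left is the one coming from \eqref{3.4bis}, under which $B(u)$ paired against $u$ itself is precisely the integral of $P(\cdot,u)u+Q(\cdot,u)u$; a small remark is needed that this pairing is the restriction of the $W$–$W'$ (resp.\ $Z(a,b)$–$Z'(a,b)$) pairing, which follows from the compatibility of the identifications \eqref{dualiLebesgue}, \eqref{3.4}.

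The only genuine obstacle is a bookkeeping one rather than a mathematical one: keeping the many dual identifications coherent, and in particular making sure that $\widehat P$ lands in $[L^{\overline m}(\Omega,\lambda_\alpha)]'$ (which, as the footnote after \eqref{3.2} warns, is \emph{not} identified with $L^{\overline m'}(\Omega,\lambda_\alpha)$ when $\alpha$ vanishes on a positive-measure set) and not merely in the latter space — so the output of $\widehat P$ must be described as the functional $v\mapsto\int_\Omega P(\cdot,u)v$ on $L^{\overline m}(\Omega,\lambda_\alpha)$, and one should check it is well defined there, i.e.\ vanishes on null functions for $\lambda_\alpha$, which again holds because $P(\cdot,u)$ is multiplied by (a function supported where) $\alpha>0$. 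Once this is set up correctly, the continuity/boundedness in (i)–(ii) is immediate from the scalar superposition theorem and its $L^p(a,b;\cdot)$ analogue, and (iii) is a one-line pointwise computation.
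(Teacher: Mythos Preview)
Your proposal is correct and follows essentially the same route as the paper: both reduce (i)--(ii) to the classical Krasnoselskii continuity theorem for Nemitskii operators applied in the abstract measure spaces $(\Omega,\lambda_\alpha)$ and $((a,b)\times\Omega,\lambda'_\alpha)$ (the paper makes this explicit by introducing $P_\alpha=P/\alpha$, defined $\lambda_\alpha$--a.e., whose Nemitskii operator maps $L^{\overline m}(\Omega,\lambda_\alpha)$ to $L^{\overline m'}(\Omega,\lambda_\alpha)$ and then composes with the Riesz isomorphism into $[L^{\overline m}(\Omega,\lambda_\alpha)]'$), and both obtain (iii) directly from the pointwise monotonicity (PQ2). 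Your remark about coordinate neighbourhoods on $\Gamma_1$ is unnecessary---the argument for $\widehat Q$ is identical to that for $\widehat P$, working in the abstract measure space $(\Gamma_1,\lambda_\beta)$---but this does not affect correctness.
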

\begin{proof}
We shall prove the properties listed above only for $\widehat{P}$, since
the same arguments apply, {\em mutatis mutandis}, to $\widehat{Q}$.
As to (i) we note that the fact that $\widehat{P}$ is well--defined
and bounded follows from \eqref{Pgrowth} and H\"{o}lder inequality.
Moreover, since the classical result on the continuity of Nemitskii
operators (see \cite[Theorem~2.2,~p.16]{ambrosettiprodi}) trivially
extends to abstract measure spaces, the Nemitskii operator
associated to $P_\alpha=P/\alpha$ (which is defined $\lambda_\alpha$
-- a.e. in $\Omega$) is continuous from
$L^{\overline{m}}(\Omega,\lambda_\alpha)$ to
$L^{\overline{m}'}(\Omega,\lambda_\alpha)$. By the form of the Riesz
isomorphism between $L^{\overline{m}'}(\Omega,\lambda_\alpha)$ and
$[L^{\overline{m}}(\Omega,\lambda_\alpha)]'$, since
$[\cdot]_\alpha\in \cal{L}(L^{2,\overline{m}}_\alpha (\Omega),
L^{\overline{m}}(\Omega,\lambda_\alpha))$, we get (i).  To prove (ii) we note that the
boundedness of $B$, almost everywhere defined in $(a,b)$ by
\eqref{4.4}, is a trivial consequence of (PQ1) and H\"{o}lder
inequality once again. To prove the asserted continuity we note
that, by repeating  previous argument, the Nemitskii operator
$\widehat{P_\alpha}$ associated to $P_\alpha=P/\alpha$ is continuous
from $L^{\overline{m}}((a,b)\times\Omega,\lambda'_\alpha )$
($\lambda'_\alpha$ denoting the product of the 1 - dimensional
Lebesgue measure and $\lambda_\alpha$) to
$L^{\overline{m}'}((a,b)\times\Omega,\lambda'_\alpha)$. Since for
any $\rho\in [1,\infty)$ one can prove as in the standard case, by
the density of $C_c((a,b)\times\Omega)$ in
$L^{\overline{m}'}((a,b)\times\Omega,\lambda'_\alpha )$ (cfr.
\cite[Theorem~1.36~p.~27~and~Theorem~3.14~p.~68]{rudinrealcomplex}),
that
\begin{equation}\label{newiso}
L^{\rho}((a,b)\times\Omega,\lambda'_\alpha )\simeq
L^{\rho}(a,b;L^{\rho}(\Omega,\lambda_\alpha)),
\end{equation}
 we then get that
$\widehat{P}$ is continuous from
$L^{\overline{m}}(a,b;L^{\overline{m}}(\Omega,\lambda_\alpha))$ to
$L^{\overline{m}ì}(a,b;[L^{\overline{m}}(\Omega,\lambda_\alpha)]')$
and then by \eqref{3.2} we get (ii).
Finally (iii) trivially follows from (PQ2).
\end{proof}

We introduce the following assumption, which will be used only in the
last part of the proof of Theorem \ref{proposition4.1} below:
\begin{enumerate}
\setcounter{enumi}3
\item if $m>\romega$ there are constants $c_m'', M_m>0$  such that
\begin{equation}\label{P4}
P_v(x,v)\ge c_m''\, \alpha(x)|v|^{m-2}\quad\text{for almost all
$(x,v)\in \Omega\times(\R\setminus (-M_m,M_m))$,}
\end{equation}
and if $\mu>\rgamma$ there are constants $c_\mu'', M_\mu>0$  such
that
\begin{equation}\label{Q4}
Q_v(x,v)\ge c_\mu''\, \beta(x)|v|^{\mu-2}\quad\text{for almost all
$(x,v)\in \Gamma_1\times(\R\setminus (-M_\mu,M_\mu))$.}
\end{equation}
\end{enumerate}
\begin{rem}\label{remark4.2bis}
Since by (PQ1--2)  the partial derivatives $P_v$ and $Q_v$ exist
almost everywhere (see \cite{DiBenedettorealanalysis}) and are
nonnegative, \eqref{P4}--\eqref{Q4} always hold if one allows $c_m''$
and $c_\mu''$ to vanish, and the assumption (PQ4) reduces to ask
that if $m>\romega$ then there is $M_m>0$ such that one can take
$c_m''>0$ in \eqref{P4} and if $\mu>\rgamma$ then there is $M_\mu>0$
such that one can take $c_\mu''>0$ in \eqref{Q4}. Moreover, in the
separate variable case considered in problem \eqref{5}, that is
$P(x,v)=\alpha(x)P_0(v)$ and  $Q(x,v)=\beta(x)Q_0(v)$ with
$\alpha\in L^\infty(\Omega)$, $\beta\in L^\infty(\Gamma_1)$,
$\alpha,\beta\ge 0$, (PQ4) reduces to (III).
\end{rem}

\begin{rem}\label{remark4.2ter}
We remark here, for a future use, some trivial consequences of
assumptions (PQ1--4). Setting $c_m'=0$ when $m\le \romega$ and
$c_\mu'=0$ when $\mu\le \rgamma$, since $P_v,Q_v\ge 0$ a.e., from
(PQ4) we have
\begin{alignat}2\label{4.20bis}
P_v(x,v)\ge
&\alpha(x)\left[c_m''|v|^{m-2}-c_m'''\right]\quad&&\text{for almost all $(x,v)\in \Omega\times\R$,}\\
\label{4.21bis} Q_v(x,v)\ge
&\beta(x)\left[c_\mu''|v|^{\mu-2}-c_\mu'''\right]\quad&&\text{for
almost all $(x,v)\in \Gamma_1\times\R$,}
\end{alignat}
where $c_m'''=c_m''M_m^{m-2}$, $c_\mu'''=c_\mu''M_\mu^{\mu-2}$. By
(PQ2) then, integrating \eqref{4.20bis} we get, for almost all
$x\in\Omega$ and all $v<w$,
\begin{equation}\label{4.22bis}
P(x,w)-P(x,v)\ge  \alpha(x)\left[\frac
{c_m''}{m-1}\left(|w|^{m-2}w-|v|^{m-2}v\right)-c_m'''(w-v)\right].\end{equation}
Consequently, using when $m>\romega$ the elementary inequality
$$\left(|w|^{m-2}w-|v|^{m-2}v\right)(w-v)\ge \widetilde{c_m}|w-v|^m\qquad\text{for all $v,w\in\R$,}$$
where $\widetilde{c_m}$ is a positive constant, setting
$\widetilde{c_m}''=c_m''\widetilde{c_m}/(m-1)$, from \eqref{4.22bis}
we get
\begin{equation}\label{4.23bis}
\widetilde{c_m}''\alpha(x)|v-w|^{\overline{m}}\le c_m'''
\alpha(x)|v-w|^2+ (P(x,w)-P(x,v))(w-v)\end{equation} for almost all
$x\in\Omega$ and all $v,w\in\R$, with $\widetilde{c_m}''>0$ when
$m>\romega$. Using the same arguments we get the existence of
$\widetilde{c_\mu}''\ge0$ such that
\begin{equation}\label{4.24bis}
\widetilde{c_\mu}''\beta(x)|v-w|^{\overline{\mu}}\le c_\mu'''
\beta(x)|v-w|^2+ (Q(x,w)-Q(x,v))(w-v)\end{equation} for almost all
$x\in\Gamma_1$ and all $v,w\in\R$, with $\widetilde{c_\mu}''>0$ when
$\mu>\rgamma$.
\end{rem}

Our main assumptions on $f$ and $g$, are the following ones:
\renewcommand{\labelenumi}{{(FG\arabic{enumi})}}
\begin{enumerate}
\item $f$ and $g$ are  Carath\'eodory functions, respectively in
$\Omega\times\R$ and $\Gamma_1\times\R$, and  there are constants
$p,q\ge 2$ and  $c_p,c_q\ge 0$  such that
\begin{alignat}2\label{F1}
&|f(x,u)|\le c_p(1+|u|^{p-1}),\qquad&&\text{for almost all
$x\in\Omega$, all $u\in\R$, and }\\
\label{G1} &|g(x,u)|\le c_q(1+|u|^{q-1}) \qquad&&\text{for almost
all $x\in\Gamma_1$, all $u\in\R$;}
\end{alignat}
\item there are constants $c'_p,c'_q\ge 0$  such that
\begin{alignat}2\label{F2}
&|f(x,u)-f(x,v)|\le &&c'_p |u-v|(1+|u|^{p-2}+|v|^{p-2})\\
\intertext{for almost all $x\in\Omega$,  all $u,v\in\R$,
and}\label{G2} &|g(x,u)-g(x,v)|\le &&c'_q
|u-v|(1+|u|^{q-2}+|v|^{q-2})
\end{alignat}
for almost all $x\in\Gamma_1$, all $u,v\in\R$.
\end{enumerate}
\begin{rem}\label{remark4.3}
 Assumptions (FG1--2)  can be equivalently formulated as follows:
\renewcommand{\labelenumi}{{(FG\arabic{enumi})$'$}}
\begin{enumerate}
\item $f$ and $g$ are  Carath\'eodory functions, respectively in
$\Omega\times\R$ and $\Gamma_1\times\R$, $f(x,\cdot)\in C^{0,1}_{\text{loc}}(\R)$
for almost all $x\in\Omega$ and $g(x,\cdot)\in C^{0,1}_{\text{loc}}(\R)$ for
almost all $x\in\Gamma_1$;
\item  $f(\cdot,0)\in
L^\infty(\Omega)$ and $g(\cdot,0)\in L^\infty(\Gamma_1)$;
\item there are constants $p,q\ge 2$,
$\widetilde{c_p},\widetilde{c_q}\ge 0$ such that
\begin{alignat*}2
&|f_u(x,u)|\le \widetilde{c_p}(1+|u|^{p-2}),\qquad&&\text{for almost
all $(x,u)\in\Omega\times\R$, and }\\
&|g_u(x,u)|\le \widetilde{c_q}(1+|v|^{q-2})\qquad&&\text{for almost
all $(x,u)\in\Gamma_1\times\R$.}
\end{alignat*}
\end{enumerate}
Indeed by (FG1--2)  we immediately get (FG1--2)$'$. Moreover, by (FG1)$'$
$f_u$ and $g_u$ exist almost everywhere
\begin{footnote}{the fact that measurable functions in an open set, which are locally absolutely continuous with respect to a variable,
  possess  almost everywhere partial derivative with respect to that variable is classical, as stated for example in \cite[p.297]{mm}. However
 the sceptical reader can prove it by repeating \cite[Proof~of~ Proposition~2.1~p.~173]{DiBenedettorealanalysis}
 for Carath\'eodory functions, so getting the measurability of the four Dini derivatives. Hence the set where the
 derivative does not exist is measurable and finally  it has zero measure by
 Fubini's theorem.}\end{footnote} so (FG3)$'$ follows. Conversely one gets  (FG1--2)  by
simply integrating (FG3)$'$ with respect to the second variable in the
convenient interval.

In the case considered in problem \eqref{5}, i.e.
$f(x,u)=f_0(u)$ and  $g(x,u)=g_0(u)$, assumptions  (FG1--2)  then reduce to
(II). Other relevant examples of functions $f$ and $g$ satisfying
 (FG1--2)  are given by

\begin{equation}\label{modellisupplementari}
\begin{alignedat}3
f_2(x,u)&=\gamma_1(x)|u|^{\widetilde{p}-2}u+
\gamma_2(x)|u|^{p-2}u+\gamma_3(x),\quad &&2\le\widetilde{p}\le
p,\,\, && \gamma_i\in L^\infty(\Omega),
\\
g_2(x,u)&=\delta_1(x)|u|^{\widetilde{q}-2}u+
\delta_2(x)|u|^{q-2}u+\delta_3(x),\, \quad &&2\le\widetilde{q}\le
q,\,\, &&\delta_i\in L^\infty(\Gamma_1),
\end{alignedat}
\end{equation}
and by
\begin{equation}\label{f3g3}
f_3(x,u)=\gamma(x) f_0(u),\quad g_3(x,u)=\delta(x) g_0(u),\quad
\gamma\in L^\infty(\Omega),\quad \delta\in L^\infty(\Gamma_1),
\end{equation}
where $f_0$ and $g_0$ satisfy (II).
\end{rem}
In line with Sobolev embedding of $H^1(\Omega)$ the source $f$  can
be classified (see \cite{bociulasiecka1}) as subcritical (or
critical) when $2\le p\le 1+\romega/2$, supercritical  when
$1+\romega/2< p\le\romega $ and supersupercritical when
$p>\romega$. The source $g$ can be classified in the same way
referring to $\rgamma$. This paper is devoted to the case when both
sources are subcritical (or critical), that is \eqref{6} holds. In
this case is easy to see, using H\"{o}lder inequality and Sobolev
embedding, that the Nemitskii operators  $\widehat{f}:H^1(\Omega)\to
L^2(\Omega)$ and $\widehat{g}:H^1(\Gamma)\cap L^2(\Gamma_1)\to L^2(\Gamma_1)$ respectively
associated to $f$ and $g$  are locally
Lipschitz.

 To precise the meaning of weak solutions of problem
\eqref{1} we first note that, by  (FG1--2) , for any $u$ satisfying \eqref{3.20}, denoting $u'=(u_t,(u_{|\Gamma})_t)$,  we
have $\widehat{f}(u)\in L^1(0,T;L^2(\Omega))$ and $\widehat{g}(u_{|\Gamma})\in
L^1(0,T;L^2(\Gamma_1))$. Moreover, by Lemma~\ref{lemma4.1},
$\widehat{P}(u_t)\in L^{\overline{m}'}(0,T;[L^{\overline{m}}(\Omega,
\lambda_\alpha)]')$ and $\widehat{Q}(u_{|\Gamma})_t)\in
L^{\overline{\mu}'}(0,T;[L^{\overline{\mu}}(\Gamma_1,
\lambda_\alpha)]')$. Then, by \eqref{newiso}, $\widehat{P}(u_t)=\alpha
\,\,\xi_2$ and $\widehat{Q}(u_{|\Gamma})_t)=\beta \,\,\eta_2$, with $\xi_2\in
L^{\overline{m}'}(0,T;L^{\overline{m}'}(\Omega, \lambda_\alpha))$
and $\eta_2\in L^{\overline{\mu}'}(0,T;L^{\overline{\mu}'}(\Gamma_1,
\lambda_\beta))$. By previous remarks and Lemma~\ref{lemma3.2} the following definition makes sense.
\begin{definition}\label{definition4.1} Let (PQ1--3), (FG1--2)
hold and $u_0\in H^1$, $u_1\in H^0$. A weak solution of problem
\eqref{1} in $[0,T]$, $0<T<\infty$,  is  $u$ verifying \eqref{3.20}--
\eqref{3.21} with
\begin{equation}\label{xieta}
\xi=\widehat{f}(u)-\widehat{P}(u_t),\quad
\eta=\widehat{g}(u_{|\Gamma})-\widehat{Q}((u_{|\Gamma})_t), \quad
\rho=\overline{m}\quad\text{and}\quad \theta=\overline{\mu},
\end{equation} such that $u(0)=u_0$ and $u'(0)=u_1$.
A weak solution of \eqref{1} in $[0,T)$, $0<T\le\infty$, is
$u\in L^\infty_{\text{loc}}([0,T);H^1)$  which is a weak solution
of \eqref{1} in $[0,T']$ for any $T'\in (0,T)$. Such a solution is
called maximal if it has no proper extensions.
\end{definition}
 \begin{rem} The introduction of Definition~\ref{definition4.1} is justified by the fact that, when $\Gamma$ is $C^2$, any $u\in C^2([0,T]\times\overline{\Omega})$ is a weak solution of \eqref{1} if and only if it is a classical one.\end{rem}
\begin{rem}\label{remark4.4}It follows by Lemma~\ref{lemma3.2} that
any weak solution of \eqref{1} in $\text{dom }u=[0,T]$ or $\text{dom
}u=[0,T)$ enjoys the further regularity
\begin{equation}\label{4.9}
u\in C(\text{dom }u;H^1)\cap C^1(\text{dom }u;H^0),
\end{equation}
satisfies the energy identity
\begin{multline}\label{enidd}
\tfrac 12 \left[\int_\Omega u_t^2+\int_{\Gamma_1}
(u_{|\Gamma})_t^2+\int_\Omega |\nabla
u|^2+\int_{\Gamma_1}|\nabla_\Gamma
u|_\Gamma^2\right]_s^t+\int_s^t\int_\Omega
P(\cdot,u_t)u_t\\
+\int_s^t\left[\int_{\Gamma_1}Q(\cdot,(u_{|\Gamma})_t)(u_{|\Gamma})_t
-\int_\Omega f(\cdot,u)u_t-\int_{\Gamma_1}g(\cdot,
u)(u_{|\Gamma})_t\right]=0
\end{multline}
for all $s,t\in \text{dom }u$, and the distribution identity
\begin{multline*}
\left[\int_\Omega u_t\phi+\int_{\Gamma_1}(u_{|\Gamma})_t\phi\right]_0^{T'}
+\int_0^{T'}\left[-\int_\Omega
u_t\phi_t-\int_{\Gamma_1}(u_{|\Gamma})_t(\phi_{|\Gamma})_t+\int_\Omega
\nabla u\nabla \phi\right.\\
+\left.\int_{\Gamma_1} (\nabla_\Gamma
u,\nabla_\Gamma\phi)_\Gamma+\int_\Omega
P(\cdot,u_t)\phi+\int_{\Gamma_1}
Q(\cdot,(u_{|\Gamma})_t)\phi-\!\int_\Omega
f(\cdot,u)\phi-\!\!\int_{\Gamma_1}  g(\cdot,u)\phi\right]=0
\end{multline*}
for all $T'\in \text{dom }u$ and $\phi\in C([0,T'];H^1)\cap
C^1([0,T'];H^0)\cap Z(0,T')$.

Finally we remark that when $u_0\in
H^{1,\rho,\theta}_{\alpha,\beta}$ for some finite $\rho$, $\theta$
satisfying \eqref{special} then, as
$u'\in
L^1(0,T'; H^{1,\rho,\theta}_{\alpha,\beta})
$ for all $T'\in \text{dom }u$, one easily gets that $u\in
W^{1,1}(0,T'; H^{1,\rho,\theta}_{\alpha,\beta})$, so
\begin{equation}\label{4.11bis}
u\in C(\text{dom }u; H^{1,\rho,\theta}_{\alpha,\beta}).
\end{equation}
\end{rem}
We can now state our main local well--posedness result for problem
\eqref{1}.
\begin{thm}\label{theorem4.1}Let (PQ1--3), (FG1--2), \eqref{6}
hold, $u_0\in H^1$ and $u_1\in H^0$. Then problem \eqref{1} has a
unique maximal weak solution $u=u(u_0,u_1)$ in $[0,T_{\text{max}})$,
$T_{\text{max}}=T_{\text{max}}(u_0,u_1)>0$.  Moreover
$$\lim\limits_{t\to T^-_{\text{max}}}
\|u(t)\|_{H^1}+\|u'(t)\|_{H^0}=\infty$$ provided
$T_{\text{max}}<\infty$. Next, if $(u_{0n},u_{1n})\to (u_0,u_1)$ in $H^1\times
H^0$, denoting $u_n=u(u_{0n},u_{1n})$ and
$T_{\text{max}}^n=T_{\text{max}}(u_{0n},u_{1n})$, we have
\renewcommand{\labelenumi}{{(\roman{enumi})}}
\begin{enumerate}
\item $T_{\text{max}}\le \liminf_n T_{\text{max}}^n$, and
\item $u_n\to u$ in $C([0,T^*];H^1)\cap C^1([0,T^*];H^0)$ for
all $T^*\in(0,T_{\text{max}})$.
\end{enumerate}
Finally, if also (PQ4) holds and $(u_{0n},u_{1n})\to (u_0,u_1)$ in
$H^1\times H^0$ we also have $u_n'\to u'$ in $Z(0,T^*)$ for all
$T^*\in(0,T_{\text{max}})$ and consequently, if $(u_{0n},u_{1n})\to
(u_0,u_1)$ in $H^{1,\rho,\theta}_{\alpha,\beta}\times H^0$ for some
$\rho,\theta$ satisfying \eqref{special}, we also have $u_n\to u$ in
$C([0,T^*];H^{1,\rho,\theta}_{\alpha,\beta})$ for all
$T^*\in(0,T_{\text{max}})$.
\end{thm}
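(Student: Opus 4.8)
The plan is to recast problem~\eqref{1} as an abstract semilinear Cauchy problem in the Hilbert space $H^1\times H^0$ and to apply the nonlinear semigroup result \cite[Theorem~7.2,~Appendix]{chueshovellerlasiecka}, in the form recalled in Appendix~\ref{appendixA} (see also \cite{barbu2010,showalter}). Writing $U=(u,u')$, the problem reads $U'+\cal{A}U\ni \cal{F}(U)$, $U(0)=(u_0,u_1)$, where $\cal{A}$ is the operator on $H^1\times H^0$ associated with the conservative wave part governed by $A$ of~\eqref{3.22} together with the damping $B=(\widehat P,\widehat Q)$ of~\eqref{4.4}, while $\cal{F}(u,v)=(0,(\widehat f(u),\widehat g(u_{|\Gamma})))$ collects the source terms.

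The first step is to prove that $\cal{A}$ is maximal monotone on $H^1\times H^0$, working with the pivot structure~\eqref{3.17ripetuta} built on the spaces $W$ and $X$ of~\eqref{4.1}. Monotonicity follows from Lemma~\ref{lemma4.1}(iii) together with the skew--symmetry of the wave part; for the range condition one reduces, via the linear theory of Lemma~\ref{lemma3.2} applied to~\eqref{L} with suitable forcing, to solving a stationary equation perturbed by the monotone operator $B$, where the density statements of Lemma~\ref{lemma3.1} are what make~\eqref{3.26} meaningful in $X'$. The second step is to check that $\cal{F}$ is locally Lipschitz from $H^1\times H^0$ into itself: by~\eqref{6} and the remarks preceding Definition~\ref{definition4.1} the Nemitskii maps $\widehat f\colon H^1(\Omega)\to L^2(\Omega)$ and $\widehat g\colon H^1(\Gamma)\cap L^2(\Gamma_1)\to L^2(\Gamma_1)$ are locally Lipschitz, and~\eqref{H1} transfers this to $\cal{F}$. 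The abstract theorem then yields a unique maximal mild solution $U\in C([0,T_{\text{max}});H^1\times H^0)$, the blow--up alternative, and the continuous dependence statements~(i)--(ii).

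It then remains to identify the mild solution with the weak solution of Definition~\ref{definition4.1}. By construction a mild solution satisfies the integrated regularity~\eqref{3.20} with $\xi,\eta$ as in~\eqref{xieta} --- here Lemma~\ref{lemma4.1}(ii) and~\eqref{newiso} locate $\widehat P(u_t)$ and $\widehat Q((u_{|\Gamma})_t)$ in the appropriate spaces --- together with the abstract equation~\eqref{3.26}, so that item~(ii) of Lemma~\ref{lemma3.2} applies and~\eqref{3.21}, the regularity~\eqref{4.9} and the energy identity~\eqref{enidd} follow at once. Conversely every weak solution solves the abstract problem, so uniqueness and maximality carry over, completing (i)--(ii).

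For the stronger--topology convergence under (PQ4), given $(u_{0n},u_{1n})\to(u_0,u_1)$ in $H^1\times H^0$, I would compare the energy identity~\eqref{enidd} for $u_n$ with that for $u$: the convergence already obtained in $C([0,T^*];H^1)\cap C^1([0,T^*];H^0)$ disposes of the quadratic terms and of the source contributions, forcing the dissipation integrals $\int_0^{T^*}\!\int_\Omega P(\cdot,u_{nt})u_{nt}$ and $\int_0^{T^*}\!\int_{\Gamma_1}Q(\cdot,(u_{n|\Gamma})_t)(u_{n|\Gamma})_t$ to converge to their limits for $u$; combined with the monotonicity of $B$ and the coercivity inequalities~\eqref{4.23bis}--\eqref{4.24bis} of Remark~\ref{remark4.2ter} (under (PQ4) the coercivity constants there are positive in the relevant ranges of $m,\mu$) this gives $u_n'\to u'$ in $Z(0,T^*)$, and then writing $u_n(t)=u_{0n}+\int_0^t u_n'$ in $H^{1,\rho,\theta}_{\alpha,\beta}$, as in~\eqref{4.11bis}, yields $u_n\to u$ in $C([0,T^*];H^{1,\rho,\theta}_{\alpha,\beta})$ whenever $u_{0n}\to u_0$ there. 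I expect the main obstacle to be the maximality of $\cal{A}$ in the pivot space $W$ when $\alpha$ and/or $\beta$ vanish on sets of positive measure --- exactly the point flagged in the introduction, and the reason for the careful construction of $W$, $X$ and of the duality chains~\eqref{3.17}--\eqref{3.18}.
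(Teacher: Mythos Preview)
Your overall architecture matches the paper's: reformulate \eqref{1} as \eqref{C}, invoke the abstract semigroup machinery of Appendix~\ref{appendixA} for the pair $(\cal{A},\cal{F})$, and then identify the abstract solution with a weak solution in the sense of Definition~\ref{definition4.1}. However, there are two genuine gaps.

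First, the identification step is where the real work lies, and your sketch elides it. You write that ``by construction a mild solution satisfies the integrated regularity~\eqref{3.20}\ldots\ together with the abstract equation~\eqref{3.26}''. This is not true for \emph{generalized} solutions: they are merely $C([0,T];H^1)\cap C^1([0,T];H^0)$ limits of strong solutions, and nothing in that topology gives $u'\in Z(0,T)$ or an equation in $X'$. The paper handles this in two stages (Lemma~\ref{lemma4.2}): for strong solutions one first uses the pointwise energy identity together with the coercivity (PQ3) to show $\langle B(u'),u'\rangle_W\in L^1(0,T)$, hence $u'\in Z(0,T)$; for generalized solutions one then extracts weak limits $u_n'\rightharpoonup u'$ in $Z(0,T)$ and $B(u_n')\rightharpoonup\chi$ in $Z'(0,T)$, passes to the limit in the distributional identity, and finally identifies $\chi=B(u')$ via the Minty--Browder monotonicity argument, using that the two limit energy identities \eqref{4.17bis} and \eqref{4.19bis} force $\lim_n\int\langle B(u_n'),u_n'\rangle_W=\int\langle\chi,u'\rangle_W$. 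Without this, you have not shown that the semigroup solution is weak. Relatedly, uniqueness of weak solutions does not follow from the abstract theory (a weak solution is not a priori a generalized one): the paper proves it directly (Lemma~\ref{lemma4.1bis}) via the difference energy identity~\eqref{4.14} and Gronwall.

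Second, two smaller points. It is $\cal{A}+I$, not $\cal{A}$, that is maximal monotone (cf.\ the boundary cross term in~\eqref{3.6}); and the range condition is established not through Lemma~\ref{lemma3.2} but by showing directly that $T=4I+A+2B:X\to X'$ is monotone, hemicontinuous and coercive, hence surjective. For the (PQ4) convergence, the paper does not compare the separate energy identities for $u_n$ and $u$ but uses the \emph{difference} identity~\eqref{4.14}, which yields $\int_0^{T^*}\langle B(u_n')-B(u'),u_n'-u'\rangle_W\to 0$ immediately from the already-proved convergence in $C\cap C^1$; plugging this into~\eqref{4.25bis} then gives $u_n'\to u'$ in $Z(0,T^*)$ at once. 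Your route via separate identities would still leave the cross terms $\langle B(u_n'),u'\rangle_W$ to be controlled.
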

Theorem~\ref{theorem4.1} is a particular case of an analogous result
concerning a slightly more general and abstract version of problem
\eqref{1}, that is the abstract Cauchy problem
\begin{equation}\label{C}
\begin{cases} u''+Au+B(u')=F(u)\qquad\text{in $X'$,}\\
u(0)=u_0,\quad u'(0)=u_1,
\end{cases}
\end{equation}
where $A$ and $B$ are the operators respectively defined in
\eqref{3.22} and \eqref{4.4}, and $F:H^1\to H^0$ is a locally
Lipschitz map, that is for any $R>0$ there is $L(R)\ge
0$ such that
\begin{equation}\label{4.12}
\|F(u)-F(v)\|_{H^0}\le L(R)\,\|u-v\|_{H^1}\qquad\text{provided
$\|u\|_{H^1},\|v\|_{H^1}\le R$.}
\end{equation}
When  (FG1--2)  and \eqref{6} hold,  $F=(\widehat{f},\widehat{g})$ satisfies
\eqref{4.12}.

We first precise the meaning of strong,  generalized  and
weak  solutions of
\begin{equation}\label{C1}
u''+Au+B(u')=F(u)\qquad\text{in $X'$.}
\end{equation}
\begin{definition}\label{definition4.2}
Let (PQ1--3) and \eqref{4.12} hold, and $0<T<\infty$.
\renewcommand{\labelenumi}{{(\roman{enumi})}}
\begin{enumerate}
\item By a {\em strong} solution of \eqref{C1} in $[0,T]$ we mean $u\in
W^{1,\infty}(0,T;H^1)\cap W^{2,\infty}(0,T;H^0)$ such that
$Au(t)+B(u'(t))\in H^0$ and $u'(t)\in X$ for all $t\in [0,T]$ and
\eqref{C1} holds in $H^0$ almost everywhere in $(0,T)$.
\item By a {\em generalized} solution of \eqref{C1} in $[0,T]$ we mean the
limit of a sequence of strong solutions of \eqref{C1} in
$C([0,T];H^1)\cap C^1([0,T];H^0)$.
\item By a {\em weak} solution of \eqref{C1} in $[0,T]$ we mean
$u$ satisfying \eqref{3.20} with $\rho=\overline{m}$, $\theta=\overline{\mu}$ and
the distribution identity
\begin{equation}\label{4.13}
\int_0^T-(u',\phi')_{H^0}+\langle Au,\phi\rangle_{H^1}+\langle
B(u'),\phi\rangle_W=\int_0^T (F(u),\phi)_{H^0}
\end{equation}
for all $\phi\in C_c((0,T);H^1)\cap C^1_c((0,T);H^0)\cap
Z(0,T)$.
\end{enumerate}
By a solution  in $[0,T)$, $T\in
(0,\infty]$, we mean $u\in L^\infty_{\text{loc}}([0,T);H^1)$ which is
a solution  in $[0,T']$ for any $T'\in
(0,T)$. Such a solution is called maximal if has no proper
extensions in the same class.
\end{definition}
\begin{rem}\label{remark4.5}
For any weak solution of \eqref{C1} in $[0,T]$ we have
$F(u)\in L^\infty(0,T;H^0)$, hence as in
Remark~\ref{remark4.4} we see that weak solutions satisfy \eqref{4.9} as well as
the generalized versions of the energy and distribution identities in Remark \ref{remark4.4}. Moreover for
any couple $(u,v)$ of weak solutions the energy identity
\begin{equation}\label{4.14}
\tfrac 12 \|w'\|_{H^0}^2+\tfrac 12 \langle Aw,w\rangle_{H^1}\Big
|_s^t+\int_s^t\negquad \langle B(u')-B(v'),w'\rangle_W=\int_s^t
(F(u)-F(v),w')_{H^0}
\end{equation}
holds for $s,t\in \text{dom u}\cap\text{dom v}$, where $w$ denotes
the difference $u-v$. Finally also in this case \eqref{4.11bis} holds true for  $u_0\in
H^{1,\rho,\theta}_{\alpha,\beta}$, with  $(\rho,\theta)$
satisfying \eqref{special}.
\end{rem}
By previous remark  the following definition makes sense.
\begin{definition} By a strong, generalized or weak solution of
\eqref{C} we mean a solution of \eqref{C1} in  the corresponding
class verifying also the initial conditions.
\end{definition}
Our main result concerning \eqref{C} is the following one.
\begin{thm}\label{proposition4.1} Let (PQ1--3), \eqref{4.12} hold, $u_0\in H^1$ and $u_1\in
H^0$. Then problem \eqref{C} has a unique maximal weak solution
$u=u(u_0,u_1)$ in $[0,T_{\text{max}})$,
$T_{\text{max}}=T_{\text{max}}(u_0,u_1)>0$, which is also the unique
maximal generalized solution of it. If
\begin{equation}\label{4.15}
u_0\in H^1,\quad u_1\in X,\quad\text{and}\quad Au_0+B(u_1)\in H^0,
\end{equation}
then $u$ is actually the unique maximal strong solution of
\eqref{C}. Moreover
\begin{equation}\label{4.16}\lim\limits_{t\to T^-_{\text{max}}}
\|u(t)\|_{H^1}+\|u'(t)\|_{H^0}=\infty \end{equation}
 provided
$T_{\text{max}}<\infty$,
 and  $T_{\text{max}}=\infty$
when $F$ is globally Lipschitz.

\noindent Next, if $(u_{0n},u_{1n})\to
(u_0,u_1)$ in $H^1\times H^0$, denoting $u_n=u(u_{0n},u_{1n})$ and
$T_{\text{max}}^n=T_{\text{max}}(u_{0n},u_{1n})$, we have
\renewcommand{\labelenumi}{{(\roman{enumi})}}
\begin{enumerate}
\item $T_{\text{max}}\le \liminf_n T_{\text{max}}^n$, and
\item $u_n\to u$ in $C([0,T^*];H^1)\cap C^1([0,T^*];H^0)$ for
all $T^*\in(0,T_{\text{max}})$.
\end{enumerate}
Finally, if also (PQ4) holds and $(u_{0n},u_{1n})\to (u_0,u_1)$ in
$H^1\times H^0$ we also have $u_n'\to u'$ in $Z(0,T^*)$ for all
$T^*\in(0,T_{\text{max}})$. Consequently, if $(u_{0n},u_{1n})\to
(u_0,u_1)$ in $H^{1,\rho,\theta}_{\alpha,\beta}\times H^0$ for some
$\rho,\theta$ satisfying \eqref{special}, we also have $u_n\to u$ in
$C([0,T^*];H^{1,\rho,\theta}_{\alpha,\beta})$ for all
$T^*\in(0,T_{\text{max}})$.
\end{thm}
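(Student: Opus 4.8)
The plan is to recast \eqref{C} as a first--order evolution problem in the Hilbert space $\cal{H}=H^1\times H^0$ and to invoke the abstract theory of locally Lipschitz perturbations of maximal monotone operators collected in Appendix~\ref{appendixA} (following \cite[Theorem~7.2, Appendix]{chueshovellerlasiecka}). Setting $U=(u,u')$, problem \eqref{C} reads $U'+\cal{A}U\ni\cal{F}(U)$, $U(0)=(u_0,u_1)$, where $\cal{A}(u,v)=(-v,\,Au+B(v))$ with domain $D(\cal{A})=\{(u,v)\in H^1\times X:\ Au+B(v)\in H^0\}$ --- exactly condition \eqref{4.15} in the first slot --- and $\cal{F}(u,v)=(0,F(u))$. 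First I would check that $\cal{A}$ is maximal monotone on $\cal{H}$. Monotonicity follows from the symmetry and nonnegativity of $\langle Au,v\rangle_{H^1}$ in \eqref{3.22} together with the monotonicity of $B$ in $W$ from Lemma~\ref{lemma4.1}(iii), after the telescoping cancellation typical of second--order equations. Maximality amounts to the range condition $R(I+\cal{A})=\cal{H}$, which reduces to solving the stationary problem $u+Au+B(u)\ni h$ in $X'$ for $h\in H^0$; this follows from the surjectivity of the sum of the maximal monotone operators $A$ (linear, $X$--coercive) and $B$ (monotone, demicontinuous, coercive by (PQ3)) in the reflexive space $X$ of \eqref{4.1}, by standard monotone operator theory. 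Since \eqref{4.12} makes $\cal{F}:\cal{H}\to\cal{H}$ locally Lipschitz, the abstract result applies.

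From it one obtains, for every $(u_0,u_1)\in\cal{H}$, a unique maximal \emph{generalized} solution $U$ on a maximal interval $[0,T_{\max})$, with $u\in C([0,T_{\max});H^1)\cap C^1([0,T_{\max});H^0)$, the blow--up alternative \eqref{4.16}, global existence when $F$ (hence $\cal{F}$) is globally Lipschitz (a Gronwall bound on the energy prevents finite--time blow--up), the continuous dependence statements (i)--(ii) in $C([0,T^*];H^1)\cap C^1([0,T^*];H^0)$, and --- when the datum lies in $D(\cal{A})$, i.e.\ under \eqref{4.15} --- that the generalized solution is in fact a \emph{strong} solution.

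It then remains to identify generalized with weak solutions and to prove weak uniqueness. Testing \eqref{C1} --- which a strong solution satisfies in $H^0$ a.e.\ --- against $\phi\in C_c((0,T);H^1)\cap C^1_c((0,T);H^0)\cap Z(0,T)$ and integrating the $u''$--term by parts yields \eqref{4.13}, so each strong solution is weak; along a defining sequence of strong solutions the linear terms and $F(u_k)\to F(u)$ in $C([0,T];H^0)$ pass to the limit directly, while the damping term is controlled by the uniform bound on $\int_0^T\langle B(u_k'),u_k'\rangle_W$ from the energy identity and (PQ3) (which also gives the regularity \eqref{3.20} with $\rho=\overline{m}$, $\theta=\overline{\mu}$), yielding weak convergence of $u_k'$ in $Z(0,T)$, whence, by monotonicity of $B$ and Minty's device, $\lim B(u_k')=B(u')$; so every generalized solution is weak. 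Conversely, any weak solution solves the linear problem \eqref{L} with forcing $\xi=\widehat{f}(u)-\widehat{P}(u_t)$, $\eta=\widehat{g}(u_{|\Gamma})-\widehat{Q}((u_{|\Gamma})_t)$ of the form \eqref{3.19}, so Lemma~\ref{lemma3.2} applies and, by Remark~\ref{remark4.5}, the difference $w=u-v$ of two weak solutions with equal data obeys the energy identity \eqref{4.14}; since $\langle B(u')-B(v'),w'\rangle_W\ge0$ and $|(F(u)-F(v),w')_{H^0}|\le L(R)\|w\|_{H^1}\|w'\|_{H^0}$ on bounded sets, Gronwall forces $w\equiv0$. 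A routine continuation argument then shows the maximal weak solution coincides with the maximal generalized one.

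For the last assertion we upgrade continuous dependence under (PQ4). Writing $w_n=u_n-u$ and applying \eqref{4.14} to the pair $(u_n,u)$ together with the pointwise inequalities \eqref{4.23bis}--\eqref{4.24bis}, one bounds $\widetilde{c_m}''\int_0^{T^*}\!\int_\Omega\alpha|w_n'|^{\overline{m}}$ and $\widetilde{c_\mu}''\int_0^{T^*}\!\int_{\Gamma_1}\beta|w_n'|^{\overline{\mu}}$ by a multiple of $\int_0^{T^*}\!\int_\Omega\alpha|w_n'|^2+\int_0^{T^*}\!\int_{\Gamma_1}\beta|w_n'|^2$ plus $\int_0^{T^*}\!\langle B(u_n')-B(u'),w_n'\rangle_W$, and by \eqref{4.14} this last term is a combination of $\|w_n'(t)\|_{H^0}^2$, $\langle Aw_n(t),w_n(t)\rangle_{H^1}$ and $\int_0^{T^*}\!(F(u_n)-F(u),w_n')_{H^0}$, all vanishing as $n\to\infty$ by (i)--(ii); hence $u_n'\to u'$ in $Z(0,T^*)$. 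Combined with $u_{0n}\to u_0$ in $H^{1,\rho,\theta}_{\alpha,\beta}$ and the embedding $Z(0,T^*)\hookrightarrow L^1(0,T^*;L^{2,\rho}_\alpha(\Omega)\times L^{2,\theta}_\beta(\Gamma_1))$ for $(\rho,\theta)$ satisfying \eqref{special}, integration in time gives $u_n\to u$ in $C([0,T^*];H^{1,\rho,\theta}_{\alpha,\beta})$. The main obstacle is the verification that $\cal{A}$ is maximal monotone --- equivalently the range condition in the degenerate pivot space $X'$ --- together with the identification of generalized and weak solutions: both rely on the weighted spaces $L^{2,\overline{m}}_\alpha(\Omega)$, $L^{2,\overline{\mu}}_\beta(\Gamma_1)$ being \emph{precisely} the spaces on which $B$ is maximal monotone and coercive and on which the energy identity of Lemma~\ref{lemma3.2} holds, which is exactly what the functional framework of Section~\ref{section 2} (in particular Lemma~\ref{lemma3.1}) is designed to guarantee, notably when $\alpha$ or $\beta$ vanishes on a set of positive measure.
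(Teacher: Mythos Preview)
Your overall strategy matches the paper's, but there is a concrete error in the first step: the operator $\cal{A}$ you define is \emph{not} monotone on $\cal{H}$. The ``telescoping cancellation'' you invoke would work if the inner product on $H^1$ were $\langle A\cdot,\cdot\rangle_{H^1}$, but by \eqref{3.6} we have $(u,v)_{H^1}=\langle Au,v\rangle_{H^1}+\int_{\Gamma_1}uv$, so the computation actually gives (cf.\ the paper's Step~1 of the proof of Theorem~\ref{theorem4.2})
\[
(\cal{A}U_1-\cal{A}U_2,U_1-U_2)_{\cal{H}}=\int_{\Gamma_1}(v_2-v_1)(u_1-u_2)+\langle B(v_1)-B(v_2),v_1-v_2\rangle_W,
\]
and the boundary term has no sign. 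One only obtains $(\cal{A}U_1-\cal{A}U_2,U_1-U_2)_{\cal{H}}\ge -\|U_1-U_2\|_{\cal{H}}^2$, i.e.\ $\cal{A}+I$ is monotone. Consequently the range condition you need is $R(\cal{A}+2I)=\cal{H}$, not $R(I+\cal{A})=\cal{H}$, and the reduced stationary equation is $4v+Av+2B(v)=h^2$ in $X'$ for $h^2\in (H^1)'$, not ``$u+Au+B(u)=h$ for $h\in H^0$''. (Also, $A$ alone is not $X$--coercive since $\langle Au,u\rangle_{H^1}$ controls neither $\|u\|_{2,\Gamma_1}$ nor the weighted Lebesgue norms; coercivity of the full operator comes from the $4I$ piece together with (PQ3).) The equation is then rewritten as $U'+(\cal{A}+I)U+(\cal{F}_1)(U)=0$ with $\cal{F}_1=\cal{F}-I$ still locally Lipschitz, and the abstract theory applies.

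Once this is corrected, the remainder of your outline --- identification of generalized and weak solutions via the energy identity, Minty's trick for the damping limit, weak uniqueness by \eqref{4.14} and Gronwall, and the (PQ4) upgrade via \eqref{4.23bis}--\eqref{4.24bis} --- is exactly the paper's route (Lemmas~\ref{lemma4.1bis}--\ref{lemma4.2} and the last paragraph of the proof of Theorem~\ref{proposition4.1}). One small addition the paper makes in the uniqueness argument: since $\langle Aw,w\rangle_{H^1}=\|w\|_{H^1}^2-\|w\|_{H^0}^2$, one needs the elementary bound $\|w(t)\|_{H^0}^2\le T\int_0^t\|w'\|_{H^0}^2$ (from $w(0)=0$) to close Gronwall, which you should mention.
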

Theorem~\ref{proposition4.1} will be proved in the next section
by transforming \eqref{C} in a first order Cauchy problem, applying
nonlinear semigroup theory to it, and finally discussing the
relations between various type of solutions of \eqref{C}.

\section{\bf Well--posedness in $\boldsymbol{H^1\times H^0}$ and in $\boldsymbol{H^{1,\rho,\theta}_{\alpha,\beta}\times H^0}$: proofs}
\label{section4}

We introduce the phase space for problem \eqref{C}, that is the
Hilbert space
\begin{equation}\label{4.17}
\cal{H}=H^1\times H^0 ,\end{equation} endowed with the standard
scalar product $(\cdot,\cdot)_{\cal{H}}$ given by
\begin{equation}\label{4.18}
(U_1,U_2)_{\cal{H}}=(u_1,u_2)_{H^1}+(v_1,v_2)_{H^0}\qquad\text{for
all $U_i=(u_i,v_i)$, $i=1,2$.}
\end{equation}
Moreover, using  \eqref{3.17ripetuta}, we introduce the nonlinear operator $\cal{A}:
D(\cal{A})\subset \cal{H}\to\cal{H}$ by
\begin{gather}\label{4.19}
D(\cal{A})= \{(u,v)\in H^1\times X: Au+B(v)\in H^0\},\\
\label{4.20}\cal{A}\begin{pmatrix}u\\v\end{pmatrix} =
\begin{pmatrix}-v\\Au+B(v)\end{pmatrix},
\end{gather}
and the abstract Cauchy problem
\begin{equation}\label{C2}
\begin{cases} U'+\cal{A}U+\cal{F}(U)=0\qquad\text{in $\cal{H}$,}\\
U(0)=U_0\in\cal{H},
\end{cases}
\end{equation}
where $\cal{F}:\cal{H}\to\cal{H}$ is any locally Lipschitz map.

The meaning of strong and generalized solutions of \eqref{C2} in
$[0,T]$, $0<T<\infty$ is standard (see \cite[Theorem~4.1~and~
Definition,~pp.180--183]{showalter}),  while by solutions in $[0,T)$
we mean $U\in C([0,T);\cal{H})$ which are solutions in $[0,T']$ in
the corresponding sense for all $T'\in(0,T)$.
Our main result on problem \eqref{C2} is the
following one.
\begin{thm}\label{theorem4.2}
Let (PQ1--3) hold. Then the operator $\cal{A}+I$ is maximal monotone
in $\cal{H}$, $D(\cal{A})$  is dense in $\cal{H}$ and $A(0)=0$.
Consequently, given  any locally Lipschitz map
$\cal{F}:\cal{H}\to\cal{H}$, the following conclusions hold:
\renewcommand{\labelenumi}{{(\roman{enumi})}}
\begin{enumerate}
\item for any $U_0\in\cal{H}$ the problem \eqref{C2} has a unique maximal
generalized solution $U=U(U_0)$ in $[0,T_{\text{max}})$,
$T_{\text{max}}=T_{\text{max}}(U_0)$, which is the unique maximal
strong solution of it when $U_0\in D(\cal{A})$;
\item $\lim_{t\to T^-_{\text{max}}}
\|U(t)\|_{\cal{H}}=\infty$ provided $T_{\text{max}}<\infty$, and
$T_{\text{max}}=\infty$ provided $\cal{F}$ is globally Lipschitz;
\item if $U_{0n}\to U_0$ in $\cal{H}$ then
denoting $U_n=U(U_{0n})$ and
$T_{\text{max}}^n=T_{\text{max}}(U_{0n})$, we have
$T_{\text{max}}\le \liminf_n T_{\text{max}}^n$ and $U_n\to U$ in
$C([0,T^*];\cal{H})$ for all $T^*\in(0,T_{\text{max}})$.
\end{enumerate}
\end{thm}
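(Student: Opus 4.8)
The plan is to reduce \eqref{C2} to the abstract framework of Appendix~\ref{appendixA}, i.e. to a locally Lipschitz perturbation of a maximal monotone operator, by writing it as $U'+(\cal{A}+I)U+\widetilde{\cal{F}}(U)=0$ with $\widetilde{\cal{F}}(U):=\cal{F}(U)-U$, which is again locally Lipschitz and is globally Lipschitz exactly when $\cal{F}$ is. So the whole issue is to prove the three structural properties asserted in the first sentence: $\cal{A}(0)=0$, density of $D(\cal{A})$, and maximal monotonicity of $\cal{A}+I$; then (i)--(iii) will follow verbatim from the abstract results quoted there (existence/uniqueness of the maximal generalized solution, which is strong for $U_0\in D(\cal{A})$; the blow--up alternative; global existence in the globally Lipschitz case; and the continuous--dependence statement).

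First, $\cal{A}(0)=0$ is immediate since $B(0)=0$ by Remark~\ref{remark4.1}. For monotonicity of $\cal{A}+I$ I would take $U_i=(u_i,v_i)\in D(\cal{A})$, set $w=(u_1-u_2,v_1-v_2)$, and compute $(\cal{A}U_1-\cal{A}U_2,w)_{\cal{H}}$ via \eqref{4.18}, the identification $H^0\simeq(H^0)'$ and \eqref{3.17ripetuta}. The two occurrences of $(u_1-u_2,v_1-v_2)_{H^1}$ cancel; the difference between the $H^1$ scalar product \eqref{3.6} and the duality form \eqref{3.22} leaves the cross term $-\int_{\Gamma_1}(u_1-u_2)(v_1-v_2)$; and Lemma~\ref{lemma4.1}(iii) yields $\langle B(v_1)-B(v_2),v_1-v_2\rangle_W\ge0$. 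Bounding the cross term by $\|u_1-u_2\|_{2,\Gamma_1}\|v_1-v_2\|_{2,\Gamma_1}\le\|u_1-u_2\|_{H^1}\|v_1-v_2\|_{H^0}\le\tfrac12\|w\|_{\cal{H}}^2$ (using that $\|\cdot\|_{2,\Gamma_1}$ is dominated by both $\|\cdot\|_{H^1}$ and $\|\cdot\|_{H^0}$, by \eqref{3.6}) gives $(\cal{A}U_1-\cal{A}U_2,w)_{\cal{H}}\ge-\|w\|_{\cal{H}}^2$, i.e. $\cal{A}+I$ is monotone.

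For maximality it suffices, by Minty's theorem, to show $R(\cal{A}+2I)=\cal{H}$. Given $(g_1,g_2)\in H^1\times H^0$, the equation $(\cal{A}+2I)(u,v)=(g_1,g_2)$ is equivalent, through $u=\tfrac12(v+g_1)$, to finding $v\in X$ with
\[
\tfrac12 Av+2v+B(v)=g_2-\tfrac12 Ag_1\in X',
\]
and the operator $Tv:=\tfrac12 Av+2v+B(v)$ from $X$ to $X'$ is bounded and demicontinuous (Lemma~\ref{lemma4.1}(i) together with $X\hookrightarrow W$, $W'\hookrightarrow X'$), monotone (monotonicity of $A$ and Lemma~\ref{lemma4.1}(iii)), and coercive: by \eqref{3.6}, \eqref{3.22} one has $\tfrac12\langle Av,v\rangle_{H^1}+2\|v\|_{H^0}^2\ge\tfrac12\|v\|_{H^1}^2$, while (PQ3) gives $\langle B(v),v\rangle_W\ge c'_m\int_\Omega\alpha|v|^m+c'_\mu\int_{\Gamma_1}\beta|v|^\mu$; recalling $\overline{m}=\max\{2,m\}$, $\overline{\mu}=\max\{2,\mu\}$, that $\alpha,\beta$ are bounded on a bounded domain, and $\overline m,\overline\mu>1$, the sum of these lower bounds controls $\|v\|_X=\|v\|_{H^1}+\|v\|_W$ superlinearly. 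Since $X$ is reflexive, the Browder--Minty surjectivity theorem yields $v$, and then $u=\tfrac12(v+g_1)\in H^1$ with $Au+B(v)=g_2-2v\in H^0$, so $(u,v)\in D(\cal{A})$; hence $\cal{A}+I$ is maximal monotone. For density of $D(\cal{A})$ I would approximate $v_0\in H^0$ by $v_n\in H^{1,\infty,\infty}_{1,1}\subset X$ (Lemma~\ref{lemma3.1}), noting that then $B(v_n)\in H^0$ by (PQ1), and approximate $u_0\in H^1$ by $u_n$ in the dense set $\{u\in H^1:Au\in H^0\}$ (which contains $\mathfrak{A}^{-1}(H^0)$, $\mathfrak{A}$ being the Lax--Milgram isomorphism of $H^1$ onto $(H^1)'$ attached to the $H^1$-coercive form $\langle Au,v\rangle_{H^1}+(u,v)_{H^0}$, and $H^0$ being dense in $(H^1)'$); then $Au_n+B(v_n)\in H^0$ and $(u_n,v_n)\in D(\cal{A})$ converges to $(u_0,v_0)$. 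With these three facts in hand, the abstract theory of Appendix~\ref{appendixA} closes the argument.

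The main obstacle is the coercivity/range step, Step~3: one has to run Browder--Minty on the genuinely weighted space $W$ (and hence $X$) in the delicate regime where $\alpha$ or $\beta$ vanish on sets of positive measure and $m>\romega$ or $\mu>\rgamma$, so that $W$ neither contains nor is contained in $H^0$; here one must keep the weighted duality pairing $\langle B(v),v\rangle_W$ exactly and combine (PQ3) with the $H^1$-ellipticity to recover coercivity on the full norm of $X$, which is precisely the ``right pivot space'' difficulty advertised in the introduction.
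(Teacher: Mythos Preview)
Your proposal is correct and follows essentially the same route as the paper: monotonicity of $\cal{A}+I$ via the cross--term estimate on $\Gamma_1$, maximality via Minty's theorem by reducing $\text{Rg}(\cal{A}+2I)=\cal{H}$ to surjectivity of a monotone--hemicontinuous--coercive operator $T:X\to X'$, density of $D(\cal{A})$ via an explicit product subset, and finally the rewriting $U'+(\cal{A}+I)U+(\cal{F}-I)(U)=0$ to invoke Appendix~\ref{appendixA}. The only cosmetic differences are that the paper takes $T=4I+A+2B$ (your equation multiplied by $2$), uses $H^{1,2(\overline{m}-1),2(\overline{\mu}-1)}_{\alpha,\beta}$ rather than $H^{1,\infty,\infty}_{1,1}$ in the density step, and spells out the coercivity bound more explicitly via \eqref{4.27}--\eqref{4.29} (with $\mu_0=\min\{2,m,\mu\}>1$), whereas your sketch leaves that last passage implicit.
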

\begin{proof}{\bf Step 1: $\boldsymbol{\cal{A}+I}$ is monotone
 in $\boldsymbol{\cal{H}}$.} Let $U_i=(u_i,v_i)\in
D(\cal{A})$ for $i=1,2$. By \eqref{4.18} and \eqref{4.20}
\begin{multline}\label{4.21}
(\cal{A}(U_1)-\cal{A}(U_2),U_1-U_2)_{\cal{H}}\\=(v_2-v_1,u_1-u_2)_{H^1}+(A(u_1-u_2)+B(v_1)-B(v_2),v_1-v_2)_{H^0}.
\end{multline}
Since $v_i\in X$ for $i=1,2$, by \eqref{3.17ripetuta} we have
\begin{multline}\label{4.22}
(A(u_1-u_2)+B(v_1)-B(v_2),v_1-v_2)_{H^0}\\=\langle
A(u_1-u_2),v_1-v_2\rangle_{H^1}+\langle
B(v_1)-B(v_2),v_1-v_2\rangle_{W}.
\end{multline}
By plugging \eqref{3.6}, \eqref{3.22} and \eqref{4.22} in
\eqref{4.21}  we  get
$$(\cal{A}(U_1)-\cal{A}(U_2),U_1-U_2)_{\cal{H}}=\int_{\Gamma_1}\!\!
(v_2-v_1)(u_1-u_2)+\langle B(v_1)-B(v_2),v_1-v_2\rangle_{W}.
$$
By Lemma~\ref{lemma4.1}--(iii), \eqref{3.6} and \eqref{4.18}  we then
get
\begin{multline*}
(\cal{A}(U_1)-\cal{A}(U_2),U_1-U_2)_{\cal{H}} \ge
\int_{\Gamma_1} (v_2-v_1)(u_1-u_2)\\  \ge -\frac 12
\|v_1-v_2\|_{2,\Gamma_1}^2-\frac 12
\|u_1-u_2\|_{2,\Gamma_1}^2
\ge - \|U_1-U_2\|_{\cal{H}}^2,
\end{multline*}
and then $\cal{A}+I$ is  monotone.

{\noindent \bf Step 2: $\boldsymbol{\cal{A}+I}$ is maximal monotone
in $\boldsymbol{\cal{H}}$.} By Step 1 and the nonlinear version of
Minty's theorem (see \cite[Lemma~1.3, p.~159]{showalter}) this fact
is equivalent to prove that $\text{Rg}(\cal{A}+2I)=\cal{H}$. Consequently, by
\eqref{4.19}--\eqref{4.20} we have to show that for all
$(h^0,h^1)\in H^0\times H^1$  the system
\begin{equation}\label{4.25}
\begin{cases}
2u-v=h^1 &\qquad\text{in $H^1$},\\
2v+Au+B(v)=h^0&\qquad\text{in $X'$},
\end{cases}
\end{equation}
has a solution $(u,v)\in H^1\times X$. Since $X\subset H^1$ we can
solve the first equation in $u$  and plug $u=\frac 12 (v+h^1)$ in
the second one. Hence to solve \eqref{4.25} reduces to prove that,
for $h^2=2h^0-Ah^1\in (H^1)'$, the single equation
\begin{equation}\label{4.26}
4v+Av+2B(v)=h^2\qquad\text{in $X'$}
\end{equation}
has a solution $v\in X$. Actually we claim that \eqref{4.26} has a
solution for any $h^2\in X'$  i.e. that the
operator $T:X\to X'$ given by $T=4I+A+2B$ is surjective.

We first consider, for the reader's convenience, the simplest linear case when $P(x,v)=\alpha(x)v$ and $Q(x,v)=\beta(x)v$.
In this case clearly $m=\overline{m}=\mu=\overline{\mu}=2$, so $X=H^1$ and for all $u,v\in H^1$ we have
$\langle T(u), v\rangle_X = a(u,v)$, where
$a$ is the continuous bilinear form in $H^1$ given by
$$a(u,v)=4(u,v)_{H^0}+\int_\Omega
\nabla u \nabla v +\int_{\Gamma_1}\nabla_\Gamma
u\nabla_\Gamma v+\int_\Omega \alpha uv+
\int_{\Gamma_1} \beta uv.$$
Since, by \eqref{3.6}, $a(u,u)\ge \|u\|^2$, $a$ is coercive, so $T$ is surjective by the Lax--Milgram Theorem.

In the general case $T$ is (possibly) nonlinear but,  by \eqref{3.22} and
Lemma~\ref{lemma4.1}--(iii), it is  monotone being the  sum of
monotone operators. Moreover by Lemma~\ref{lemma4.1}--(i) and
\eqref{3.17ripetuta} we have $T\in C(X,X')$. Next, by
\eqref{4.1},
\begin{alignat*}2
&\|[u]_\alpha\|_{\overline{m},\alpha}\le
&&\|[u]_\alpha\|_{2,\alpha}+\|[u]_\alpha\|_{m,\alpha}
\le\|\alpha\|_\infty \|u\|_2+\|[u]_\alpha\|_{m,\alpha},\\
&\|[v]_\beta\|_{\overline{\mu},\beta}\le
&&\|[v]_\beta\|_{2,\beta,\Gamma_1}+\|[v]_\beta\|_{\mu,\beta,\Gamma_1}
\le\|\beta\|_{\infty,\Gamma_1}\|v\|_{2,\Gamma_1}+\|[v]_\beta\|_{\mu,\beta,\Gamma_1}
\end{alignat*}
for all $u\in L^{2,\overline{m}}_\alpha(\Omega)$ and
$v\in L^{2,\overline{\mu}}_\beta(\Gamma_1)$. Consequently, by \eqref{H1plus} and \eqref{4.1}, there is
$c_1=c_1(\Omega, \|\alpha\|_\infty, \|\beta\|_{\infty,\Gamma_1})>0$
such that
\begin{equation}\label{4.27}
\|u\|_X\le c_1
(\|u\|_{H^1}+\|[u]_\alpha\|_{m,\alpha}+\|[u]_\beta\|_{\mu,\beta,\Gamma_1})\quad\text{for
all $u\in X$.}
\end{equation}
On the other hand, by \eqref{3.6} and (PQ3), for any $u\in X$ we
have
\begin{equation}\label{4.28}
\begin{aligned}
\langle T(u), u\rangle_X=&4 \|u\|_{H^0}^2+\int_\Omega
|\nabla u|^2+\int_{\Gamma_1}\!\!|\nabla_\Gamma
u|_\Gamma^2+2c_m'\int_\Omega
P(\cdot,u)u\\+&2c_\mu'\int_{\Gamma_1} Q(\cdot,u)u\,\,
\ge
\,\,c_2(\|u\|_{H^1}^2+\|[u]_\alpha\|_{m,\alpha}^m+\|[u]_\beta\|_{\mu,\beta,\Gamma_1}^\mu)
\end{aligned}
\end{equation}
where $c_2=\min\{4,2c_m' 2c_\mu'\}>0$. By the elementary inequality
$x^{s'}\le 1+x^s$ for all $0\le s'\le s$, $x\ge 0$, and discrete
H\"{o}lder inequality, from \eqref{4.28} we get
\begin{equation}\label{4.29}
\langle T(u), u\rangle_X\ge
3^{1-\mu_0}c_2\left(\|u\|_{H^1}+\|[u]_\alpha\|_{m,\alpha}+\|[u]_\beta\|_{\mu,\beta,\Gamma_1}\right)^{\mu_0}-3c_2
\end{equation}
where $\mu_0=\min\{2,m,\mu\}$. By combining \eqref{4.27} and
\eqref{4.29}, since $\mu_0>1$, we get that $T$ is coercive, i.e.
$\|u_n\|_X\to\infty$ implies $\langle
T(u_n),u_n\rangle_X/\|u_n\|_X\to\infty$. Then our claim follows
since monotone, hemicontinuous and coercive operators are surjective
(see \cite[Theorem~1.3~p.~40]{barbu} or
\cite[Corollary~2.3~p.~37]{barbu2010})).\begin{footnote}{  An alternative proof of this point
is given in Remark~\ref{piffero1357}, page~\pageref{piffero1357}.}\end{footnote}

{\noindent \bf Step 3: $\boldsymbol{\cal{A}0=0}$ and
$\boldsymbol{D(\cal{A})}$ is dense in  $\boldsymbol{\cal{H}}$.} The
first conclusion follows by \eqref{4.20} and Remark~\ref{remark4.1}.
To prove the second one we note that
$H^{1,2(\overline{m}-1),2(\overline{\mu}-1)}_{\alpha,\beta}\subseteq
X$ and by (PQ1) we have
$B(H^{1,2(\overline{m}-1),2(\overline{\mu}-1)}_{\alpha,\beta})\subseteq
H^0$. Consequently
\begin{equation}\label{inclusionj}
(A+I)^{-1}(H^0)\times
H^{1,2(\overline{m}-1),2(\overline{\mu}-1)}_{\alpha,\beta}\subseteq
D(\cal{A}).
\end{equation}
Now $H^{1,2(\overline{m}-1),2(\overline{\mu}-1)}_{\alpha,\beta}$ is
dense in $H^0$ by Lemma~\ref{lemma3.1}, while $(A+I)^{-1}(H^0)$ is
dense in $H^1$ since $H^0$ is dense in $(H^1)'$ by  \eqref{3.17ripetuta}
and $A+I:H^1\to(H^1)'$ is an isomorphism by \eqref{3.6}, \eqref{3.22} and Riesz--Fr\'{e}chet
theorem. Hence, by \eqref{inclusionj},  $D(\cal{A})$ is dense in
$\cal{H}$.

{\noindent \bf Step 4: conclusion.} Assertions (i--iii) follow at
once by applying Theorems~\ref{theoremA1}, \ref{theoremA2} and Remark \ref{theoremA1plus} in Appendix~\ref{appendixA}
to \eqref{C2},
which can trivially rewritten as
\begin{equation}\label{C3}
\begin{cases} U'+\cal{A_1}U+\cal{F_1}(U)=0\qquad\text{in $\cal{H}$,}\\
U(0)=U_0\in\cal{H}
\end{cases}
\end{equation}
where $\cal{A_1}=\cal{A}+I$ and $\cal{F_1}=\cal{F}+I$.
\end{proof}

\begin{rem}\label{piffero1357}
 The surjectivity of the operator $T$ introduced in {\bf Step 2} also follows by the Direct Method of the Calculus of Variations without invoking
the surjectivity theorem of V. Barbu quoted before, since $T$ has a variational nature. Indeed, setting
$$\cal{P}(x,v)=\int_0^v P(x,s)\,ds\qquad\text{and}\quad
\cal{Q}(y,v)=\int_0^v Q(y,s)\,ds$$
for a.a. $x\in\Omega$, $y\in \Gamma_1$ and all $v\in\R$, one easily sees that
$$\cal{B}(u)=2 \|u\|_{H^0}^2+\frac 12\int_\Omega
|\nabla u|^2+\frac 12 \int_{\Gamma_1}|\nabla_\Gamma
u|_\Gamma^2+\int_\Omega \cal{P}(\cdot, u)+\int_{\Gamma_1} \cal{Q}(\cdot,u)-\langle h^2,v\rangle_X
$$
defines a (possibly nonlinear) functional $\cal{B}\in C^1(X)$ and  that its Fr\'echet differential is nothing but $T-h^2$. Moreover, by \eqref{4.27} and \eqref{4.12}, since $\cal{B}(0)=0$,
one gets that for all $u\in X$
$$\cal{B}(u)=\int_0^1 \frac d{dt}J(tu)\,dt=\int_0^1 \langle T(tu)-h^2,u\rangle_X \,dt\ge c_3\|u\|^{\mu_0}_X-\|h^2\|_{X'}\|u\|_X-3c_2$$
where $c_3=3^{1-\mu_0}(\mu_0+1)^{-1}c_2c_1^{-\mu_0}$.
Hence, as $\mu_0>1$, $\cal{B}$ is coercive in $X$ (that is $\cal{B}(u_n)\to\infty$ when $\|u_n\|_X\to\infty$) so $\cal{B}$ has a minimum $v\in X$, which is then a critical point of it, so $T(v)=h^2$.
\end{rem}

To prove Theorem~\ref{proposition4.1} we have to prove that the
generalized solution found in Theorem~\ref{theorem4.2} is actually a
weak solution, which is unique. We start with the uniqueness.
\begin{lem}\label{lemma4.1bis}
The weak maximal solution of \eqref{C} is unique.
\end{lem}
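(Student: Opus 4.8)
The plan is to compare two weak solutions of \eqref{C} with the same initial data and to close a Gr\"onwall estimate on the energy of their difference. Let $u$ and $v$ be weak solutions of \eqref{C} defined on $[0,T_1)$ and $[0,T_2)$ respectively, fix $T^{*}<\min\{T_1,T_2\}$, and set $w=u-v$. By Remark~\ref{remark4.5} both $u$ and $v$ enjoy the regularity \eqref{4.9}, so that $R:=\sup_{[0,T^{*}]}\big(\|u\|_{H^1}+\|v\|_{H^1}\big)<\infty$, and $w\in C([0,T^{*}];H^1)\cap C^1([0,T^{*}];H^0)$ with $w(0)=0$, $w'(0)=0$. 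The crucial tool is the energy identity \eqref{4.14} for the difference, which Remark~\ref{remark4.5} makes available for any pair of weak solutions; taken with $s=0$ it reads
\begin{equation*}
\tfrac12\|w'(t)\|_{H^0}^2+\tfrac12\langle Aw(t),w(t)\rangle_{H^1}+\int_0^t\langle B(u')-B(v'),w'\rangle_W=\int_0^t(F(u)-F(v),w')_{H^0}
\end{equation*}
for all $t\in[0,T^{*}]$.

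Next I would discard the damping contribution and estimate the source term. Since $B$ is monotone in $Z(0,T^{*})$ by Lemma~\ref{lemma4.1}--(iii), one has $\int_0^t\langle B(u')-B(v'),w'\rangle_W\ge 0$, so this term may be dropped from the left-hand side. For the right-hand side, \eqref{4.12} together with the Cauchy--Schwarz inequality in $H^0$ gives $(F(u)-F(v),w')_{H^0}\le L(R)\,\|w\|_{H^1}\|w'\|_{H^0}\le \tfrac{L(R)}2\big(\|w\|_{H^1}^2+\|w'\|_{H^0}^2\big)$. The one point requiring a little care is that $\langle Aw,w\rangle_{H^1}$ only controls the Dirichlet part of the $H^1$--norm: by \eqref{3.6} and \eqref{3.22} we have $\|w(t)\|_{H^1}^2=\langle Aw(t),w(t)\rangle_{H^1}+\|w(t)\|_{2,\Gamma_1}^2$, and since $w(0)=0$ we may write $w_{|\Gamma}(t)=\int_0^t(w_{|\Gamma})_t\,d\tau$, whence $\|w(t)\|_{2,\Gamma_1}^2\le t\int_0^t\|w'\|_{H^0}^2\,d\tau\le T^{*}\int_0^t\|w'\|_{H^0}^2\,d\tau$.

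Combining these estimates and setting $\psi(t)=\|w'(t)\|_{H^0}^2+\|w(t)\|_{H^1}^2$, one obtains $\psi(t)\le (T^{*}+L(R))\int_0^t\psi(\tau)\,d\tau$ on $[0,T^{*}]$, so Gr\"onwall's lemma forces $\psi\equiv 0$, i.e. $u\equiv v$ on $[0,T^{*}]$. Since $T^{*}<\min\{T_1,T_2\}$ was arbitrary, $u$ and $v$ coincide on $[0,\min\{T_1,T_2\})$; a standard argument then gives uniqueness of the maximal weak solution, since otherwise one of two distinct maximal weak solutions would be a proper restriction of the other, contradicting maximality. I do not expect a genuine obstacle here: once \eqref{4.14} is in hand, the only substantive ingredients are the monotonicity of $B$ and the bookkeeping needed to recover the lower-order boundary $L^2$--term from $w'$, both of which are routine.
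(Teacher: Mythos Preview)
Your proof is correct and follows essentially the same approach as the paper: use the difference energy identity \eqref{4.14}, drop the nonnegative damping term by monotonicity of $B$, estimate the source via \eqref{4.12}, recover the missing lower-order boundary $L^2$ part of the $H^1$--norm from the time integral of $w'$, and close with Gr\"onwall. In fact your bookkeeping is slightly cleaner than the paper's: from \eqref{3.6}--\eqref{3.22} one has $\langle Aw,w\rangle_{H^1}=\|w\|_{H^1}^2-\|w\|_{2,\Gamma_1}^2$, and you correctly control only the $\Gamma_1$ piece, whereas the paper states (and bounds) the full $\|w\|_{H^0}^2$; either way the argument goes through.
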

\begin{proof}
Clearly the statement reduces to prove that, given two weak
solutions $u$ and $v$ in $[0,T]$, $0<T<\infty$, then $u=v$. We set
$M=\max\{\|u\|_{C([0,T];H^1)},\|v\|_{C([0,T];H^1)}\}$.
Using \eqref{4.14} and Lemma~\ref{lemma4.1} -- (iii) and \eqref{4.12}
we get the estimate
\begin{equation}\label{4.29bis}
\tfrac 12 \|w'\|_{H^0}^2+\tfrac 12 \langle Aw,w\rangle_{H^1}\Big
|_0^t\le L(M)\int_0^t\|w\|_{H^1}\|w'\|_{H^0}\qquad\text{for $t\in
[0,T]$}.
\end{equation}
By \eqref{3.6} and \eqref{3.22} we have $\langle
Au,u\rangle_{H^1}=\|u\|_{H^1}^2-\|u\|_{H^0}^2$ for all $u\in H^1$.
Moreover, as $w(0)=0$, by H\"{o}lder inequality we have
\begin{equation}\label{4.30}
\|w(t)\|_{H^0}^2=\left\|\int_0^t w'\right\|_{H^0}^2\le T\int_0^t
\|w'\|_{H^0}^2.
\end{equation}
Hence by \eqref{4.29bis} and Young inequality we get
\begin{equation}\label{4.31}
\tfrac 12 \|w'(t)\|_{H^0}^2+\tfrac 12 \|w(t)\|_{H^1}^2\le
\tfrac{T+L(M)}2\int_0^t \|w'\|_{H^0}^2+ \|w\|_{H^1}^2\qquad\text{for
$t\in [0,T]$.}
\end{equation}
The proof is completed by applying Gronwall inequality.
\end{proof}
\begin{lem}\label{lemma4.2}
Generalized solutions of \eqref{C} are also weak.
\end{lem}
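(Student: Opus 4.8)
The plan is to pass to the limit, along the approximating sequence of strong solutions, in the weak formulation \eqref{4.13}. As a preliminary step I would observe that every strong solution $v$ of \eqref{C1} in $[0,T]$ is also a weak one. Indeed, pairing \eqref{C1} (which for a strong solution holds in $H^0$ almost everywhere) with $v'(t)\in X$ and using, as in \eqref{4.22}, that $(Av(t)+B(v'(t)),v'(t))_{H^0}=\langle Av(t),v'(t)\rangle_{H^1}+\langle B(v'(t)),v'(t)\rangle_W$, one obtains after integration the energy identity
\begin{equation*}
\tfrac12\|v'\|_{H^0}^2+\tfrac12\langle Av,v\rangle_{H^1}\Big|_0^t+\int_0^t\langle B(v'),v'\rangle_W=\int_0^t(F(v),v')_{H^0};
\end{equation*}
its right--hand side being finite, the nonnegative term $\int_0^t\langle B(v'),v'\rangle_W$ is finite, and then (PQ3), the elementary inequality $x^{\overline m}\le x^2+x^m$ and its $\overline{\mu}$--analogue, together with $v'\in L^\infty(0,T;H^0)$, give $v'\in Z(0,T)$, so that \eqref{3.20} holds with $\rho=\overline m$, $\theta=\overline{\mu}$. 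Pairing \eqref{C1} instead with a test function $\phi(t)\in X$ and integrating by parts in $t$ then yields \eqref{4.13} (alternatively this follows from Lemma~\ref{lemma3.2} after rewriting \eqref{C1} as $v''+Av=F(v)-B(v')$, whose right--hand side is of the form \eqref{3.19} by Lemma~\ref{lemma4.1} and \eqref{newiso}).

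Now let $u$ be a generalized solution of \eqref{C} in $[0,T]$, and let $u_n$ be strong solutions of \eqref{C1} with $u_n\to u$ in $C([0,T];H^1)\cap C^1([0,T];H^0)$; in particular $\{u_n\}$ and $\{u_n'\}$ are bounded in $C([0,T];H^1)$ and $C([0,T];H^0)$ respectively, and $F(u_n)\to F(u)$ in $C([0,T];H^0)$ by \eqref{4.12}. The energy identity above, written for $u_n$ and solved for $\int_0^T\langle B(u_n'),u_n'\rangle_W$, together with the coercivity estimate just used, yields a uniform bound of $\{u_n'\}$ in the reflexive space $Z(0,T)$, hence, by Lemma~\ref{lemma4.1}~(ii), of $\{B(u_n')\}$ in $Z'(0,T)$. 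Passing to a subsequence, $u_n'\rightharpoonup\chi$ in $Z(0,T)$ and $B(u_n')\rightharpoonup\Psi$ in $Z'(0,T)$; since $u_n'\to u'$ in $C([0,T];H^0)$ one identifies $\chi=u'$, so $u'\in Z(0,T)$ and the remaining part of \eqref{3.20} holds.

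Passing to the limit in \eqref{4.13} for $u_n$ (using the convergences of $u_n$, $u_n'$ and $F(u_n)$, the boundedness of $A$, and $B(u_n')\rightharpoonup\Psi$ in $Z'(0,T)$) shows that $u$ satisfies \eqref{4.13} with $\Psi$ in place of $B(u')$; equivalently, in the sense of Lemma~\ref{lemma3.2}, $u$ is a weak solution of the linear problem $u''+Au=F(u)-\Psi$, so by \eqref{3.25} it satisfies the above energy identity with $\Psi$ replacing $B(u')$. On the other hand, solving the energy identities for $u_n$ for $\int_0^T\langle B(u_n'),u_n'\rangle_W$ and using that $u_n(s)\to u(s)$ in $H^1$, $u_n'(s)\to u'(s)$ in $H^0$ for $s\in\{0,T\}$ and $F(u_n)\to F(u)$ in $C([0,T];H^0)$, the right--hand side converges \emph{strongly}, whence $\int_0^T\langle B(u_n'),u_n'\rangle_W\to\int_0^T\langle\Psi,u'\rangle_W$. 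The decisive step is then Minty's monotonicity trick: since $B$ is monotone and hemicontinuous on $Z(0,T)$ by Lemma~\ref{lemma4.1}, from $\int_0^T\langle B(u_n')-B(w),u_n'-w\rangle_W\ge0$ for all $w\in Z(0,T)$ one gets, letting $n\to\infty$, $\int_0^T\langle\Psi-B(w),u'-w\rangle_W\ge0$; choosing $w=u'-\lambda z$ with $\lambda>0$, $z\in Z(0,T)$, dividing by $\lambda$ and letting $\lambda\to0^+$ gives $\Psi=B(u')$. Hence $u$ satisfies \eqref{4.13}; since it also satisfies \eqref{3.20} and, being a generalized solution of \eqref{C}, the initial conditions $u(0)=u_0$, $u'(0)=u_1$, it is a weak solution of \eqref{C}.

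The hard part is the identification $\Psi=B(u')$, i.e. the passage to the limit in the nonlinear damping terms when $\alpha$ and $\beta$ may vanish on sets of positive measure, so that one cannot work in plain $L^m$/$L^\mu$ spaces: it is precisely the pivot--space framework of Section~\ref{section 2} — reflexivity of $Z(0,T)$ and $Z'(0,T)$, and the energy identity \eqref{3.25} valid there — that makes the Minty argument available.
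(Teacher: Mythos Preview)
Your proof is correct and follows essentially the same route as the paper's: first show that strong solutions are weak (via the energy identity and (PQ3) to get $v'\in Z(0,T)$, then Lemma~\ref{lemma3.2}), then for a generalized solution use the approximating strong solutions, extract weak limits $u_n'\rightharpoonup u'$ in $Z(0,T)$ and $B(u_n')\rightharpoonup\Psi$ in $Z'(0,T)$, pass to the limit in \eqref{4.13}, compare the two energy identities to obtain $\lim_n\int_0^T\langle B(u_n'),u_n'\rangle_W=\int_0^T\langle\Psi,u'\rangle_W$, and conclude $\Psi=B(u')$ by Minty's trick. The only cosmetic differences are that the paper phrases the last step by invoking maximal monotonicity of $B$ on $Z(0,T)$ and a reference to Barbu, while you spell out the Minty argument by hand, and that the paper records the weak limit of $B(u_n')$ in the slightly smaller space $L^{\overline m'}(0,T;[L^{\overline m}(\Omega,\lambda_\alpha)]')\times L^{\overline\mu'}(0,T;[L^{\overline\mu}(\Gamma_1,\lambda_\beta)]')$ (which is what Lemma~\ref{lemma4.1}(ii) actually gives) rather than in $Z'(0,T)$.
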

\begin{proof}
Clearly we can prove the statement for
solutions  in $[0,T]$, $0<T<\infty$.

{\noindent\bf Step 1: strong solutions are also weak.} Let $u$ be a
strong solution. We first claim that
$u'\in Z(0,T)$. Since, by Definition~\ref{definition4.2}, $u'(t)\in
X$ for all $t\in [0,T]$, by \eqref{C1} we get
\begin{equation}\label{4.15bis}
(u'',u')_{H^0}+\langle Au,u'\rangle_{H^1}+\langle
B(u'),u'\rangle_W=(F(u),u')_{H^0}\qquad\text{a.e. in $(0,T)$}.
\end{equation}
 Since
$u\in W^{1,\infty}(0,T;H^1)\cap W^{2,\infty}(0,T;H^0)$, so $Au\in
W^{1,\infty}(0,T;(H^1)')$, by standard time--regularization we have
$\|u'\|_{H^0}^2, \langle Au,u\rangle_{H^1}\in W^{1,\infty}(0,T)$ and
$$\left(\|u'\|_{H^0}^2\right)'=2(u'',u')_{H^0},\quad \langle Au,u\rangle _{H^1}'=2\langle Au,u'\rangle_{H^1}
\qquad\text{a.e. in $(0,T)$,}$$ where the symmetry of $A$ is also
used. Since $(F(u),u')_{H^0}\in L^\infty(0,T)$, by \eqref{4.15bis} we
then get that $\langle B(u'),u'\rangle_W\in L^\infty(0,T)\subset
L^1(0,T)$ as $T<\infty$. Our claim then follows by (PQ3). Combining
it with Lemma~\ref{lemma4.1}--(ii) we get that $B(u')\in
L^{\overline{m}'}(0,T\, ;
[L^{\overline{m}}(\Omega,\lambda_\alpha)]')\times
L^{\overline{\mu}'}(0,T\, ;
[L^{\overline{\mu}}(\Gamma_1,\lambda_\beta)]')$. Since $F(u)\in
L^1(0,T;H^0)$ and trivially $u'\in W^{1,1}(0,T;X')$, by Riesz
theorem and Lemma~\ref{lemma3.2} we get \eqref{4.13}, concluding
Step 1.

{\noindent\bf Step 2: generalized solutions are also weak.} Let $u$ be a
generalized solution and $(u_n)_n$ a sequence of
strong solutions of \eqref{C1} converging  to $u$  in
$C([0,T];H^1)\cap C^1([0,T];H^0)$. By Step 1 and
Remark~\ref{remark4.5} the energy identity
$$\tfrac 12 \|u_n'\|_{H^0}^2+\tfrac 12
\langle Au_n,u_n\rangle_{H^1}\Big |_0^T+\int_0^T \langle
B(u_n'),u_n'\rangle_W=\int_0^T (F(u_n),u_n')_{H^0} $$ holds for all
$n\in\N$. Since by \eqref{4.12} we have $F(u_n)\to F(u)$ in
$C([0,T];H^0)$ we can pass to the limit in the last identity to get
\begin{equation}\label{4.17bis}
\tfrac 12 \|u'\|_{H^0}^2+\tfrac 12 \langle Au,u\rangle_{H^1}\Big
|_0^T+\lim_n\int_0^T \langle B(u_n'),u_n'\rangle_W=\int_0^T
(F(u),u')_{H^0}.
\end{equation}
Then, by (PQ3) and Lemma~\ref{lemma4.1}--(ii), it follows that
$u'_n$ and  $B(u_n')$ are (respectively) bounded in $Z(0,T)$  and
$L^{\overline{m}'}(0,T\, ;
[L^{\overline{m}}(\Omega,\lambda_\alpha)]')\times
L^{\overline{\mu}'}(0,T\, ;
[L^{\overline{\mu}}(\Gamma_1,\lambda_\beta)]')$. Hence, up to a
subsequence, $u_n'\to \psi$ and $B(u_n')\to \chi$ weakly in these
spaces. Since $u_n'\to u'$ in $L^2(0,T;H^0)$ and
$Z(0,T)\hookrightarrow L^2(0,T;H^0)$, it follows that $\psi=u'$, so
$u_n'\to u'$ weakly in $Z(0,T)$. We can pass to the limit in the
distribution identity \eqref{4.13} written, thanks to Step 1, for
$u_n$, and get
\begin{equation}\label{4.18bis}\int_0^T-(u',\phi')_{H^0}+\langle
Au,\phi\rangle_{H^1}+\langle \chi,\phi\rangle_W=\int_0^T
(F(u),\phi)_{H^0} \end{equation} for all $\phi\in C_c((0,T);H^1)\cap
C^1_c((0,T);H^0)\cap Z(0,T)$. By a further application of
Lemma~\ref{lemma3.2} we then get the energy identity
\begin{equation}\label{4.19bis}
\tfrac 12 \|u'\|_{H^0}^2+\tfrac 12 \langle Au,u\rangle_{H^1}\Big
|_0^T+\int_0^T \langle \chi,u'\rangle_W=\int_0^T (F(u),u')_{H^0}.
\end{equation}
Combining \eqref{4.17bis} and \eqref{4.19bis} we then get
$\lim_n\int_0^T \langle B(u_n'),u_n'\rangle_W=\int_0^T \langle
\chi,u'\rangle_W$. By Lemma~\ref{lemma4.1}--(ii--iii) and
\cite[Theorem~1.3~p.40]{barbu} $B$ is maximal monotone in $Z(0,T)$, so by the
classical monotonicity argument (see
\cite[Lemma~1.3~p.49]{barbu1993}
 we get $B(u')=\chi$ which, by \eqref{4.18bis}, concludes the proof.
\end{proof}

We not turn to the proofs of the results stated in
Section~\ref{section3}.

\begin{proof}[\bf Proof of Theorem~\ref{proposition4.1}]
We apply Theorem~\ref{theorem4.2} with $\cal{F}$ being given by
$$\cal{F}(U)=\begin{pmatrix}0\\-F(u)\end{pmatrix},\qquad\text{where}\quad U=\begin{pmatrix}u\\v\end{pmatrix},$$
which is trivially locally Lipschitz in $\cal{H}$ by \eqref{4.12}.
Consequently for any $(u_0,u_1)\in H^1\times H^0$ problem \eqref{C}
has a unique maximal generalized solution $u$ in
$[0,T_{\text{max}})$, which is the unique strong maximal solution of
it when (see \eqref{4.19}) also $u_1\in X$ and $Au_0+B(u_1)\in H^0$.
By Lemmas~\ref{lemma4.1bis} and \ref{lemma4.2} then $u$ is also the
unique weak solution of \eqref{C} in $[0,T_{\text{max}})$. By
Theorem~\ref{theorem4.2}--(ii) we then get \eqref{4.16} and the
maximality of $u$ among weak solutions of \eqref{C}. The continuous
dependence on the data in $H^1\times H^0$ then follows directly from
Theorem~\ref{theorem4.2}--(iii). Moreover when $F$ is globally Lipschitz also
$\cal{F}$ is globally Lipschitz, so all solutions are global in time.

To complete the proof we assume from now on that (PQ4) holds. By
\eqref{4.23bis}--\eqref{4.24bis}
there is $C=C(\|\alpha\|_\infty, \|\beta\|_{\infty,\Gamma_1},
c_m''', c_\mu''')\ge 0$ such that
\begin{equation}\label{4.25bis}
\widetilde{c_m}''\|v_\Omega-w_\Omega\|_{\overline{m},\alpha}^{\overline{m}}+
\widetilde{c_\mu}''\|v_\Gamma-w_\Gamma\|_{\overline{\mu},\beta,\Gamma_1}^{\overline{\mu}}
\le C |v-w|_{H^0}^2+\langle B(v)-B(w),v-w\rangle_W
\end{equation}
for any
$v=(v_\Omega,v_\Gamma),w=(w_\Omega,w_\Gamma)\in W$, where $\widetilde{c_m}''>0$ provided $m>\romega$ and
$\widetilde{c_\mu}''>0$ provided $\mu>\rgamma$. Then, using part (ii) of
the statement, the energy identity \eqref{4.14} and \eqref{4.25bis}
we get that $u_n'\to u'$ in
$L^{\overline{m}}(0,T^*;L^{2,\overline{m}}_\alpha(\Omega))$ provided
$m>\romega$ and $({u_n}_{|\Gamma})'\to (u_{|\Gamma})'$ in
$L^{\overline{\mu}}(0,T^*;L^{2,\overline{\mu}}_\beta(\Gamma_1))$
provided $\mu>\rgamma$. Since these conclusions are automatic when
$m\le \romega$ and $\mu\le\rgamma$ we get $u_n'\to u'$ in
$Z(0,T^*)$.

Finally, when $(u_{0n},u_{1n})\to (u_0,u_1)$ in
$H^{1,\rho,\theta}_{\alpha,\beta}\times H^0$ for some $\rho,\theta$
satisfying \eqref{special}, we recall \eqref{4.11bis} and we note
that  $u_n'\to u'$ in $Z(0,T^*)$ and $u_{0n}\to u_0$ in
$H^{1,\rho,\theta}_{\alpha,\beta}$ yields by a simple integration in
time that $u_n\to u$ in
$W^{1,1}(0,T^*;H^{1,\rho,\theta}_{\alpha,\beta})\hookrightarrow
C([0,T^*],H^{1,\rho,\theta}_{\alpha,\beta})$, concluding the proof.
\end{proof}

\begin{proof}[\bf Proof of Theorem~\ref{theorem4.1}]
It follows immediately by applying Theorem~\ref{proposition4.1}
with $F=(\widehat{f},\widehat{g})$, which satisfies \eqref{4.12} as a
consequence of  (FG1--2)  and \eqref{6}.
\end{proof}

\begin{proof}[\bf Proof of Theorems~\ref{theorem1} and \ref{theorem2}]
They are particular cases of
Theorem~\ref{theorem4.1}, by using
Remarks~\ref{remark4.1}, \ref{remark4.2bis},
\ref{remark4.3} and the fact that \eqref{8} is trivial when $m\le 2$
and $\mu\le 2$.
\end{proof}

\section{Regularity results}\label{section5}
This section is devoted to make explicit, when we are dealing with problem \eqref{1}, so $F=(\widehat{f},\widehat{g})$, the meaning of strong
solutions of problem \eqref{C} found in
Theorem~\ref{proposition4.1}.  In this way we shall get
our main regularity result for problem \eqref{1}. We shall from now
on assume that
\begin{equation}\label{regass}
\text{$ \Gamma$ is $C^2$ and $\overline{\Gamma_0}\cap
\overline{\Gamma_1}=\emptyset$.}
\end{equation}
Recalling the discussion made in Section~\ref{section 2} on Sobolev
spaces on compact $C^k$ manifolds, we remark that by the arguments
used in \cite[pp.~38-42]{lionsmagenes1}, when $M$ is a $C^2$ compact
manifold, then
\begin{alignat}2
\label{5.1}C^2(M)\quad\text{is dense in $W^{s,\rho}(M)$}
\qquad &\text{for $-2\le s\le2$}&&\quad \text{and $\rho\in (1,\infty)$;}\\
\label{5.2}[W^{s,\rho}(M]'\simeq W^{-s,\rho'}(M)\qquad&\text{for
$-2\le s\le2$ } && \quad\text{and $\rho\in (1,\infty)$.}
\end{alignat}

Since by \eqref{regass}, $\Gamma$, $\Gamma_0$ and $\Gamma_1$ are
compact $C^2$ manifolds, \eqref{5.1} and \eqref{5.2}  hold true when
$M=\Gamma$ and $M=\Gamma_i$, $i=0,1$.

 We now recall some fact on the Laplace-Beltrami
operator $\Delta_M$, which we shall use when  $M=\Gamma$ and
$M=\Gamma_i$, $i=0,1$, referring to \cite{taylor} for
more details and proofs, given there for smooth manifolds.  One
easily sees that the $C^2$ regularity of $M$ and the $C^1$ regularity of $(\cdot,\cdot)_M$ are enough. Then
$\Delta_M$ can be at first defined on $C^2(M)$ by the formula
\begin{equation}\label{26}
-\int_M \Delta_M u \,v=\int_M (\nabla_M u,\nabla_M v)_M
\end{equation}
for any $u,v\in C^2(M)$, and
$\Delta_M
u=g^{-1/2}\partial_i\left(g^{ij}g^{1/2}\partial_ju\right)$  in local coordinates.
Consequently $\Delta_M$ uniquely extends
\begin{footnote}{here we are implicitly considering $\Delta_M$ as the restriction to
real--valued distributions of the same operator acting on Sobolev
spaces of complex--valued distributions, which will be studied in
Appendix~\ref{appendixB}.
 }\end{footnote}
 to a bounded
linear operator from $W^{s+1,\rho}(M)$ to $W^{s-1,\rho}(M)$ for any
$s\in [-1,1]$ and $1<\rho<\infty$ (see
\cite[Lemma~1.4.1.3~pp.~21--24]{grisvard}).
Since $\Delta_M 1=0$ the operator is not injective.   The isomorphism properties of $-\Delta_M+I$ are given in  Lemma~\ref{propositionA1} in Appendix~\ref{appendixB}.

Since the characteristic functions $\chi_{\Gamma_0}$ ,
$\chi_{\Gamma_1}$ of $\Gamma_0, \Gamma_1$ are $C^2$ on $\Gamma$, by
identifying the elements of $W^{s,\rho}(\Gamma_i)$, $i=0,1$, with
their trivial extensions to $\Gamma$  we have the decomposition
\begin{equation}\label{5.4}
W^{s,\rho}(\Gamma)=W^{s,\rho}(\Gamma_0)\oplus W^{s,\rho}(\Gamma_1),
\quad\text{for $\rho\in (1,\infty)$, $-2\le s\le 2$,}
\end{equation}
so in particular $W^{s,\rho}(\Gamma_1)=\{u\in W^{s,\rho}(\Gamma):
u=0\,\,\text{in $\Gamma_0$}\}$, coherently with \eqref{ID}. By
\eqref{5.4} we also have
$\Delta_\Gamma=\Delta_{\Gamma_0}+\Delta_{\Gamma_1}$, hence
$\Delta_\Gamma u=\Delta_ {\Gamma_1}u$ for $u\in
W^{s,\rho}(\Gamma_1)$.

We recall here some classical facts on  the distributional normal
derivative. For any $u\in W^{1,\rho}(\Omega)$, $1<\rho<\infty$, such
that $-\Delta u=h\in L^\rho(\Omega)$ in the sense of distributions,
we set   $\partial_\nu u\in W^{-1/\rho,\rho}(\Gamma)$ by
\begin{footnote}{$\mathbb{D}$ was defined in
subsection~\ref{subsec2.2}}\end{footnote}
\begin{equation}\label{5.5}\langle \partial_\nu u
,\psi\rangle_{W^{1-1/\rho',\rho'}(\Gamma)}=-\int_\Omega
h\mathbb{D}\psi+\int_\Omega \nabla u\nabla
(\mathbb{D}\psi)\quad\text{for all $\psi\in
W^{1-1/\rho',\rho'}(\Gamma)$}.\end{equation} The operator
$u\mapsto\partial_\nu u$ is linear and bounded from
$D_\rho(\Delta)=\{u\in W^{1,\rho}(\Omega): \Delta u\in
L^\rho(\Omega)\}$, equipped with the graph norm, to
$W^{-1/\rho,\rho}(\Gamma)$. Moreover, since for any $\Psi\in
W^{1,\rho'}(\Omega)$ such that $\Psi_{|\Gamma}=\psi$ we have
$\Psi-\mathbb{D}\psi\in W^{1,\rho'}_0(\Omega)$,  \eqref{5.5} extends
to
\begin{equation}\label{5.5bis}\langle \partial_\nu u
,\psi\rangle_{W^{1-1/\rho',\rho'}(\Gamma)}=-\int_\Omega
h\Psi+\int_\Omega \nabla u\nabla\Psi\quad\text{for all $\psi\in
W^{1-1/\rho',\rho'}(\Gamma)$}.\end{equation} Moreover, by
\eqref{5.4}, we have $\partial_\nu u=\partial_\nu
u_{|\Gamma_0}+\partial_\nu u_{|\Gamma_1}$ and
$\psi=\psi_{|\Gamma_0}+\psi_{|\Gamma_1}$, where $\partial_\nu
u_{|\Gamma_i}\in W^{-1/\rho,\rho}(\Gamma_i)$, $\psi_{|\Gamma_i}\in
W^{1-1/\rho',\rho'}(\Gamma_i)$, $i=0,1$, and by \eqref{5.2},
\begin{equation}\label{5.6}
\langle \partial_\nu u
,\psi\rangle_{W^{1-1/\rho',\rho'}(\Gamma)}=\sum\nolimits_{i=0}^1\langle
\partial_\nu u_{|\Gamma_i}
,\psi_{|\Gamma_i}\rangle_{W^{1-1/\rho',\rho'}(\Gamma_i)}
\end{equation}
for all $\psi\in W^{1-1/\rho',\rho'}(\Gamma)$. Hence, in particular,
\begin{equation}\label{asteriscus}
\langle \partial_\nu u_{|\Gamma_1}
,\psi\rangle_{W^{1-1/\rho',\rho'}(\Gamma_1)}=-\int_\Omega
h\Psi+\int_\Omega \nabla u\nabla\Psi
\end{equation}
for all $\psi\in W^{1-1/\rho',\rho'}(\Gamma_1)$ and all $\Psi\in
W^{1,\rho'}(\Omega)$ such that $\Psi_{|\Gamma}=\psi$. Finally, when
$u\in W^{2,\rho}(\Gamma)$ the so--defined normal derivatives
coincide with the ones given by the already recalled trace theorem,
that is $\partial_\nu u\in  W^{2-1/\rho,\rho}(\Gamma)  $ and
$\partial_\nu u_{|\Gamma_i}\in  W^{2-1/\rho,\rho}(\Gamma_i)$ , $i=0,1$.

Our main regularity result is the following one.
\begin{thm} \label{theorem5.1}
Suppose that  (FG1--2) , (PQ1--3), \eqref{6} and \eqref{regass} hold true,
and
let $l,\lambda$ be the exponents defined in \eqref{ls}.
 Then, if
$$
(u_0,u_1)\in W^{2,l}\times X,\quad
 -\Delta u_0+\widehat{P}(u_1)\in
 L^2(\Omega),\quad\partial_\nu
{u_0}_{|\Gamma_1}-\Delta_{\Gamma}{u_0}+
\widehat{Q}({u_1}_{|\Gamma}\!)\in L^2(\Gamma_1), $$ the weak maximal
solution $u$ of problem \eqref{1} found in Theorem~\ref{theorem4.1}
enjoys the further regularity
\begin{gather}\label{6.4}
u\in L^\lambda([0,T_{\text{max}}); W^{2,l})\cap
C^1_w([0,T_{\text{max}});H^1)\cap
W^{2,\infty}_{\text{loc}}([0,T_{\text{max}});H^0),\\\label{6.5}
u'\in C_w([0,T_{\text{max}});X).
\end{gather}
Moreover
\begin{alignat}2\label{6.8}
 &u_{tt}-\Delta
u+\widehat{P}(u_t)=\widehat{f}(u)\quad &&\text{in $L^l(\Omega)$, a.e. in
$(0,T_{\text{max}})$,}\\
\label{6.9}&(u_{|\Gamma})_{tt}+\partial_\nu
u_{|\Gamma_1}\!\!-\Delta_{\Gamma}
u_{|\Gamma}+\widehat{Q}((u_{|\Gamma})_t)=\widehat{g}(u_{|\Gamma})\,\,
&&\text{in $L^l(\Gamma_1)$, a.e. on $(0,T_{\text{max}})$.}
\end{alignat}
\end{thm}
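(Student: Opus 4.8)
The plan is to derive Theorem~\ref{theorem5.1} from the strong--solution part of Theorem~\ref{proposition4.1} and then to upgrade the spatial regularity by an elliptic bootstrap between $\Omega$ and $\Gamma_1$, the latter being a closed $C^2$ manifold --- a union of connected components of $\Gamma$, since $\overline{\Gamma_0}\cap\overline{\Gamma_1}=\emptyset$. First I would observe that the hypotheses on $(u_0,u_1)$ say exactly that $(u_0,u_1)\in D(\cal{A})$: since $\Gamma_1$ has no boundary, integrating by parts in $\langle Au_0,v\rangle_{H^1}=\int_\Omega\nabla u_0\nabla v+\int_{\Gamma_1}(\nabla_\Gamma u_0,\nabla_\Gamma v)_\Gamma$ --- valid for $v\in H^{1,\infty,\infty}_{1,1}$, dense in $X$ by Lemma~\ref{lemma3.1} --- using $u_0\in W^{2,l}$, the $C^2$ regularity of $\Omega$, the distributional normal derivative \eqref{5.5bis} and $v_{|\Gamma_0}=0$, one gets $\langle Au_0+B(u_1),v\rangle=\int_\Omega(-\Delta u_0+\widehat{P}(u_1))v+\int_{\Gamma_1}(\partial_\nu{u_0}_{|\Gamma_1}-\Delta_\Gamma u_0+\widehat{Q}({u_1}_{|\Gamma}))v$, so $Au_0+B(u_1)\in H^0$ is equivalent to the two $L^2$ conditions of the statement. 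Hence \eqref{4.15} holds and Theorem~\ref{proposition4.1} gives that the weak maximal solution $u$ of \eqref{1} is the unique maximal \emph{strong} solution: $u\in W^{1,\infty}_{\text{loc}}([0,T_{\max});H^1)\cap W^{2,\infty}_{\text{loc}}([0,T_{\max});H^0)$, $u'(t)\in X$ and $Au(t)+B(u'(t))\in H^0$ for all $t$, and $u''(t)+Au(t)+B(u'(t))=F(u(t))$ in $H^0$ for a.e.\ $t$, with $F=(\widehat{f},\widehat{g})$. This already yields the $W^{2,\infty}_{\text{loc}}(H^0)$ part of \eqref{6.4}.

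Next I would \emph{unfold} the abstract equation at a.e.\ fixed $t$. Testing it against $\varphi\in C^\infty_c(\Omega)$ gives $-\Delta u(t)+\widehat{P}(u_t(t))=\widehat{f}(u(t))-u_{tt}(t)$ in $\cal{D}'(\Omega)$; since $u'(t)\in X\subset H^1$, the growth bound \eqref{Pgrowth}, the embedding $H^1(\Omega)\hookrightarrow L^{\romega}(\Omega)$ and the weighted membership $\alpha^{1/\overline{m}}u_t(t)\in L^{\overline{m}}(\Omega)$ together force $\widehat{P}(u_t(t))\in L^{\max\{m,\romega\}/(m-1)}(\Omega)\subseteq L^l(\Omega)$, whence $\Delta u(t)\in L^l(\Omega)$. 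One then defines $\partial_\nu u(t)_{|\Gamma_1}\in W^{-1/l,l}(\Gamma_1)$ by \eqref{5.5}; testing the abstract equation against smooth $\varphi$ vanishing near $\Gamma_0$ and cancelling the interior integrals by the identity just obtained yields the boundary equation $\partial_\nu u(t)_{|\Gamma_1}-\Delta_\Gamma u(t)+\widehat{Q}((u_{|\Gamma})_t(t))=\widehat{g}(u(t)_{|\Gamma})-(u_{|\Gamma})_{tt}(t)$, first distributionally on $\Gamma_1$, with $\widehat{Q}((u_{|\Gamma})_t(t))\in L^{\max\{\mu,\rgamma\}/(\mu-1)}(\Gamma_1)\subseteq L^l(\Gamma_1)$ by the analogous estimate on $\Gamma_1$.

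Then comes the bootstrap: $\Delta u(t)\in L^l(\Omega)$ gives $\partial_\nu u(t)_{|\Gamma_1}\in W^{-1/l,l}(\Gamma_1)$, hence $\Delta_\Gamma u(t)\in W^{-1/l,l}(\Gamma_1)$, hence $u(t)_{|\Gamma_1}\in W^{2-1/l,l}(\Gamma_1)$ by the isomorphism property of $-\Delta_{\Gamma_1}+I$ (Lemma~\ref{propositionA1}); the Dirichlet problem in $\Omega$ with $\Delta u(t)\in L^l(\Omega)$ and datum $u(t)_{|\Gamma}\in W^{2-1/l,l}(\Gamma)$ gives $u(t)\in W^{2,l}(\Omega)$ by $C^2$ elliptic regularity; the trace theorem then gives $\partial_\nu u(t)_{|\Gamma_1}\in W^{1-1/l,l}(\Gamma_1)\subseteq L^l(\Gamma_1)$, so $\Delta_\Gamma u(t)\in L^l(\Gamma_1)$ and $u(t)_{|\Gamma_1}\in W^{2,l}(\Gamma_1)$; thus $u(t)\in W^{2,l}$ for a.e.\ $t$ and \eqref{6.8}--\eqref{6.9} hold in $L^l$. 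Keeping track of the constants, $\|u(t)\|_{W^{2,l}}\leq C\bigl(1+\|u(t)\|_{H^1}+\|u_{tt}(t)\|_{H^0}+\|\widehat{P}(u_t(t))\|_l+\|\widehat{Q}((u_{|\Gamma})_t(t))\|_l\bigr)$, the terms other than the last two being locally bounded in $t$ by the strong--solution regularity and \eqref{6}. If $m\le\romega$ and $\mu\le\rgamma$ then $(m-1)l\le\romega$ and $(\mu-1)l\le\rgamma$, so $\|\widehat{P}(u_t(t))\|_l+\|\widehat{Q}((u_{|\Gamma})_t(t))\|_l\leq C(1+\|u'(t)\|_{H^1}^{m-1}+\|u'(t)\|_{H^1}^{\mu-1})$, locally bounded since $u'\in L^\infty_{\text{loc}}(H^1)$, giving $u\in L^\infty_{\text{loc}}(W^{2,l})=L^\lambda_{\text{loc}}(W^{2,l})$. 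Otherwise $l\le\min\{m',\mu'\}$; from $\|\widehat{P}(u_t(t))\|_{m'}\leq C(1+\|\alpha^{1/m}u_t(t)\|_m^{m-1})$ (and its $\Gamma_1$ analogue) together with the weak--solution regularity $u'\in Z(0,T)$ for $T<T_{\max}$ (as $u$ is a weak solution, cf.\ Definition~\ref{definition4.1}) one gets $\widehat{P}(u_t)\in L^{m'}_{\text{loc}}(L^{m'}(\Omega))$ and $\widehat{Q}((u_{|\Gamma})_t)\in L^{\mu'}_{\text{loc}}(L^{\mu'}(\Gamma_1))$, hence $u\in L^{\min\{m',\mu'\}}_{\text{loc}}(W^{2,l})=L^\lambda_{\text{loc}}(W^{2,l})$.

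For the weak continuities: $u'\in L^\infty_{\text{loc}}(H^1)\cap C(H^0)$, reflexivity of $H^1$ and the dense continuous embedding $H^1\hookrightarrow H^0$ give, by a subsequence argument, $u'\in C_w([0,T_{\max});H^1)$, i.e.\ the $C^1_w(H^1)$ part of \eqref{6.4}; for \eqref{6.5} one tests the abstract equation against $u'(t)\in X$ to get $\langle Au(t),u'(t)\rangle_{H^1}+\langle B(u'(t)),u'(t)\rangle_W=(F(u(t))-u''(t),u'(t))_{H^0}$, whose right side and first left term are locally bounded, so by (PQ3) the weighted norms $\|[u_t(t)]_\alpha\|_{m,\alpha}$ and $\|[(u_{|\Gamma})_t(t)]_\beta\|_{\mu,\beta,\Gamma_1}$ are locally bounded, whence by \eqref{4.27} $\|u'(t)\|_X$ is locally bounded; with $u'\in C(H^0)$ and reflexivity of $X$ this gives $u'\in C_w([0,T_{\max});X)$. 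The main obstacle I expect is the interplay in the two middle steps: deriving the distributional boundary equation on $\Gamma_1$ with the correct dual pairings when $\Omega$ is merely $C^2$ and $l$ may be $<2$, and matching exponents through the alternation between $\Omega$ and $\Gamma_1$ so that the final regularity is exactly $W^{2,l}$ --- the estimates $\widehat{P}(u_t(t))\in L^l(\Omega)$ and $\widehat{Q}((u_{|\Gamma})_t(t))\in L^l(\Gamma_1)$, which use both $u'(t)\in H^1$ and $u'(t)\in W$, being the arithmetic core behind the definitions of $l$ and $\lambda$.
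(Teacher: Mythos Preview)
Your approach is essentially the same as the paper's: recognize that the hypotheses say $(u_0,u_1)\in D(\cal{A})$, invoke the strong--solution part of Theorem~\ref{proposition4.1}, unfold the abstract equation into \eqref{6.8}--\eqref{6.9}, run the elliptic bootstrap $\Omega\to\Gamma_1\to\Omega\to\Gamma_1$ via Lemma~\ref{propositionA1} and $C^2$ elliptic regularity, and get the weak continuities from Strauss's theorem together with the pointwise bound on $\langle B(u'(t)),u'(t)\rangle_W$ obtained by testing against $u'(t)$. The only organizational difference is that the paper packages both directions of the identification $D(\cal{A})=D_1$ into Lemma~\ref{lemma5.1} and then uses the representation formula \eqref{rappresentazione} to read off \eqref{6.8}--\eqref{6.9} directly, whereas you do both directions inline.

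There is one genuine (though easily repaired) slip in your time--regularity estimate. In the ``otherwise'' case you assert $l\le\min\{m',\mu'\}$ and then use $L^{\mu'}(\Gamma_1)\hookrightarrow L^l(\Gamma_1)$ for $\widehat{Q}$. This inequality can fail: take $N=4$ (so $\romega=4$, $\rgamma=6$), $m=9/2>\romega$, $\mu=5\le\rgamma$; then $l=\min\{2,9/7,3/2\}=9/7>5/4=\mu'$. The correct split, which the paper makes, is to treat $\widehat{P}$ and $\widehat{Q}$ \emph{independently}: when $m>\romega$ use $\widehat{P}(u_t)\in L^{m'}_{\text{loc}}(L^{m'}(\Omega))$ and $l\le m'$; when $m\le\romega$ use the $H^1$--Sobolev bound to get $\widehat{P}(u_t)\in L^\infty_{\text{loc}}(L^{\romega/(m-1)}(\Omega))$ and $l\le\romega/(m-1)$; and symmetrically for $\widehat{Q}$. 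Each term then lands in $L^{\lambda_i}_{\text{loc}}(L^l)$ with $\lambda_1\in\{\infty,m'\}$, $\lambda_2\in\{\infty,\mu'\}$, and $\min\{\lambda_1,\lambda_2\}\ge\lambda$, which gives $u\in L^\lambda_{\text{loc}}(W^{2,l})$. With this correction your argument is complete.
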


\begin{rem}\label{remark5.2}
By \eqref{6.5} one easily sees, integrating in time, that when
$u_0\in W^{2,l}\cap X$ then $u\in C^1_w([0,T_{\text{max}});X)$.
\end{rem}

\begin{rem}\label{remark5.1} By \eqref{6.4}--\eqref{6.5} $u$ and  all terms
in \eqref{6.8}  possess  a representative in
$L^1_\text{loc}((0,T_{\text{max}})\times\Omega)$ and all derivatives
in it are actually derivatives in the sense of distributions in
$(0,T_{\text{max}})\times\Omega$. The same remarks
\begin{footnote}{by the way a distribution in
$(0,T_{\text{max}})\times\Gamma_1$ is an elements of the dual of
$C^2_c((0,T_{\text{max}})\times\Gamma_1)$}\end{footnote} apply to
$u_{|\Gamma}$ and all terms in \eqref{6.9} in
$(0,T_{\text{max}})\times\Gamma_1$, and one easily proves that \eqref{6.8}--\eqref{6.9} are
equivalent to $u_{tt}-\Delta
u+P(\cdot,u_t)=f(\cdot,u)$ a.e. in $(0,T_{\text{max}})\times\Omega$ and $(u_{|\Gamma})_{tt}+\partial_\nu
u_{|\Gamma_1}-\Delta_{\Gamma}
u_{|\Gamma}+Q(\cdot,(u_{|\Gamma})_t)=g(\cdot,u_{|\Gamma})$,
a.e. on $(0,T_{\text{max}})\times\Gamma_1$.
\end{rem}
Before proving Theorem~\ref{theorem5.1} we  characterize the
domain of the operator $\cal{A}$ in \eqref{4.19}.
\begin{lem}\label{lemma5.1}
Let (PQ1--3) and \eqref{regass} hold. Then $D(\cal{A})=D_1$, where
$$
D_1:=\{(u,v)\in W^{2,l}\times X :
 -\Delta u+\widehat{P}(v)\in
 L^2(\Omega),\,\,\partial_\nu
u_{|\Gamma_1}-\Delta_{\Gamma}{u}+ \widehat{Q}(v)\in L^2(\Gamma_1)\},$$
and
\begin{equation}\label{rappresentazione}
Au+B(v)=(-\Delta u+\widehat{P}(v),\partial_\nu
u_{|\Gamma_1}-\Delta_{\Gamma}{u}+ \widehat{Q}(v))\quad\text{for all
$(u,v)\in D_1$.}
\end{equation}
\end{lem}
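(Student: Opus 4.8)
The plan is to prove the two set inclusions $D_1\subseteq D(\cal{A})$ and $D(\cal{A})\subseteq D_1$ separately, together with the representation formula \eqref{rappresentazione} which will follow naturally from the identification argument used in the harder inclusion. The easy direction is $D_1\subseteq D(\cal{A})$: given $(u,v)\in D_1$ we already know $u\in W^{2,l}\subseteq H^1$ and $v\in X$, so it only remains to check that $Au+B(v)\in H^0$ and that it coincides with the pointwise expression $(-\Delta u+\widehat P(v),\partial_\nu u_{|\Gamma_1}-\Delta_\Gamma u+\widehat Q(v))$. For this I would test $\langle Au+B(v),\varphi\rangle$ against an arbitrary $\varphi=(\phi,\phi_{|\Gamma})\in X$, using the definition \eqref{3.22} of $A$, the definition \eqref{4.3}--\eqref{4.4} of $B$, Green's formula $\int_\Omega\nabla u\nabla\phi=-\int_\Omega\Delta u\,\phi+\langle\partial_\nu u,\phi_{|\Gamma}\rangle$ (valid since $u\in W^{2,l}$ and using the distributional normal derivative set up in \eqref{5.5}--\eqref{asteriscus}), and the corresponding identity \eqref{26} on $\Gamma_1$, namely $\int_{\Gamma_1}(\nabla_\Gamma u,\nabla_\Gamma\phi)_\Gamma=-\int_{\Gamma_1}\Delta_\Gamma u\,\phi$. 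Collecting terms and using that $\Delta u\in L^2$, $\partial_\nu u_{|\Gamma_1}-\Delta_\Gamma u+\widehat Q(v)\in L^2(\Gamma_1)$ and $-\Delta u+\widehat P(v)\in L^2(\Omega)$ by hypothesis, one sees that the functional $\varphi\mapsto\langle Au+B(v),\varphi\rangle$ is represented by an element of $H^0$, which is exactly \eqref{rappresentazione}; hence $(u,v)\in D(\cal{A})$.

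The substantive direction is $D(\cal{A})\subseteq D_1$, i.e.\ the elliptic regularity statement: if $(u,v)\in H^1\times X$ and $h:=Au+B(v)\in H^0$, then in fact $u\in W^{2,l}$. Write $h=(h_\Omega,h_{\Gamma_1})\in L^2(\Omega)\times L^2(\Gamma_1)$. Testing against $\phi\in C_c^\infty(\Omega)$ shows $-\Delta u=h_\Omega-\widehat P(v)\in L^{l}(\Omega)$ (note $\widehat P(v)\in L^{\overline m'}(\Omega,\lambda_\alpha)$, so one must track its integrability; by the choice of $l$ in \eqref{ls} one gets $-\Delta u\in L^l(\Omega)$). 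Interior $W^{2,l}$ regularity is then standard, and near $\Gamma_0$ one uses that $u_{|\Gamma_0}=0$ with $\overline{\Gamma_0}\cap\overline{\Gamma_1}=\emptyset$, so locally $u$ solves a Dirichlet problem with $C^2$ boundary and $L^l$ right-hand side, giving $W^{2,l}$ up to $\Gamma_0$. The delicate part is regularity up to $\Gamma_1$: there the effective boundary condition, obtained by testing the identity $Au+B(v)=h$ against $\varphi\in X$ supported near $\Gamma_1$ and integrating by parts as above, reads $\partial_\nu u_{|\Gamma_1}=h_{\Gamma_1}-\widehat Q(v)+\Delta_\Gamma u$. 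Since $u_{|\Gamma}\in H^1(\Gamma)$ only a priori, $\Delta_\Gamma u_{|\Gamma}\in H^{-1}(\Gamma_1)$, so this is a Neumann-type problem with a boundary datum that is only a negative-order distribution; one has to bootstrap. The strategy is: first use $-\Delta u\in L^l(\Omega)\subseteq L^2(\Omega)$ and $\partial_\nu u_{|\Gamma_1}\in H^{-1/2}(\Gamma)$ together with the trace theory of \eqref{5.5bis} to upgrade $u$ to $W^{3/2,\ldots}(\Omega)$ near $\Gamma_1$, hence $u_{|\Gamma_1}\in H^1(\Gamma_1)$ is improved; then feed the improved trace regularity back through $\partial_\nu u_{|\Gamma_1}=h_{\Gamma_1}-\widehat Q(v)+\Delta_\Gamma u_{|\Gamma}$, using the isomorphism properties of $-\Delta_\Gamma+I$ from Lemma~\ref{propositionA1} in Appendix~\ref{appendixB} to solve for $u_{|\Gamma_1}$ in $H^2(\Gamma_1)$ (possible since the right-hand side is then in $L^2(\Gamma_1)$), so that $\partial_\nu u_{|\Gamma_1}=h_{\Gamma_1}-\widehat Q(v)+\Delta_\Gamma u_{|\Gamma}\in L^l(\Gamma_1)$ lies in the trace space $W^{1-1/l,l}(\Gamma_1)$, and finally apply the $L^l$ elliptic estimate for the Neumann problem on the $C^2$ domain to conclude $u\in W^{2,l}$ up to $\Gamma_1$.

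The main obstacle is precisely this last bootstrap near $\Gamma_1$: one must carefully match the integrability exponents — the term $\widehat Q(v)$ lives in $L^{\overline\mu'}(\Gamma_1,\lambda_\beta)$, the term $\Delta_\Gamma u_{|\Gamma}$ improves only after $u_{|\Gamma_1}$ has been upgraded by the ``tangential'' elliptic regularity, and the coupling between the bulk Neumann problem in $\Omega$ and the Laplace--Beltrami problem on $\Gamma_1$ must be iterated a finite number of times in the right order, each step respecting the exponent $l$ from \eqref{ls}. I expect the bookkeeping of Sobolev exponents through the interior/boundary coupling, and the invocation of fractional trace theorems on a merely $C^2$ manifold, to be the technically heaviest point; by contrast, once $u\in W^{2,l}$ is known, the identification \eqref{rappresentazione} is just the integration-by-parts computation already carried out in the easy inclusion, read backwards.
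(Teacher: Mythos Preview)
Your overall architecture --- prove the two inclusions separately, with the hard direction $D(\cal{A})\subseteq D_1$ done by a bootstrap coupling the bulk elliptic problem to the Laplace--Beltrami equation via Lemma~\ref{propositionA1} --- is exactly the paper's approach. However, several of your intermediate claims are incorrect as stated and would derail the argument.

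First, you write $-\Delta u\in L^l(\Omega)\subseteq L^2(\Omega)$, but by \eqref{ls} one always has $l\le 2$, so on a bounded domain the inclusion goes the other way; the whole point of introducing $l$ is that the damping terms $\widehat P(v)$, $\widehat Q(v)$ may fail to lie in $L^2$. Relatedly, you claim the bootstrap ends with $u_{|\Gamma_1}\in H^2(\Gamma_1)$, but in general you only reach $W^{2,l}(\Gamma_1)$ --- this is precisely the space $W^{2,l}$ appearing in the statement. Second, and more structurally, your description of the bootstrap near $\Gamma_1$ is in the wrong order. You propose to first ``upgrade $u$ to $W^{3/2,\ldots}(\Omega)$ near $\Gamma_1$'' and then feed improved trace regularity into the Laplace--Beltrami equation. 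But you cannot get any boundary regularity for $u$ in $\Omega$ without first controlling either the Dirichlet or the Neumann datum on $\Gamma_1$, and neither is available a priori. The paper breaks this circularity by using the \emph{distributional} normal derivative: from $-\Delta u\in L^{m_2}(\Omega)$ alone (no $W^{2,m_2}$ regularity needed) one gets $\partial_\nu u_{|\Gamma_1}\in W^{-1/m_2,m_2}(\Gamma_1)$ via \eqref{5.5}--\eqref{asteriscus}; inserting this into the boundary identity gives $-\Delta_\Gamma u\in W^{-1/l,l}(\Gamma_1)$, whence $u_{|\Gamma}\in W^{2-1/l,l}(\Gamma_1)$ by Lemma~\ref{propositionA1}. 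Only then, with the Dirichlet datum in hand, does one invoke elliptic regularity in $\Omega$ to get $u\in W^{2,l}(\Omega)$, which in turn improves $\partial_\nu u$ to $W^{1-1/l,l}(\Gamma_1)$, and a second application of Lemma~\ref{propositionA1} finishes. So the order is: weak normal derivative $\to$ tangential regularity $\to$ bulk regularity $\to$ strong normal derivative $\to$ full tangential regularity.

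A smaller point on the easy inclusion $D_1\subseteq D(\cal A)$: the integration-by-parts formula $\int_\Omega\nabla u\nabla\phi=-\int_\Omega\Delta u\,\phi+\int_\Gamma\partial_\nu u\,\phi$ requires $\phi\in L^{l'}(\Omega)$ and $\phi_{|\Gamma}\in L^{l'}(\Gamma)$ to pair with $\Delta u,\partial_\nu u\in L^l$, but a generic $\phi\in X$ need not have this integrability when $l'$ is large. The paper first establishes \eqref{6.11} for $\phi\in H^{1,l',l'}_{1,1}$ by approximating $u$ in $W^{2,l}(\Omega)\cap H^1(\Omega)$ by $C^2(\overline\Omega)$ functions, and then extends to all $\phi\in X$ by the density Lemma~\ref{lemma3.1}.
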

\begin{proof}
 By \eqref{3.22}, \eqref{4.3}, \eqref{4.4} and \eqref{4.19}
clearly $(u,v)\in D(\cal{A})$ if and only if $u\in H^1$, $v\in X$
and there are $h_1\in L^2(\Omega)$, $h_2\in L^2(\Gamma_1)$ such
that, for all $\phi\in X$,
\begin{equation}\label{6.11}
\int_\Omega \nabla u\nabla \phi+\int_{\Gamma_1}(\nabla_\Gamma
u,\nabla_\Gamma \phi)_\Gamma+\int_\Omega
\widehat{P}(u)\phi+\int_{\Gamma_1}\widehat{Q}(v)\phi=\int_\Omega
h_1\phi+\int_{\Gamma_1}h_2\phi.
\end{equation}
To prove that  $D(\cal{A})\subseteq D_1$ we fix $(u,v)\in
D(\cal{A})$. Taking $\phi\in C^\infty_c(\Omega)$ in \eqref{6.11} we
immediately get that
\begin{equation}\label{6.12}
-\Delta u+\widehat{P}(v)=h_1\qquad\text{in the sense of distributions in
$\Omega$.}
\end{equation}
We set $\widetilde{\romega}=\romega$ if $N \ge 3$, while $\widetilde{\romega}=2m$ if
$N=2$, so that $H^1(\Omega)\hookrightarrow L^{\widetilde{\romega}}(\Omega)$.
Hence, as $v\in X$, using  \eqref{Pgrowth} and Sobolev embedding we
have $\widehat{P}(v)\in L^{m_1}(\Omega)$, where
$m_1:=\max\{m,\widetilde{\romega}\}/(m-1)$. By \eqref{6.12} then $-\Delta u\in
L^{m_2}(\Omega)$ in the sense of distributions where, as $m_1>2$
when $N=2$,
\begin{equation}\label{6.13}
m_2:=\min\{2,m_1\}=\min\{2, \max\{m,\romega\}/(m-1)\}\le 2.
\end{equation}
 Since
$u\in H^1(\Omega)\subset W^{1,m_2}(\Omega)$ it has a distributional
derivative $\partial_\nu u_{|\Gamma_1}\in W^{-\frac 1{m_2},m_2}(\Gamma_1)$
and then, by \eqref{asteriscus}, we can rewrite \eqref{6.11} as
\begin{equation}\label{6.14}
\langle \partial_\nu u_{|\Gamma_1}
,\phi_{|\Gamma}\rangle_{W^{1-1/m_2',m_2'}(\Gamma_1)}+
\int_{\Gamma_1}(\nabla_\Gamma u,\nabla_\Gamma
\phi)_\Gamma+\int_{\Gamma_1}\widehat{Q}(v)\phi=\int_{\Gamma_1}h_2\phi.
\end{equation}
for all $\phi\in X$ such that $\phi_{|\Gamma}\in
W^{1-1/m_2',m_2'}(\Gamma_1)$. Since for any $\psi\in
C^2(\Gamma_1)\subset W^{1,2N}(\Gamma_1)$ we have $\mathbb{D}\psi\in
W^{1,2N}(\Omega)\subset C(\overline{\Omega})$ , by Morrey's theorem,
so $\mathbb{D}\psi\in X$, from \eqref{6.14} we can conclude that
\begin{equation}\label{6.15}
\partial_\nu
u_{|\Gamma_1}-\Delta_{\Gamma}{u}+ \widehat{Q}(v)=h_2\qquad \text{in
$[C^2(\Gamma_1)]'$.}
\end{equation}
We now set $\widetilde{\rgamma}=\rgamma$ if $N=2$ and $N \ge 4$, while
$\widetilde{\rgamma}=2\mu$ if $N=3$, so that $H^1(\Gamma_1)\hookrightarrow
L^{\widetilde{\rgamma}}(\Gamma_1)$. Hence, as $v\in X$, using  \eqref{Qgrowth}
and Sobolev embedding we have $\widehat{Q}(v)\in L^{\mu_1}(\Gamma_1)$,
where $\mu_1:=\max\{\mu,\widetilde{\rgamma}\}/(\mu-1)$. By \eqref{6.15} then
$\partial_\nu u_{|\Gamma_1}-\Delta_{\Gamma}{u}=h_3$ in the sense of
$[C^2(\Gamma_1)]'$ where   $h_3\in L^{\mu_2}(\Gamma_1)$ and, as
$\mu_1>2$ when $N=3$,
\begin{equation}\label{6.16}
\mu_2:=\min\{2,\mu_1\}=\min\{2, \max\{\mu,\rgamma\}/(\mu-1)\}\le 2.
\end{equation}
Since $\partial_\nu u_{|\Gamma_1}\in W^{-1/m_2,m_2}(\Gamma_1)$ and,
by \eqref{ls}, \eqref{6.13} and \eqref{6.16} we have
$l=\min\{m_2,\mu_2\}$, we get $-\Delta_{\Gamma_1}u\in
W^{-1/l,l}(\Gamma_1)$. By  Lemma~\ref{propositionA1}  then we get $u\in
W^{2-1/l,l}(\Gamma_1)$. Since $-\Delta u\in L^l(\Omega)$  as $l\le
m_2$ we then get by elliptic regularity (see
\cite[Theorem~2.4.2.5~p. 124]{grisvard}) that $u\in
W^{2,l}(\Omega)$, so \eqref{6.12} holds in $L^l(\Omega)$ and
$\partial_\nu u_{|\Gamma_1}\in W^{1-1/l,l}(\Gamma_1)$ by the trace
theorem. Plugging this information in \eqref{6.15} we then get, as
$l\le \mu_2$, that $-\Delta_{\Gamma_1}u\in L^l(\Gamma_1)$ so
\eqref{6.15} holds true in this space and, by a further application
of  Lemma~\ref{propositionA1}  then $u\in W^{2,l}(\Gamma_1)$, proving that
$D(\cal{A})\subseteq D_1$.

To prove that $D_1\subseteq D(\cal{A})$ let $(u,v)\in D_1$. We
denote  $h_1=-\Delta u+\widehat{P}(v)\in L^2(\Omega)$ and
$h_2=\partial_\nu u_{|\Gamma_1}-\Delta_{\Gamma}{u}+ \widehat{Q}(v)\in
L^2(\Gamma_1)$. Since $\widehat{P}(v)\in L^l(\Omega)$ as $l\le m_1$ we
have
$$\int_\Omega -\Delta u\phi+\int_\Omega \widehat{P}(v)\phi=\int_\Omega
h_1\phi\qquad\text{for all $\phi\in L^{l'}(\Omega)$}.$$ We now point
out that the classical integration by parts formula
\begin{equation}\label{intbyparts}
\int_\Omega \nabla h\nabla k+\int_\Omega \Delta h
k=\int_\Gamma\partial_\nu h \,k \end{equation} which is standard
when $h\in H^2(\Omega)$ and $k\in H^1(\Omega)$ (see \cite[Lemma
1.5.3.7 p.59]{grisvard}) extends to $h\in W^{2,l}(\Omega)\cap
H^1(\Omega)$ and $k\in H^{1,l',l'}_{1,1}$. Indeed, by using
\cite[Theorem 4.26, p. 84]{adams}, $h$ can be approximated in
$W^{2,l}(\Omega)\cap H^1(\Omega)$ by a sequence $(h_n)$ in
$C^2(\overline{\Omega})\subset H^2(\Omega)$, so we can pass to the
limit in \eqref{intbyparts} as $k\in L^{l'}(\Omega)$ and
$k_{|\Gamma}\in L^{l'}(\Gamma)$. Hence we get that \eqref{6.11}
holds for all $\phi\in H^{1,l',l'}_{1,1}$. By Lemma \ref{lemma3.1}
then \eqref{6.11} holds for all $\phi\in X$, so proving that
$D_1\subseteq D(\cal{A})$ and \eqref{rappresentazione} holds,
concluding the proof.
\end{proof}
\begin{proof}[Proof of Theorem \ref{theorem5.1}]
By Lemma \ref{lemma5.1} we have $(u_0,u_1)\in D(\cal{A})$, hence by
Theorem \ref{proposition4.1} the maximal solution of \eqref{1}
is actually a strong solution of \eqref{C} when
$F=(\widehat{f},\widehat{g})$, so $u\in
W^{1,\infty}_{\text{loc}}([0,T_{\text{max}});H^1)\cap
W^{2,\infty}_{\text{loc}}([0,T_{\text{max}});H^0)$, $(u(t),u'(t))\in
D_1$ for all $t\in [0,T_{\text{max}})$ and,  by
\eqref{rappresentazione}, \eqref{6.8}--\eqref{6.9} hold true.

To prove \eqref{6.4} we note that, since $\widehat{P}(u_t)\in
L^{\infty}_{\text{loc}}([0,T_{\text{max}});L^{\widetilde{\romega}/(m-1)}(\Omega))$
when $m\le \widetilde{\romega}$ and $\widehat{P}(u_t)\in
L^{m'}_{\text{loc}}([0,T_{\text{max}});L^{m'}(\Omega))$ when $m>
\widetilde{\romega}$, we have
\begin{equation}\label{Preg}\widehat{P}(u_t)\in
L^{\lambda_1}_{\text{loc}}([0,T_{\text{max}});L^{m_1}(\Omega)),
\end{equation}
where $\lambda_1=\infty$ when $m\le \romega$ and $\lambda_1=m'$
otherwise. Since moreover $u_{tt}, \widehat{f}\in
L^{\infty}_{\text{loc}}([0,T_{\text{max}});L^2(\Omega))$ we then get
from \eqref{6.8} that
\begin{equation}\label{6.22}
\Delta u\in
L^{\lambda_1}_{\text{loc}}([0,T_{\text{max}});L^{m_2}(\Omega))\subset
L^{\lambda}_{\text{loc}}([0,T_{\text{max}});L^l(\Omega)).
\end{equation}
Consequently, by the boundedness of the distributional normal
derivatives,  $\partial_\nu u|_{|\Gamma_1}\in
L^{\lambda}_{\text{loc}}([0,T_{\text{max}});W^{-1/l,l}(\Gamma_1))$.
Since $\widehat{Q}((u_{|\Gamma})_t\in
L^{\infty}_{\text{loc}}([0,T_{\text{max}});L^{\widetilde{\rgamma}/(\mu-1)}(\Gamma_1))$
when $\mu\le \widetilde{\rgamma}$ and $\widehat{Q}((u_{|\Gamma})_t\in
L^{\mu'}_{\text{loc}}([0,T_{\text{max}});L^{\mu'}(\Gamma_1))$ when
$\mu> \widetilde{\rgamma}$, we have
\begin{equation}\label{Qreg}
\widehat{Q}((u_{|\Gamma})_t\in
L^{\lambda_2}_{\text{loc}}([0,T_{\text{max}});L^{\mu_1}(\Gamma_1)),
\end{equation}
where $\lambda_2=\infty$ when $\mu\le \rgamma$ and $\lambda_2=\mu'$
otherwise.

Since moreover $(u_{|\Gamma})_{tt}, \widehat{g}\in
L^{\infty}_{\text{loc}}([0,T_{\text{max}});L^2(\Gamma))$ we then get
from \eqref{6.9} that $\Delta_{\Gamma_1} u_{|\Gamma_1}\in
L^{\lambda}_{\text{loc}}([0,T_{\text{max}});W^{-1/l,l}(\Gamma_1))$.
As $l\le 2$, from Lemma \ref{propositionA1} we get $u_{|\Gamma_1}\in
L^{\lambda}_{\text{loc}}([0,T_{\text{max}});W^{2-1/l,l}(\Gamma_1))$.
By \eqref{6.22} and the already quoted elliptic regularity result we
have $u\in
L^{\lambda}_{\text{loc}}([0,T_{\text{max}});W^{2,l}(\Omega))$, and
consequently we get $\partial_\nu u|_{|\Gamma_1}\in
L^{\lambda}_{\text{loc}}([0,T_{\text{max}});W^{1-1/l,l}(\Gamma_1))$.
Using \eqref{6.9} again then we get $$\Delta_{\Gamma_1}u \in
L^{\lambda}_{\text{loc}}([0,T_{\text{max}});L^l(\Gamma_1)),$$ hence
by Lemma \ref{propositionA1} we have $u_{|\Gamma_1} \in
L^{\lambda}_{\text{loc}}([0,T_{\text{max}});W^{2,l}(\Gamma_1))$.

Since $u_t\in C([0,T_{\text{max}});H^0)\cap
L^\infty_{\text{loc}}(0,T_{\text{max}};H^1)$, by  \cite[Theorem 2.1
p.544]{strauss} and Lemma \ref{lemma3.2} we then get $u_t\in
C_w([0,T_{\text{max}});H^1)$, completing the proof of \eqref{6.4}.

To prove \eqref{6.5} we remark that, as shown is the proof of Lemma
\ref{lemma4.2} -- Step 1, we have $\langle B(u'),u'\rangle_w\in
L^\infty_{\text{loc}}([0,T_{\text{max}}))$, hence by (PQ3) we have
$u'\in L^\infty_{\text{loc}}([0,T_{\text{max}});W)$ and consequently
using Lemma \ref{lemma3.2} and the result by W. Strauss already
recalled we get \eqref{6.5}, completing the proof.
\end{proof}

When the damping terms are not supersupercritical,  the time
regularity \eqref{6.4} can be improved as follows.
\begin{cor}\label{corollary5.1}
Suppose that all assumptions in Theorem \ref{theorem5.1} hold true,
and moreover suppose
\begin{equation}\label{6.23}
1<m\le \romega\qquad \text{and}\quad 1<\mu\le \rgamma.
\end{equation}
Then, if
$$
(u_0,u_1)\in W^{2,l}\times H^1,\quad
 -\Delta u_0+\widehat{P}(u_1)\in
 L^2(\Omega),\quad\partial_\nu
{u_0}_{|\Gamma_1}-\Delta_{\Gamma}{u_0}+
\widehat{Q}({u_1}_{|\Gamma}\!)\in L^2(\Gamma_1), $$ the regularity
\eqref{6.4} is improved to
\begin{equation}\label{6.24}
u\in C_w([0,T_{\text{max}}); W^{2,l})\cap
C^1_w([0,T_{\text{max}});H^1)\cap C^2_w([0,T_{\text{max}});H^0).
\end{equation}
Consequently, when
\begin{equation}\label{6.26}
1<m\le 1+\frac\romega 2\qquad \text{and}\quad 1<\mu\le \frac\rgamma
2,
\end{equation}
for initial data $(u_0,u_1)\in H^2\times H^1$ we have
\begin{equation}\label{6.27} u\in C_w([0,T_{\text{max}}); H^2)\cap
C^1_w([0,T_{\text{max}});H^1)\cap C^2_w([0,T_{\text{max}});H^0).
\end{equation}
\end{cor}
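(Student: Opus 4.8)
The plan is to derive both \eqref{6.24} and \eqref{6.27} from Theorem~\ref{theorem5.1} together with the weak--continuity interpolation lemma \cite[Theorem 2.1 p.544]{strauss} already used throughout Section~\ref{section5}. Under \eqref{6.23} one has $\max\{m,\romega\}=\romega$ and $\max\{\mu,\rgamma\}=\rgamma$, so the exponent $\lambda$ of \eqref{ls} equals $\infty$; moreover, since $\overline m=\max\{2,m\}\le\romega$, $\overline\mu=\max\{2,\mu\}\le\rgamma$ and $H^1(\Omega)\hookrightarrow L^\romega(\Omega)$, $H^1(\Gamma)\hookrightarrow L^\rgamma(\Gamma)$, the space $X=H^{1,\overline m,\overline\mu}_{\alpha,\beta}$ coincides with $H^1$ (with equivalent norms), so the hypotheses of Corollary~\ref{corollary5.1} are exactly those of Theorem~\ref{theorem5.1}. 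Hence that theorem applies and, as $\lambda=\infty$, yields
\[
u\in L^\infty_{\text{loc}}([0,T_{\text{max}});W^{2,l})\cap C^1_w([0,T_{\text{max}});H^1)\cap W^{2,\infty}_{\text{loc}}([0,T_{\text{max}});H^0),
\]
together with the pointwise identities \eqref{6.8}--\eqref{6.9}.

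First I would upgrade $L^\infty_{\text{loc}}(W^{2,l})$ to $C_w(W^{2,l})$. Since the three quantities defining $l$ in \eqref{ls} are each at least $\tfrac{2(N-1)}{N+1}$, one has $l\in(1,2]$ and $W^{2,l}\hookrightarrow H^0$, with $W^{2,l}$ reflexive and separable; as $u\in C([0,T_{\text{max}});H^0)$ by \eqref{4.9}, \cite[Theorem 2.1 p.544]{strauss} gives $u\in C_w([0,T_{\text{max}});W^{2,l})$. Combined with the $C^1_w([0,T_{\text{max}});H^1)$ regularity above this is all of \eqref{6.24} except the $C^2_w(H^0)$ statement.

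The core of the argument is the upgrade $W^{2,\infty}_{\text{loc}}(H^0)\to C^2_w(H^0)$, whose obstacle is that the Nemitskii operators $\widehat P,\widehat Q$ are not weakly continuous; I expect this to be the main technical point, to be circumvented via the sub--critical (hence compact) Sobolev embeddings. The key observation is that $u_t\in C([0,T_{\text{max}});L^2(\Omega))$ (from $u\in C^1([0,T_{\text{max}});H^0)$) while $u_t\in C_w([0,T_{\text{max}});H^1(\Omega))$ is locally bounded in $L^\romega(\Omega)$, so by the interpolation inequality for Lebesgue norms $u_t\in C([0,T_{\text{max}});L^r(\Omega))$ for every $2\le r<\romega$, and similarly $(u_{|\Gamma})_t\in C([0,T_{\text{max}});L^r(\Gamma_1))$ for every $2\le r<\rgamma$. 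Consequently, by (PQ1) and the continuity of $\widehat P$ as a map $L^r(\Omega)\to L^{r/(m-1)}(\Omega)$ for $r<\romega$, the map $\widehat P(u_t)$ is locally (in time) bounded in $L^l(\Omega)$ and strongly continuous into $L^s(\Omega)$ for every $s<l$, hence, by reflexivity of $L^l(\Omega)$ and uniqueness of weak limits, weakly continuous into $L^l(\Omega)$; likewise $\widehat Q((u_{|\Gamma})_t)\in C_w([0,T_{\text{max}});L^l(\Gamma_1))$. Since, by the previous paragraph and the boundedness of $\Delta$, $\Delta_\Gamma$ and of the distributional normal derivative on $W^{2,l}$, we have $\Delta u\in C_w(L^l(\Omega))$ and $\partial_\nu u_{|\Gamma_1},\Delta_\Gamma u_{|\Gamma}\in C_w(L^l(\Gamma_1))$, while $\widehat f(u)\in C(L^2(\Omega))$ and $\widehat g(u_{|\Gamma})\in C(L^2(\Gamma_1))$ by (FG1--2) and \eqref{6}, reading off the second time derivatives from \eqref{6.8}--\eqref{6.9} gives $u_{tt}\in C_w(L^l(\Omega))$ and $(u_{|\Gamma})_{tt}\in C_w(L^l(\Gamma_1))$. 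A last application of \cite[Theorem 2.1 p.544]{strauss}, based on $u''\in W^{2,\infty}_{\text{loc}}(H^0)$ and $H^0\hookrightarrow L^l(\Omega)\times L^l(\Gamma_1)$, then yields $u''\in C_w([0,T_{\text{max}});H^0)$, completing the proof of \eqref{6.24}.

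Finally, under \eqref{6.26} one has $m-1\le\romega/2$ and $\mu-1\le\rgamma/2$, hence $\romega/(m-1)\ge2$ and $\rgamma/(\mu-1)\ge2$, so $l=2$ and $W^{2,l}=W^{2,2}=H^2$; also \eqref{6.26} implies \eqref{6.23}. It then suffices to note that every $(u_0,u_1)\in H^2\times H^1$ satisfies the hypotheses of Corollary~\ref{corollary5.1}: indeed $u_0\in H^2$ gives $-\Delta u_0\in L^2(\Omega)$ and $\partial_\nu{u_0}_{|\Gamma_1},\Delta_\Gamma{u_0}_{|\Gamma_1}\in L^2(\Gamma_1)$, while $u_1\in H^1$ gives $u_1\in L^\romega(\Omega)$ and ${u_1}_{|\Gamma}\in L^\rgamma(\Gamma_1)$, so by (PQ1) $|\widehat P(u_1)|\le c_m\|\alpha\|_\infty(1+|u_1|^{m-1})\in L^2(\Omega)$ (as $2(m-1)\le\romega$) and symmetrically $\widehat Q({u_1}_{|\Gamma})\in L^2(\Gamma_1)$ (as $2(\mu-1)\le\rgamma$). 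Applying the already established \eqref{6.24} then gives precisely \eqref{6.27}.
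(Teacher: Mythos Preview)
Your proof is correct and follows the same strategy as the paper's: use $\lambda=\infty$ under \eqref{6.23}, apply Strauss's lemma to upgrade $L^\infty_{\text{loc}}(W^{2,l})$ to $C_w(W^{2,l})$, read off $u''$ from \eqref{6.8}--\eqref{6.9}, and apply Strauss once more with $H^0\hookrightarrow L^l(\Omega)\times L^l(\Gamma_1)$; in fact your interpolation argument for $\widehat P(u_t),\widehat Q((u_{|\Gamma})_t)\in C_w(L^l)$ makes explicit a step that the paper handles only tersely. Two cosmetic remarks: the embedding $W^{2,l}\hookrightarrow H^0$ is immediate from the definition \eqref{W2sigma} (since $W^{2,l}\subset H^1$ by construction), so the lower bound on $l$ is not needed for that step, and ``$u''\in W^{2,\infty}_{\rm loc}(H^0)$'' in your last paragraph should read ``$u''\in L^\infty_{\rm loc}(H^0)$''.
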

\begin{proof} When \eqref{6.23} holds, by
\eqref{ls} we have $\lambda=\infty$, hence by \eqref{6.4} we get
$u\in L^\infty_{\text{loc}}([0,T_{\text{max}}); W^{2,l})$. Since
$W^{2,l}(\Omega)$ and $W^{2,l}(\Gamma_1)$ are (respectively) dense
in $H^1(\Omega)$ and $H^1(\Gamma_1)$, by \cite[Theorem 2.1
p.544]{strauss} we get $u\in C_w([0,T_{\text{max}}); W^{2,l})$.
Hence $\Delta u\in C_w([0,T_{\text{max}});L^l(\Omega))$ and
$\Delta_{\Gamma_1} u-\partial_\nu u_{|\Gamma_1}\in
C_w([0,T_{\text{max}});L^l(\Gamma_1))$.

Moreover, by \eqref{Preg} and \eqref{Qreg}, we also have $P(u_t)\in
L^\infty_{\text{loc}}([0,T_{\text{max}}); L^l(\Omega))$ and
$Q((u_{\Gamma})_t)\in L^\infty_{\text{loc}}([0,T_{\text{max}});
L^l(\Gamma_1))$. Hence, as $\widehat{f}(u)\in
C_w([0,T_{\text{max}}];L^2(\Omega))$ and $\widehat{g}(u_{|\Gamma})\in
C_w([0,T_{\text{max}}];L^2(\Gamma_1))$, by \eqref{6.8}--\eqref{6.9}
we get $u''\in C_w([0,T_{\text{max}});L^l(\Omega)\times
L^l(\Gamma_1))$. Hence by \eqref{6.4}, the density of $H^0$ in
$L^l(\Omega)\times L^l(\Gamma_1)$ and \cite[Theorem 2.1
p.544]{strauss} again we get $u''\in C_w([0,T_{\text{max}});H^0)$,
concluding the proof of \eqref{6.24}.

When \eqref{6.26} holds we also have $l=2$ and for data
$(u_0,u_1)\in H^2\times H^1$ the conditions $-\Delta u_0+
\widehat{P}(u_1)\in
 L^2(\Omega)$ and  $\partial_\nu
{u_0}_{|\Gamma_1}-\Delta_{\Gamma}{u_0}+
\widehat{Q}({u_1}_{|\Gamma}\!)\in L^2(\Gamma_1)$ are automatic, so
\eqref{6.27} holds.
\end{proof}

\begin{proof}[\bf Proof of Theorems~\ref{theorem3} and \ref{theorem4}]
By  Remarks~\ref{remark4.1}, \ref{remark4.2bis} and \ref{remark4.3}
they are particular cases of Theorem~\ref{theorem5.1} and
Corollary~\ref{corollary5.1}.
\end{proof}

\section{Global existence versus blow--up}\label{section6}

This section is devoted to our global existence and blow--up results for problem \eqref{1}. Before giving them we need some preliminaries. We shall assume in the sequel  that assumptions (PQ1--3), (FG1--2) and \eqref{6} hold true.

\subsection{The energy function}\label{preliminarienergia}
To use energy methods it is fruitful to introduce an energy function involving the potential operator associated to
$F=(\widehat{f},\widehat{g})$, and to write the energy identity \eqref{enidd} in terms of this function.

We first need to point out the following abstract version of the classical chain rule, which easy proof is given for the reader's convenience.
\begin{lem}\label{chainrule} Let $V_0$ and $H_0$ be real Hilbert spaces, $I$ be a real interval
and denote by $(\cdot,\cdot)_{H_0}$ the scalar product of $H_0$. Suppose that $V_0\hookrightarrow H_0$ with dense embedding, so that
$V_0\hookrightarrow H_0\simeq H_0'\hookrightarrow V_0'$.

Then for any $J_1\in C^1(V_0)$ such that its Fr\`{e}chet derivative $J_1'$ is locally Lipschitz from $V_0$ to $H_0$ and
any $w\in C(I;V_0)\cap C^1(I;H_0)$ we have
 $J_1\cdot w\in C^1(I)$ and $(J_1\cdot w)'=(J_1'\cdot w,w')_{H_0}$ in $I$, where $\cdot$ denotes the composition product.
\end{lem}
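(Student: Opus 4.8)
The plan is to reduce the statement to the finite‑dimensional chain rule by a difference‑quotient argument, using only the hypotheses: $J_1\in C^1(V_0)$ with $J_1'\colon V_0\to H_0$ locally Lipschitz, and $w\in C(I;V_0)\cap C^1(I;H_0)$. First I would fix $t_0$ in the interior of $I$ and a compact subinterval $K\ni t_0$; since $w\in C(K;V_0)$, the set $w(K)$ is compact in $V_0$, hence bounded, so there is $R>0$ with $\|w(t)\|_{V_0}\le R$ for $t\in K$, and $J_1'$ is $L(R)$‑Lipschitz on the ball $B_{V_0}(0,R)$. Continuity of $J_1\cdot w$ on $I$ is then immediate from $J_1\in C(V_0)$ and $w\in C(I;V_0)$.

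Next I would estimate, for $t,t+h\in K$,
\[
\Bigl|\tfrac1h\bigl[(J_1\cdot w)(t+h)-(J_1\cdot w)(t)\bigr]-(J_1'(w(t)),w'(t))_{H_0}\Bigr|,
\]
splitting it into two pieces. The first piece uses that $J_1\in C^1(V_0)$, so along the segment joining $w(t)$ to $w(t+h)$ (both in $B_{V_0}(0,R)$, which is convex) one has
\[
J_1(w(t+h))-J_1(w(t))=\int_0^1 \langle J_1'(w(t)+s(w(t+h)-w(t))),\,w(t+h)-w(t)\rangle_{V_0}\,ds,
\]
where I identify $\langle J_1'(\cdot),\cdot\rangle_{V_0}$ with $(J_1'(\cdot),\cdot)_{H_0}$ since $J_1'$ takes values in $H_0\hookrightarrow V_0'$. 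Dividing by $h$ and comparing with $(J_1'(w(t)),w'(t))_{H_0}$, I bound the error by
\[
\sup_{s\in[0,1]}\bigl\|J_1'(w(t)+s(w(t+h)-w(t)))-J_1'(w(t))\bigr\|_{H_0}\cdot\Bigl\|\tfrac{w(t+h)-w(t)}h\Bigr\|_{H_0}
+\bigl\|J_1'(w(t))\bigr\|_{H_0}\Bigl\|\tfrac{w(t+h)-w(t)}h-w'(t)\Bigr\|_{H_0}.
\]
The second summand tends to $0$ as $h\to0$ because $w\in C^1(I;H_0)$. For the first summand, the $H_0$‑norm of the difference of $J_1'$ is at most $L(R)\|w(t+h)-w(t)\|_{V_0}\to0$ by continuity of $w$ into $V_0$, while $\|h^{-1}(w(t+h)-w(t))\|_{H_0}$ stays bounded (again $w\in C^1(I;H_0)$); hence this summand also tends to $0$. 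This shows $(J_1\cdot w)'(t_0)$ exists and equals $(J_1'(w(t_0)),w'(t_0))_{H_0}$.

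Finally, the map $t\mapsto (J_1'(w(t)),w'(t))_{H_0}$ is continuous on $I$: $w\in C(I;V_0)$ and $J_1'\in C(V_0;H_0)$ give $J_1'(w(\cdot))\in C(I;H_0)$, $w'\in C(I;H_0)$, and the inner product is continuous. Therefore $(J_1\cdot w)'\in C(I)$, i.e. $J_1\cdot w\in C^1(I)$, completing the proof. The only mildly delicate point is the interchange of the identifications $V_0'\supset H_0\ni J_1'$ when writing the integral form of $J_1(w(t+h))-J_1(w(t))$; since $J_1'$ is assumed to land in $H_0$ and the embedding $V_0\hookrightarrow H_0$ is dense, $\langle J_1'(\xi),\eta\rangle_{V_0}=(J_1'(\xi),\eta)_{H_0}$ for $\eta\in V_0$, so no genuine obstacle arises.
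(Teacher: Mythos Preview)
Your proof is correct and takes a genuinely different route from the paper's. The paper proceeds by mollification: it extends $w$ to $\R$, sets $w_n=\rho_n*w\in C^1(\R;V_0)$, applies the ordinary Fr\'echet chain rule to $J_1\cdot w_n$, and then passes to the limit using that $w_n\to w$ in $V_0$ and $w_n'\to w'$ in $H_0$ locally uniformly, together with the local Lipschitz property of $J_1'$ to get $J_1'\cdot w_n\to J_1'\cdot w$ in $H_0$. You instead work directly with the difference quotient via the integral mean-value representation $J_1(w(t+h))-J_1(w(t))=\int_0^1(J_1'(w(t)+s\Delta),\Delta)_{H_0}\,ds$, and split the error into a term controlled by $L(R)\|w(t+h)-w(t)\|_{V_0}$ (times the bounded $H_0$-difference quotient) and a term controlled by $\|J_1'(w(t))\|_{H_0}\,\|h^{-1}\Delta-w'(t)\|_{H_0}$. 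Your argument is slightly more elementary in that it avoids the extension-and-mollification step and works directly on $I$; the paper's approach has the advantage of reducing everything to the classical $C^1$ chain rule in one stroke. Both exploit the local Lipschitz hypothesis on $J_1'$ at exactly the same place: you to bound the first error term, the paper to pass the locally uniform limit. The only cosmetic point you left implicit is the one-sided version at an endpoint of $I$, but your estimates go through verbatim with $h\to0^\pm$.
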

\begin{proof} At first we note that $w$ can be trivially extended, with the same regularity, to the whole of $\R$,
so we assume without restriction that $I=\R$. Next we remark that our claim reduces to the well--known chain rule for the
Fr\`{e}chet derivative (see \cite[Proposition~1.4, p. 12]{ambrosettiprodi} when $w\in C^1(I;V_0)$.

In the general case, denoting by $(\rho)_n$ a standard sequence of mollifiers and  by $*$ the standard convolution product in $\R$,
we set $w_n=\rho_n *w$, so $w_n\in C^1(I;V_0)$ and, by previous remark,  $(J_1\cdot w_n)'=(J_1'\cdot w_n,w_n')_{H_0}$ in $\R$ for all $n\in\N$ .
Since the proof of  \cite[Proposition~4.21,~p. 108]{brezis2} trivially extends to vector--valued functions,
we also have  $w_n\to w$ in $V_0$ and $w_n'\to w'$ in $H_0$ locally uniformly in $\R$.  Since $J_1'$ is locally Lipschitz from $V_0$ to $H_0$
it then follows that  $J_1'\cdot w_n\to J_1'\cdot w$ in $H_0$ locally uniformly in $\R$ and consequently $(J_1\cdot w_n)'=(J_1'\cdot w_n,w_n')_{H_0}\to (J_1'\cdot w,w')_{H_0}$ locally uniformly in $\R$. Since $J_1\in C(V_0)$ we also have $J_1\cdot w_n\to J_1\cdot w$ in $\R$. Our assertion then follows
by standard results on uniformly convergent real sequences.
 \end{proof}

We introduce the primitives of the functions $f$ and $g$ by
\begin{equation}\label{fgprimitives}
\mathfrak{F}(x,u)=\int_0^u f(x,s)\,ds,\qquad\text{and}\quad \mathfrak{G}(y,u)=\int_0^u g(y,s)\,ds,
\end{equation}
for a.a. $x\in\Omega$, $y\in\Gamma_1$ and all $u\in\R$, and
we note that by (FG1) there are constants $c_p''',c_q'''\ge 0$ such that
\begin{equation}\label{nnn6.2}
|\mathfrak{F}(x,u)|\le c_p'''(1+|u|^p),\qquad \text{and}\quad|\mathfrak{G}(y,u)|\le c_q'''(1+|u|^q)
\end{equation}
for a.a. $x\in\Omega$, $y\in\Gamma_1$ and all $u\in\R$.

 By  \eqref{6}, \eqref{nnn6.2}  and  Sobolev embedding theorem we can set the potential operator
$J:H^1\to\R$ by
\begin{equation}\label{potentialJ}
J(v)=\int_\Omega \mathfrak{F}(\cdot,v)+\int_{\Gamma_1} \mathfrak{G}(\cdot,v_{|\Gamma})\qquad\text{for all $v\in H^1$.}
\end{equation}
By (FG1),  \eqref{6} and  Sobolev embedding theorem, using the  same arguments applied to prove \cite[Theorem~2.2, p. 16]{ambrosettiprodi}
one easily gets that $J\in C^1(H^1)$, its Fr\`echet derivative $J'$ being given by $F=(\widehat{f},\widehat{g})$, which is locally Lipschitz from
$H^1$ to $H^0$.

Hence, by \eqref{3.24} and Lemma~\ref{chainrule}, for any weak solution $u$ of \eqref{1}
we have $J\cdot u\in C^1(\text{dom }u)$ and
\begin{equation}\label{potentialderivative} (J\cdot u)'=\int_\Omega f(\cdot,u)u_t+\int_{\Gamma_1} g(\cdot,u_{|\Gamma})(u_{|\Gamma})_t.
\end{equation}

We also introduce the energy functional $\cal{E}\in C^1(\cal{H})$
defined by
 \begin{equation}\label{energyfunction}
\cal{E}(v,w)=\frac 12 \|w\|_{H^0}^2+\tfrac 12\int_\Omega |\nabla v|^2+\tfrac 12\int_{\Gamma_1}|\nabla_{\Gamma} v|_{\Gamma}^2-J(v),\quad\text{for all $(v,w)\in\cal{H}$,}
 \end{equation}
  and the energy function associated to a weak maximal solution $u$ of \eqref{1} by
 \begin{equation}\label{energyfunctionbis}
 \cal{E}_{u}(t)=\cal{E}(u(t),u'(t)),\qquad\text{for all $t\in [0,T_{\max})$}.
 \end{equation}
By \eqref{potentialderivative} and \eqref{energyfunction} the energy identity \eqref{enidd}
can be rewritten as
\begin{equation}\label{eniddd}
\cal{E}_u(t)-\cal{E}_u(s)+\int_s^t\langle B(u'),u'\rangle_W=0\quad\text{for all $s,t\in [0,T_{\max})$.}
\end{equation}
Consequently,  by Lemma~\ref{lemma4.1}, \eqref{3.6}, \eqref{energyfunction} and \eqref{energyfunctionbis}, $\cal{E}_u$ is decreasing and  we have
\begin{equation}\label{nnn6.6}
\begin{aligned}
\frac 12 \|u'(t)\|_{H^0}^2+\frac 12 \|u(t)\|_{H^1}^2 =&\cal{E}_u(0)-\int_0^t\langle B(u'),u'\rangle_W+\frac 12 \|u(t)\|_{H^0}^2+J(u(t))\\
\le &\cal{E}_u(0)+\frac 12 \|u(t)\|_{H^0}^2+J(u(t))\quad\text{for $t\in [0,T_{\max})$.}
\end{aligned}
\end{equation}
The alternative between global existence and blow--up depends on the specific structure of the nonlinearities involved.
We shall separately treat two different cases.
\subsection{Blow--up when damping terms are linear}
We shall consider in this subsection damping terms satisfying the following assumption
\renewcommand{\labelenumi}{{(PQ5)}}
\begin{enumerate}
\item there are $\alpha\in L^\infty(\Omega)$, $\beta\in L^\infty(\Gamma_1)$, $\alpha,\beta\ge 0$, such that
$$P(x,v)=\alpha(x) v,\qquad\text{and}\quad Q(y,v)=\beta(y)v$$
for a.a. $x\in\Omega$, $y\in\Gamma_1$ and all $v\in\R$.
\end{enumerate}
\begin{rem}\label{nuova6.1}
Trivially (PQ5) implies  assumptions (PQ1--3) with $m=\mu=2$, and in some sense override them.
Moreover in the case considered in problem \eqref{5} it reduces to assumption (I)$'$.
\end{rem}
Moreover we shall consider in this subsection source terms  satisfying the following specific assumption:
\renewcommand{\labelenumi}{{(FG4)}}
\begin{enumerate}
\item at least one between $f$ and $g$ is not a.e. vanishing and there are exponents $\overline{p},\overline{q}>2$ such that,
for a.a. $x\in\Omega$, $y\in\Gamma_1$ and all $u\in\R$,
\begin{equation}\label{blowupgrowth}
f(x,u)u\ge \overline{p}\,\mathfrak{F}(x,u)\ge 0, \qquad\text{and}\quad g(y,u)u\ge \overline{q}\,\mathfrak{G}(y,u)\ge 0.
\end{equation}
\end{enumerate}
\begin{rem}\label{nuova6.2}In the case considered in problem \eqref{5}, i.e. $f(x,u)=f_0(u)$ and $g(x,u)=g_0(u)$, assumption (FG4) reduces to
assumption (IV). Another specific example is is given by \eqref{f3g3} when $f_0$ and $g_0$ still satisfy (IV).
\end{rem}
We can now give our blow--up result
\begin{thm}\label{theorem6.1} Let (PQ4), (FG1--2), (FG4), \eqref{6} hold. Then
\renewcommand{\labelenumi}{{(\roman{enumi})}}
\begin{enumerate}
\item $N=\{(u_0,u_1)\in \cal{H}: \,\,\cal{E}(u_0,u_1)<0\}\not=\emptyset$, and
\item
for any $(u_0,u_1)\in N$ the unique maximal weak solution $u$ of \eqref{1} blows--up in finite time, that is $T_{\max}<\infty$, and \eqref{new6.4} holds.
\end{enumerate}
\end{thm}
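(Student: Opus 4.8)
The strategy is the classical concavity method of Levine, adapted to the present functional setting. For part (i), I would exhibit an explicit element of $N$: fix any $v_0\in H^1$ with $v_0\not\equiv 0$ on the set where $f$ (or $g$) is genuinely a source, and consider $(\lambda v_0,0)$ for $\lambda>0$ large. By (FG4) and the inequalities $f(x,u)u\ge\overline p\,\mathfrak F(x,u)\ge0$, one integrates to get a lower bound $\mathfrak F(x,u)\ge c_0|u|^{\overline p}$ (and similarly for $\mathfrak G$) on a suitable region, so that $J(\lambda v_0)$ grows at least like $\lambda^{\overline p}$ while the quadratic gradient terms grow only like $\lambda^2$. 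Since $\overline p,\overline q>2$ and at least one source is nontrivial, $\cal E(\lambda v_0,0)\to-\infty$, so $N\neq\emptyset$. I would need a small lemma that $f(x,u)u\ge\overline p\,\mathfrak F(x,u)$ forces $\mathfrak F$ to grow at a power rate wherever it is eventually positive; this follows by integrating the differential inequality $\frac{d}{du}\log\mathfrak F(x,u)\ge \overline p/u$ on intervals where $\mathfrak F>0$.

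For part (ii), take $(u_0,u_1)\in N$, so $\cal E_u(0)=\cal E(u_0,u_1)<0$, and let $u$ be the maximal weak solution from Theorem~\ref{theorem4.1}. By the energy identity \eqref{eniddd} and monotonicity of $\cal E_u$, we have $\cal E_u(t)\le\cal E_u(0)<0$ for all $t\in[0,T_{\max})$. The auxiliary functional is
\begin{equation*}
\cal N(t)=\tfrac12\|u(t)\|_{H^0}^2+\tfrac12\int_0^t\|u(\tau)\|_{H^0}^2\,\langle B(1),\cdot\rangle\ldots
\end{equation*}
— more precisely, since the damping is linear by (PQ5), I would set
\begin{equation*}
\cal N(t)=\|u(t)\|_{H^0}^2 + \int_0^t\!\!\int_\Omega\alpha u^2 + \int_0^t\!\!\int_{\Gamma_1}\beta u_{|\Gamma}^2 + (T_0-t)\,\Big(\|u_0\|_{H^0}^2 + \ldots\Big),
\end{equation*}
and compute $\cal N'$, $\cal N''$ using \eqref{potentialderivative}, the equation \eqref{C1} tested against $u$, and the fact that $\langle B(u'),u\rangle$ is an exact time derivative because $B$ is linear here. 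The key algebraic step is to show $\cal N\,\cal N'' - \big(1+\tfrac\eta2\big)(\cal N')^2\ge 0$ for a suitable $\eta>0$ controlled by $\min\{\overline p,\overline q\}-2$; this uses that $\langle B(u'),u'\rangle u$ terms combine with the Cauchy–Schwarz cross terms, and that $-J(u)\le -\tfrac1{\overline p}\int_\Omega f u -\tfrac1{\overline q}\int_{\Gamma_1} g u_{|\Gamma}$ by (FG4). One then argues that $\cal N^{-\eta/2}$ is concave and positive with negative derivative at some time, hence hits zero in finite time, forcing $\|u(t)\|_{H^0}\to\infty$ — but since $u\in C([0,T_{\max});H^0)$ this is impossible unless $T_{\max}<\infty$, and then the blow-up alternative in Theorem~\ref{theorem4.1} gives the first limit in \eqref{new6.4}; the second (the $L^p$, $L^q$ norms) follows from \eqref{nnn6.6} together with the power lower bounds on $\mathfrak F,\mathfrak G$.

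The main obstacle is the rigorous justification of the differential inequalities for $\cal N$ at the level of weak solutions: $\cal N$ involves $\|u(t)\|_{H^0}^2$, whose second derivative is only $\langle u'',u\rangle + \|u'\|_{H^0}^2$ in a distributional sense, and $u''$ lives in $X'$, not $H^0$. I would handle this by working with the difference-quotient / regularized form of the energy and distribution identities from Remark~\ref{remark4.4} (or equivalently passing through strong solutions and using the continuous dependence from Theorem~\ref{proposition4.1} to pass to the limit), so that all pairings $\langle Au,u\rangle_{H^1}$, $\langle B(u'),u\rangle_W$, $(F(u),u)_{H^0}$ are legitimate. A secondary technical point is ensuring the constant $\eta$ can genuinely be taken positive: this needs $\cal E_u(0)<0$ strictly (which is exactly the definition of $N$) so that the "bad" term $\cal E_u(t)$ in the estimate for $\cal N''$ has a sign working in our favor, i.e. one absorbs $2\cal E_u(t)\le 2\cal E_u(0)<0$ to upgrade the coercivity.
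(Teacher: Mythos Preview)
Your plan is essentially correct and follows the same underlying route as the paper: Levine's concavity method for (ii), and the scaling argument $u_0=\lambda v_0$ combined with the integrated differential inequality $\mathfrak F(x,u)\ge c|u|^{\overline p}$ (on the region where the source is active) for (i). The paper's proof of (i) is exactly this, though it spends some effort on the localization needed to find a $v_0\in H^1$ supported where $\mathfrak G$ (or $\mathfrak F$) genuinely grows; you gloss over this but clearly have the right idea.

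For (ii) the only real difference is packaging: rather than carrying out the concavity computation directly, the paper checks the hypotheses of the abstract nonexistence theorem of Levine--Pucci--Serrin \cite[Theorem~1]{lps} in the spaces $V_*=Y_*=H^0$, $W_*=H^1$, $X_*=L^p(\Omega)\times L^q(\Gamma_1)$, with $P_*=\text{Id}$, $A_*=A$, $F_*=F$, $Q_*=B$. The crucial structural hypothesis there reduces precisely to \eqref{blowupgrowth}. That abstract result is itself a concavity argument of exactly the type you sketch, so your approach simply unfolds what the paper cites. The advantage of the citation is that it sidesteps the regularity issue you flag (whether the pairings and second derivatives of $\cal N$ are justified for weak solutions): the paper observes that weak solutions of \eqref{1} are exactly ``strong solutions'' in the sense of \cite{lps}, so the abstract theorem applies directly. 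Your proposed workaround---pass through strong solutions and use continuous dependence---would also work, but is more laborious.

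One small correction: for the second limit in \eqref{new6.4} you invoke ``power lower bounds on $\mathfrak F,\mathfrak G$'', but what is actually needed is the \emph{upper} bound \eqref{nnn6.2} on $J(u)$, combined with \eqref{nnn6.6} and the embedding $\|u\|_{H^0}^2\le C(1+\|u\|_p^p+\|u\|_{q,\Gamma_1}^q)$, so that blow-up of the left side of \eqref{nnn6.6} forces blow-up of $\|u\|_p^p+\|u\|_{q,\Gamma_1}^q$.
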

\begin{proof} We first prove (ii). By (PQ5), we have $X=H^1$ and $W=H^0$.
Since we are going to apply \cite[Theorem~1]{lps} to
\eqref{1}, in the sequel we are going to check its assumptions.
 Referring to the notation of the quoted paper, adding a $_*$ subscript to it, we set
$$V_*=H^0,\qquad Y_*=H^0,\qquad W_*=H^1,\qquad X_*=L^p(\Omega)\times L^q(\Gamma_1)$$
where, according to \eqref{ID},   $L^q(\Gamma_1)$ is identified with $\{v\in L^q(\Gamma): v=0\,\,\text{on}\,\,\Gamma_0\}$.
We note that, by \eqref{6} and Sobolev embedding theorem we have $W_*\hookrightarrow X_*$.

We also set the operators
$$P_*:V_*\to V_*',\,\, A_*: W_*\to W_*', \,\, F_*: X_*\to X_*',\,\,Q_*:[0,\infty)\times Y_*\to Y_*'$$
by
$$P_*=\text{Id},\qquad A_*=A,\qquad F_*=F,\qquad Q_*=B,$$
nothing that in our case $A_*$, $F_*$ and $Q_*$ are autonomous so no explicit dependence on time is needed.
Trivially $P_*$ and  $Q_*$ are non--negative definite and symmetric. Moreover $A_*$ and $F_*$ are the Fr\`echet derivatives
of the $C^1$ potentials $\cal{A}_*: W_*\to \R$, $\cal{F}_*: X_*\to\R$
respectively given for all $v\in W_*$, $(v_1,v_2)\in X_*$ by
$$\cal{A}_*(v)=\frac 12 \|\nabla v\|_2^2+\frac 12 \|\nabla_\Gamma v\|_\Gamma^2\quad\text{and}\quad \cal{F}_*(v_1,v_2)= \int_\Omega \mathfrak{F}(\cdot,v_1)+\int_{\Gamma_1}\mathfrak{G}(\cdot,v_2).$$
With this setting the abstract evolution equation
\begin{equation}\label{abstractLPS}
P_*u''+Q_*u'+A_*(u)=F_*(u)
\end{equation}
considered in \cite[(2.1)]{lps} formally reduces to \eqref{C} and, taking
$G_*=W_*$ as the nontrivial subspace of $V_*$, $W_*$ and $Y_*$  we have $K_*=C([0,\infty);H^1)\cap C^1([0,\infty);H^0)$,
hence, by Definition~\ref{definition4.1} and \eqref{enidd}
strong solutions of \eqref{abstractLPS} in the sense of  \cite{lps} exactly reduce to weak solutions of \eqref{1} in $[0,\infty)$.
Moreover, to check the specific assumptions of \cite[Theorem~1]{lps} we note that, by (FG4) for all $v\in G_*=H^1$ we have
$$
\begin{aligned}
\langle A_*(v),v\rangle_{W_*}-\langle F_*(v),v\rangle_{X_*}=& \|\nabla v\|_2^2+\|\nabla_\Gamma v\|_\Gamma^2-\int_\Omega f(\cdot,v)-\int_{\Gamma_1}g(\cdot,v)\\
\le & 2\cal{A}_*(v)-\overline{p}\int_\Omega \mathfrak{F}(\cdot,v)-\overline{q}\int_{\Gamma_1}\mathfrak{G}(\cdot,v)\\
\le & q_*\left[\cal{A}_*(v)-\cal{F}_*(v)\right]
\end{aligned}
$$
where $q_*:=\min\{\overline{p},\overline{q}\}>2$, hence \cite[(2.3)]{lps} holds, while \cite[(2.4)]{lps} is trivially satisfied since $\cal{A}_*$ and
$\cal{F}_*$ does not depend on $t$.

By the quoted result we then get that \eqref{1} has no global weak solutions in $[0,\infty)$ with $(u_0,u_1)\in N$, and then $T_{\max}<\infty$. By Theorem~\ref{theorem4.1}
we  get the first limit in \eqref{new6.4}.
Consequently, by  \eqref{nnn6.2} and \eqref{nnn6.6}, since $p,q\ge 2$, we
also get  the second limit in \eqref{new6.4}.

We now prove (i), first considering the case in which $g$ does not vanish a.e. in $\Gamma_1\times\R$ (so $\Gamma_1\not=\emptyset$). Since $g(x,u)u\ge 0$ at least one between the two sets
\begin{equation}\label{sets}
E^\pm=\{(x,u)\in \Gamma_1\times\R: \pm g(x,u)>0,\,\pm u>0\}
\end{equation}
 has positive measure in $\Gamma_1\times\R$. In the sequel of the proof the symbol $\pm$ means $+$ is $E^+$ has positive measure and $-$ if $E^-$ has positive measure. Hence there are $C\subseteq\Gamma_1$, $\eps>0$ and
$\overline{u}\in\R$ such that $\pm \overline{u}>0$, $C\times(\overline{u}-\eps,\overline{u}+\eps)\subseteq E^\pm$ and $\sigma(C)>0$.
We denote
$$B_1=\{x'\in\R^{N-1}: |x'|<1\},\quad Q=B_1\times (-1,1),\quad Q^+=B_1\times (0,1),\quad Q^0=B_1\times\{0\}.$$
Since $\Gamma$ is $C^1$ and compact there is an open set $U_0$ in $\R^N$ and a coordinate map $\psi_1:Q\to U_0$, bijective and such that $\psi_1\in C^1(\overline{Q})$, $\psi_1^{-1}\in C^1(\overline{U_0})$, $\psi_1(Q^+)=U_0\cap\Omega$,
$\psi_1(Q^0)=U_0\cap\Gamma$ and $\sigma(U_0\cap C)>0$. We denote $\psi_2=\psi_1(\cdot,0):B_1\to\Gamma$, $\psi_2\in C^1(\overline{B_1})$ and
$D=\psi_2^{-1}(U_0\cap C)$. Since
$\sigma(U_0\cap C)=\int_D |\partial_{x_1}\psi_2\wedge\ldots\wedge\partial_{x_{N-1}}\psi_2|\,dx'$, where $x'=(x_1,\ldots,x_{N-1})$,
$D$ has positive measure in $R^{N-1}$ and hence it contains an open ball $B_2$ of radius $r>0$. We set $B=\psi_2(B_2)\subseteq U_0\cap C\subseteq \Gamma_1$
and $U_1=\psi_1(B_2\times (-1,1))$. Hence $U_1$ is open in $\R^N$ and $U_1\cap \Gamma_0=B\cap \Gamma_1=\emptyset$.

By \eqref{fgprimitives} and \eqref{sets}, since $B\times(\overline{u}-\eps,\overline{u}+\eps) \subseteq E^\pm$, we get
that $\phi_2:=\frak{G}(\cdot,\overline{u})>0$ a.e. in $B$. Integrating the differential inequality in \eqref{blowupgrowth} from $0$ to $\overline{u}$ and denoting $\phi_3=\phi_2|\overline{u}|^{-\overline{q}}$, we
get $\frak{G}(\cdot,u)\ge \phi_3|u|^{\overline{q}}$ a.e. in $B$ when $u-\overline{u}\in\R^\pm_0$, and consequently
\begin{equation}\label{primusinterpares}
\frak{G}(\cdot,u)\ge \phi_3|u|^{\overline{q}}-\phi_2,\qquad\text{a.e. in $B$, when  $u\in\R^\pm_0$.}
\end{equation}
Now (see \cite[p. 210]{brezis2}) we fix $\eta_0\in C^\infty(\R)$ such that $\eta_0(s)=1$ if $s<1/4$ and $\eta_0(s)=0$ if $s>3/4$.  Moreover we
denote $\overline{w_0}(x',x_N)=\eta_0(|x'|/r)\,\eta_0(|x_N|)$ for $(x',x_N)\in B_2\times (-1,1)$ and $w_0=\overline{w_0}\cdot \psi^{-1}$.
Hence $w_0\ge 0$, ${w_0}_{|\Gamma_1}\not\equiv 0$, $w_0\in C^1_c(U_1)$ and then, as $U_1\cap \Gamma_0=\emptyset$, $w_0\in H^1$.
Hence, by \eqref{energyfunction},  (FG4) and \eqref{primusinterpares} we have
$$
\cal{E}(sw_0,u_1)\le \tfrac 12 \|u_1\|_{H^0}^2+\tfrac 12\left(\|\nabla w_0\|_2^2+\|\nabla_\Gamma w_0\|_{2,\Gamma_1}^2\right)s^2-\left(\int_B\phi_3\,|w_0|^{\overline{q}} \right) |s|^{\overline{q}}+\|\phi_2\|_{1,\Gamma_1}
$$
for all $u_1\in H^0$ and $s\in\R^\pm_0$. Since $\overline{q}>2$ and $\int_B\phi_3\,|w_0|^{\overline{q}}>0$
it follows that $\cal{E}(sw_0,u_1)\to-\infty$ when $s\to\pm\infty$  and $u_1$ is fixed. Hence, choosing $u_0=sw_0$ for $s\in\R^\pm$ large enough, depending on $\|u_1\|_{H^0}$,  we get $(u_0,u_1)\in N$.

When $f$ does not vanish a.e. the proof repeats the arguments used in the previous case and hence it is omitted.
We just mention that we can directly
take $B\subseteq\Omega$ to be an open ball of radius $r>0$ and define $w_0\in C^\infty_c(B)$
by $w_0(x)=\eta_0(|x|/r)$.
\end{proof}

\begin{proof}[\bf Proof of Theorem~\ref{theorem1.5}]
By  Remarks~\ref{remark4.1}, \ref{remark4.3}, \ref{nuova6.1} and \ref{nuova6.2},
the statement is a particular case of Theorem~\ref{theorem6.1}.
\end{proof}

\subsection{Global existence}
In this subsection we shall deal with perturbation terms $f$ and $g$  which source part has at most linear growth at infinity, uniformly in the space
variable, or, roughly, it is dominated by the corresponding damping term. More precisely we shall make the following specific assumption:
\renewcommand{\labelenumi}{{(FGQP)}}
\begin{enumerate}
\item there are $p_1$ and $q_1$ verifying \eqref{V.2} and constants $C_{p_1},C_{q_1}\ge 0$ such that
$$
\mathfrak{F}(x,u)\le C_{p_1}\left[1+u^2+\gamma_0(x)|u|^{p_1}\right], \,\,\mathfrak{G}(y,u)\le  C_{q_1}\left[1+u^2+\delta_0(y)|u|^{q_1}\right]
$$
for a.a. $x\in\Omega$, $y\in\Gamma_1$ and all $u\in\R$.
\end{enumerate}
Since $\mathfrak{F}(\cdot,u)=\int_0^1 f(\cdot,su)u\,ds$ (and similarly $\frak{G}$), assumption
(FGQP) is a weak version of
 of the following one:
\renewcommand{\labelenumi}{{(FGQP)$'$}}
\begin{enumerate}
\item there are $p_1$ and $q_1$ verifying \eqref{V.2} and constants $C'_{p_1},C'_{q_1}\ge 0$ such that
$$f(x,u)u\le C'_{p_1}\left[|u|+u^2+\gamma_0(x)|u|^{p_1}\right], \quad g(y,u)u\le  C'_{q_1}\left[|u|+u^2+\delta_0(y)|u|^{q_1}\right]
$$
for a.a. $x\in\Omega$, $y\in\Gamma_1$ and all $u\in\R$.
\end{enumerate}
\begin{rem}\label{remark6.3ff}
Assumptions (FG1--2) and (FGQP)$'$ hold
 provided
 \begin{equation}\label{centerdotbis}
 f=f^0+f^1+f^2,\qquad g=g^0+g^1+g^2,
 \end{equation}
 where $f^i$, $g^i$ satisfy the following assumptions:
\renewcommand{\labelenumi}{{(\roman{enumi})}}
\begin{enumerate}
\item $f^0$ and $g^0$ are a.e. bounded and independent on $u$;
\item $f^1$ and $g^1$ satisfy (FG1--2) with exponents $p_1$ and $q_1$ satisfying \eqref{V.2}, and
\begin{enumerate}
\item when $p_1>2$ and $\essinf_\Omega\alpha=0$ there is a constant $\overline{c_{p_1}}\ge 0$ such
\begin{footnote}{that is $\varlimsup_{|u|\to\infty}|f_1(\cdot,u)|/|u|^{p_1-1}\le\overline{c_{p_1}}\alpha$ a.e. uniformly in $\Omega$}\end{footnote} that
$$|f^1(x,u)|\le \overline{c_{p_1}}\left[1+|u|+\alpha(x)|u|^{p_1-1}\right]$$
for a.a. $x\in\Omega$ and all $u\in\R$;
\item when $q_1>2$ and $\essinf_{\Gamma_1}\beta =0$ there is a constant $\overline{c_{q_1}}\ge 0$ such that
$$|g^1(y,u)|\le \overline{c_{q_1}}\left[1+|u|+\beta(y)|u|^{q_1-1}\right]$$
for a.a. $y\in\Gamma_1$ and all $u\in\R$;
\end{enumerate}
\item $f^2$ and  $g^2$ satisfy (FG1--2),
$f^2(x,u)u\le 0$ and $g^2(y,u)u\le 0$ for a.a. $x\in\Omega$, $y\in\Gamma_1$ and all $u\in\R$.
\end{enumerate}
Conversely any couple of functions $f$ and $g$ satisfying (FG1--2) and (FGQP)$'$ admits a decomposition of the form \eqref{centerdotbis}--(i--iii) with
$f^1$ and $g^1$ being source terms. Indeed one can set
$f^0=f(\cdot,0)$,
$$f^1(\cdot,u)=
\begin{cases}[f(\cdot,u)-f^0]^+ \,\,&\text{if $u>0$,}\\
\,\,\qquad 0\,\,&\text{if $u=0$,}\\
-[f(\cdot,u)-f^0]^- \,\,&\text{if $u<0$,}
\end{cases}\,\, \text{and}\,\,
f^2(\cdot,u)\begin{cases}-[f(\cdot,u)-f^0]^- \,\,&\text{if $u>0$,}\\
\,\,\qquad 0\,\,&\text{if $u=0$,}\\
[f(\cdot,u)-f^0]^+ \,\,&\text{if $u<0$,}
\end{cases}
$$
and define $g^0$, $g^1$, $g^2$ in the analogous way.
\end{rem}
\begin{rem}\label{remark6.4}When dealing with  problem \eqref{5} assumption
(FGQP) reduces to (V). The function $f\equiv f_2$ defined in \eqref{modellisupplementari} satisfies  (FGQP) provided
one among the following cases occurs:
\renewcommand{\labelenumi}{{(\roman{enumi})}}
\begin{enumerate}
\item $\gamma_1^+=\gamma_2^+\equiv 0$,
\item $\gamma_2^+\equiv0$, $\gamma_1^+\not\equiv0$,  $\widetilde{p}\le\max\{2,m\}$ and
$\gamma_1\le c'_1\alpha$ a.e. in $\Omega$ when $\widetilde{p}>2$
\item $\gamma_1^+\not=0$, $\gamma_2^+\not\equiv 0$, $p\le\max\{2,m\}$,
$\gamma_1\le c'_1\alpha$ when $\widetilde{p}>2$ and $\gamma_2\le c'_2\alpha$ when $p>2$, a.e. in $\Omega$,
\end{enumerate}
where $c_1', c_2'\ge 0$ denote suitable constants. The analogous cases (j--jjj) occurs when $g\equiv g_2$,
so that $(f_2,g_2)$ satisfies (FGQP) provided any combination between the cases (i--iii) and (j--jjj) occurs.
In particular then a damping term can be localized provided the corresponding source is equally localized.

Finally when $f\equiv f_3$ and $g\equiv g_3$ as in  \eqref{f3g3}, assumption (FGQP) holds provided
$f_0$ and $g_0$  satisfy assumption (V) (where we conventionally take $f_0\equiv 0$ when $\gamma\equiv 0$ and
$g_0\equiv 0$ when $\delta\equiv 0$), $\gamma\le \alpha$ when $p_1>2$ and
$\delta\le \beta$ when $q_1>2$.
\end{rem}

 Our global existence result is the following one.
 \begin{thm}\label{theorem6.2} Let (PQ1--3), (FG1--2), (FGQP) and \eqref{6} hold. Then, for any couple of data $(u_0,u_1)\in \cal{H}$  the unique maximal weak solution $u$ of \eqref{1} is global in time, that is $T_{\max}=\infty$. Consequently the semi--flow generated by problem \eqref{1} is a dynamical system in $H^1\times H^0$ and, when also (III) holds, in  $H^{1,\rho,\theta}_{\alpha,\beta}\times H^0$ for $(\rho,\theta)$ verifying \eqref{special}.
\end{thm}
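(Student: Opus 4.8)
The plan is to deduce $T_{\max}=\infty$ from the blow--up alternative in Theorem~\ref{theorem4.1}: that result gives $\lim_{t\to T_{\max}^-}\big(\|u(t)\|_{H^1}+\|u'(t)\|_{H^0}\big)=\infty$ whenever $T_{\max}<\infty$, so it will suffice to show that, for every $T>0$, the quantity $\cal{G}(t):=\tfrac12\|u'(t)\|_{H^0}^2+\tfrac12\|u(t)\|_{H^1}^2$ stays bounded on $[0,T_{\max})\cap[0,T]$ by a constant depending only on $T$ and on the data. The ingredients will be the energy identity \eqref{eniddd} (equivalently \eqref{nnn6.6}), the coercivity (PQ3) of the damping, assumption (FGQP) on the source primitives, and Gronwall's lemma. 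Once $T_{\max}=\infty$ is obtained for every datum, the final clause follows at once: the map $(t,(u_0,u_1))\mapsto(u(t),u'(t))$ is then defined on $[0,\infty)\times\cal{H}$, it satisfies the semigroup property by uniqueness (Theorem~\ref{theorem4.1}) and is continuous by Theorem~\ref{theorem4.1}(ii), hence it is a dynamical system on $H^1\times H^0$; and when (III) (i.e.\ (PQ4) in the separate--variable case) holds, the last part of Theorem~\ref{theorem4.1} together with \eqref{4.11bis} gives continuity and invariance on $H^{1,\rho,\theta}_{\alpha,\beta}\times H^0$ for $(\rho,\theta)$ as in \eqref{special}, so that it is a dynamical system there as well.

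First I would set $D(t):=\int_0^t\langle B(u'),u'\rangle_W\ge0$, nondecreasing by Lemma~\ref{lemma4.1}(iii) and $B(0)=0$, so that \eqref{nnn6.6} reads $\cal{G}(t)+D(t)\le\cal{E}_u(0)+\tfrac12\|u(t)\|_{H^0}^2+J(u(t))$ on $[0,T_{\max})$. By (FGQP), \eqref{6} and the subcritical embeddings $H^1(\Omega)\hookrightarrow L^{p_1}(\Omega)$, $H^1(\Gamma)\hookrightarrow L^{q_1}(\Gamma)$, there is a structural constant $C_0>0$ with $J(v)\le C_0\big(1+\|v\|_{H^0}^2\big)+C_0\int_\Omega\gamma_0|v|^{p_1}+C_0\int_{\Gamma_1}\delta_0|v_{|\Gamma}|^{q_1}$ for $v\in H^1$; the two weighted terms are genuinely present only when $p_1>2$, resp.\ $q_1>2$, in which case \eqref{V.2} forces $2<p_1\le\overline{m}=m$, resp.\ $2<q_1\le\overline{\mu}=\mu$, while (FGQP) and Remark~\ref{remark6.3ff} let us assume $\gamma_0\le C\alpha$, resp.\ $\delta_0\le C\beta$, a.e. From $u(t)=u_0+\int_0^tu'$ and $\|u'(s)\|_{H^0}\le\sqrt{2\cal{G}(s)}$ one gets $\|u(t)\|_{H^0}^2\le2\|u_0\|_{H^0}^2+4t\int_0^t\cal{G}(s)\,ds$, and, setting $v(t)=\int_0^tu'$, the coercivity (PQ3) yields $\int_0^t\!\int_\Omega\alpha|u_t|^m\le(c_m')^{-1}D(t)$, whence $\int_\Omega\alpha|v(t)|^m\le(c_m')^{-1}t^{m-1}D(t)$ by Minkowski and H\"older in time (and similarly on $\Gamma_1$).

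Then I would bound the source terms. When $p_1\le2$ the $\Omega$--term is absorbed into $C_0\|v\|_{H^0}^2$; when $2<p_1<m$, I split $\int_\Omega\gamma_0|u(t)|^{p_1}\le C\int_\Omega\alpha|u_0|^{p_1}+C\int_\Omega\alpha|v(t)|^{p_1}$, estimate $\int_\Omega\alpha|u_0|^{p_1}\le\|\alpha\|_\infty\|u_0\|_{p_1}^{p_1}\le C\|u_0\|_{H^1}^{p_1}$, and, by H\"older with exponents $m/p_1$ and $m/(m-p_1)$, $\int_\Omega\alpha|v(t)|^{p_1}\le C\big(\int_\Omega\alpha|v(t)|^m\big)^{p_1/m}\le C\big(t^{m-1}D(t)\big)^{p_1/m}$; since $p_1/m<1$, Young's inequality converts this into $\tfrac14D(t)+C_T$. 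Doing the same on $\Gamma_1$ and inserting into the inequality of the previous paragraph, the terms $\tfrac14D(t)+\tfrac14D(t)$ are absorbed by $D(t)$ on the left, and I arrive at $\cal{G}(t)\le\Lambda(T,u_0,u_1)+\Lambda'(T)\int_0^t\cal{G}(s)\,ds$ on $[0,T_{\max})\cap[0,T]$; Gronwall's lemma then gives the sought bound and, by the blow--up alternative, $T_{\max}=\infty$.

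The hard part will be the borderline case in which the source primitive grows exactly at the rate of the damping, i.e.\ $p_1=\overline{m}>2$ (or $q_1=\overline{\mu}>2$): there the Young--absorption above fails, since $p_1/m=1$ leaves $D(t)$ with a $t$--dependent coefficient. To handle it I would exploit the strict subcriticality $p_1\le1+\romega/2<\romega$, which gives $H^1(\Omega)\hookrightarrow L^m(\Omega)$, and interpolate (Gagliardo--Nirenberg) $v(t)=\int_0^tu'$ between $H^1(\Omega)$ and $L^2(\Omega)$, so that $\|v(t)\|_m^m$ splits into a factor controlled linearly by $\cal{G}(t)$ (up to lower--order terms) and a factor controlled by $D(t)$ with an arbitrarily small coefficient; should this interpolation fall short in low dimension, the fallback is to run the Gronwall estimate only on a time interval $[0,\delta]$ whose length $\delta$ depends solely on the structural constants $m,\mu,c_m',c_\mu',\|\alpha\|_\infty,\|\beta\|_{\infty,\Gamma_1},C_0$, and then to iterate this \emph{uniform} local existence time, via uniqueness, to conclude $T_{\max}=\infty$. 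This borderline analysis is precisely where the ``auxiliary functional inspired by \cite{georgiev}'' is needed, and making it work is the technical core of the proof.
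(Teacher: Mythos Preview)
Your strategy diverges from the paper's, and the divergence is exactly at the point you flag as ``the hard part''. You try to control $\int_\Omega\alpha|u(t)|^{p_1}$ by writing $u(t)=u_0+\int_0^t u'$ and feeding $\int_0^t\!\int_\Omega\alpha|u_t|^m$ back through the accumulated damping $D(t)$; this is clean when $p_1<m$ (your H\"older/Young step gives a sublinear power of $D(t)$ that can be absorbed), but at $p_1=m$ it leaves a coefficient $Ct^{m-1}$ in front of $D(t)$ and you are forced into a short--time iteration. That iteration does work (the step $\delta$ depends only on the structural constants $c_m',c_\mu',C_{p_1},C_{q_1},\|\alpha\|_\infty,\|\beta\|_{\infty,\Gamma_1}$, and at each restart the new ``data constant'' $\|u(k\delta)\|_{p_1}^{p_1}$ is controlled via Sobolev embedding by $\cal{G}(k\delta)$, which was just bounded), so the proposal is not wrong; but it is not what the paper does, and the interpolation alternative you sketch is not needed and not obviously workable.

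The paper instead augments the Lyapunov functional: it sets
\[
\Upsilon(t)=\tfrac12\|u'(t)\|_{H^0}^2+\tfrac12\|u(t)\|_{H^1}^2+\cal{I}(u(t)),\qquad
\cal{I}(v)=C_{p_1}\!\int_\Omega\alpha|v|^{p_1}+C_{q_1}\!\int_{\Gamma_1}\beta|v|^{q_1},
\]
and uses the chain rule (Lemma~\ref{chainrule}) to differentiate $\cal{I}(u(t))$ directly, producing $\int_\Omega\alpha|u|^{p_1-2}uu_t$. A weighted Young inequality splits this into $\eps^{1-p_1'}\!\int_\Omega\alpha|u|^{p_1}$ and $\eps\!\int_\Omega\alpha|u_t|^{p_1}$. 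The first term is $\le C\eps^{1-p_1'}\Upsilon(t)$ because $\cal{I}$ is already part of $\Upsilon$; the second, since $p_1\le m$ gives $|u_t|^{p_1}\le 1+|u_t|^m$, is $\le C\eps(1+\|[u_t]_\alpha\|_{m,\alpha}^m)$ and is absorbed by the damping after choosing $\eps$ small. The same works on $\Gamma_1$. One then obtains $\Upsilon(t)\le k_2+k_{10}\!\int_0^t(1+\Upsilon)$ on all of $[0,T_{\max})$ and a single Gronwall finishes, with no distinction between $p_1<m$ and $p_1=m$.

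In short: the ``auxiliary functional inspired by \cite{georgiev}'' is precisely the inclusion of $\cal{I}(u)$ in $\Upsilon$. It buys a uniform treatment of the borderline $p_1=\overline{m}$, $q_1=\overline{\mu}$ and avoids both the time--step iteration and any interpolation. Your route is salvageable via the iteration, but the paper's is shorter and conceptually cleaner; if you rewrite, adopt $\Upsilon$ and drop the case split.
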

\begin{proof} We suppose by contradiction that $T_{\max}<\infty$, so by Theorem~\ref{theorem4.1} we have
\begin{equation}\label{CCCP1234}
\lim\limits_{t\to T^-_{\text{max}}}
\|u(t)\|_{H^1}+\|u'(t)\|_{H^0}=\infty.
\end{equation}
We introduce the functional $\cal{I}:H^1\to\R^+_0$ given by
\begin{equation}\label{6.106bis}
\cal{I}(v)=C_{p_1}\int_\Omega \alpha|v|^{p_1}+C_{q_1}\int_{\Gamma_1}\beta|v|^{q_1}.
\end{equation}
Since the functions $p_1C_{p_1}(x)|u|^{p_1-2}u$ and $q_1C_{q_1}\beta(x)|u|^{q_1-2}u$ satisfy assumption (FG1), we see as in
subsection~\ref{preliminarienergia} that
$\cal{I}\in C^1(H^1)$, with Fr\`{e}chet derivative being given by the couple of Nemitskii operators associated with them. Hence by Lemma~\ref{chainrule} we have
$\cal{I}\cdot u\in C^1([0,T_{\max}))$  and, for all $t\in[0,T_{\max})$,
\begin{equation}\label{potentialderivative1}
\cal{I}(u(t))-\cal{I}(u_0)=\int_0^t\left[\int_\Omega p_1C_{p_1}\alpha|u|^{p_1-2}uu_t+\int_{\Gamma_1} q_1 C_{q_1}|u|^{q_1-2}u(u_{|\Gamma})_t\right].
\end{equation}
We introduce an auxiliary function associated to $u$ by
\begin{equation}\label{6.105}
\Upsilon(t)=\tfrac 12 \|u'(t)\|_{H^0}^2+\tfrac 12 \|u(t)\|_{H^1}^2+\cal{I}(u(t)),\qquad\text{for all $t\in [0,T_{\max})$}.
\end{equation}
By \eqref{nnn6.6} and \eqref{6.105} we have
\begin{equation}\label{6.106}
\Upsilon(t)=\cal{E}_u(0)+\tfrac 12 \|u(t)\|_{H^0}^2+J(u(t))+\cal{I}(u(t))-\int_0^t\langle B(u'),u'\rangle_W.
\end{equation}
By \eqref{6.106bis} and assumption (FGQP)  we get
\begin{equation}\label{new}
J(v)\le \left[C_{p_1}|\Omega|+C_{q_1}\sigma(\Gamma)\right]\, \left(1+\|v\|_{H^0}^2\right)+\cal{I}(v)\qquad\text{for all $v\in H^1$.}
\end{equation}
By \eqref{6.106}-- \eqref{new} we thus obtain
\begin{equation}\label{new2}
\Upsilon(t)\le \cal{E}_u(0)+k_1+k_1\|u(t)\|_{H^0}^2+2\cal{I}(u(t))-\int_0^t\langle B(u'),u'\rangle_W,
\end{equation}
where $k_1=C_{p_1}|\Omega|+C_{q_1}\sigma(\Gamma)+1/2$. Consequently, by \eqref{potentialderivative1},
\begin{equation}\label{6.107}
\begin{aligned}
\Upsilon(t)\le & k_2+\int_0^t \left[2k_1(u',u)_{H^0}-\langle B(u'),u'\rangle_W\phantom{\int}\right.\\+& \left.2p_1C_{p_1}\int_\Omega \alpha|u|^{p_1-2}uu_t+2q_1C_{q_1}\int_{\Gamma_1}\beta|u|^{q_1-2}u(u_{|\Gamma})_t\right],
\end{aligned}
\end{equation}
where $k_2=\cal{E}_u(0)+2\cal{I}(u_0)+k_1(1+\|u_0\|_{H^0}^2)$.
Consequently, by assumption (PQ3),  Cauchy--Schwartz  and Young inequalities, we get the preliminary estimate
\begin{equation}\label{6.108}
\begin{aligned}
\Upsilon(t)\le &k_2+\int_0^t\left[- c_m'\|[u_t]_\alpha\|_{m,\alpha}^m-c_\mu'\|[(u_\Gamma)_t]_\beta\|_{\mu,\beta}^\mu+k_1\|u'\|_{H^0}^2+k_1 \|u\|_{H^0}^2\right.\\+&\left.2p_1C_{p_1}\int_\Omega \alpha|u_t| |u|^{p-1}|u_t|+2q_1C_{q_1}\int_{\Gamma_1}\beta |u|^{q-1}|(u_{|\Gamma})_t|
|(u_{|\Gamma})_t|\right]
\end{aligned}
\end{equation}
for all $t\in[0,T_{\max})$.
We now estimate, a.e. in $[0,T_{\max})$, the last four integrands  in the right--hand side of \eqref{6.108}.
By \eqref{6.105} we get
\begin{equation}\label{6.109}
k_1\|u'\|_{H^0}^2\le 2k_1 \Upsilon.
\end{equation}
Moreover, by the embedding $H^1(\Omega;\Gamma)\hookrightarrow L^2(\Omega)\times L^2(\Gamma)$, there is a positive constant $k_3$,
depending only on $\Omega$, such that
\begin{equation}\label{6.110}
 \|u\|_{H^0}^2\le k_3 \|u\|_{H^1}^2.
\end{equation}
Consequently, by \eqref{6.105}, there is a positive constant $k_4$,  depending only on $\Omega$, such that
\begin{equation}\label{6.111}
k_1\|u\|_{H^0}^2\le  k_4\Upsilon.
\end{equation}
To estimate the addendum $2p_1C_{p_1}\int_\Omega \alpha|u|^{p_1-1}|u_t|$ we now distinguish between the cases $p_1=2$ and $p_1>2$.
When $p_1=2$,  by \eqref{6.105}, \eqref{6.111} and Young inequality,
\begin{equation}\label{6.112}
2p_1C_{p_1}\int_\Omega \alpha|u|^{p-1}|u_t|\le p_1C_{p_1}\|\alpha\|_\infty(\|u\|_{H^0}^2+\|u'\|_{H^0}^2)\le k_5\Upsilon,
\end{equation}
where $k_5=2p_1C_{p_1}\|\alpha\|_\infty (1+k_3)$.

When $p_1>2$, for any $\eps\in (0,1]$ to be fixed later, by  weighted Young inequality
\begin{equation}\label{6.114}
2p_1C_{p_1}\int_\Omega \alpha|u|^{p_1-1}|u_t|\le 2(p_1-1)C_{p_1}\eps^{1-p_1'}\int_\Omega \alpha|u|^{p_1}+2\eps C_{p_1}\int_\Omega \alpha |u_t|^{p_1}.
\end{equation}
By \eqref{6.105} we have
\begin{equation}\label{6.114bis}
2(p_1-1)C_{p_1}\eps^{1-p_1'}\int_\Omega \alpha|u|^{p_1}\le 2(p_1-1)\eps^{1-p_1'}\Upsilon.
\end{equation}
Moreover by \eqref{V.2} we have $p_1\le m=\overline{m}$ and consequently  $|u_t|^{p_1}\le 1+|u_t|^m$ a.e. in $\Omega$, which yields
\begin{equation}\label{6.115}
\int_\Omega \alpha |u_t|^{p_1}\le \int_\Omega\alpha+\int_\Omega \alpha |u_t|^m\le \|\alpha\|_\infty |\Omega|+\|[u_t]_\alpha\|_{m,\alpha}^m.
\end{equation}
Plugging \eqref{6.114bis} and \eqref{6.115} in \eqref{6.114} we get, as $\eps\le1$,
\begin{equation}\label{6.116}
2p_1C_{p_1}\int_\Omega \alpha|u|^{p_1-1}|u_t|\le k_6\left(\eps^{1-p_1'}\Upsilon+\eps \|[u_t]_\alpha\|_{m,\alpha}^m+1\right)
\end{equation}
where $k_6$ is a positive constant independent on $\eps$.

Comparing \eqref{6.110} and \eqref{6.116} we get that for $p\ge 2$ we have
\begin{equation}\label{6.117}
2p_1C_{p_1}\int_\Omega \alpha|u|^{p_1-1}|u_t|\le k_7\left[(1+\eps^{1-p_1'})\Upsilon+\eps \|[u_t]_\alpha\|_{m,\alpha}^m+1\right]
\end{equation}
where $k_7$ is a positive constant independent on $\eps$.

We estimate the last integrand in the right--hand side of
\eqref{6.108} by transposing from $\Omega$ to $\Gamma_1$ the arguments used to get \eqref{6.117}. At the end we get
\begin{equation}\label{6.118}
2q_1C_{q_1}\int_{\Gamma_1} \beta|u|^{q_1-1}|(u_{|\Gamma})_t|\le k_8\left[(1+\eps^{1-q_1'})\Upsilon+\eps \|[(u_{|\Gamma})_t]_\beta\|_{\mu,\beta,\Gamma_1}^\mu+1\right]
\end{equation}
where $k_8$ is a positive constant independent on $\eps$.

Plugging estimates \eqref{6.109}, \eqref{6.111}, \eqref{6.117} and \eqref{6.118} into \eqref{6.108} we get
 \begin{multline}\label{6.119}
   \Upsilon(t)\le k_2+\int_0^t \left[(k_7\eps-c_m')\|[u_t]_\alpha\|_{m,\alpha}^m+(k_8\eps-c_\mu')\|[(u_\Gamma)_t]_\beta\|_{\mu,\beta}^\mu\right]\\+
   k_9\int_0^t\left[(1+\eps^{1-p_1'}+\eps^{1-q_1'})\Upsilon+1\right]\qquad\text{for all $t\in [0,T_{\max})$.}
 \end{multline}
 where $k_9$ is a positive constant independent on $\eps$.  Fixing $\eps=\eps_1$, where $\eps_1=\min\{1, c_m'/k_7, c_\mu'/k_8\}$, and setting $k_{10}=k_9(1+\eps_1^{1-p'}+\eps_1^{1-q'})$, the estimate
 \eqref{6.119} reads as
 $$\Upsilon_u(t)\le \int_0^t k_{10}\left(1+\Upsilon\right)\qquad\text{for all $t\in [0,T_{\max})$.}$$
 Then, by Gronwall Lemma (see \cite[Lemma 4.2, p. 179]{showalter}),  $\Upsilon$ is bounded in $[0,T_{\max})$, hence
by \eqref{6.105} $\|u\|_{H^1}$ and $\|u'\|_{H^0}$ are bounded in $[0,T_{\max})$, contradicting \eqref{CCCP1234}.
\end{proof}

\begin{proof}[\bf Proof of Theorem~\ref{theorem1.6}]
It follows by Remarks~\ref{remark4.1}, \ref{remark4.3},  \ref{remark6.4}
and  Theorem~\ref{theorem6.2}.
\end{proof}

\appendix
\section{On the Cauchy problem for locally Lipschitz perturbations of maximal monotone
operators}\label{appendixA} The aim of this section is to complete
the statement of the local existence--uniqueness result in
\cite{chueshovellerlasiecka} concerning locally Lipschitz
perturbations of maximal monotone operators. We first recall it,
changing the notation to fit with \eqref{C3}.

Let $\cal{A_1}:D(\cal{A_1})\subset \cal{H}\to \cal{H}$ be  a maximal
monotone operator on the (real) Hilbert space $\cal{H}$,
$(\cdot,\cdot)_{\cal{H}}$ and $\|\cdot\|_{\cal{H}}$ respectively
denoting its scalar product and norm. Moreover let
$\cal{F_1}:\cal{H}\to\cal{H}$ be a locally Lipschitz map, i.e. for
any $R\ge0$ there is $L(R)\ge 0$ such that
\begin{equation}\label{A1}
\|\cal{F_1}(U)-\cal{F_1}(V)\|_{\cal{H}}\le L(R)\,
\|U-V\|_{\cal{H}}\qquad\text{provided
$\|U\|_{\cal{H}},\|V\|_{\cal{H}}\le R$
.}
\end{equation}
Given any  $h\in L^1_{\text{loc}}([0,\infty);\cal{H})$, we are concerned with the Cauchy problem
\begin{equation}\label{C4}
 U'+\cal{A_1}(U)+\cal{F_1}(U)\ni h\qquad\text{in $\cal{H}$,}\qquad
U(0)=U_0\in\cal{H},
\end{equation}

\begin{thm}[{\cite[Theorem~7.2]{chueshovellerlasiecka}}]\label{theoremA1}
Suppose that $\cal{A_1}$ is a maximal monotone operator in $\cal{H}$
with $0\in \cal{A_1}(0)$ and $\cal{F_1}$ satisfies \eqref{A1}. Then
for any $U_0\in D(\cal{A_1})$ and $h\in
W^{1,1}_{\text{loc}}([0,\infty);\cal{H})$ problem \eqref{C4} has a
unique maximal strong solution $U$ in the interval
$[0,T_{\text{max}})$. Moreover  for any $U_0\in
\overline{D(\cal{A_1})}$ and  $h\in
L^1_{\text{loc}}([0,\infty);\cal{H})$ problem \eqref{C4} has a
unique maximal generalized solution in $[0,T_{\text{max}})$. In both
cases we have $\lim_{t\to
T_{\text{max}}^-}\|U(t)\|_{\cal{H}}=\infty$ provided
$T_{\text{max}}<\infty$.
\end{thm}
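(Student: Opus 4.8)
The plan is to reduce to the classical theory for the maximal monotone part by truncating $\cal{F_1}$, to build a local solution by a contraction argument, and then to globalize via a continuation/blow--up alternative. Throughout one uses the standard facts that, for $g\in L^1_{\text{loc}}([0,\infty);\cal{H})$ and $V_0\in\overline{D(\cal{A_1})}$, the problem $U'+\cal{A_1}(U)\ni g$, $U(0)=V_0$, has a unique generalized (mild) solution on $[0,\infty)$, which is strong when $V_0\in D(\cal{A_1})$ and $g\in W^{1,1}_{\text{loc}}$, and that the associated solution map is nonexpansive in the forcing term, i.e.\ $\|U_1(t)-U_2(t)\|_{\cal{H}}\le\|V_{0,1}-V_{0,2}\|_{\cal{H}}+\int_0^t\|g_1-g_2\|_{\cal{H}}$.

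First I would fix $R>\|U_0\|_{\cal{H}}$, let $r_R:\cal{H}\to\overline{B}_R(0)$ be the radial retraction onto the closed ball (which is $1$--Lipschitz in a Hilbert space), and set $\cal{F_1^R}=\cal{F_1}\circ r_R$; by \eqref{A1} this map is globally Lipschitz on $\cal{H}$, bounded, and coincides with $\cal{F_1}$ on $\overline{B}_R(0)$. For the truncated problem $U'+\cal{A_1}(U)+\cal{F_1^R}(U)\ni h$, $U(0)=U_0$, I would define on $C([0,T];\cal{H})$ the map $\Phi$ sending $V$ to the solution of $U'+\cal{A_1}(U)\ni h-\cal{F_1^R}(V)$, $U(0)=U_0$. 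The nonexpansiveness recalled above gives
$$\|\Phi(V_1)-\Phi(V_2)\|_{C([0,T];\cal{H})}\le\int_0^T\|\cal{F_1^R}(V_1(s))-\cal{F_1^R}(V_2(s))\|_{\cal{H}}\,ds\le L(R)\,T\,\|V_1-V_2\|_{C([0,T];\cal{H})},$$
so $\Phi$ is a contraction for $T<1/L(R)$; gluing solutions over consecutive such intervals yields a unique fixed point $U^R\in C([0,T];\cal{H})$ for every finite $T$.

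The most delicate point, and the one I would treat with care, is to show that when $U_0\in D(\cal{A_1})$ and $h\in W^{1,1}_{\text{loc}}$ this mild fixed point $U^R$ is actually a \emph{strong} solution: since $\cal{F_1^R}(U^R(\cdot))$ is only continuous in $t$, the $W^{1,1}$--forcing regularity theorem cannot be invoked directly. The remedy is the classical Br\'ezis approximation scheme adapted to the perturbed equation: replace $\cal{A_1}$ by its Yosida regularization $\cal{A_1}^\varepsilon$, solve the genuine $\cal{H}$--valued ODE $U_\varepsilon'+\cal{A_1}^\varepsilon(U_\varepsilon)+\cal{F_1^R}(U_\varepsilon)=h$ by a Picard iteration, derive bounds on $U_\varepsilon$, $U_\varepsilon'$ and $\cal{A_1}^\varepsilon(U_\varepsilon)$ uniform in $\varepsilon$ (using $0\in\cal{A_1}(0)$, monotonicity of $\cal{A_1}^\varepsilon$, the global Lipschitzness of $\cal{F_1^R}$, and $h\in W^{1,1}$), and pass to the limit $\varepsilon\to0^+$ to produce a strong solution; uniqueness (see below) then identifies it with $U^R$.

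Finally I would remove the truncation and globalize. From $\|U^R(t)\|_{\cal{H}}\le\|U_0\|_{\cal{H}}+\int_0^t(\|h\|_{\cal{H}}+\|\cal{F_1^R}(U^R)\|_{\cal{H}})$ and the boundedness of $\cal{F_1^R}$, continuity yields $T_0=T_0(R,\|U_0\|_{\cal{H}},h)>0$ with $\|U^R(t)\|_{\cal{H}}\le R$ on $[0,T_0]$, where $\cal{F_1^R}(U^R)=\cal{F_1}(U^R)$, so $U^R$ solves \eqref{C4} on $[0,T_0]$. If $U,V$ are two solutions on a common interval they remain in some ball $\overline{B}_\rho(0)$ and $w=U-V$ satisfies, by the monotonicity of $\cal{A_1}$ and \eqref{A1}, $\tfrac12\|w(t)\|_{\cal{H}}^2\le L(\rho)\int_0^t\|w\|_{\cal{H}}^2$ (for mild solutions via the Yosida scheme), hence $w\equiv0$ by Gronwall; patching over common domains gives the unique maximal solution on some $[0,T_{\text{max}})$, strong for $U_0\in D(\cal{A_1})$, $h\in W^{1,1}_{\text{loc}}$ and generalized for $U_0\in\overline{D(\cal{A_1})}$, $h\in L^1_{\text{loc}}$. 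For the blow--up alternative, assume $T_{\text{max}}<\infty$ and $M:=\liminf_{t\to T_{\text{max}}^-}\|U(t)\|_{\cal{H}}<\infty$, and pick $t_n\to T_{\text{max}}^-$ with $\|U(t_n)\|_{\cal{H}}\le M+1$ (taking $U(t_n)$ in $D(\cal{A_1})$, resp.\ $\overline{D(\cal{A_1})}$, as needed). By the local theory the solution restarted at $U(t_n)$ exists on $[t_n,t_n+\delta]$ with $\delta=\delta(M)>0$ and $\|U\|_{\cal{H}}\le C(M)$ there, so for $n$ large (when $t_n+\delta>T_{\text{max}}$) one simultaneously bounds $\|U\|_{\cal{H}}$ near $T_{\text{max}}$ and extends $U$ strictly beyond $T_{\text{max}}$, contradicting maximality; therefore $\lim_{t\to T_{\text{max}}^-}\|U(t)\|_{\cal{H}}=\infty$.
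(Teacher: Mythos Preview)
Your sketch is correct, but it takes a longer road than the argument actually recalled in the paper from \cite{chueshovellerlasiecka}. After the same radial truncation $\cal{F_1^R}$, the paper does \emph{not} run a contraction argument followed by a separate Yosida scheme. Instead it observes that $L(R)I+\cal{F_1^R}$ is monotone and Lipschitz continuous on $\cal{H}$, so $\cal{A_1^R}:=\cal{A_1}+L(R)I+\cal{F_1^R}$ is itself maximal monotone. Rewriting the truncated problem as $U'+\cal{A_1^R}(U)\ni h+L(R)U$ (or equivalently as \eqref{CR}), one applies the classical result \cite[Theorems~4.1~and~4.1A]{showalter} directly and obtains in one stroke both the global generalized solution for $U_0\in\overline{D(\cal{A_1})}$, $h\in L^1_{\text{loc}}$, and the global strong solution for $U_0\in D(\cal{A_1})$, $h\in W^{1,1}_{\text{loc}}$. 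The local solution of the untruncated problem then follows by choosing $t^*$ small enough that $\|U(t)\|_{\cal{H}}\le R$ on $[0,t^*]$.

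What this buys: the step you flag as ``most delicate'' --- upgrading the mild fixed point to a strong solution via a dedicated Yosida approximation --- simply disappears, since the strong--solution regularity comes for free from the maximal monotone theory applied to $\cal{A_1^R}$. Your contraction route is legitimate and has the virtue of being self--contained, but it duplicates work already packaged inside Showalter's theorem. Your treatment of the blow--up alternative via $\liminf$ and restarting is fine and equivalent to the paper's continuation argument; just note that for the strong case you should pick $t_n$ in the full--measure set where $U(t_n)\in D(\cal{A_1})$, which your parenthetical hints at but does not quite justify.
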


\begin{rem}\label{theoremA1plus}It is well--known that $T_{\text{max}}=\infty$ for any datum
$U_0$ when $\cal{F_1}$ is globally Lipschitz, i.e. \eqref{A1} holds
with $R=\infty$, see \cite[Theorems 4.1 and
4.1A]{showalter}.\end{rem}

The aim of this section is to point out the continuous dependence of
$U$ from $U_0$ and $h$, which is a standard fact when $\cal{F}$ is
globally Lipschitz, since the author did not find a precise
reference for this fact when $\cal{F}$ is only locally Lipschitz.
We shall denote by
$U=U(U_0,h)$  the maximal generalized solution corresponding to
$U_0$ and $h$ and by $T_{\text{max}}=T_{\text{max}}(U_0,h)$ the
right--endpoint of its domain.

\begin{thm}\label{theoremA2}
Under the assumptions of Theorem \ref{theoremA1}, given $U_0,
(U_{0n})_n$ in $\overline{D(\cal{A_1})}$ such that $U_{0n}\to U_0$
in $\cal{H}$ and  $h_n\to h$ in
$L^1_{\text{loc}}([0,\infty);\cal{H})$, we have
\renewcommand{\labelenumi}{{\roman{enumi})}}
\begin{enumerate}
\item $T_{\text{max}}(U_0,h)\le\liminf_n
T_{\text{max}}(U_{0n},h_n)$, and
\item $U(U_{0n},h_n)\to U(U_0,h)$ in $C([0,T^*];\cal{H})$ for any
$T^*\in (0,T_{\text{max}}(U_0,h))$.
\end{enumerate}
\end{thm}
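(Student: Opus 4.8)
The plan is to prove Theorem~\ref{theoremA2} by the usual comparison-plus-bootstrap argument for locally Lipschitz perturbations, using crucially that the unperturbed part $\cal{A_1}$ is maximal monotone, so that the corresponding contraction semigroup machinery (and in particular the a~priori estimate for generalized solutions) is available. Fix $U_0\in\overline{D(\cal{A_1})}$, $h\in L^1_{\text{loc}}([0,\infty);\cal{H})$ and set $U=U(U_0,h)$, $T_{\text{max}}=T_{\text{max}}(U_0,h)$. Fix $T^*\in(0,T_{\text{max}})$; by continuity $R:=1+\sup_{[0,T^*]}\|U(t)\|_{\cal{H}}<\infty$. Let $L=L(R+1)$ be the Lipschitz constant of $\cal{F_1}$ on the ball of radius $R+1$. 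For $n$ large enough we have $\|U_{0n}-U_0\|_{\cal{H}}<1$ and $\int_0^{T^*}\|h_n-h\|_{\cal{H}}<1$; I want to show that for such $n$ the generalized solution $U_n:=U(U_{0n},h_n)$ exists on $[0,T^*]$ and stays in the ball of radius $R+1$, and that $\|U_n-U\|_{C([0,\tau];\cal{H})}\to0$.

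The key step is the basic estimate for generalized solutions of a maximal monotone evolution. Since $\cal{A_1}$ is maximal monotone, for two generalized solutions $U_n$ and $U$ of \eqref{C4} with data $(U_{0n},h_n)$ and $(U_0,h)$ one has, as long as both are defined,
\begin{equation*}
\|U_n(t)-U(t)\|_{\cal{H}}\le \|U_{0n}-U_0\|_{\cal{H}}+\int_0^t\|\cal{F_1}(U(s))-\cal{F_1}(U_n(s))\|_{\cal{H}}\,ds+\int_0^t\|h_n(s)-h(s)\|_{\cal{H}}\,ds.
\end{equation*}
This is the integrated form of the dissipativity inequality $\tfrac12\frac{d}{dt}\|U_n-U\|_{\cal{H}}^2\le (\cal{F_1}(U)-\cal{F_1}(U_n)+h_n-h,\,U_n-U)_{\cal{H}}$, valid for strong solutions and passing to the limit to generalized ones exactly as in \cite[Theorems~4.1--4.1A]{showalter} and as already used in the proof of Lemma~\ref{lemma4.1bis}. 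Now run a continuation argument: let $\tau_n\in(0,T^*]$ be the maximal time such that $U_n$ is defined on $[0,\tau_n)$ and $\|U_n(t)-U(t)\|_{\cal{H}}<1$ there. On $[0,\tau_n)$ both $U_n(t)$ and $U(t)$ lie in the ball of radius $R+1$, so $\|\cal{F_1}(U)-\cal{F_1}(U_n)\|_{\cal{H}}\le L\|U_n-U\|_{\cal{H}}$, and the displayed estimate together with Gronwall's inequality (\cite[Lemma~4.2, p.~179]{showalter}) gives
\begin{equation*}
\|U_n(t)-U(t)\|_{\cal{H}}\le\Big(\|U_{0n}-U_0\|_{\cal{H}}+\int_0^{T^*}\|h_n-h\|_{\cal{H}}\Big)e^{LT^*}=:\delta_n e^{LT^*}\qquad\text{for }t\in[0,\tau_n).
\end{equation*}

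The bootstrap then closes as follows. Since $\delta_n\to0$, for $n$ large we have $\delta_ne^{LT^*}<\tfrac12$, so on $[0,\tau_n)$ we get $\|U_n(t)-U(t)\|_{\cal{H}}<\tfrac12$, hence $\|U_n(t)\|_{\cal{H}}<R+\tfrac12$ stays bounded away from the blow-up threshold; by the blow-up alternative in Theorem~\ref{theoremA1} ($\lim_{t\to T_{\text{max}}(U_{0n},h_n)^-}\|U_n(t)\|_{\cal{H}}=\infty$ if $T_{\text{max}}(U_{0n},h_n)<\infty$) this forces $\tau_n=T^*$ and in particular $T_{\text{max}}(U_{0n},h_n)>T^*$. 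As $T^*<T_{\text{max}}(U_0,h)$ was arbitrary, this proves (i). Moreover the last displayed bound now holds on all of $[0,T^*]$, i.e. $\|U_n-U\|_{C([0,T^*];\cal{H})}\le\delta_ne^{LT^*}\to0$, which is (ii).

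The main obstacle — and really the only nontrivial point — is justifying the differential/integral inequality for \emph{generalized} (not strong) solutions, since generalized solutions need not be differentiable; but this is standard: approximate each generalized solution by strong ones (by definition of generalized solution), write the inequality for the strong approximants using monotonicity of $\cal{A_1}$ and \eqref{A1}, and pass to the uniform limit. One small care point is that the Lipschitz constant must be chosen on a ball slightly larger than the one containing $U$, so that $U_n$ has room to move; the choice $R+1$ and $L=L(R+1)$ above handles this. Everything else — Gronwall, the continuation/blow-up alternative — is quoted directly from \cite{showalter} and Theorem~\ref{theoremA1}.
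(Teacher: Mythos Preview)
Your proof is correct and uses the same core ingredients as the paper: the comparison inequality coming from the monotonicity of $\cal{A_1}$, the local Lipschitz bound on $\cal{F_1}$, Gronwall, and the blow--up alternative from Theorem~\ref{theoremA1}. The only organizational difference is that the paper first proves an explicit lower bound $T_1(\|U_0\|_{\cal{H}},\|h\|_{L^1(0,1;\cal{H})})$ on the existence time and a comparison estimate on $[0,T_1]$, and then \emph{iterates} this local estimate finitely many times to cover $[0,T^*]$; you instead run a single \emph{continuation} argument, defining $\tau_n$ as the maximal time where $U_n$ stays within distance $1$ of $U$ and showing $\tau_n=T^*$ directly. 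Your packaging is slightly cleaner (no iteration bookkeeping), while the paper's gives, as a byproduct, an explicit quantitative lower bound on $T_{\text{max}}$ in terms of the data; otherwise the two arguments are equivalent.
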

\begin{proof}
The proof is based on the arguments of the proof of Theorem
\ref{theoremA1}, so we are going to recall some details of it. The
solution $U$ is found as the solution of a modified version of
\eqref{C4}, where $\cal{F_1}$ is replaced by a globally Lipschitz
map $\cal{F_1^R}$ given by
$$\cal{F_1^R}(U)=\begin{cases}
\cal{F_1}(U),\qquad &\text{if $\|U\|_{\cal{H}}\le R$,}\\
\cal{F_1}\left(\frac {RU}{\|U\|_{\cal{H}}}\right),\qquad &\text{if
$\|U\|_{\cal{H}}\ge R$,}
\end{cases}$$
where $R$ is chosen so that $\|U_0\|_{\cal{H}}<R$. Then it is proved
that $\cal{F_1^R}$ is globally Lipschitz, with Lipschitz constant
$L(R)$, and that $\cal{A_1^R}=\cal{A_1}+L(R)I+\cal{F_1^R}$ is
maximal monotone, hence by \cite[Theorem 4.1]{showalter} the Cauchy
problem
\begin{equation}\label{CR}
\begin{cases} U'+\cal{A_1}(U)+\cal{F_1^R}(U)=U'+\cal{A_1^R}(U)-L(R)U\ni h\qquad\text{in $\cal{H}$,}\\
U(0)=U_0\in\cal{H},
\end{cases}
\end{equation}
has a unique generalized solution $U$ in $[0,\infty)$ provided $h\in
L^1_{\text{loc}}([0,\infty);\cal{H})$ and
$U_0\in\overline{D(\cal{A_1})}$, which is actually strong provided
$h\in W^{1,1}_{\text{loc}}([0,\infty);\cal{H})$ and $U_0\in
D(\cal{A_1})$. The existence of a solution of \eqref{C4} in some
interval $[0,t^*]$ then follows by choosing $t^*$ (small),
depending on $R$ and $h$, such that
\begin{equation}\label{A2}
\|U(t)\|_{\cal{H}}\le R\qquad\text{for all $t\in [0,t^*]$.}
\end{equation}

Our first claim is that, choosing $R=2(1+\|U_0\|_{\cal{H}})$, there
is $T_1:[0,\infty)^2\to (0,1]$, decreasing in both
variables, such that $t^*=T_1(\|U_0\|_{\cal H},
\|h\|_{L^1(0,1;\cal{H})})$ verifies \eqref{A2}, so
\begin{equation}\label{A3}
\|U(t)\|_{C([0,t^*];\cal{H})}\le 2(1+\|U_0\|_{\cal{H}}).
\end{equation}
To prove our claim we note, by the same arguments used in the proof
of Theorem \ref{theoremA1},
that when $U_0\in D(\cal{A_1})$, since $0\in\cal{A_1}(0)$,
$\cal{A_1^R}$ is monotone and $\cal{F_1^R}(0)=\cal{F_1}(0)$, we have
\begin{equation}\label{A4}
\frac d{dt}\left(\frac 12\|U(t)\|_{\cal{H}}^2\right)\le L(R)\,
\|U(t)\|_{\cal{H}}^2+
\left(\|h(t)\|_{\cal{H}}+\|\cal{F_1}(0)\|_{\cal{H}}\right)\,\|U(t)\|_{\cal{H}}
\end{equation}
for all  $t\in [0,\infty)$, hence by Gronwall Lemma (see \cite[Lemma
4.1, p. 179]{showalter})
\begin{equation}\label{A5}
\|U(t)\|_{\cal{H}}\le e^{L(R)t}\left[\|U_0\|_{\cal{H}}+\int_0^t
e^{-L(R)s} \Big(\|h(s)\|_{\cal{H}}+\|\cal{F_1}(0)\|\Big)\,ds \right]
\end{equation}
for all $t\in [0,\infty)$, which by \cite[(4.12), p. 183]{showalter}
holds for all $U_0\in \overline{D(\cal{A_1})}$. By \eqref{A5} then
\eqref{A2} holds provided
$$t^*\le 1,\quad e^{L(R)t^*}\le 2,\quad\text{and}\quad
e^{L(R)t^*}\left(\|\cal{F_1}(0)\|_{\cal{H}}+\|h\|_{L^1(0,1;\cal{H})}\right)\le
2.$$ Since $L(R)$ in \eqref{A1} can be assumed, without restriction,
to be increasing, our claim then follows by setting (where
$\log{(2/0)}$ and $(\log 2)/0$ stand for $\infty$)
$$
T_1=\min\left\{1, \frac {\log 2}{L(2+2\,\|U_0\|_{\cal{H}})},\frac
1{L(2+2\,\|U_0\|_{\cal{H}})}\log{\frac
2{\|h\|_{L^1(0,1;\cal{H})}+\|\cal{F_1}(0)\|_{\cal{H}}}}\right\} .$$

From our first claim then it follows the existence of a maximal
generalized solution, as well as its uniqueness, and clearly we have
\begin{equation}\label{A7}
T_1(\|U_0\|_{\cal H}, \|h\|_{L^1(0,1;\cal{H}})<T_{\text{max}}(U_0,h)
\end{equation}
for all $U_0\in\overline{D(\cal{A_1})}$ and $h\in
L^1_{\text{loc}}([0,\infty);\cal{H})$.

We now claim  that for any $U_0,
V_0\in\overline{D(\cal{A_1})}$, $h,k\in
L^1_{\text{loc}}([0,\infty);\cal{H})$,  $M,H$ such that
\begin{equation}\label{A8}
\max\{\|U_0\|_{\cal{H}},\|V_0\|_{\cal{H}}\}\le M,
\quad\text{and}\quad
\max\{\|h\|_{L^1(0,1;\cal{H})},\|k\|_{L^1(0,1;\cal{H})}\}\le H
\end{equation}
we have, denoting $U=U(U_0,h)$ and $V=U(V_0,k)$,
\begin{equation}\label{A9}
\|U(t)-V(t)\|_{\cal{H}}\le
e^{L(2M+2)t}\left(\|U_0-V_0\|_{\cal{H}}+\|h-k\|_{L^1(0,1;\cal{H})}\right)
\end{equation}
for all $t\in
[0,T_1(M,H)]$. To prove our claim we note that, being $T_1$ decreasing in both
variables, by \eqref{A8} we have
\begin{equation}\label{A9bis}
T_1(M,H)\le \min\{T_1(\|U_0\|_{\cal H}, \|h\|_{L^1(0,1;\cal{H}}),
T_1(\|V_0\|_{\cal H}, \|k\|_{L^1(0,1;\cal{H}})\}.
\end{equation}
Hence, by \eqref{A3} and \eqref{A8}, $U$ and $V$ solve in
$[0,T_1(M,H)]$ the equation in \eqref{CR} when $R=2(1+M)$. Then,
first considering data $U_0, V_0\in D(\cal{A_1})$ and using the
monotonicity of $\cal{A_1^R}$ we get
\begin{equation}\label{A10}
\frac d{dt}\left(\frac 12\|U-V\|_{\cal{H}}^2\right)\le L(2M+2)\,
\|U-V\|_{\cal{H}}^2+ \|h-k\|_{\cal{H}}\,\|U-V\|_{\cal{H}},
\end{equation}
in $[0,T_1(M,H)]$, hence, by using Gronwall Lemma again
$$
\|U(t)-V(t)\|_{\cal{H}}\le
e^{L(2M+2)t}\left(\|U_0-V_0\|_{\cal{H}}+\int_0^t e^{-L(2M+2)s}
\|h(s)-k(s)\|_{\cal{H}}\,ds \right),
$$
from which, as $T_1\le 1$, \eqref{A9} follows. By
\cite[(4.12)]{showalter} the estimate \eqref{A9} hold for $U_0,
V_0\in \overline{D(\cal{A_1})}$, concluding the proof of our second
claim.

Now let $U_0$, $(U_{0n})_n$, $h$, $h_n$ and $T^*$ as in the
statement, and denote for shortness $U=U(U_0,h)$,
$U_n=U(U_{0n},h_n)$, $T_{\text{max}}=T_{\text{max}}(U_0,h)$ and
$T_{\text{max}}^n=T_{\text{max}}(U_{0n},h_n)$. We set
$M(T^*)=\|U\|_{C([0,T^*];\cal{H})}$,
$H(T^*)=\|h\|_{L^1(0,T^*+1,\cal{H})}$,
$T_2(T^*)=T_1(1+M(T^*),1+H(T^*))$ and $\kappa(T^*)\in\N_0$ such that
\begin{equation}\label{A12}
\kappa(T^*) T_2(T^*)< T^*\le [\kappa(T^*)+1] T_2(T^*)
\end{equation}
i.e. $\kappa(T^*)=\min\left\{\kappa\in\N_0: T^*/T_2(T^*)\le
\kappa+1\right\}$.
 By \eqref{A8}, since $U_{0n }\to U_0$ in $\cal{H}$ and $h_n\to h$
in $L^1(0,1;\cal{H})$, there is $n_1(T^*)\in N$ such that
$\|U_{0n}\|_{\cal{H}}\le M(T^*)+1$ and
$\|h_n\|_{L^1(0,1;\cal{H})}\le H(T^*)+1$ for $n\ge n_1(T^*)$. By the
monotonicity of $T_1$ and \eqref{A8} then we have $$T_2(T^*)\le
T_1(\|U_0\|_{\cal{H}},\|h\|_{L^1(0,1;\cal{H})})\quad\text{and}\quad
T_2(T^*)\le T_1(\|U_{0n}\|_{\cal{H}},\|h_n\|_{L^1(0,1;\cal{H})})$$
for $n\ge n_1(T^*)$. By maximality it follows that
\begin{equation}\label{A13}
T_2(T^*)<T_{\text{max}},\quad\text{and}\quad
T_2(T^*)<T_{\text{max}}^n\quad\text{for $n\ge n_1(T^*)$.}
\end{equation}
By our second claim moreover  we have
$$\|U_n-U\|_{C([0,T_2(T^*)];\cal{H}})\le
e^{L(2M(T^*)+4)T_2(T^*)}\left(\|U_{0n}-U_0\|_{\cal{H}}+\|h_n-h\|_{L^1(0,1;\cal{H})}\right),
$$
from which  $U_n\to U$ in $C([0,T_2(T^*)];\cal{H})$, so that
\begin{equation}\label{A14}
U_n(T_2(T^*))\to U(T_2(T^*))\quad \text{and}\quad h_n\to h\quad
\text{in $L^1(T_2(T^*),T_2(T^*)+1;\cal{H})$.}
\end{equation}
When $T^*\le
T_2(T^*)$, or equivalently $\kappa(T^*)=0$, the proof of ii) is
complete, and by \eqref{A13} we have
\begin{equation}\label{A15}
T^*<T_{\text{max}}^n\qquad\text{for $n\ge n_1(T^*)$.}
\end{equation}
When $T^*> T_2(T^*)$, or equivalently $\kappa(T^*)\ge 1$, we simply
repeat previous arguments $\kappa(T^*)$ times, having \eqref{A14} as
the starting point. In this way we get that $U_n\to U$ in
$C([0,[\kappa(T^*)+1]T_2(T^*)];\cal{H})$ and $T^*<T_{\text{max}}^n$
for $n\ge n_{\kappa(T^*)+1}(T^*)$. By \eqref{A12} the proof of ii)
is then completed, while i) follows, since $T^*\in
(0,T_{\text{max}})$ is arbitrary, also using \eqref{A15}, concluding
the proof.
\end{proof}

\section{On the Laplace--Beltrami operator}
\label{appendixB} This section is devoted to prove  the following result
\begin{lem}\label{propositionA1}
Let $M$ be a $C^2$ compact manifold equipped with a $C^1$ Riemannian
metric $(\cdot,\cdot)_M$. Then  $-\Delta_M+I$ is a topological and
algebraic isomorphism between $W^{s+1,\rho}(M)$ and
$W^{s-1,\rho}(M)$ for any $s\in [-1,1]$ and $1<\rho<\infty$.
\end{lem}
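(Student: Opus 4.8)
The plan is to prove Lemma~\ref{propositionA1} in three stages: first the Hilbert case $\rho=2$, $s=0$ by the Lax--Milgram theorem; then the general $\rho$, $s=0$ case by elliptic $L^\rho$-regularity on a compact manifold; and finally the full range $s\in[-1,1]$ by interpolation together with a duality argument, using \eqref{5.2}.

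First I would treat $\rho=2$, $s=0$. On $H^1(M)$ equipped with the equivalent inner product \eqref{complessifica}, the bilinear form $a(u,v)=\int_M uv+\int_M(\nabla_M u,\nabla_M v)_M$ is exactly the inner product, hence continuous and coercive; by Lax--Milgram it induces an isomorphism $H^1(M)\to [H^1(M)]'\simeq H^{-1}(M)$ (using \eqref{5.2} with $s=1$). Unwinding the definition \eqref{26} of $\Delta_M$ in the distributional sense, this isomorphism is precisely $-\Delta_M+I$. This settles $s=0$, $\rho=2$.

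Next, for $\rho\in(1,\infty)$ and $s=0$, I would invoke the standard $L^\rho$ theory for second-order uniformly elliptic operators in divergence form with continuous (here $C^1$, by the regularity of the metric and $\Delta_M u=g^{-1/2}\partial_i(g^{ij}g^{1/2}\partial_j u)$ in local coordinates, cf.\ \eqref{26}) coefficients: working in a finite atlas of $C^2$ charts with a subordinate partition of unity, and using \eqref{metricalimitata} for uniform ellipticity, one reduces to the flat Calder\'on--Zygmund estimate $\|v\|_{W^{2,\rho}}\le C(\|(-\Delta+I)v\|_{L^\rho}+\|v\|_{L^\rho})$; since $-\Delta_M+I$ is injective (if $(-\Delta_M+I)u=0$ then $0=\langle(-\Delta_M+I)u,u\rangle=\|u\|_{H^1(M)}^2$ for $u\in H^1(M)$, and elliptic regularity bootstraps any $W^{1,\rho}$ solution into $H^1$), the a priori estimate plus the already-known surjectivity for $\rho=2$ and a density/approximation argument (or the method of continuity deforming $\rho$ to $2$) give that $-\Delta_M+I:W^{2,\rho}(M)\to L^\rho(M)$ is a topological isomorphism. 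This is the case $s=1$ in the statement's shifted indexing; the case $s=-1$, i.e.\ $-\Delta_M+I:L^\rho(M)\to W^{-2,\rho}(M)$, then follows by duality: the transpose of the isomorphism $-\Delta_M+I:W^{2,\rho'}(M)\to L^{\rho'}(M)$ is, by \eqref{5.2} and the symmetry of $-\Delta_M+I$ under the pairing \eqref{26}, exactly $-\Delta_M+I:L^{\rho}(M)\to W^{-2,\rho}(M)$, hence an isomorphism.

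Finally I would fill in the intermediate $s\in(-1,1)$ by complex interpolation between the endpoint isomorphisms $-\Delta_M+I:W^{2,\rho}(M)\to L^\rho(M)$ and $-\Delta_M+I:L^\rho(M)\to W^{-2,\rho}(M)$, using the standard interpolation identity $[W^{2,\rho}(M),L^\rho(M)]_\vartheta=W^{2-2\vartheta,\rho}(M)$ and $[L^\rho(M),W^{-2,\rho}(M)]_\vartheta=W^{-2\vartheta,\rho}(M)$ for the scale of Sobolev--Slobodeckij spaces on the compact $C^2$ manifold $M$ (valid in this range by the local-chart definition of these spaces and the corresponding flat interpolation results); an interpolated bijective bounded operator between interpolation spaces is again a topological isomorphism. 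Setting $s+1=2-2\vartheta$ covers all of $s\in[-1,1]$. The main obstacle I expect is the bookkeeping needed to justify the elliptic $L^\rho$-estimate and the interpolation identities with only $C^2$ manifold regularity and a $C^1$ metric: one must check that the Sobolev spaces defined via $C^2$ charts behave as usual under restriction/extension and interpolation in the full range $|s|\le 2$, which is where the reductions in \cite[pp.~38--42]{lionsmagenes1}, \eqref{5.1}--\eqref{5.2}, and \eqref{metricalimitata} are used; the analytic core (Lax--Milgram, Calder\'on--Zygmund, duality) is routine.
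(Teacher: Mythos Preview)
Your three-stage architecture---the Hilbert case via Lax--Milgram, the endpoints $s=\pm1$ for general $\rho$ via elliptic regularity and duality, then interpolation for intermediate $s$---is exactly the paper's route, and the interpolation and duality steps are handled the same way (the paper also invokes \eqref{5.2}--\eqref{5.3} and the adjoint identification to pass from $s=1$ to $s=-1$).

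The one substantive difference, and the one place your sketch is loose, is the case $s=1$ with $1<\rho<2$. The paper does \emph{not} rely on the Calder\'on--Zygmund estimate here; instead it proves directly that $-\Delta_M$ is accretive in $L^\rho(M)$, i.e.\ $\operatorname{Re}\int_M(-\Delta_M u)\,|u|^{\rho-2}\bar u\ge0$ for $u\in W^{2,\rho}(M)$, which immediately gives the clean bound $\|u\|_{L^\rho}\le\|(-\Delta_M+I)u\|_{L^\rho}$ and hence injectivity with closed range; surjectivity then follows because $L^2(M)=\operatorname{Rg}(A_{1,2})\subset\operatorname{Rg}(A_{1,\rho})$ is dense in $L^\rho(M)$. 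Your alternative is viable, but the Calder\'on--Zygmund estimate carries a lower-order term $\|u\|_{L^\rho}$ that does not by itself yield closed range, and ``method of continuity deforming $\rho$ to $2$'' is not a standard device (the method of continuity deforms the operator within a fixed pair of spaces, not the exponent). What actually works along your lines is to combine the injectivity you establish (via bootstrap into $H^1$) with compactness of $W^{2,\rho}(M)\hookrightarrow L^\rho(M)$ to absorb the lower-order term by the usual contradiction argument, and then invoke the same density-of-$L^2$ step. The paper's accretivity computation avoids this detour and is self-contained.
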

This fact is well--known when $M$ is smooth (see for example
\cite{mitrea}, \cite{strichartz} and \cite[p.28]{taylor3}). A
proof  is given in  the sequel for the sake of completeness.
 Due to the linear nature of the problem
it is convenient to prove it for Sobolev spaces of complex--valued
distributions, since complex interpolation arguments are available.
The real case then trivially follows. All the preparatory material
in the main body of the paper still hold provided one proceeds as follows.
The tangent bundle $T(M)$ is complexified by setting $T(M)^{\C}:=\bigcup_{x\in
M} \{x\}\times T_x(M)^{\C}$, where $T_x(M)^{\C}\simeq
T_x(M)+iT_x(M)$ stands for the complexification of $T_x(M)$ (see \cite{roman}). By $\text{Re }v$ and $\text{Im }v$
we shall respectively  denote the real and imaginary part of $v\in
T(M)^{\C}$. Moreover $(\cdot,\cdot)_M$ is uniquely extended  as an hermitian form
on $T(M)^{\C}$. Finally  $v$ is replaced by $\overline{v}$ in the first integral in
\eqref{complessifica} and \eqref{26} and in the last one in
\eqref{3.5}--\eqref{3.6}.

 By repeating the arguments in \cite[pp.~38-42]{lionsmagenes1} and
using the well--known interpolations properties of  Sobolev
spaces in $\R^{n}$ (see \cite{triebel}) one easily proves that
\begin{equation}\label{5.3}W^{s,\rho}(\Gamma)=
\begin{cases}
(W^{s_0,\rho}(M),W^{s_1,\rho}(M))_{\theta,\rho}\qquad&\text{if
$s\not\in\Z$},\\
[W^{s_0,\rho}(M),W^{s_1,\rho}(M)]_\theta\qquad&\text{if $s\in\Z$}
\end{cases}
\end{equation}
where $s_0,s_1\in\Z$, $s=\theta s_0+(1-\theta)s_1$, $\theta\in
(0,1)$, $-2\le s_0\le s_1\le 2$, and $(\cdot,\cdot)_{\theta,\rho}$,
$[\cdot,\cdot]_\theta$ respectively denote the real and complex
interpolator functors (see \cite{berglofstrom}).
\begin{lem}\label{lemmaB1} Let $M$ be a $C^2$ compact manifold equipped with a $C^1$ Riemannian
metric $(\cdot,\cdot)_M$ and $1<\rho<\infty$. Then
\begin{enumerate}
\renewcommand{\labelenumi}{{\roman{enumi})}}
\item for any $u\in W^{2,1}(M)$ such that $\Delta_Mu\in L^\rho(M)$
we have $u\in W^{2,\rho}(M)$;
\item
there is $C=C(\rho, (\cdot,\cdot)_M)>0$ such that
\begin{equation}\label{ADN}
\|u\|_{W^{2,\rho}(M)}\le C\left( \|\Delta_M
u\|_{L^\rho(M)}+\|u\|_{L^\rho(M)}\right) \qquad\text{for all $u\in
W^{2,\rho}(M)$.}
\end{equation}
\end{enumerate}
\end{lem}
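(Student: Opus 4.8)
The statement is local and linear, so the plan is to transfer it, via a finite $C^2$ atlas $\{(U_k,\varphi_k)\}_{k=1}^K$ of the compact manifold $M$ together with a subordinate partition of unity $\{\chi_k\}\subset C^2_c(U_k)$, to a Calder\'{o}n--Zygmund type estimate for a second order uniformly elliptic operator with continuous leading coefficients on a Euclidean ball. First I would record the standard facts that such $C^2$ partitions of unity exist on a $C^2$ manifold and that a $C^2$ change of coordinates preserves $W^{2,\rho}$ with equivalent norms (on relatively compact sets this needs only boundedness of the first and second derivatives of the transition maps). In a chart, expanding the divergence in $\Delta_M u=g^{-1/2}\partial_i(g^{ij}g^{1/2}\partial_j u)$ gives the nondivergence form $\Delta_M u=g^{ij}\partial_i\partial_j u+b^j\partial_j u$, where the principal coefficients $g^{ij}$ are $C^1$ (hence continuous) and uniformly elliptic by \eqref{metricalimitata}, while $b^j=g^{-1/2}\partial_i(g^{1/2}g^{ij})$ is merely $C^0$ but bounded on compact sets. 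We are thus exactly in the setting of the Agmon--Douglis--Nirenberg interior $L^\rho$ theory (see \cite[Chapter~9]{gilbarg} and \cite[Theorem~2.4.2.5]{grisvard}).

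To prove (ii) I would apply the interior $W^{2,\rho}$ estimate in each chart to $\chi_k u$: it yields $\|\chi_k u\|_{W^{2,\rho}}\le C\big(\|\Delta_M(\chi_k u)\|_{L^\rho}+\|\chi_k u\|_{L^\rho}\big)$. Expanding the commutator, $\Delta_M(\chi_k u)=\chi_k\Delta_M u+2(\nabla_M\chi_k,\nabla_M u)_M+(\Delta_M\chi_k)u$, the right hand side is bounded by $\|\Delta_M u\|_{L^\rho}$, $\|\nabla_M u\|_{L^\rho}$ and $\|u\|_{L^\rho}$; summing over $k$ and absorbing the middle term by the interpolation inequality $\|\nabla_M u\|_{L^\rho}\le\varepsilon\|D^2 u\|_{L^\rho}+C_\varepsilon\|u\|_{L^\rho}$ on the compact $M$ gives \eqref{ADN}.

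For (i) the idea is a finite bootstrap on integrability. Given $u\in W^{2,1}(M)$ with $\Delta_M u\in L^\rho(M)$, in each chart $u\in W^{2,1}_{\text{loc}}$, so $\partial_j u\in W^{1,1}\hookrightarrow L^{n/(n-1)}$ ($n=\dim M$) and the nondivergence equation holds a.e., its right hand side $\Delta_M u$ lying in $L^1$ a priori and in $L^\rho$ by hypothesis. Define $q_0=1$ and $q_{j+1}=\min\{\rho,q_j^{*}\}$, where $q_j^{*}$ is the Sobolev conjugate of $q_j$ (read as any fixed finite exponent $>q_j$ once $q_j\ge n$); since $q_j^{*}>q_j$ whenever $q_j<n$, finitely many steps reach $\rho$. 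At the inductive step, assuming $u\in W^{2,q_j}_{\text{loc}}$, I localize with a cutoff $\eta$ supported in a small coordinate ball: $w=\eta u$ vanishes near the boundary of the ball and satisfies $g^{ij}(x_0)\partial_i\partial_j w=G_1+G_2$, where $G_1=\eta\,\Delta_M u-b^j\partial_j w+(\text{first order commutator terms in }u)$ lies in $L^{\min\{\rho,\,q_j^{*}\}}=L^{q_{j+1}}$ (using $\Delta_M u\in L^\rho$ and $\nabla u\in W^{1,q_j}\hookrightarrow L^{q_j^{*}}$), and $G_2=[g^{ij}(x_0)-g^{ij}(x)]\partial_i\partial_j w$ has small $L^{q_j}$ norm by the uniform continuity of $g^{ij}$. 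Applying the constant coefficient Calder\'{o}n--Zygmund inequality and absorbing $G_2$, then invoking the $L^{q_{j+1}}$ solvability and uniqueness for the constant coefficient (or small oscillation) operator, one upgrades $w$ to $W^{2,q_{j+1}}_{\text{loc}}$. Iterating and recombining with the partition of unity yields $u\in W^{2,\rho}(M)$.

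The main obstacle is precisely this last upgrade in the bootstrap. Since the metric is only $C^1$, the zeroth order coefficient $b^j$ of the nondivergence form is merely continuous and the principal part $g^{ij}$ has no better than a modulus of continuity; one cannot freeze coefficients crudely but must work at a scale on which the oscillation of $g^{ij}$ is small and close the estimate through the standard $L^p$-solvability machinery for elliptic operators with continuous coefficients (the method of continuity, as in \cite[Theorems~9.11,~9.14~and~9.15]{gilbarg}). Everything else --- coordinate invariance of $W^{2,\rho}$ under $C^2$ maps, the existence of $C^2$ partitions of unity, Sobolev embeddings and the interpolation inequality --- is routine.
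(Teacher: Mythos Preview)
Your proposal is correct and mirrors the paper's strategy: finite $C^2$ atlas with subordinate $C^2$ partition of unity, reduction to a uniformly elliptic problem on a Euclidean ball, commutator expansion via the Leibniz rule \eqref{Leibnitz}, bootstrap through Sobolev exponents for (i), and absorption of the first order term by interpolation for (ii). The one substantive difference is how the local problem is treated in (i): the paper splits $-\Delta_M$ in coordinates as $L^2+L^1$ with $L^2=-\partial_i(g^{ij}\partial_j)$ in \emph{divergence} form, so the leading coefficients are $C^1$ (not merely continuous), and then invokes \cite[Lemma~2.4.1.4]{grisvard} for the Dirichlet problem to upgrade $\widetilde{v}$ directly to $W^{2,\rho}$; you instead work in nondivergence form with continuous principal part and argue via coefficient freezing plus solvability and uniqueness from \cite[Chapter~9]{gilbarg}. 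Both are valid, but the divergence form route buys a cleaner regularity step (no separate existence/uniqueness argument) and sidesteps the awkward first bootstrap step at $q_0=1$, where the Aleksandrov maximum principle is unavailable. For (ii) the two proofs are essentially identical; the paper cites \cite[Theorem~15.2]{ADN} in place of the interior Calder\'on--Zygmund estimate you quote.
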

\begin{proof} We use the
standard localization technique. Since $M$ is compact it posses a
finite atlas $\cal{U}=\{(U_i,\phi_i),i=1,\ldots,r\}$, with
$\phi_i(U_i)=B_1$, where $B_R$ denotes the open ball in $\R^n$,
$n=\text{dim} M$, of
radius $R>0$ centered at the origin. By \cite[Theorem 4.1, p.
57]{sternberg} there is a $C^2$ partition of the unity
$\cal{T}=\{\theta_i,i=1,\ldots,r\}$ subordinate to it, i.e.
$\theta_i\in C^2(M)$, $0\le \theta_i\le 1$, $\text{supp }
\theta_i\subset\subset U_i$ for $i=1,\ldots,r$,
$\sum_{i=1}^r\theta_i=1$ on $M$. In the sequel we shall denote by
$C_1,C_2,\ldots$ positive constants depending on $\rho$,
$(\cdot,\cdot)_M$, $\cal{U}$ and $\cal{T}$.

We first claim that if $u\in W^{2,s}(M)$ for some $1<s<\rho$ such
that $\rho\le sn/(n-s)$ if $s<n$,  and  $\Delta_Mu\in L^\rho(M)$,
then $u\in W^{2,\rho}(M)$. To prove our claim we fix
$\bar{i}=1,\ldots,r$ and we denote $\theta=\theta_{\bar{i}}$, $v=u
\theta$,  $\widetilde{u}=u\cdot \phi_{\bar{i}}^{-1}\in
W^{2,s}(B_1)$, $\widetilde{\theta}=\theta\cdot
\phi_{\bar{i}}^{-1}\in C^2(B_1)$,
$\widetilde{v}=\widetilde{u}\widetilde{\theta}$. Now set $R\in
(0,1)$ such that $\text{supp }\widetilde{\theta}\subset\subset B_R$,
so that $\widetilde{v}\in W^{2,s}(B_R)$ and $\text{supp
}\widetilde{v}\subset\subset B_R$. By the expression of $\Delta_M$ in local coordinates we have
\begin{equation}\label{Leibnitz}
\Delta_M
v=\theta\Delta_M u+2(\nabla_M \theta,\nabla_Mu)_M+u\Delta_M\theta.
\end{equation}
Since, by Sobolev embedding theorem, we have $u, |\nabla_Mu|_M\in
L^\rho(M)$, we get that $\Delta_M v\in L^\rho(M)$. Using its expression  in local coordinates
again the operator $-\Delta_M$ is expressed, in local coordinates,
by $L^2+L^1$, where $L^2=-\partial_i(g^{ij}\partial_j)$ and
$L^1=-\frac 12 g^{-1}(\partial_j g) g^{ij}\partial_j$, hence by
\eqref{metricalimitata} and Sobolev embedding theorem we get $L^2
\widetilde{v}\in L^\rho(B_R)$. Since, also by
\eqref{metricalimitata}, $L^2$ is a linear uniformly elliptic
operator, in the divergence form, with coefficients in
$C^1(\overline{B_R})$, we can apply \cite[Lemma 2.4.1.4, p.
114]{grisvard} to the homogeneous Dirichlet problem in $B_R$
to conclude that $\widetilde{v}\in W^{2,\rho}(B_1)$, so that
$v\in W^{2\,\rho}(M)$. Summing up for $\bar{i}=1,\ldots,r$ we then get  $u\in W^{2,\rho}(M)$,
proving our claim. By a reiterate application of previous claim we get i).

To prove ii) we note that, by \cite[Theorem 15.2]{ADN}
\begin{footnote}{or, with a slight variant, \cite[Theorem 2.3.3.2, p. 106]{grisvard}}\end{footnote}
 we get
\begin{equation}\label{B3}
\|\tilde{v}\| _{W^{2,\rho}(B_1)}=\|\tilde{v}\| _{W^{2,\rho}(B_R)}\le
C_1\left(\|\Delta_M\tilde{v}\|_{L^{\rho}(B_R)}+\|\tilde{v}\|_{L^{\rho}(B_1)}\right),
\end{equation}
and then, by \eqref{Leibnitz},
$$ \|\tilde{v}\|
_{W^{2,\rho}(B_1)}\le
C_2\left(\|\Delta_M\tilde{u}\|_{L^{\rho}(B_1)}+\|\tilde{u}\|_{W^{1,\rho}(B_1)}\right),
$$
which, by \eqref{metricalimitata}, yields
$$ \|\tilde{v}\|
_{W^{2,\rho}(B_1)}\le
C_3\left(\|\Delta_Mu\|_{L^{\rho}(M)}+\|u\|_{W^{1,\rho}(M)}\right).
$$
Summing
up for $\bar{i}=1,\ldots,r$ we get
$$ \|u\|
_{W^{2,\rho}(M)}\le
C_4\left(\|\Delta_Mu\|_{L^{\rho}(M)}+\|u\|_{W^{1,\rho}(M)}\right).
$$
Since by \eqref{5.3} we have
$W^{1,\rho}(M)=[W^{2,\rho}(M),L^{\rho}(M)]_{1/2}$, by interpolation
\cite[p. 21]{triebel}) and weighted Young inequalities we get
\begin{equation}\label{B6}
\|u\|_{W^{2,\rho}(M)}\le C_5\left( \|\Delta_M
u\|_{L^\rho(M)}+\|u\|_{L^\rho(M)}\right) \qquad\text{for all $u\in
W^{2,\rho}(M)$}.
\end{equation}
We finally set
$C=\sup\left\{\frac{\|u\|_{W^{2,\rho}(M)}}{
\|\Delta_M u\|_{L^\rho(M)}+\|u\|_{L^\rho(M)}}, \, u\in
W^{2,\rho}(M)\setminus\{0\}\right\}$,
nothing that $C<\infty$ by \eqref{B6} and $C$ is trivially independent on
$\cal{U}$ and $\cal{T}$.
 \end{proof}
\begin{proof}[Proof of Lemma~\ref{propositionA1}] We denote $A_{s,\rho}=-\Delta_M+I:W^{s+1,\rho}(M)\to W^{s-1,\rho}(M)$.
By \eqref{complessifica}, \eqref{5.1} and  \eqref{5.4} we have
$\langle A_{0,2}\bar{u},v\rangle_{H^1(M)}=(v,u)_{H^1}$ for all $u,v\in H^1(M)$,
so by Riesz--Fr\'echet theorem,
$A_{0,2}$ is  an isomorphism.

We now consider the case $s=1$, starting with $\rho=2$. By previous remark, for all $h\in L^2(M)$ there
is a unique $u\in H^1(M)$ such that $-\Delta_Mu+u=h$. Since
\cite[Theorem 1.3, p. 304--306]{taylor} trivially extends to $C^2$ manifolds we get $u\in H^2(M)$,
hence also $A_{1,2}$ is an isomorphism.

We now consider $\rho\ge 2$. Given  $h\in L^\rho(M)$ there is a unique $u\in H^2(M)$ such that
$-\Delta_Mu+u=h$, and by Lemma \ref{lemmaB1} -- i) we have $u\in W^{2,\rho}(M)$, hence
$A_{1,\rho}$ is an isomorphism when $\rho\ge 2$.

We now take  $1<\rho<2$ and  we consider $A_{1,\rho}$ as an
unbounded linear operator in $L^\rho(M)$ with domain
$W^{2,\rho}(M)$. Being  bounded from  $W^{2,\rho}(M)$ to $L^\rho(M)$ by
Lemma~\ref{lemmaB1} --ii), it is a closed operator.
We now claim, as in\cite{grisvard,pazy}, that
$-\Delta_M$ is accretive in $L^\rho(M)$, i.e.
\begin{equation}\label{B9}
\text{Re } \int_M-\Delta_M u\,\, |u|^{\rho-2}\bar{u}\ge 0\qquad\text{for all $u\in W^{2,\rho}(M)$.}
\end{equation}
We first take $u\in C^2(M)$ and set $u_\eps^{\star}=(|u|^2+\eps)^{(\rho-2)/2}u$ for $\eps>0$.
Hence
\begin{multline*}
(\nabla_Mu,\nabla_M u_\eps^{\star})_M=(|u|^2+\eps)^{(\rho-2)/2}|\nabla_M u|_M^2+\tfrac {\rho-2}2 (|u|^2+\eps)^{(\rho-4)/2}
(\nabla_Mu, u^2\nabla_M \bar{u}\\+|u|^2\nabla_Mu)_M
=(|u|^2+\eps)^{(\rho-4)/2}\left[\eps |\nabla_M u|_M^2+\frac \rho 2 |\bar{u}\nabla_M u|_M^2+\tfrac{\rho-2}2(\bar{u}\nabla_M u, u\nabla_M\bar{u})_M\right]
\end{multline*}
and then, setting $v=\text{Re} (\bar{u}\nabla_M u)$ and $w=\text{Im} (\bar{u}\nabla_M u)$, we get
$$(\nabla_Mu,\nabla_M u_\eps^{\star})_M=(|u|^2+\eps)^{(\rho-4)/2}\left[ \eps |\nabla_M u|_M^2+(\rho-1)|v|_M^2+|w|^2+i(\rho-2)(v,w)_M\right].
$$
Consequently $\text{Re}(\nabla_Mu,\nabla_M u_\eps^{\star})_M\ge 0$. By \eqref{26} then
$\text{Re } \int_M-\Delta_M u\,\, \bar{u_\eps^{\star}}\ge 0$ for all $\eps>0$. Since $u_\eps^{\star}\to |u|^{\rho-2}u$
pointwise in $M$, being uniformly bounded, we can pass to the limit as $\eps\to0^+$ and get \eqref{B9} for all $u\in C^2(M)$.
By density  our claim is proved.
By \eqref{B9} we immediately get that
$\text{Re }\langle A_{1,\rho}u, |u|^{\rho-2}\bar{u}\rangle_{L^\rho(M)}\ge \|u\|_{L^\rho(M)}^\rho$ for all $u\in W^{2,\rho}(M)$, from which
$A_{1,\rho}$ is injective and, by H\"{o}lder inequality,
$$\|u\|_{L^\rho(M)}\le \|A_{1,\rho}u\|_{L^\rho(M)}\qquad\text{for all
$u\in W^{2,\rho}(M)$},$$
so $\text{Rg}(A_{1,\rho})$ is closed. But $L^2(M)=\text{Rg}(A_{1,2})\subset \text{Rg}(A_{1,\rho})$, and $L^2(M)$ is dense in $L^\rho(M)$, hence
$\text{Rg}(A_{1,\rho})$ is dense, so $\text{Rg}(A_{1,\rho})=L^\rho(M)$ and $A_{1,\rho}$ is an isomorphism also when $1<\rho<2$.

We now consider the case $s=-1$. By \eqref{5.1} and \eqref{26} we have
$$\langle A_{s,\rho} u,v\rangle_{W^{1-s,\rho'}(M)}=\langle A_{-s,\rho'}v,u\rangle_{W^{1+s,\rho}(M)}$$
for all $s\in [-1,1]$, $u\in W^{s+1,\rho}(M)$ and $v\in W^{1-s,\rho'}(M)$, hence $A_{-1,\rho}$ is the Banach adjoint of
 $A_{1,\rho'}$. It follows then that $A_{-1,\rho}$ is an isomorphism for $1<\rho<\infty$.
Finally the result holds for  $s\in [-1,1]$ by \eqref{5.3} and interpolation theory (see \cite{berglofstrom}).
\end{proof}

\bigskip
{\bf Conflict of Interest:} The author declares that he has  no
conflict of interest.
\bigskip
%\bibliographystyle{amsplain}
%\bibliography{biblio2}

\def\cprime{$'$}
\providecommand{\bysame}{\leavevmode\hbox to3em{\hrulefill}\thinspace}
\providecommand{\MR}{\relax\ifhmode\unskip\space\fi MR }
% \MRhref is called by the amsart/book/proc definition of \MR.
\providecommand{\MRhref}[2]{%
  \href{http://www.ams.org/mathscinet-getitem?mr=#1}{#2}
}
\providecommand{\href}[2]{#2}

\end{document}